\renewcommand{\subsection}[1]{\vspace{.18in}\par\noindent\addtocounter{subsection}{1}\setcounter{equation}{0}{\bf\thesubsection.\hspace{5pt}#1}}
\theoremstyle{definition}
\newtheorem{Def}[subsection]{Definition}
\newtheorem{Rem}[subsection]{Remark}
\theoremstyle{plain}
\newtheorem{Prop}[subsection]{Proposition}
\newtheorem{Thm}[subsection]{Theorem}
\newtheorem{Lem}[subsection]{Lemma}
\newtheorem{Coro}[subsection]{Corollary}
 \numberwithin{equation}{subsection}
\newcommand{\wt}{\mathrm{wt}}
\newcommand{\Image}{\operatorname{Im}}
\newcommand{\End}{\operatorname{End}}
\newcommand{\Hom}{\operatorname{Hom}}
\newcommand{\spann}{\operatorname{span}}
\newcommand{\diag}{\operatorname{diag}}
\newcommand{\height}{\operatorname{ht}}
\newcommand{\bin}{\bigcup}
\newcommand{\han}{\subseteq}
\def\fS{{\frak S}}
\def\fD{{\frak D}}
\newcommand{\klan}{\bfh_{\la,n}}
\newcommand{\kmun}{\bfh_{\mu,n}}
\newcommand{\knun}{\bfh_{\nu,n}}
\newcommand{\klanp}{\bfh_{\la^*,n}}
\newcommand{\bfMod}{{\bf Mod}}
\newcommand{\leb}{\left[}
\newcommand{\rib}{\right]}
\newcommand{\etad}{\eta^\dashv}
\newcommand{\Sr}{\ti\bfV(\iy,r)}
\newcommand{\tiN}{\ti\sN}
\newcommand{\mbzeta}{\mathbb Z^{\eta}}
\newcommand{\mbneta}{\mathbb N^{\eta}}
\newcommand{\mbni}{\mathbb N^{\infty}}
\newcommand{\mbzi}{\mathbb Z^{\infty}}
\newcommand{\mbznn}{\mathbb Z^{[-n,n]}}
\newcommand{\mbnnn}{\mathbb N^{[-n,n]}}
\newcommand{\mbnnno}{\mathbb N^{[-n,n-1]}}
\def\hmod{\text{-\bf mod}}
\newcommand{\Laetar}{\Lambda(\eta,r)}
\newcommand{\Lair}{\Lambda(\infty,r)}
\newcommand{\tiTheta}{\widetilde\Xi(\eta)}
\newcommand{\tiThmn}{\widetilde\Xi([m,n])}
\newcommand{\tiThi}{\widetilde\Xi(\infty)}
\newcommand{\Thetapm}{\Xi^{\pm}(\eta)}
\newcommand{\Thetap}{\Xi^{+}(\eta)}
\newcommand{\Thetam}{\Xi^{-}(\eta)}
\newcommand{\Thetar}{\Xi(\eta,r)}
\newcommand{\Thet}{\Xi(\eta)}
\newcommand{\Thip}{\Xi^+(\infty)}
\newcommand{\Thim}{\Xi^-(\infty)}
\newcommand{\Thiz}{\Xi^0(\infty)}
\newcommand{\Thir}{\Xi(\infty,r)}
\newcommand{\Thipm}{\Xi^\pm(\infty)}
\newcommand{\Thnnpm}{\Xi^\pm([-n,n])}
\newcommand{\Thmnpm}{\Xi^\pm([m,n])}
\newcommand{\Thnnopm}{\Xi^\pm([-n,n-1])}
\newcommand{\Thmn}{\Xi([m,n])}
\newcommand{\Lannr}{\Lambda([-n,n],r)}
\newcommand{\tiThmnz}{\widetilde\Xi([m_0,n_0])}
\newcommand{\bfUn}{\bfU([-n,n])}
\newcommand{\bfUnr}{\bfU([-n,n],r)}
\newcommand{\bfUeta}{\bfU(\eta)}
\newcommand{\bfUetar}{\bfU(\eta,r)}
\newcommand{\Uetar}{U(\eta,r)}
\newcommand{\bfVn}{\bfV([-n,n])}
\newcommand{\bfVnr}{\bfV([-n,n],r)}
\newcommand{\bfVnrpz}{\bfV^{\geq 0}([-n,n],r)}
\newcommand{\bfVnrmz}{\bfV^{\leq 0}([-n,n],r)}
\newcommand{\bfVmn}{\bfV([m,n])}
\newcommand{\bfVeta}{\bfV(\eta)}
\newcommand{\bfVi}{\bfV(\iy)}
\newcommand{\bfVir}{\bfV(\iy,r)}
\newcommand{\bfVetar}{\bfV(\eta,r)}
\def\sA{{\mathcal A}}
\def\sB{{\mathcal B}}
\def\sC{{\mathcal C}}
\def\sF{{\mathcal F}}
\def\sH{{\mathcal H}}
\def\sK{{\mathcal K}}
\def\sM{{\mathcal M}}
\def\sN{{\mathcal N}}
\def\sO{{\mathcal O}}
\def\sR{{\mathcal R}}
\def\sS{{\mathcal S}}
\def\sT{{\mathcal T}}
\def\sZ{{\mathcal Z}}
\def\leqwt{{\leq_{\text{wt}}}}
\def\geqwt{{\geq_{\text{wt}}}}
\newcommand{\diU}{\dot{\bfU}(\infty)}
\newcommand{\mcT}{\mathcal T}
\newcommand{\lann}{\lambda_{[-n,n]}}
\newcommand{\Kir}{\sK(\infty,r)}
\newcommand{\bfKir}{\bfK(\infty,r)}
\newcommand{\Ketar}{\sK(\eta,r)}
\newcommand{\bfKetar}{\bfK(\eta,r)}
\newcommand{\hbfKetar}{\widehat{\bfK}(\eta,r)}
\newcommand{\hbfKir}{\widehat{\bfK}(\iy,r)}
\newcommand{\tibfKir}{\h\bfK^\dagger(\iy,r)}
\newcommand{\tibfKirl}{\!^\dagger\h\bfK(\iy,r)}
\newcommand{\nKrb}{\bfK([-n,n],r)}
\newcommand{\Keta}{\sK(\eta)}
\newcommand{\bfKeta}{\bfK(\eta)}
\newcommand{\hbfKeta}{\widehat\bfK(\eta)}
\newcommand{\hbfKi}{\widehat\bfK(\iy)}
\newcommand{\tibfKi}{\h\bfK^\dagger(\iy)}
\newcommand{\tibfKil}{\!^\dagger\h\bfK(\iy)}
\newcommand{\iKt}{\h{\bfK}^\dagger(\infty)}
\newcommand{\tzr}{\tilde{\zeta}_r}
\newcommand{\AL}{^\dagger\!\h\sA}
\newcommand{\AR}{\h\sA^\dagger}
\newcommand{\bfKi}{\bfK(\infty)}
\newcommand{\iKh}{\widehat{\bfK}(\infty)}
\newcommand{\Jr}{J_r}
\newcommand{\Jrp}{J_{r+1}}
\newcommand{\nWmu}{W([-n,n],\mu)}
\newcommand{\iWmu}{W(\infty,\mu)}
\newcommand{\CrU}{\bfU(\iy,r)\text{-}{\bf mod}}
\newcommand{\CrS}{\bfS(\iy,r)\text{-}{\bf mod}}
\newcommand{\CrK}{\bfKir\text{-}{\bf mod}}
\newcommand{\CrUp}{\bfU(\iy,r')\text{-}{\bf mod}}
\newcommand{\polC}{{\mathcal C^{pol}}}
\newcommand{\intC}{\mathcal C^{int}}
\newcommand{\Cr}{{\mathcal C}_r}
\newcommand{\Cro}{{\mathcal C}_{r+1}}
\newcommand{\iX}{X(\infty)}
\newcommand{\iXp}{X^+(\infty)}
\newcommand{\iPi}{\Pi(\infty)}
\newcommand{\Xnnp}{X^+([-n,n])}
\newcommand{\iU}{\bfU(\infty)}
\newcommand{\iUr}{\bfU(\infty,r)}
\newcommand{\iSr}{\bfS(\infty,r)}
\newcommand{\iVr}{\bfV(\infty,r)}
\newcommand{\iUm}{\bfU^-(\infty)}
\newcommand{\amnA}{{}_a(A^{^{[m,n]}})}
\newcommand{\amnB}{{}_a(B^{^{[m,n]}})}
\newcommand{\amnC}{{}_a(C^{^{[m,n]}})}
\newcommand{\mnA}{A^{^{[m,n]}}}
\newcommand{\tte}{\mathtt{e}}
\newcommand{\ttf}{\mathtt{f}}
\newcommand{\ttk}{\mathtt{k}}
\newcommand{\tth}{\mathtt{h}}
\newcommand{\ttm}{\mathtt{m}}
\newcommand{\ttn}{\mathtt{n}}
\def\leq{\leqslant}\def\geq{\geqslant}
\def\le{\leqslant}\def\ge{\geqslant}
 \newcommand{\iy}{\infty}
 \newcommand{\Og}{\Omega}
 \def\fkf{{\frak f}}
\newcommand{\Th}{\Xi}
 \newcommand{\dt}{\delta}
 \newcommand{\Dt}{\Delta}
 \newcommand{\lm}{\longmapsto}
 \newcommand{\map}{\mapsto}
 \newcommand{\vp}{\varpi}
 \newcommand{\bfOg}{{\bf\Omega}}
 \newcommand{\bfOgir}{\bfOg(\iy,r)}
 \newcommand{\og}{\omega}
  \newcommand{\vi}{\varphi}
 \newcommand{\st}{\stackrel}
 \newcommand{\up}{\upsilon}
 \newcommand{\al}{\alpha}
 \newcommand{\bt}{\beta}
 \newcommand{\h}{\widehat}
 \newcommand{\ti}{\widetilde}
 \newcommand{\dzr}{\dot{\zeta}_r}
\newcommand{\zr}{\zeta_r}
\newcommand{\bfxir}{\xi_r}
\newcommand{\bfzetar}{\zeta_r}
\newcommand{\barbfzetar}{\varsigma_r}
 \newcommand{\s}{\sigma}
 \newcommand{\p}{\prec}
 \newcommand{\pr}{\preccurlyeq}
 \newcommand{\bop}{\bigoplus}
 \newcommand{\op}{\oplus}
 \newcommand{\ot}{\otimes}
 \newcommand{\bfK}{\boldsymbol{\mathcal K}}
 \newcommand{\bfk}{\mathbf{k}}
 \newcommand{\bfl}{\mathbf{0}}
 \newcommand{\bfe}{\mathbf{e}}
 \newcommand{\bff}{\mathbf{f}}
 \newcommand{\mc}{\mathcal}
 \newcommand{\lra}{\longrightarrow}
 \newcommand{\ra}{\rightarrow}
 \newcommand{\la}{\lambda}
 \newcommand{\La}{\Lambda}
\newcommand{\eap}{{\tte}^{(A^+)}}
\newcommand{\Eap}{E^{(A^+)}}
\newcommand{\faf}{{\ttf}^{(A^-)}}
\newcommand{\Faf}{F^{(A^-)}}
 \newcommand{\eA}{e_{_{A}}}
 \newcommand{\eB}{e_{_{B}}}
 \newcommand{\eC}{e_{_{C}}}
 \newcommand{\gABC}{g_{_{A,B,C}}}
 \newcommand{\mbn}{\mathbb N}
 \newcommand{\mbq}{\mathbb Q}
 \newcommand{\mbz}{\mathbb Z}
 \newcommand{\bfm}{{\mathbf{m}}}
 \newcommand{\bfj}{{\mathbf{j}}}
 \newcommand{\bfh}{{\mathbf{h}}}
\newcommand{\bft}{{\mathbf t}}
 \newcommand{\bfU}{{\mathbf{U}}}
 \newcommand{\bfV}{{\mathbf{V}}}
\newcommand{\bfH}{{\boldsymbol{\mathcal H}}}
\newcommand{\bfS}{{\boldsymbol{\mathcal S}}}
\newcommand{\be}{\boldsymbol{e}}
\def\bH{{\boldsymbol{\mathcal H}}}
\def\bS{{\boldsymbol{\mathcal S}}}
\def\bfsi{{\boldsymbol\sigma}}
\begin{document}
\title{Quantum $\frak {gl}_\infty$, infinite  $q$-Schur algebras and their representations}
\author{Jie Du and
Qiang Fu$^\dagger$}
\address{School of Mathematics and Statistics, University of New South Wales,
Sydney 2052, Australia.}
\address{{\it Home page:} \tt
http://web.maths.unsw.edu.au/$\sim$jied} \email{j.du@unsw.edu.au}
\address{Department of Mathematics, Tongji University, Shanghai, 200092, China.}
\email{q.fu@hotmail.com}

\thanks{$^\dagger$Corresponding author.}

\thanks{Supported by the Australian Research Council (Grant: DP
0665124) and partially by the National Natural Science Foundation
of China (10601037 \& 10671142). The paper was written while the
second author was visiting the University of New South Wales.
}

\begin{abstract} In this paper, we investigate the structure and
representations of the quantum group $\iU=\mathbf
U_\up(\frak{gl}_\iy)$. We will present a realization for $\iU$,
following Beilinson--Lusztig--MacPherson (BLM) \cite{BLM}, and
show that the natural algebra homomorphism $\zr$ from $\iU$ to the
infinite $q$-Schur algebra $\iSr$ is not surjective for any $r\geq
1$. We will give a BLM type realization for the image
$\iUr:=\zr(\iU)$ and discuss its presentation in terms of
generators and relations. We further construct a certain
completion algebra $\tibfKi$ so that $\zr$ can be extended to an
algebra epimorphism $\ti\zr:\tibfKi\to\iSr$. Finally we will
investigate the representation theory of $\iU$, especially the
polynomial representations of $\iU$.
\end{abstract}
 \sloppy \maketitle
\begin{center}
{\it Dedicated to Professor Leonard L. Scott on the occasion of his 65th
birthday}
\end{center}

\section{Introduction}
The Lie algebra $\frak{gl}_\iy$ of infinite matrices and its
extension $A_\iy$ are interesting topics in the theory of infinite
dimensional Lie algebras. Certain highest weight representations
of these algebras have important applications in finding solutions
of a large class of nonlinear equations (see, e.g., \cite{DJKM})
and in the representations theories of Heisenberg algebras, the
Virasoro algebra and other Kac-Moody algebras (see, e.g.,
\cite{Kac81,Kac87,Pa1,Pa2}). The quantum versions of the
corresponding universal enveloping algebras have also been studied
in \cite{LS91,PS97,PS98}. It should be noted that the structure
and representations of quantum $\frak{gl}_\iy$ have various
connections with the study of Lie superalgebras $\frak{gl}(m|n)$
(see, e.g., \cite{Br}) and the study of quantum affine
$\frak{gl}_n$ (see \cite[\S6]{Ha}, \cite[\S 2]{MM}, \cite{DGr} and
\cite{Mc}).

In this paper, we will investigate the quantum group $\mathbf
U(\infty)=\mathbf U_v(\frak{gl}_\iy)$ and its representations
through a series of its quotient algebras $\mathbf U(\iy,r)$ and
the infinite quantum Schur algebras $\iSr$. The main idea is to
extend the approach developed in \cite{BLM} for quantum $\frak
{gl}_n$ to quantum $\frak {gl}_\infty$. Thus, we obtain a
realization for $\iU$ and an explicit description of the algebra
homomorphism $\zeta_r:\iU\to\mathbf U(\iy,r)$. This in turn gives
rise to a presentation and various useful bases for $\mathbf
U(\iy,r)$. We will also prove that $\mathbf U(\iy,r)$ is a proper
subalgebra of $\iSr$. This fact shows that the classical
Schur--Weyl duality fails in this case. Finally, we investigate
the `polynomial' representation theory and classify all
irreducible polynomial representations for $\iU$. We expect that
this work will have further applications to quantum affine $\frak
{gl}_n$.

We organize the paper as follows.
We recall the definition
of quantum $\frak{gl}_{\eta}$ and $q$-Schur algebras $\sS(\eta,r)$
at any consecutive segment $\eta$ of $\mbz$ in \S 2. In \S 3, we
use the $\eta$-step flag variety to define the algebra
$\sK(\eta,r)$ and discuss its stabilization property developed in
\cite{BLM} in a context suitable for infinite $\eta$. At the end
of \S3, we will focus on the infinite case $\eta=\mbz$. First, the
stabilization property allows us to define an algebra
$\boldsymbol\sK(\infty)$ over $\mbq(\up)$ whose completion
$\h{\boldsymbol\sK}(\infty)$ contains a subalgebra $\mathbf
V(\iy)$ which is isomorphic to $\iU$. Second, there is a algebra
homomorphism $\xi_r:\mathbf V(\iy)\to
\h{\boldsymbol\sK}(\infty,r)$ with image $\bfVir$. A Drinfeld--Jimbo
type presentation
for $\bfVir$ will be given in \S 4. In \S 5, we will establish
isomorphisms $\iSr\cong {\h{\boldsymbol\sK}}^\dagger(\infty,r)$
and between $\bfVir$ and the homomorphic image $\iUr$ of the
natural homomorphism $\zr:\iU\to\iSr$. Thus, we conclude that
$\zr$ is not surjective for any $r\geq 1$. In \S 6, by identifying
$\boldsymbol\sK(\iy)$ with the modified quantum group $\diU$, we
derive an algebra epimorphism
$\dot\zr:\boldsymbol\sK(\iy)\to\boldsymbol\sK(\iy,r)$ and hence,
extend the map $\zr$ to an epimorphism from $\tibfKi$ to $\iSr$.
In the last three sections, we investigate the representation
theory of $\iU$. The highest weight representations of $\iU$ is
studied in \S 7, and the polynomial representations of $\iU$  is
in \S 9 as an application of the representation theory of $\iSr$
investigated in \S 8.

Throughout the paper, we will encounter several (associative)
algebras $\sA$ over a commutative ring $\sR$ without the identity
element, but with many orthogonal idempotents $e_i,\,\,i\in I,$
such that $\sA=\oplus_{i,i'\in I}e_i\sA e_{i'}$ and, for all
$i,i'\in I$, $e_i\sA e_{i'}$ are free over $\sR$. 
Clearly, the index set $I$ must be an infinite set, since if $I$
was a finite set, then $\sum_{i\in I}e_i$ would be the identity
element of $\sA$.

Choose an $\sR$-basis $\sB=\{a_j\}_{j\in J}$ such that  $e_i\sB
e_{i'}=\{e_ia_je_{i'}\}_{j\in J}\backslash\{0\}$ is a basis for
$e_i\sA e_{i'}$ for all $i,i'\in I$, and $\sB=\cup_{i,i'}e_i\sB
e_{i'}$. We further assume that $e_i\in\sB$ for all $i\in I$.
 It is clear that,  for any $j\in J$, there exist unique
$i_j,i_j'\in I$ such that $e_{_{i_j}}a_j=a_j$ and
$a_je_{_{i_j'}}=a_j$. We will write $ro(j):=i_j$ and $co(j):=i_j'$
for all $j\in J$.

For a formal infinite linear combination $f=\sum_{j\in J}f_ja_j$
with $f_j\in \sR$, let, for any $i\in I$, $J(e_i,f):=\{j\in J\mid
f_j\neq0, e_ia_j=a_j\}=\{j\in J\mid f_j\neq0, ro(j)=i\}$ and
$J(f,e_i):=\{j\in J\mid f_j\neq0, a_je_i=a_j\}=\{j\in J\mid
f_j\neq0, co(j)=i\}.$ One can easily check the following lemma.

\begin{Lem}\label{completion}
 Let 
 $\frak L$ be the set\footnote{We may identify the set $\frak L$
 with the direct product $\prod_{j\in J}\sR a_j$.} of formal (possibly infinite)
linear combinations of $\sB$, and let
\begin{equation*}
\begin{split}
\AL&:=\{f\in\frak L\mid \forall i\in I,|J(e_i,f)|<\infty\},
\quad\AR:=\{f\in\frak L\mid \forall i\in I, |J(f,e_i)|<\infty\},\\
\widehat\sA&:=\{f\in\frak L\mid \forall i,i'\in I,
|J(e_i,f)|<\infty,\,|J(f,e_{i'})|<\infty\}=\AL\cap\AR.
\end{split}
\end{equation*} We define the product of two elements
 $\sum_{s\in J}f_sa_s,\sum_{t\in J}g_ta_t$ in $\AR$ (resp., $\AL$) to be $\sum_{s,t\in J}f_sg_ta_sa_t$ where $a_sa_t$ is the product in $\sA$.
 Then $\AL$ and $\AR$  become
associative algebras with identity $1=\sum_{i\in I}e_i$ and
$\widehat\sA$ is a subalgebra with the same identity.
\end{Lem}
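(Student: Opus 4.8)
The plan is to verify the algebra axioms in the order: first that the product is well defined (i.e.\ \emph{locally finite}), then that $\AR$, $\AL$ and $\widehat\sA$ are each closed under it, then associativity, and finally that $1=\sum_{i\in I}e_i$ is a two-sided identity belonging to all three.

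For well-definedness I would fix $f=\sum_s f_sa_s$ and $g=\sum_t g_ta_t$ in $\AR$ and look at the coefficient of an arbitrary basis vector $a_j$ in $fg$. The structural input I would use is that the multiplication of $\sA$ respects the decomposition $\sA=\oplus_{i,i'}e_i\sA e_{i'}$: hence $a_sa_t\in e_{ro(s)}\sA e_{co(t)}$, the product is $0$ unless $co(s)=ro(t)$ (orthogonality of the $e_i$), and $a_j$ can occur in the finite expansion of $a_sa_t$ in $\sB$ only when $ro(j)=ro(s)$, $co(j)=co(t)$ and $co(s)=ro(t)$. Reading these constraints ``from the right'' then gives finiteness: there are only finitely many $t$ with $g_t\neq0$ and $co(t)=co(j)$, because $|J(g,e_{co(j)})|<\infty$; each such $t$ determines $ro(t)$, and then there are only finitely many $s$ with $f_s\neq0$ and $co(s)=ro(t)$, because $|J(f,e_{ro(t)})|<\infty$; and each $a_sa_t$ is itself a finite combination in $\sA$. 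The same counting read ``from the left'', using the conditions $|J(e_i,-)|<\infty$, handles $\AL$.

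The closure of $\AR$ (resp.\ $\AL$) under the product is the same kind of estimate: if $(fg)_j\neq 0$ with $co(j)=i$, then $a_j$ occurs in some $a_sa_t$ with $f_sg_t\neq0$, which forces $co(t)=i$ for finitely many $t$, then finitely many $s$ with $co(s)=ro(t)$, and finitely many $j$ from each of these products; hence $|J(fg,e_i)|<\infty$. Since $\widehat\sA=\AL\cap\AR$, it is then closed as well. For associativity I would introduce the triple product $\sum_{s,t,w}f_sg_th_w\,(a_sa_ta_w)$, observe by the same bounding argument that it is locally finite (fixing $a_j$ one gets finitely many $w$, then finitely many $t$, then finitely many $s$), and then check that expanding either $(fg)h$ or $f(gh)$ reproduces it coefficient by coefficient, invoking associativity in $\sA$ and the finiteness of all the index sets to justify the interchanges of summation. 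Finally, since $e_i\in\sB$ and the $e_i$ are nonzero orthogonal idempotents they are pairwise distinct, so $1\in\frak L$ with $|J(e_i,1)|=|J(1,e_i)|=1$, whence $1\in\widehat\sA\subseteq\AL\cap\AR$; and for any $f$ one has $1\cdot f=\sum_s f_s\,e_{ro(s)}a_s=f=f\cdot1$ because for each $s$ only one idempotent survives.

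The one place I expect genuine care to be needed is the local-finiteness bookkeeping, above all in the associativity step: there is no norm or topology available, so every rearrangement of an a priori infinite sum must be justified by showing that, after the target basis vector $a_j$ is fixed, only finitely many terms contribute, and one has to make sure the intermediate summation over the basis vectors appearing inside $a_sa_t$ does not break this. Once the estimate used for well-definedness is in hand, the rest is routine.
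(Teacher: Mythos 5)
Your proposal is correct and follows essentially the same route as the paper: the paper also proves well-definedness and closure of $\AR$ by fixing $i\in I$, observing that $a_sa_t=a_se_{co(s)}e_{ro(t)}a_t=0$ unless $co(s)=ro(t)$, and using the finiteness of $\{(s,t)\mid f_sg_t\neq0,\,co(s)=ro(t),\,co(t)=i\}$, which is exactly your "reading the constraints from the right" count. You merely spell out the associativity and identity verifications that the paper leaves as "clear," and your bookkeeping there is sound.
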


The algebras $\AL$, $\AR$ and $\widehat\sA$ are called the {\it
completion algebras} of $\sA$.

Note that, if $\sA$ admits an algebra antiautomorphism $f$
satisfying $f(e_i)=e_i$ for all $i\in I$, then
$\AL\cong(\AR)^{\text{op}}$, the algebras with the same underlying
space as $\AR$ but opposite multiplication.

\vspace{0.3cm} \noindent{\bf Some notations and conventions.}
A
{\it consecutive segment} $\eta$ of $\mbz$ is either a finite
interval of the form $[m,n]:=\{i\in\mbz\mid m\le i\le n\}$ or an
infinite interval of the form $(-\infty,m]$, $[n,+\infty)$ and
$(-\infty,\infty)=\mbz$, where $m,n\in\mbz$. Let
$$
\eta^\dashv=\begin{cases} \eta\backslash\{\eta_{\max}\},&\text{ if
there is
a maximum element $\eta_{\max}$ in $\eta$};\\
\eta,& \text{ otherwise.}\\
\end{cases}$$
 Let $M_\eta(\mbz)$ (resp. $\mbz^\eta$) be the set of
all matrices $(a_{i,j})_{i,j\in\eta}$   (resp. all sequences
$(a_i)_{i\in\eta}$) over $\mbz$ with {\it finite support} if $\eta$ is infinite.
We will use the following index sets throughout the paper.
\begin{equation}\label{notations}
\aligned
\ti\Xi(\eta)&=\{(a_{ij})\in M_\eta(\mbz)\mid
a_{ij}\ge0\,\,\forall i\neq j\text{ in }\eta\}=\Thetam+\widetilde\Xi^0(\eta)+\Thetap,\\
\Xi(\eta)&=\{(a_{ij})\in M_\eta(\mbz)\mid
a_{ij}\ge0\,\,\forall i, j\in \eta\}=\Thetam+\Xi^0(\eta)+\Thetap,\\
\Thetapm&=\{(a_{ij})\in M_\eta(\mbz)\mid
a_{ii}=0\,\,\forall i\in\eta\}=\Thetam+\Thetap,\\
\Xi(\eta,r)&=\{A\in\Xi(\eta)\mid \s(A):=\Sigma_{i,j\in\eta}a_{ij}=r\},\,\, \text{ and }\\
\Laetar&=\{\la\in\mbneta\mid \s(\la):=\Sigma_{i\in\eta}\lambda_i=r\}
\endaligned
\end{equation}
where $\Thetap$ (resp., $\Thetam$, $\widetilde\Xi^0(\eta)$) is the subset
of $\tilde\Xi(\eta)$ consisting of those matrices $(a_{ij})$ with $a_{ij}=0$
for all $i\geq j$ (resp., $i\leq j$, $i\neq j$), $\Xi(\eta)= \widetilde\Xi^0(\eta)\cap\Xi(\eta)$,
and $\mbn^\eta\subset\mbz^\eta$. The sum $\Thetam+\widetilde\Xi^0(\eta)+\Thetap$ indicates the following matrix decomposition:
for $A\in\ti\Xi(\eta)$, $A=A^++A^0+A^-$ with $A^+\in\Xi^+(\eta)$,
$A^-\in\Xi^-(\eta)$, $A^0\in\widetilde\Xi^0(\eta)$. We also write $A^\pm=A^++A^-\in\Xi^{\pm}(\eta)$.
Also, for $A=(a_{ij})\in M_\eta(\mathbb N)$, define
\begin{equation}\label{degA}\deg(A):=\sum_{i,j\in\eta}|j-i|a_{ij}.\end{equation}

Throughout the paper, $\up$ denotes an indeterminate and $\mathbb Q(\up)$ denotes the
fraction field of the integral Laurent polynomial ring $\mathcal Z:=\mathbb Z[\up,\up^{-1}]$.

\section{Quantum $\frak{gl}_{\eta}$ and $q$-Schur algebras at $\eta$}

Let $\eta$ be a fixed consecutive segment of $\mbz$.

\begin{Def}\label{definition of U(infty)}
The quantum ${\frak {gl}}_\eta$ over $\Bbb Q(\up)$ is the $\Bbb
Q(\up)$-algebra $\bfU(\eta):=\bfU_v({\frak {gl}}_\eta)$ presented
by generators
$$E_i,\ F_i\quad(i\in\eta^\dashv),\ K_j,\ K_j^{-1}\quad(j\in\eta)$$
and relations

$(a)\ K_{i}K_{j}=K_{j}K_{i},\ K_{i}K_{i}^{-1}=1;$

$(b)\ K_{i}E_j=\upsilon^{\dt_{i,j}-\dt_{i,j+1}} E_jK_{i};$

$(c)\ K_{i}F_j=\upsilon^{\dt_{i,j+1}-\dt_{i,j}} F_jK_i;$

$(d)\ E_iE_j=E_jE_i,\ F_iF_j=F_jF_i\ when\ |i-j|>1;$

$(e)\ E_iF_j-F_jE_i=\delta_{i,j}\frac
{\widetilde K_{i}-\widetilde K_{i}^{-1}}{\upsilon-\upsilon^{-1}},\
where \ \widetilde K_i =K_{i}K_{i+1}^{-1};$

$(f)\ E_i^2E_j-(\upsilon+\upsilon^{-1})E_iE_jE_i+E_jE_i^2=0\
 when\ |i-j|=1;$

$(g)\ F_i^2F_j-(\upsilon+\upsilon^{-1})F_iF_jF_i+F_jF_i^2=0\
 when\ |i-j|=1.$
\end{Def}
The algebra $\bfUeta$ is a Hopf algebra with comultiplication
$\Dt$ defined on generators by $ \Dt(E_i)=E_i\ot \ti K_i+1\ot
E_i$, $\Dt(F_i)=F_i\ot 1+\ti K_i^{-1}\ot F_i$, $\Dt(K_j)=K_j\ot
K_j.$
For notational simplicity, we set
$$\bfU(\eta)=\begin{cases}\bfU(n)=\bfU(\mathfrak{gl}_n),&\text{ if }\eta=[1,n];\\
\bfU(\infty)=\bfU(\mathfrak{gl}_\infty),&\text{ if }\eta=(-\infty,+\infty).\\\end{cases} $$

Let $\bfU^+(\eta)$ (resp., $\bfU^-(\eta)$, $\bfU^0(\eta)$) be the
subalgebra of $\bfUeta$ generated by the $E_i$ (resp., $F_i$,
$K_j^{\pm 1}$).  The subalgebras $\bfU^+(\eta)$ and $\bfU^-(\eta)$
are both $\mbn$-graded in terms of the degrees of monomials in the
$E_i$ and $F_i$. For monomials $M$ in the $E_i$ and $M'$ in the
$F_i$, and an element $h\in\bfU^0(\eta)$, write
$\deg(MhM')=\deg(M)+\deg(M')$. Note that $\deg$ does {\it not}
define an algebra grading on $\bfUeta$. However, if
$\Pi(\eta)=\{\al_j:=\be_j-\be_{j+1}\mid j\in\eta^\dashv\}$ denotes
the set of simple roots, where $\be_i=(\cdots,0,\underset
i1,0\cdots)\in\mbzeta,$ then there is an algebra grading over the
root lattice $\mbz\Pi(\eta)$,
\begin{equation}\label{alg grading}
\bfU(\eta)=\bop\limits_{\nu\in\mbz\Pi(\eta)}\bfU(\eta)_\nu
\end{equation} defined by the conditions
$\bfU(\eta)_{\nu'}\bfU(\eta)_{\nu''}\han\bfU(\eta)_{\nu'+\nu''}$,
$K^\bfj\in\bfU(\eta)_0$, $E_i\in\bfU(\eta)_{\al_i}$,
$F_i\in\bfU(\eta)_{-\al_i}$ for all $\nu',\nu''\in\mbz\Pi(\eta)$,
$i\in\eta^\dashv$ and $\bfj\in\mbz^\eta$.

Let $[m]^{!}=[1][2]\cdots[m]$
where
$[t]=\frac{\upsilon^t-\upsilon^{-t}}{\upsilon-\upsilon^{-1}}\in\sZ$. The
Lusztig $\sZ$-form is the $\sZ$-subalgebra $U(\eta)$ of $\bfUeta$
generated by the elements
$E_i^{(m)}=\frac{E_i^m}{[m]^!},\,\,F_i^{(m)}=\frac{F_i^m}{[m]^!},$
$K_j$ and
\begin{equation}\label{binomialK}
 \bigg[ {K_j;c \atop t}
\bigg] = \prod_{s=1}^t \frac
{K_j\upsilon^{c-s+1}-K_j^{-1}\upsilon^{-c+s-1}}{\upsilon^s-\upsilon^{-s}},\end{equation}
for all $i\in\etad$, $j\in\eta$, $m,t\in\mathbb N$ and
$c\in\mathbb Z$. Let $U^+(\eta)$ (resp., $U^-(\eta)$, $U^0(\eta)$)
be the subalgebra of $U(\eta)$ generated by the $E_i^{(m)}$
(resp., $F_i^{(m)}$, $K_j^{\pm 1}$ and $\leb{K_j;c\atop t}\rib$).

For each $A\in\Thetapm$ and $\bfj=(j_i)_{i\in\mbz}\in\mbzeta$,
choose $m,n\in\eta$ such that $m\le n$ and $A\in\Thmnpm$ and let
\begin{equation*}
E^{(A^+)}=M_nM_{n-1}\cdots M_{m+1},\quad
F^{(A^-)}=M_{m+1}'M_{m+2}'\cdots M_n'\ \text{ and } \
K^\bfj=\prod_{i\in\mbz}K_i^{j_i},
\end{equation*}
where
$$M_j=E_{j-1}^{(a_{j-1,j})}(E_{j-2}^{(a_{j-2,j})}E_{j-1}^{(a_{j-2,j})})
\cdots(E_{m}^{(a_{m,j})}E_{m+1}^{(a_{m,j})}\cdots
E_{j-1}^{(a_{m,j})}),$$ and
$$M_j'=(F_{j-1}^{(a_{j,m})}\cdots
F_{m+1}^{(a_{j,m})}F_{m}^{(a_{j,m})})
\cdots(F_{j-1}^{(a_{j,j-2})}F_{j-2}^{(a_{j,j-2})})
F_{j-1}^{(a_{j,j-1})}.$$
 It is clear that $E^{(A^+)}$ and
 $F^{(A^-)}$ are independent of the selection of $m,n$.  Note that we
 have,  for $A\in\Thetapm$,
 $\deg(E^{(A^+)})=\deg(A^+)$ and $\deg(F^{(A^-)})=\deg(A^-)$.

For a finite $\eta$, the following result is due to Lusztig (see, e.g.,
\cite[2.14]{Lu901}, \cite[5.7]{BLM} and \cite{Du95}). The infinite case follows from
the algebra isomorphism
\begin{equation*}
\bfU(\infty)\cong\underset{\underset n\longrightarrow}\lim\,\bfU([-n,
n]),
\end{equation*}induced from the natural embeddings
$\bfU([-n,n])\subseteq\bfU([-n-1,n+1])\subseteq\bfU(\infty)$ for
all $n\ge 0$.

\begin{Prop} \label{Monomial base for U(infty)}
{\rm(1)} The set $\{\Eap K^\bfj\Faf\mid A\in\Thetapm,\mathbf
j\in\mbzeta\}$ forms a $\Bbb Q(\up)$-basis for $\bfUeta$.

{\rm(2)} The set
$\bigg\{\Eap\prod_{i\in\eta}K_i^{\dt_i}\leb{K_i;0\atop
t_i}\rib\Faf\ \big|\ A\in\Thetapm,\bft\in\mbneta,
\dt_i\in\{0,1\}\text{ for $i\in\eta$}\bigg\}$ forms a $\sZ$-basis
for $U(\eta)$.
\end{Prop}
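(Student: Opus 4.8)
The plan is to derive both statements from Lusztig's corresponding results for the finite rank algebras $\bfU(n)$ (or $\bfU([-n,n])$) together with the direct limit description \eqref{dirlimbU}. The key point is that all the objects appearing in the statement are compatible with the embeddings $\bfU([-n,n])\hookrightarrow\bfU([-n-1,n+1])$: a matrix $A\in\Thetapm$ lives in $\Thmnpm$ for suitable $m\le n$, the monomials $E^{(A^+)}$ and $F^{(A^-)}$ are, as noted in the excerpt, independent of the choice of such $m,n$, and $K^\bfj$ for $\bfj\in\mbzeta$ (finite support) likewise lies in a finite-rank $\bfU^0$. So a basis element of $\bfUeta$ of the stated form is simply a basis element of $\bfU([-n,n])$ once $n$ is chosen large enough to contain the support of $A$ and of $\bfj$.

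First I would recall the finite-rank statement: for $\eta=[1,n]$ (equivalently any finite interval, by translation/relabeling of indices), part (1) is exactly Lusztig's monomial basis theorem as quoted — the cited references \cite[2.14]{Lu901}, \cite[5.7]{BLM}, \cite{Du95}. This handles $\bfU(n)$ for all finite $n$, hence $\bfU([-n,n])$ for all $n\ge 0$. Next, for the infinite case $\eta=\mbz$, I would invoke \eqref{dirlimbU}, $\bfU(\infty)=\varinjlim\bfU([-n,n])$, together with the matching direct limit $\Thetapm=\Thi^{\pm}=\bigcup_{n\ge0}\Thnnpm$ and $\mbzeta=\bigcup_{n\ge0}\mbznn$ recorded just before \S2. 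Linear independence of $\{\Eap K^\bfj\Faf\}$ over all of $\Thetapm\times\mbzeta$ follows because any finite $\mbq(\up)$-linear relation among them involves only finitely many $A$'s and $\bfj$'s, all supported in some $[-n,n]$, and hence is a relation inside $\bfU([-n,n])$, where the set is known to be linearly independent. Spanning follows because $\bfU(\infty)=\bigcup_n\bfU([-n,n])$ and each $\bfU([-n,n])$ is spanned by the corresponding finite subfamily. This proves (1).

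For (2), the same direct-limit argument applies to the Lusztig $\sZ$-form: one has $U(\infty)=\bigcup_n U([-n,n])$ (the generators $E_i^{(m)},F_i^{(m)},K_j^{\pm1},\leb{K_j;c\atop t}\rib$ all lie in some finite-rank $U([-n,n])$), and Lusztig's integral basis theorem gives the displayed $\sZ$-basis for each $U([-n,n])$. One subtlety worth spelling out: in the finite-rank basis the diagonal part runs over products $\prod_{i}K_i^{\dt_i}\leb{K_i;0\atop t_i}\rib$ with $\dt_i\in\{0,1\}$ and $t_i\in\mbn$, and for the infinite interval only finitely many $(\dt_i,t_i)$ are nonzero — this matches the finite-support condition $\bft\in\mbneta$, $(\dt_i)\in\mbn^\eta$ built into the indexing set, so again any element of $U(\infty)$ lies in some $U([-n,n])$ and is uniquely a $\sZ$-combination of the finitely many relevant basis vectors. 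Linear independence over $\sZ$ and spanning then pass to the limit verbatim.

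I do not expect a genuine obstacle here, since the result is essentially a bookkeeping consequence of the finite-rank theory plus the direct limit; the only thing requiring care is to check that the indexing conventions for $\Thetapm$, $\mbzeta$, $\mbneta$ and the product orders in \eqref{prodorder} are genuinely the ``union'' of the finite-rank conventions — i.e.\ that restricting to $[-n,n]$ the order $\le_1,\le_2$ and the formulas $M_j,M_j'$ reduce to Lusztig's, so that a basis monomial for $\bfU(\infty)$ restricts to a basis monomial for $\bfU([-n,n])$ and vice versa. This compatibility is already implicit in the remark that $E^{(A^+)},F^{(A^-)}$ are independent of the choice of $m,n$, so the verification is routine.
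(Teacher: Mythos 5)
Your proposal is correct and follows exactly the route the paper intends: the paper gives no written proof but attributes the finite-rank case to Lusztig (citing \cite[2.14]{Lu901}, \cite[5.7]{BLM}, \cite{Du95}) and points to the direct limit \eqref{dirlimbU} for infinite $\eta$, which is precisely the argument you spell out. Your elaboration of the linear-independence and spanning steps via finite support is the standard bookkeeping the paper leaves implicit.
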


We now extend the definition of $q$-Schur algebras (or quantum
Schur algebras). Let $\Og_\eta$ be a free $\sZ$-module with basis
$\{\og_i\}_{i\in\eta}$. Let $\bfOg_\eta=\Og_\eta\otimes\mbq(\up)$.
Then $\bfU(\eta)$ acts naturally on $\bfOg_\eta$ by
$K_a\og_b=\up^{\dt_{a,b}}\og_b\,(a,b\in\eta)$,
$E_a\og_b=\dt_{a+1,b}\og_a$ and
$F_a\og_b=\dt_{a,b}\og_{a+1}\,(a\in\eta^\dashv,b\in\eta).$ The
tensor space $\bfOg_\eta^{\ot r}$ is a $\bfU(\eta)$-module via the
comultiplication $\Dt$ on $\bfUeta$. Further, restriction gives
the $U(\eta)$-module $\Og_\eta^{\ot r}$.

Let $\sH$ be the Hecke algebra over $\sZ$ associated with the
symmetric group $\fS_r$, and let $\bH=\sH\otimes_\sZ\mbq(\up)$.
Thus, $\sH$ as a $\sZ$-algebra has a basis $\{\sT_w\}_{w\in\fS_r}$
subject the relations: for all $w\in \fS_r$ and $s\in
S:=\{(i,i+1)\mid 1\le i\le r-1\}$
$$\sT_s\sT_w=\begin{cases} \sT_{sw},&\text{ if }\ell(sw)>\ell(w);\\
(\up-\up^{-1})\sT_w+\sT_{sw},&\text{ if }\ell(sw)<\ell(w),
\end{cases}$$
where $\ell$ is the length function on $\fS_r$ with respect to
$S$.

The Hecke algebra $\sH$ acts on $\Og_\eta^{\otimes r}$ on the
right by ``place permutations'' via
   $$(\og_{i_1}\!\cdots \og_{i_r})\sT_{(j,j+1)}
     =\begin{cases}
      \og_{i_1}\!\cdots \og_{i_{j+1}}\og_{i_j}\!\cdots \og_{i_r},
            &\text{if }i_j\!<\!i_{j+1};\cr
      v \og_{i_1}\!\cdots \og_{i_r},
            &\text{if }i_j\!=\!i_{j+1};\cr
      (v\!-\!v^{-1})\og_{i_1}\!\cdots \og_{i_r}+\og_{i_1}\!\cdots \og_{i_{j+1}}\og_{i_j}\!\cdots \og_{i_r},
            &\text{if }i_j\!>\!i_{j+1};
      \end{cases}
      $$
cf. \cite[(14.6.4)]{DDPW}. The endomorphism algebras
$$\bS(\eta,r):=\End_\bH(\bfOg_\eta^{\otimes
r}),\qquad \sS(\eta,r):=\End_\sH(\Og_\eta^{\otimes r})$$ are
called {\it $q$-Schur algebras at $(\eta,r)$}. Note that
$\bS(n,r):=\bS(\eta,r)$ for $\eta=[1,n]$
 is usually called the {\it $q$-Schur algebra} of bidegree $(n,r)$. In general, if $n=|\eta|$ is finite,
  $\bS(\eta,r)\cong\bS(n,r)$ and $\bS(\eta,r)\cong\sS(\eta,r)\otimes_\sZ\mbq(\up)$. If
$\eta=\mbz$, $\bS(\infty,r):=\bS(\eta,r)$ is called an {\it
infinite $q$-Schur algebra}. We shall use similar notations for
their integral versions.

Since the $\bH$-action commutes with the action of $\bfU(\eta)$
(see, e.g., \cite{Du95}, \cite[\S2-d]{Br}), we obtain algebra homomorphisms
\begin{equation}\label{bfzeta1}
\bfzetar:\bfU(\eta)\longrightarrow \bS(\eta,r),\qquad
\zr|_{U(\eta)}:U(\eta)\longrightarrow\sS(\eta,r).
\end{equation}
Let $\bfUetar=\Image(\bfzetar)$ and
$\Uetar=\Image(\zr|_{U(\eta)})$. It is known that both maps are
surjective if $\eta$ is finite. However, this is not the case when
$\eta$ is infinite; see \ref{not onto} below. Thus, for
$\eta=(-\infty,\infty)$ both $\bfU(\infty,r)$ and $U(\infty, r)$
are proper subalgebras. We will investigate the relationship
between $\bfU(\infty,r)$ and $\bS(\infty,r)$ in \S5 via the BLM
type realizations for $\bfU(\infty)$ and $\bfU(\infty,r)$
discussed in the next section.

\section{The BLM realization $\mathbf V(\infty)$ of $\iU$ and its quotients $\mathbf V(\infty,r)$}

In this section, we review the geometric construction of the
$q$-Schur algebra and extend it to the infinite case.

Let $V$ be a vector space of dimension $r$ over a field $k$. An
$\eta$-{\it step flag} is a collection $\fkf=(V_i)_{i\in\eta}$ of
subspaces of $V$ such that $V_i\subseteq V_{i+1}$ for all
$i\in\eta^\dashv$ and $\bin_{i\in\eta}V_i=V$ and $V_i=0$ for
$i\ll0$ if $\eta$ has no minimum element. If $\eta$ has a minimum
element $\eta_{\min}$, let $V_{\eta_{\min}-1}=0$.

Let $\sF$ be the set of $\eta$-step flags. The group $G:=GL(V)$
acts naturally on $\sF$, and hence, diagonally on $\sF\times\sF$.
For $(\fkf,\fkf')\in\sF\times\sF$, where $\fkf=(V_i)_{i\in\eta}$
and $\fkf'= (V'_i)_{i\in\eta}$, we let
$a_{i,j}=\text{dim}(V_{i-1}+(V_i\cap
V_j'))-\text{dim}(V_{i-1}+(V_i\cap V_{j-1}')).$
  Then the map $(\fkf,\fkf')\map(a_{i,j})$ induces a bijection from the
  set of $G$-orbits on $\sF\times\sF$ to the set $\Thetar$.
Let $\sO_{A}\han\mc F\times\mc F$  be the $G$-orbit corresponding
to the matrix $A\in\Thetar$. When $k$ is a finite field of
$q$-elements, every orbit $\mc O_A$ is a finite set. Thus, for any
$A,B,C \in\Thetar$ and any fixed $(\fkf_1,\fkf_2)\in \mc O_{C}$,
the number $$g_{_{A,B,C;q}}:=\{\fkf\in\mc F\mid (\fkf_1,\fkf)\in\mc
O_{A}, (\fkf,\fkf_2)\in\mc O_{B}\}$$ is independent of the
selection of $(\fkf_1,\fkf_2)$. It is well-known that there exists
a polynomial $g_{_{A,B,C}}\in\mbz[\up^2]$
 such that
$g_{_{A,B,C}}|_{\up^2=q}=g_{_{A,B,C;q}}$ for any $q$.

Let $\Ketar$ be the free $\sZ$-module with basis $\{e_{A}\mid
A\in\Xi(\eta,r)\}$. By \cite[1.2]{BLM} there is a associative
$\sZ$-algebra structure on $\Ketar$ with multiplication $\eA\cdot
\eB=\sum_{C\in\Thetar}\gABC\eC$. Note that, if $\eta$ is finite,
then $\Ketar$ is isomorphic to the $q$-Schur algebra
$\sS(|\eta|,r)$ (see, e.g., \cite{Du95}). However, when $\eta$ is
infinite, the algebra $\Ketar$ has no identity element. We further
consider the basis $\{[A]\}_{A\in\Thetar}$ for $\sK(\eta,r)$,
where $[A]:=\up^{-d_{A}}\eA\,\,\,\text{ with
}\,\,\,d_{A}=\sum_{i\geq k, j<l}a_{ij}a_{kl}.$

For a finite segment $\eta$ of $\mbz$, let $I=I_{\eta}\in\Thet$
be the identity matrix of size $|\eta|$. Consider $A\in\tiThmnz$.
For any $a,m,n\in\mbz$ with $m\leq m_0$ and $n_0\leq n$, let $\amnA$
denote the matrix in $\Thmn$ obtained from $A$ by adding 0s at $(i,j)$ positions
with $m\le i<m_0$ or $n_0<j\le n$, and let
$\amnA=\mnA+aI\in\tiThmn$. Clearly, when $a$ is large enough,
$\amnA\in\Thmn$.  By \cite{BLM} one can easily show the following
generalized version of the {\it stabilization property}.
\begin{Thm}\label{stabilization property}
Let $\sZ_1$ be the subring of $\mbq(\up)[\up']$ generated by
$\prod_{1\leq i\leq t}\frac{\up^{-2(a-i)}\up'^2-1} {\up^{-2i}-1}$
and $\up^j$ with $a\in\mbz$, $t\geq 1$ and $j\in\mbz$. Let $\eta$
be a fixed consecutive segment of $\mbz$. For any
$A,B,C\in\tiTheta$ with $co(A)=ro(B)$, there exist finitely many nonzero
elements $f_{A, B , C }(\up,\up')\in\sZ_1$ and an integer $a_0\geq
1$ such that, if $A,B,C\in\tiThmnz$ for some $m_0\leq n_0$ in
$\mbz$ with $[m_0,n_0]\subseteq\eta$, then
$$[\amnA]\cdot[\amnB]=\sum_{ C \in\tiThmnz}f_{A, B , C }(\up,\up^{-a})[\amnC]$$
for all $m\leq m_0$, $n\geq n_0$ with $[m,n]\subseteq\eta$, and
$a\geq a_0$.
\end{Thm}

We now use the polynomials $f_{A, B , C }(\up,\up')$ given in
\ref{stabilization property} to define a new associative algebra.

For $C=(c_{i,j})\in M_\eta(\mbz)$ let
$ro(C)=\bigl(\sum_jc_{i,j}\bigr)_{i\in\eta}\in\mbz^\eta$ and
$co(C)=\bigl(\sum_ic_{i,j}\bigr)_{j\in\eta}\in\mbz^\eta $. Consider
the free $\sZ_1$-module with basis $\{A\mid A\in\tiTheta\}$. Define
a multiplication on this module by linearly extending the products
on basis elements:
\begin{equation}\label{AB}
A\cdot B :=\begin{cases}\sum\limits_{ C \in\tiTheta}f_{A, B , C }(\up,\up') C , &\text{if $co(A)=ro( B )$}\\
0&\text{otherwise}
\end{cases}
\end{equation}
Then we get an associative algebra over $\sZ_1$ which has no
identity element.

By specializing $\up'=1$, we get an associative $\sZ$-algebra
$\Keta$ with basis $[A]:=A\otimes 1$ $(A\in\tiTheta)$ in which the
product $[A]\cdot[ B ]$ is given by $\sum_{ C \in\tiTheta}f_{A, B
, C }(\up,1)[ C ]$, if $co(A)=ro( B )$, and it is zero, otherwise.

Let $\bfKeta=\Keta\ot_{\sZ}\mbq(\up)$ and
$\bfKetar=\Ketar\ot_{\sZ}\mbq(\up)$. The set $\{[\diag(\la)]\mid
\la\in\mbzeta\}$ (resp., $\{[\diag(\la)]\mid \la\in\Laetar\}$) is
a set of orthogonal idempotents of $\bfKeta$ (resp., $\bfKetar$). By
\ref{completion} we may construct the completion algebra
$\hbfKeta$ (resp., $\hbfKetar$) of $\bfKeta$ (resp., $\bfKetar$).
The element $\sum_{\la\in\mbzeta}[\diag(\la)]$ (resp.,
$\sum_{\la\in\Laetar}[\diag(\la)]$) is the identity element of
$\hbfKeta$ (resp., $\hbfKetar$). Note that if $\eta$ is a finite
set, then we have $\bfKetar=\hbfKetar$.

Given $r>0$, $A\in\Thetapm$ and
${\bf j}\in \mbzeta$, we define
\begin{equation}\label{the definition of A(j) and A(j,r)}
\begin{split}
A({\bf j},r)=A({\bf j},r)_\eta &=\sum_{\la\in\La(\eta,r-\sigma(A))}\upsilon^{\la\cdot\bfj}[A+\diag(\la)]\in{\hbfKetar},\\
A({\bf j})=A({\bf j})_\eta &=\sum_{\la\in\mbz^\eta}
\upsilon^{\la\cdot\bfj}[A+\diag(\la)]\in\widehat{\bfK}(\eta).
\end{split}
\end{equation}
where $\la\cdot\bfj=\sum_{i\in\mbz}\la_ij_i<\iy$. (Note that these
summations depend on $\eta$.) In particular,
$$\aligned
A(\bfl)&=\sum_{\la\in\mbz^\eta}
[A+\diag(\la)], \text{ for the zero vector $\bfl\in\mbzeta$,}\\
 0(\bf j)&=\sum_{\la\in\mbz^\eta}
\upsilon^{\la\cdot\bfj}[\diag(\la)], \text{ for the zero matrix $0\in\Thetapm$}.\endaligned$$
 Thus,  \eqref{AB} implies the following.

\begin{Lem}\label{the commute formula in the completion algebra of K(infty)}
Let $\la,\mu\in\mbzeta$. For $i\in\etad$ we have
$$\aligned
&E_{i,i+1}(\bfl)[\diag(\la)]=[E_{i,i+1}+\diag(\la-\be_{i+1})]=[\diag(\la+\al_i)] E_{i,i+1}(\bfl)\\
& E_{i+1,i}(\bfl)[\diag(\la)]=[E_{i+1,i}+\diag(\la-\be_i)]=[\diag(\la-\al_i)]
E_{i+1,i}(\bfl).\endaligned$$
\end{Lem}

Let $\bfVeta$ (resp. $\bfVetar$) be the subspace of $\hbfKeta$
(resp. $\hbfKetar$) spanned by the elements $A(\bfj)$ (resp.,
$A(\bfj,r)$) for $A\in\Thetapm$ and $\bfj\in\mbzeta$. It is clear
that the elements $A(\bfj)$ ($A\in\Thetapm$, $\bfj\in\mbzeta$)
form a $\mbq(\up)$-basis of $\bfVeta$. There are similar bases for
$\bfVetar$; see \cite{DFW} and Corollary \ref{Ajr basis} below.

When $\eta=[m,n]$ is {\it finite}, it is known from
\cite[5.7]{BLM} that $\bfVmn$ is a subalgebra of
$\h{\boldsymbol{\mc K}}([m,n])$ and there is an isomorphism
\begin{equation}\label{bfVn}
\bfVmn\cong \mathbf U([m,n])\,\,\,\text{ via }\,\,\,
 E_h\mapsto E_{h,h+1}(\bfl),\quad K^{\bfj}\mapsto 0(\bfj),\quad F_h\mapsto E_{h+1,h}(\bfl).
 \end{equation}
Moreover, $\bfK([m,n],r)=\bfV([m,n],r)=\h\bfK([m,n],r)$ is a
$q$-Schur algebra in this case.

When $\eta$ is {\it infinite}, we will see below that the
isomorphism $\bfU(\eta)\cong\bfV(\eta)$ continues to hold, but the
equalities $\bfK([m,n],r)=\bfV([m,n],r)=\h\bfK([m,n],r)$ are
replaced by a chain relation
$\bfKetar\subseteq\bfVetar\subseteq\hbfKetar$; see \ref{map zr}(2)
and \ref{tri}(1) below. Moreover, $\bfVetar$ is isomorphic to
$\bfU(\eta, r)$.

For simplicity, we will only consider the case when $\eta=\mbz$ in
the sequel.

\begin{Thm}\label{map zr}
$(1)$ $\bfVi$ is a subalgebra of $\hbfKi$. Moreover, the algebra $\iU$ is isomorphic
 to $\bfVi$ by sending $E_h$ to $E_{h,h+1}(\bfl)$, $F_h$ to $E_{h+1,h}(\bfl)$, and
 $K^\bfj$ to $0(\bfj)$ where $h\in\mbz$ and $\bfj\in\mbzi$.

$(2)$ There is an algebra homomorphism, {\rm the truncation map},
\begin{equation}\label{bfxir}
\bfxir:\mathbf
V(\infty)\longrightarrow\hbfKir
\end{equation} by sending $A(\bfj)$ to
$A(\bfj,r)$ for any $A\in\Thipm$ and $\bfj\in\mbzi$. In
particular, we have $\bfVir=\bfxir(\mathbf V(\infty))$ is a
subalgebra of $\hbfKir$.
\end{Thm}
\begin{proof}
Let $f$ be the injective linear map from $\bfVn$ to $\bfVi$ by
sending $A(\bfj)$ to $A(\bfj)_\infty$ for any $A\in\Thnnpm$ and
$\bfj\in\mbznn$. Let $X:=\{E_{h,h+1}(\bfl),E_{h+1,h},0(\bfj)\mid
-n\leq h<n,\bfj\in\mbznn\}$.  Since the formulas similar to
\cite[5.3]{BLM} hold in $\hbfKi$,  we have
$f(xA(\bfj))=f(x)f(A(\bfj))$ for any $x\in X$ and $A\in\Thnnpm$
and $\bfj\in\mbznn$. Since the algebra $\bfVn$ is generated by
$X$; see (\ref{bfVn}), it follows that $f$ is an algebra
homomorphism. Thus, $\bfVi$ is the direct limit of $\bfVn$ and,
hence, is a subalgebra of $\hbfKi$, proving (1). The statement (2)
can be proved similarly.
\end{proof}

We shall see in Proposition \ref{iSr as dagger} below that $\hbfKir$ identifies with
a subalgebra of $\bS(\infty,r)$. Thus, identifying $\iU$ with $\bfVi$ yields
the fact that the maps $\xi_r$ in \eqref{bfxir} and $\zeta_r$ in \eqref{bfzeta1} are identical.

The quotient algebras $\bfVir$ share many
properties with the usual $q$-Schur algebras. We first prove that
$\bfKir$ is a subalgebra of $\bfVir$.

For $i\in\mbz$, let
$$\ttk_i=0(\boldsymbol e_i,r),\,\,\, \tte_i=
E_{i,i+1}(\bfl,r)\,\,\,\text{ and }\,\,\,\ttf_i=
E_{i+1,i}(\bfl,r),$$
 and, for any $\bft\in\mbni$ and
$\bfj\in\mbzi$,  let
$$\ttk(\bft)=\prod_{i\in\mbz}[\ttk_{i};t_{i}]^!,\,\,\,\ttk_\bft=\prod_{i\in\mbz}\left[{\ttk_i;0
\atop t_i}\right]\,\,\,\text{ and }
 \ttk^{\bfj}=\prod_{i\in\mbz}\ttk_i^{j_i}$$  where
$[x;t]^!=(x-1)(x-\up)\cdots(x-\up^{t-1})$, $[x;0]^!=1$ and
$\left[{\ttk_i;0 \atop t_i}\right]$ is defined as in
\eqref{binomialK}. Since $\bft$ has finite support, the products
are well-defined. By Theorem \ref{map zr},
$\tte_i,\ttk_i,\ttf_i\,\,(i\in\mbz)$ generate $\iVr$. One can
easily get the following result by direct calculation; cf.
\cite{DG}, \cite{DP}.

\begin{Lem} \label{KKK}
\begin{enumerate}
\item We have $\ttk(\bft)=0$ for all $\bft\in\mbni$ with
$\s(\bft)>r$.

\item For any $\bft\in\mbni$, we have
$\ttk_{\bft}=\begin{cases} 0,&\text{ if }\s(\bft)>r;\\
{[\diag(\bft)],}&\text{ if }\s(\bft)= r.
\end{cases}
$ In particular, if $\la\in\La(\iy,r)$, then
$[\diag(\la)]=\ttk_\la\in\bfV(\iy,r)$ and hence
$1=\sum_{\la\in\Lair}\ttk_\la$. \item For any $\la\in\La(\iy,r)$,
we have $\ttk_i\ttk_{\la}=\up^{\la_i}\ttk_{\la}$ and
$\left[{\ttk_i;c\atop t}\right]\ttk_{\la}=\left[{\la_i+c\atop
t}\right]$.
\end{enumerate}
\end{Lem}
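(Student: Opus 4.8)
The statement is Lemma \ref{KKK}, which concerns the elements $\ttk(\bft)$, $\ttk_\bft$ and the generators $\ttk_i$ of $\bfV(\iy,r)$. The plan is to reduce everything to computations with the orthogonal idempotents $[\diag(\la)]$ in $\bfK(\iy,r)$ and the definition $\ttk_i = 0(\be_i,r) = \sum_{\la\in\La(\iy,r)}\up^{\la_i}[\diag(\la)]$. The key observation, to be used throughout, is that $\{[\diag(\la)]\mid \la\in\La(\iy,r)\}$ is a complete set of orthogonal idempotents summing to the identity of $\bfK(\iy,r)$, and that any product of the $\ttk_i$'s acts on $[\diag(\la)]$ by a scalar depending polynomially on the entries $\la_i$. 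Concretely, for $\bft\in\mbni$ one has $\ttk_\bft[\diag(\la)] = \prod_i\leb{\la_i;0\atop t_i}\rib[\diag(\la)]$ and $\ttk(\bft)[\diag(\la)] = \prod_i[\la_i;t_i]^![\diag(\la)]$, which follows from the definitions and the formulas \eqref{the product [D][A] and [A][D]}.

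First I would prove (2). Expanding $\ttk_\bft = \sum_{\la\in\La(\iy,r)}\bigl(\prod_i\leb{\la_i;0\atop t_i}\rib\bigr)[\diag(\la)]$ using orthogonality of the idempotents, I observe that the coefficient $\prod_i\leb{\la_i;0\atop t_i}\rib$ vanishes unless $\la_i\ge t_i$ for all $i$, hence unless $\la\ge\bft$ (in the order \eqref{order on Lair}), which forces $\s(\bft)\le\s(\la)=r$. Thus $\ttk_\bft=0$ when $\s(\bft)>r$. When $\s(\bft)=r$, the only $\la\in\La(\iy,r)$ with $\la\ge\bft$ is $\la=\bft$ itself, and $\prod_i\leb{t_i;0\atop t_i}\rib = 1$, giving $\ttk_\bft=[\diag(\bft)]$. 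Taking $\bft=\la\in\La(\iy,r)$ yields $[\diag(\la)]=\ttk_\la\in\bfV(\iy,r)$, and we already know $\ttk_i\in\bfV(\iy,r)$ by Theorem \ref{map zr}, so this is a genuine membership statement. For (1), the same bookkeeping applies: $\ttk(\bft)=\sum_{\la}\bigl(\prod_i[\la_i;t_i]^!\bigr)[\diag(\la)]$, and $[\la_i;t_i]^!=(\la_i-1)(\la_i-\up)\cdots(\la_i-\up^{t_i-1})$ — here I should note the mild subtlety that $[x;t]^!$ as defined evaluated at $x=\ttk_i$ and then hit with $[\diag(\la)]$ gives $(\up^{\la_i}-1)(\up^{\la_i}-\up)\cdots$, i.e. the product vanishes precisely when some factor $\up^{\la_i}-\up^{s}=0$ with $0\le s\le t_i-1$, equivalently $\la_i<t_i$ — so again the coefficient is zero unless $\la\ge\bft$, impossible when $\s(\bft)>r$, giving $\ttk(\bft)=0$.

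Finally (3) is a direct scalar computation: $\ttk_i\ttk_\la = \sum_{\mu}\up^{\mu_i}[\diag(\mu)]\cdot[\diag(\la)] = \up^{\la_i}[\diag(\la)] = \up^{\la_i}\ttk_\la$ by orthogonality of idempotents and part (2); similarly $\leb{\ttk_i;c\atop t}\rib\ttk_\la = \leb{\la_i+c\atop t}\rib[\diag(\la)]$, where the right-hand side of the displayed identity should be read as $\leb{\la_i+c\atop t}\rib[\diag(\la)] = \leb{\la_i+c\atop t}\rib\ttk_\la$ (the bracket being the Gaussian binomial $\prod_{s=1}^t\frac{\up^{\la_i+c-s+1}-\up^{-\la_i-c+s-1}}{\up^s-\up^{-s}}$ specialized from \eqref{binomialK} at $K_i\mapsto\up^{\la_i}$). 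The one point requiring a little care — and the closest thing to an obstacle — is matching the two conventions for $[\,\cdot\,;t]^!$ versus the Gaussian binomial $\leb{\cdot\atop t}\rib$ and correctly identifying when each product specializes to zero on a given $\La(\iy,r)$-idempotent; once that translation is pinned down, every assertion follows from orthogonality of the $[\diag(\la)]$ and the trivial fact that $\s(\la)=r$ for $\la\in\La(\iy,r)$.
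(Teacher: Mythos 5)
Your proposal is correct and follows essentially the same route as the paper: both expand $\ttk(\bft)$ and $\ttk_\bft$ against the orthogonal idempotents $[\diag(\la)]$ (using that $\sum_{\la\in\La(\iy,r)}[\diag(\la)]$ is the identity), observe that the scalar coefficient $\prod_i[\up^{\la_i};t_i]^!$ resp.\ $\prod_i\leb{\la_i\atop t_i}\rib$ vanishes unless $\la_i\ge t_i$ for all $i$, and deduce (1), (2) and then (3) by the same orthogonality computation. Your remark that the displayed identity in (3) should be read with a factor $\ttk_\la$ on the right-hand side is a correct reading of the paper's (slightly elliptical) statement.
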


One can easily check the following commutator relations by direct
calculation; cf. \cite[3.4]{DG} and \cite[4.8,4.9]{DP}. Let $\leq$ be the partial order on $\Laetar$ defined by setting,
for $\la,\mu\in \Laetar$,
\begin{equation*}\label{order on Lair}\la\leq\mu\text{ if and only
if } \la_i\leq\mu_i\text{ for all }i\in\eta.\end{equation*}
For
$A\in\ti\Xi(\iy)$ let
\begin{equation}\label{bfsigma}
\bfsi(A)=(\s_i(A))_{i\in\mbz}, \text{ where }
\s_i(A)=a_{i,i}+\sum_{j< i}(a_{i,j}+a_{j,i}).\end{equation}

\begin{Prop}\label{alp}
Let $A\in\Thipm$ and $\la\in\La(\iy,r)$.

$(1)$ If $\la\geq\bfsi(A^+)$, then
${\tte}^{(A^+)}\ttk_\la=\ttk_{\la-co(A^+)+ro(A^+)}{\tte}^{(A^+)}$;
otherwise, ${\tte}^{(A^+)}\ttk_\la=0$.

$(2)$ If $\la\geq\bfsi(A^-)$, then
${\ttf}^{(A^+)}\ttk_\la=\ttk_{\la+co(A^-)-ro(A^-)}{\ttf}^{(A^-)}$;
otherwise, ${\ttf}^{(A^+)}\ttk_\la=0$.
\end{Prop}

For every $A=(a_{s,t})\in\ti\Xi(\eta)$ and $i\not=j$,  let $
\s_{i,j}(A)=\sum_{s\leq i;t\geq j}a_{s,t}$ for $i<j$, and
$\s_{i,j}(A)=\sum_{s\geq i;t\leq j}a_{s,t}$ for $i>j$. Define $B
\pr A$ if and only if $\s_{i,j}( B )\leq\s_{i,j}(A)$ for all
$i,j\in\eta$, $i\not=j$. Put $ B \p A$ if $ B \pr A$ and
$\s_{i,j}( B )<\s_{i,j}(A)$ for some $i\not=j$. For
$A\in\Xi(\infty)$ with $\nu=\bfsi(A)$, let
$$\ttm^{(A)}={\tte}^{(A^+)}\ttk_\nu{\ttf}^{(A^-)}.$$
In general, for $A\in\Xi^\pm(\infty)$ and $\la\in\La(\iy,r)$,
let $\ttm^{(A,\la)}={\tte}^{(A^+)}\ttk_\la{\ttf}^{(A^-)}.$ Since
the formula similar to \cite[5.5(c)]{BLM} hold in $\bfVir$, the
following result can be proved similar to the proof of \cite[5.5
and 5.6]{DP}.
\begin{Thm} \label{tri}
$(1)$ For any $A\in\Thir$,  we have the following equation in
$\bfV(\iy,r)$.
\begin{equation}\label{monomial base for Kr(infty)}
\ttm^{(A)}=[A]+\sum_{ B \in\Thir, B \p A}f_{ B ,A}[ B
]\quad(\mathrm{a\ finite\ sum})
\end{equation}
where $f_{ B ,A}\in\sZ$.  Hence, the elements $\ttm^{(A)}$,
$A\in\Thir$, form a $\sZ$-basis of $\Kir$, and
$\bfKir\han\bfV(\iy,r)$.

$(2)$  Suppose $\ttm^{(A,\lambda)}\not=0$ for some $A\in\Thipm$
and $\la\in\La(\iy,r)$. If there exists $D\in\Thiz$ such that
$co(A+D)=\la+co(A^-)-ro(A^-)$, then $\ttm^{(A,\la)}=\ttm^{(A+D)}$.
Otherwise,
\begin{equation}\label{monomial base for Kr(infty)cc}
\ttm^{(A,\lambda)}=\sum_{ B \in\Thir, B \prec A} f'_{ B ,A}\ttm^{(
B )}\quad (\mathrm{a\ finite\ sum}).
\end{equation}
where $f'_{ B ,A}\in\mathbb{Q}(\up)$.
\end{Thm}

We end this section by showing that Borel subalgebras of (finite
dimensional) $q$-Schur algebras are natural subalgebras of
$\bfVir$, a fact which plays a crucial role in the determination
of a presentation for $\bfVir$.

 Let $\bfV^{\geq 0}([-n,n],r)$ (resp.
$\bfV^{\leq 0}([-n,n],r)$) be the subalgebras of $\bfVnr$
generated by $\bfe_i'=E_{i,i+1}({\bf0},r)_{[-n,n]}$ (resp.
$\bff_i'=E_{i+1,i}({\bf0},r)_{[-n,n]}$) and
$\bfk_j'=0(\be_j,r)_{[-n,n]}$ with $i\in[-n,n-1]$ and
$j\in[-n,n]$. For $\la\in\Lannr$, let
$$\bfk'_{\la}=\prod\limits_{i=-n}^n\left[{\bfk'_i;0 \atop
\la_i}\right]\in\bfVnrpz\cap \bfVnrmz$$ where
$\bfk'_n=\up^r\bfk_{-n}^{\prime -1}\cdots\bfk_{n-1}^{\prime -1}$.

\begin{Prop}\label{map vin}
For $n\geq 1$ there are algebra monomorphisms (sending $1$ to $1$)
 $$\aligned \vi_n^{\geq
0}:&\bfVnrpz\ra\bfVir\quad\text{ satisfying }\quad
\bfe'_i\mapsto\tte_i,\,\bfk'_i\mapsto\ttk_i\quad(-n\leq i\leq
n-1)\\
\vi_n^{\leq 0}:&\bfVnrmz\ra\bfVir\quad\text{ satisfying
  }\quad\bff'_i\mapsto\ttf_i,\,\bfk'_i\mapsto\ttk_i\quad(-n\leq i\leq n-1).\endaligned$$
  Moreover, for any $\la\in\La([-n,n],r)$, we have
  $\vi_n^{\geq 0}(\bfk'_\la)=\vi_n^{\leq 0}(\bfk'_\la)=\tth_{\la,n}$, where
\begin{equation}\label{tth}
\tth_{\la,n}=\sum_{\mu\in\La(\iy,r)
\atop\mu_{-n}=\la_{-n},\cdots,\mu_{n-1}=\la_{n-1}}[\diag(\mu)].\end{equation}
\end{Prop}
\begin{proof}
By \cite[8.1]{DP} there is an algebra homomorphism $\vi_n^{\geq
0}:\bfVnrpz\ra\bfVir$ mapping $\bfe'_i$ to $\tte_i$ and $\bfk'_i$
to $\ttk_i$ for $-n\leq i\leq n-1$. By \eqref{monomial base for
Kr(infty)} and \cite[8.2]{DP} we have $\vi_n^{\geq 0}$ is
injective. The proof for $\vi_n^{\leq 0}$ is similar. It remains
to prove the last assertion. We have
$$\aligned
\vi_n^{\geq
0}(\bfk'_\la)&=\prod\limits_{i=-n}^{n-1}\leb{\ttk_i;0\atop
t_i}\rib\cdot\leb{\up^r\ttk_{-n}^{-1}\ttk_{-n+1}^{-1}\cdots\ttk_{n-1}^{-1};0\atop
\la_n}\rib\\&=\sum_{\mu\in\Lair}\left(\prod_{i=-n}^{n-1}\leb{\mu_i\atop\la_i}\rib\leb{r-\sum_{i=-n}^{n-1}\mu_i\atop
\la_n}\rib\right)[\diag(\mu)].\\\endaligned$$ Since
$\la\in\La([-n,n],r)$ we have
$\sum_{i=-n}^{n-1}(\mu_i-\la_i)+(r-\sum_{i=-n}^{n-1}\mu_i-\la_n)=r-\sum_{i=-n}^n\la_i=0$.
Hence, for $\mu\in\Lair$ we have
\begin{equation*}
\begin{split}
\prod_{i=-n}^{n-1}\leb{\mu_i\atop\la_i}\rib\leb{r-\sum_{i=-n}^{n-1}\mu_i\atop
\la_n}\rib\not=0& \Longleftrightarrow \mu_{i}\geq\la_i \text{ for
} -n\leq i\leq n-1
\text{ and } r-\sum_{i=-n}^{n-1}\mu_i\geq\la_n\\
&\Longleftrightarrow \mu_{i}=\la_i \text{ for } -n\leq i\leq n-1.
\end{split}
\end{equation*}
Hence,
$\prod_{i=-n}^{n-1}\leb{\mu_i\atop\la_i}\rib\leb{r-\sum_{i=-n}^{n-1}\mu_i\atop
\la_n}\rib=1$, and \eqref{tth} follows.
\end{proof}

\begin{Rem}
The algebra monomorphisms $\vi_n^{\geq 0}$ and $\vi_n^{\leq 0}$
can't be glued as an algebra homomorphism from $\bfVnr$ to
$\bfVir$ since the last relation in \cite[4.1]{DP} does not hold
in $\bfVir$.
\end{Rem}

\section{Drinfeld--Jimbo type presentation for $\iVr$}
We first record the
handy result \cite[(6.6.2)]{DFW}) for later use. See \eqref{notations} and \eqref{bfsigma}
for relevant notations.

\begin{Lem}\label{handy}
 For $A\in\Th^\pm([-n,n])$, let
$$\La_{n,r,A}=\{\la\in\La([-n,n],r)\mid
\la\geq\bfsi(A)\}\,\,\text{ and
}\,\,\La_{n,r,A}'=\{\bfj\in\mbnnno\mid\sigma(\bfj)+\s(A)\leq
r\}.$$ Then $|\La_{n,r,A}|=|\La_{n,r,A}'|$ and
$\det\left(\up^{\sum_{i=-n}^{n-1}\la_ij_i}\right)_{\la\in\La_{n,r,A},\bfj\in\La_{n,r,A}'}\neq
0.$
\end{Lem}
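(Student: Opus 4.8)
The statement splits into two parts: the cardinality equality $|\La_{n,r,A}|=|\La_{n,r,A}'|$ and the nonvanishing of the determinant. The plan is to establish a bijection for the first part and then exploit a Vandermonde-type structure for the second. For the cardinality, I would set $s=\s(A)$ and $\bfsi=\bfsi(A)$, and note that $\la\mapsto\la-\bfsi$ sends $\La_{n,r,A}=\{\la\in\La([-n,n],r)\mid\la\ge\bfsi\}$ bijectively onto $\{\mu\in\mbn^{[-n,n]}\mid\s(\mu)=r-\s(\bfsi)\}$. Since $\bfsi(A)$ has the property that $\s(\bfsi(A))=\sum_i\s_i(A)=\s(A)=s$ (each off-diagonal entry $a_{i,j}$ with $i\neq j$ is counted exactly once in $\sum_i\s_i(A)$, and the diagonal entries are counted once), this target set is $\{\mu\in\mbn^{[-n,n]}\mid\s(\mu)=r-s\}$. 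On the other hand, $\La_{n,r,A}'=\{\bfj\in\mbn^{[-n,n-1]}\mid\s(\bfj)\le r-s\}$; the standard ``append a slack coordinate'' bijection $\bfj\mapsto(\bfj,r-s-\s(\bfj))$ identifies this with $\{\mu\in\mbn^{[-n,n]}\mid\s(\mu)=r-s\}$ as well (here the last coordinate plays the role of the slack variable indexed by $n$). Composing gives $|\La_{n,r,A}|=|\La_{n,r,A}'|$. I should be slightly careful that $\s(\bfsi(A))=\s(A)$ genuinely holds — this is a direct check from the definition $\s_i(A)=a_{i,i}+\sum_{j<i}(a_{i,j}+a_{j,i})$ — and that $r-s\ge 0$ is implied by the nonemptiness of $\La_{n,r,A}$ (if it is empty the statement is vacuous on both sides, since then $r-s<0$ forces $\La_{n,r,A}'$ empty too).

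For the determinant, write $N=|\La_{n,r,A}|=|\La_{n,r,A}'|$ and consider the $N\times N$ matrix $M=\bigl(\up^{\sum_{i=-n}^{n-1}\la_i j_i}\bigr)_{\la\in\La_{n,r,A},\,\bfj\in\La_{n,r,A}'}$. Using the bijection $\la\mapsto\mu=\la-\bfsi(A)$ from the first part, each entry becomes $\up^{\sum_i\mu_i j_i}\cdot\up^{\sum_i\s_i(A)j_i}$; the second factor depends only on the column $\bfj$, so it factors out of the determinant as a nonzero monomial in $\up$. Hence $\det M=\Bigl(\prod_{\bfj}\up^{\sum_i\s_i(A)j_i}\Bigr)\cdot\det\bigl(\up^{\sum_{i=-n}^{n-1}\mu_i j_i}\bigr)_{\mu,\bfj}$, and it suffices to prove the reduced determinant — indexed by pairs of nonnegative integer vectors with $\s(\mu)=r-s$ and $\s(\bfj)\le r-s$ — is nonzero. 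This is exactly the content of \cite[(6.6.2)]{DFW} (the case $A=0$, with parameter $r-s$ in place of $r$), which the excerpt explicitly invokes as already known; so the cleanest route is to cite it directly. If instead one wants a self-contained argument, I would substitute $x=\up$ as a formal variable and observe that $M$ (after the reduction) is a ``generalized Vandermonde'' matrix: its rows are indexed by monomials $x^{\mu}$ in the $2n$ variables $x_{-n},\dots,x_{n-1}$ evaluated along the curve $x_i=x^{j_i}$, and distinctness of the exponent vectors $\bfj$ combined with a suitable monomial order lets one triangulate, or alternatively one invokes a known nonvanishing of such confluent Vandermonde determinants over $\mbq(\up)$.

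The main obstacle I anticipate is purely bookkeeping rather than conceptual: matching up the three index sets (the shifted weights $\mu$, the slack-variable reformulation of $\La_{n,r,A}'$, and the index set appearing in \cite[(6.6.2)]{DFW}) so that the cited determinant result applies verbatim after factoring out the column monomials. In particular one must check that the shift $\bfsi(A)$ lands inside $\mbn^{[-n,n]}$ (i.e. $\s_i(A)\ge 0$ for all $i$, which is immediate since $A\in\Th^\pm([-n,n])\subseteq\Xi([-n,n])$ has nonnegative entries) and that $r-\s(A)\ge 0$ in the nontrivial case. Once those compatibility checks are in place, both assertions follow formally — the cardinality from the composed bijections, and the determinant from homogeneity of the entries under the shift together with \cite[(6.6.2)]{DFW}.
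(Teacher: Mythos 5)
Your proposal is correct and matches the paper's treatment: the paper gives no proof of this lemma at all, simply recording it as \cite[(6.6.2)]{DFW}, and your cardinality argument (the shift $\la\mapsto\la-\bfsi(A)$ using $\s(\bfsi(A))=\s(A)$, composed with the slack-coordinate map) is precisely the ``obvious bijective map'' stated in that reference, while your determinant argument ultimately rests on the same citation — your reduction to the $A=0$ case by factoring the column monomial $\up^{\sum_i\s_i(A)j_i}$ out of each column is a pleasant extra, though unnecessary once the general statement is cited. The one caveat is that your alternative ``self-contained'' Vandermonde sketch would not stand on its own (the truncation $\s(\bfj)\le r-\s(A)$ destroys the tensor-product structure, so distinctness of the exponent vectors plus a monomial order does not by itself yield a triangulation), but since your primary route is the citation this does not affect correctness.
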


We are now ready to describe a presentation for $\bfVir$.
\begin{Def}\label{definition of S(infty,r)} Let $\Sr$ be the associative algebra over $\mbq(\up)$
generated by  the elements
\[\bfe_i,\bff_i, \bfk_i
\quad (i\in\mbz),\] subject to the relations:
\begin{itemize}
\item[(a)] $\bfk_i\bfk_j=\bfk_j\bfk_i$; \item[(b)]
$\prod_{i\in\mbz}[\bfk_i;t_i]^!=0$  for all $\bft\in\mbni$ with
$\s(\bft)=r+1$; \item[(c)] $\bfe_i\bfe_j=\bfe_j\bfe_i,
\bff_i\bff_j=\bff_j\bff_i\quad (|i-j|>1)$; \item[(d)]
$\bfe_i^2\bfe_j-(\up+\up^{-1})\bfe_i\bfe_j\bfe_i+\bfe_j\bfe_i^2=0\quad(|i-j|=1)$;
\item[(e)]
$\bff_i^2\bff_j-(\up+\up^{-1})\bff_i\bff_j\bff_i+\bff_j\bff_i^2=0\quad(|i-j|=1)$;
\item[(f)] $\bfk_i\bfe_j=v^{\dt_{i,j}-\dt_{i,j+1}}
\bfe_{j}\bfk_i,\quad\bfk_i\bff_j=v^{-\dt_{i,j}+\dt_{i,j+1}}
\bff_{j}\bfk_i$; \item[(g)]
$\bfe_i\bff_j-\bff_j\bfe_i=\dt_{i,j}\frac
{\ti\bfk_i-\ti\bfk_i^{-1}}{\up-\up^{-1}}$, where
$\ti\bfk_i=\bfk_i\bfk_{i+1}^{-1}$, $i\in\mbz$.
\end{itemize}
\end{Def}

Comparing \ref{definition of S(infty,r)} with \ref{definition of
U(infty)}, we see that there is an algebra epimorphism
$\bfU(\iy)\twoheadrightarrow \Sr$ satisfying $E_i\mapsto\bfe_i$,
$F_i\mapsto\bff_i$ and $K_j\mapsto\bfk_j$. In particular, for
$A\in\Th^{\pm}(\iy)$, let $\bfe^{(A^+)}$, $\bff^{(A^-)}$,
$\bfk^{\bfj}$, etc., be the images of $E^{(A^+)}$, $F^{(A^-)}$,
$K^{\bfj}$, etc., under this homomorphism.

On the other hand, by \ref{map zr}(2) and \ref{KKK}(1), there is
an algebra epimorphism \begin{equation}\label{pi}\pi:\Sr
\longrightarrow\bfVir,\end{equation} defined by
 $\bfe_i\mapsto\tte_i,\quad\bff_i\mapsto\ttf_i,\quad\bfk_i\mapsto\ttk_i.$ We shall prove that $\pi$ is an isomorphism
by displaying a spanning set for $\Sr$ whose image under $\pi$ is
a linearly independent set in $\bfVir$.

We first have the following counterpart of \ref{map vin} by
comparing the above defining relations with the relations for
Borel subalgebras.

\begin{Lem}For any given $n\geq 1$, there are algebra
monomorphisms
$$\aligned\psi_n^{\geq 0}:&\bfVnrpz\ra\Sr\quad\text{ satisfying }\quad
\bfe'_i\mapsto\bfe_i,\,\bfk'_i\mapsto\bfk_i\quad(-n\leq i\leq
n-1)\\
\psi_n^{\leq 0}:&\bfVnrmz\ra\Sr\quad\text{ satisfying }\quad
  \bff'_i\mapsto\bff_i,\,\bfk'_i\mapsto\bfk_i\quad(-n\leq i\leq n-1).\endaligned$$
\end{Lem}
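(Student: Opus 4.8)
The plan is to follow the proof of Proposition~\ref{map vin} almost verbatim, with the target $\bfVir$ replaced by the abstract algebra $\Sr$: first build the homomorphisms from the presentation of $\bfVnrpz$ (and its lower analogue), then deduce injectivity by composing with the epimorphism $\pi$ of \eqref{pi}.

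To construct $\psi_n^{\geq 0}$, I would check that the assignment $\bfe'_i\mapsto\bfe_i$, $\bfk'_i\mapsto\bfk_i$ ($-n\le i\le n-1$) respects the defining relations (a)--(e) of $\bfVnrpz$ recalled from \cite[8.1]{DP}. Relations (a), (c), (d) there are exactly relations (a), (c), (d) of Definition~\ref{definition of S(infty,r)} restricted to indices in $[-n,n-1]$, and relation (e) of $\bfVnrpz$ is the $\bfe$-half of relation (f) of Definition~\ref{definition of S(infty,r)}, since $\epsilon(i,j)=\dt_{i,j}-\dt_{i,j+1}$. The one relation needing a short remark is the truncation relation (b): given $t_{-n},\dots,t_{n-1}\in\mbn$ with $\sum_{i=-n}^{n-1}t_i=r+1$, I would extend it to $\bft\in\mbni$ by setting $t_i=0$ for $i\notin[-n,n-1]$; then $\s(\bft)=r+1$ and $[\bfk_i;0]^!=1$, so relation (b) of Definition~\ref{definition of S(infty,r)} for this $\bft$ reads $[\bfk_{-n};t_{-n}]^!\cdots[\bfk_{n-1};t_{n-1}]^!=0$, which is precisely relation (b) of $\bfVnrpz$ in the new variables. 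Hence there is a well-defined algebra homomorphism $\psi_n^{\geq 0}\colon\bfVnrpz\to\Sr$; being a morphism of algebras presented with identity (note $\Sr$ is unital, relation (b) making each $\bfk_i$ invertible so that $\ti\bfk_i^{-1}$ in relation (g) is well-posed), it automatically sends $1$ to $1$. The map $\psi_n^{\leq 0}$ is produced in the same way from the lower-triangular analogue of the presentation of \cite[8.1]{DP}, using relations (a), (b), (c), (e), (f) of Definition~\ref{definition of S(infty,r)}.

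For injectivity I would compose with $\pi\colon\Sr\to\bfVir$ from \eqref{pi}. On the generators of $\bfVnrpz$ one has $\pi\bigl(\psi_n^{\geq 0}(\bfe'_i)\bigr)=\pi(\bfe_i)=\tte_i$ and $\pi\bigl(\psi_n^{\geq 0}(\bfk'_i)\bigr)=\pi(\bfk_i)=\ttk_i$, which are exactly the defining values of the monomorphism $\vi_n^{\geq 0}$ of Proposition~\ref{map vin}; since two algebra homomorphisms agreeing on a generating set coincide, $\pi\circ\psi_n^{\geq 0}=\vi_n^{\geq 0}$, and injectivity of $\vi_n^{\geq 0}$ forces injectivity of $\psi_n^{\geq 0}$. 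The argument for $\psi_n^{\leq 0}$ is identical, using $\vi_n^{\leq 0}$ in place of $\vi_n^{\geq 0}$. I do not expect any genuine obstacle: the whole proof is a comparison of two presentations, and the only steps needing a little care are the bookkeeping in the truncation relation (b) and confirming that $\Sr$ is unital.
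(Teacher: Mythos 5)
Your proposal is correct and follows the paper's own route: the paper likewise obtains $\psi_n^{\geq 0}$ and $\psi_n^{\leq 0}$ by matching the defining relations of Definition \ref{definition of S(infty,r)} against the presentation of the Borel subalgebras from \cite[8.1]{DP}, and then deduces injectivity from the identity $\vi_n^{\geq 0}=\pi\circ\psi_n^{\geq 0}$ together with Proposition \ref{map vin}. Your extra bookkeeping on the truncation relation (b) and on unitality only makes explicit what the paper leaves implicit.
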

\begin{proof}
The injectivity follows from \ref{map vin} since $\vi_n^{\geq0}$
is the composition of $\psi_n^{\geq0}$ and $\pi$.
\end{proof}

This result shows that the subalgebra ${\mathbf B}_n^+$ (resp.
${\mathbf B}_n^-$) of $\Sr$ generated by $\bfe_i,\bfk_i$ (resp.
$\bff_i,\bfk_i$), $-n\leq i\leq n-1$, is a (finite dimensional)
Borel subalgebra investigated in \cite[\S8]{DP}. Let ${\mathbf
B}^0_n={\mathbf B}^+_n\cap {\mathbf B}^-_n$. Then ${\mathbf
B}^0_n$ is the subalgebra generated by $\bfk_i$, $-n\leq i\leq
n-1$. We summarize the properties of these algebras; cf.
\cite[8.2--3]{DG} and \cite[4.7--10]{DP}.

For any $\la\in\La([-n,n],r)$, let $\klan:=\psi_n^{\geq
0}(\bfk'_\la)=\psi_n^{\leq 0}(\bfk'_\la)$.

\begin{Coro}\label{property2 of tilde(ttk)la} Let $n\geq 1$.
\begin{enumerate}
\item Each of the following sets forms a basis for ${\mathbf
B}_n^0$:
\begin{enumerate}
\item  $\{\bfh_{\la,n}|\la\in\Lannr\}$; \item $\{\bfk^\bfj\mid
\bfj\in\mbn^{[-n,n-1]}, \sigma(\bfj)\leq r\}$.
\end{enumerate}
\item For all $-n\leq i\leq n-1$ and $\la\in\Lannr$,
$\bfk_i\bfh_{\la,n}=\up^{\la_i}\bfh_{\la,n}$. \item
$1=\sum_{\la\in\Lannr}\bfh_{\la,n}$.\item The elements $\klan$
satisfy the commuting relations described in \ref{alp} with
$\ttk_\la$ replaced by $\klan$ and $\tte^{(A^+)}$, $\ttf^{(A^-)}$
by $\bfe^{(A^+)}$ and $\bff^{(A^-)}$ for all $A\in\Thnnpm$ and
$\la\in\Lannr$.
\end{enumerate}
\end{Coro}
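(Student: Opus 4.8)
The plan is to prove the four assertions by transporting known facts about the finite-dimensional Borel $q$-Schur algebra $\bfVnrpz$ (and $\bfVnrmz$) through the monomorphisms $\psi_n^{\geq 0}$ and $\psi_n^{\leq 0}$ supplied by the preceding lemma, together with the already-established properties of $\bfe_i', \bfk_i', \klan$ inside $\bfVnrpz$ recorded in \ref{map vin} and \cite[8.1--8.2]{DP}. The guiding observation is that $\psi_n^{\geq 0}$ is injective and carries $\bfk'_\la$ to $\klan$, $\bfk'_i$ to $\bfk_i$, and $\bfe'_i$ to $\bfe_i$; symmetrically for $\psi_n^{\leq 0}$. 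Thus any identity or linear independence statement that lives inside ${\mathbf B}_n^0={\mathbf B}_n^+\cap{\mathbf B}_n^-$, ${\mathbf B}_n^+$, or ${\mathbf B}_n^-$ and can be phrased purely in these generators will follow immediately.

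For part (1)(a), I would note that $\{\bfk'_\la\mid \la\in\Lannr\}$ is a basis of the degree-zero part of $\bfVnrpz$ by \cite[8.2]{DP} (it consists of the orthogonal idempotents $\vi_n^{\geq 0}{}^{-1}(\tth_{\la,n})$, cf. \ref{map vin}); since ${\mathbf B}_n^0$ is the $\psi_n^{\geq0}$-image of this degree-zero part and $\psi_n^{\geq0}$ is injective, the images $\klan$ form a basis of ${\mathbf B}_n^0$. For (1)(b), the change-of-basis matrix between $\{\bfk'_\la\}$ and $\{\bfk'^{\,\bfj}\mid \bfj\in\mbn^{[-n,n-1]},\sigma(\bfj)\le r\}$ inside $\bfVnrpz$ is the one controlled by the nonvanishing determinant of \ref{handy}/\cite[(6.6.2)]{DFW} (applied with $A=0$, so $\bfsi(A)=0$ and $\La_{n,r,0}'=\{\bfj\mid\sigma(\bfj)\le r\}$); applying $\psi_n^{\geq0}$ and using $\psi_n^{\geq0}(\bfk'_i)=\bfk_i$ transports this to $\{\bfk^\bfj\}$, so $|\mbn^{[-n,n-1]}_{\sigma\le r}|=|\Lannr|$ elements that are linearly independent and span, hence a basis. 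Part (2) is the image under $\psi_n^{\geq0}$ of the identity $\bfk'_i\bfk'_\la=\up^{\la_i}\bfk'_\la$ in $\bfVnrpz$ (which is \ref{map vin} transported back through $\vi_n^{\geq0}$, or directly \cite[4.9]{DP}). Part (3) is the image of $1=\sum_{\la\in\Lannr}\bfk'_\la$ in $\bfVnrpz$, using that $\psi_n^{\geq0}$ sends $1$ to $1$.

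Part (4) is the only assertion that genuinely mixes the $\bfe$-side and the $\bff$-side, so it cannot be read off from a single Borel subalgebra; this is the main obstacle. The commuting relations of \ref{alp} assert, e.g., $\bfe^{(A^+)}\klan=\bfh_{\la-co(A^+)+ro(A^+),n}\,\bfe^{(A^+)}$ when $\la\ge\bfsi(A^+)$ and $0$ otherwise, together with the analogous $\bff$-statement. The $\bfe$-only and $\bff$-only halves follow from parts (1)--(3) and relation (f) of \ref{definition of S(infty,r)}: expand $\klan=\sum_{\mu}[\diag(\mu)]$-type sum via the basis (1)(a) pulled apart by the $\bfk_i$-eigenvalue computation in (2), then commute $\bfe^{(A^+)}$ through each $\bfk_i$ using (f) exactly as in the derivation of \ref{alp} for the $\tte$'s (this is the argument of \cite[4.9--4.10]{DP}, which only used relations of Borel type plus the $\bfk$-eigenbasis, all of which are now available). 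Since each of $\bfe^{(A^+)}$, $\klan$ lies in ${\mathbf B}_n^+$ and each of $\bff^{(A^-)}$, $\klan$ lies in ${\mathbf B}_n^-$, and $\psi_n^{\geq0}$ (resp. $\psi_n^{\leq0}$) is an injective algebra map intertwining primed and unprimed generators, the relations of \ref{alp} --- already known in $\bfVnrpz$ and $\bfVnrmz$ by \ref{map vin} and \cite[4.9--4.10]{DP} --- push forward verbatim. I would write the proof in this transport style, citing \ref{map vin}, \ref{handy}, \cite[8.1--8.2]{DP} and \cite[4.9--4.10]{DP}, so the only thing to check by hand is that the $\bfk_i$-eigenvalue bookkeeping matches $-co(A^\pm)+ro(A^\pm)$, which is immediate from $\bfk_i\klan=\up^{\la_i}\klan$ in part (2) and relation (f).
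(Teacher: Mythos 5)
Your proposal is correct and coincides with the paper's own (implicit) argument: the corollary is stated without proof, as a summary of properties of the finite-dimensional Borel subalgebras $\bfVnrpz$ and $\bfVnrmz$ transported into $\Sr$ through the injections $\psi_n^{\geq 0}$ and $\psi_n^{\leq 0}$, citing exactly the references you invoke. The only slip is your opening claim in part (4) that the relations ``mix'' the two sides and hence cannot be read off a single Borel subalgebra --- in fact each relation of \ref{alp} involves only $\bfe^{(A^+)}$ together with $\klan$, or only $\bff^{(A^-)}$ together with $\klan$, so each lives entirely in ${\mathbf B}_n^+$ or in ${\mathbf B}_n^-$, which is precisely what your own transport argument then exploits.
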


The elements $\bfh_{\la,n}$ play an important role in a
construction of a spanning set for $\ti \bfV(\iy,r)$.
 Let
 $$\sM_n'=\{\bfe^{(A^+)}\bfh_{\la,n}\bff^{(A^-)}\mid
A\in\Xi^\pm([-n,n-1]),\la\in\Lannr,\la\geq\bfsi(A)\},$$ where
$\la\geq\bfsi(A)$ means $\la_i\geq\sigma_i(A)$ $\forall i$, and
let
$\sM_\iy'=\cup_{n\geq 1}\sM_n'$. 
Recall from \ref{handy} and \eqref{degA} the set
$\La_{n,r,A}$ 
and the function $\deg$.

 \begin{Lem}\label{case1}  For any $n\geq1$, $A\in\Xi^\pm([-n,n-1])$,
 $\la\in\Lannr$,
if $\la\not\in\La_{n,r,A}$, then there exist
$f_{B,\mu}^{A,\la}\in\sZ$ such that
\begin{equation}\label{**}{\bfe}^{(A^+)}\bfh_{\la,n}{\bff}^{(A^-)}=\sum_{B\in\Thnnopm\atop
\mu\in\La_{n,r,A},\deg(B)<\deg(A)}f_{B,\mu}^{A,\la}
\bfe^{(B^+)}\bfh_{\mu,n}\bff^{(B^-)}.\end{equation}
 Hence, $\sM_\iy'$ is a
 spanning set for $\ti \bfV(\iy,r)$.
\end{Lem}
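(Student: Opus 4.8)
The plan is to establish the identity \eqref{**} first and then to read off the spanning statement almost formally, working throughout inside $\Sr$ and using only its defining relations and their consequences (the epimorphism $\pi$ of \eqref{pi} is not yet known to be injective, so it cannot be used). For \eqref{**} I would induct on $\deg(A)$, the base case $\deg(A)=0$ being vacuous since then $A=\mathbf 0$, $\bfsi(A)=\mathbf 0$, and there is no ``bad'' $\la$. The three tools are: (i) the commutation rules between $\bfh_{\la,n}$ and the divided powers $\bfe^{(A^+)},\bff^{(A^-)}$, namely the $\Sr$-versions of \ref{alp} recorded in Corollary \ref{property2 of tilde(ttk)la}(4); (ii) relation (g) of \ref{definition of S(infty,r)} together with the commutator formula \ref{commute formula}, which holds in $\Sr$ because $\Sr$ is a quotient of $\bfU(\iy)$; and (iii) the triangular expansion of a monomial in the $\bfe$'s (resp.\ $\bff$'s) in the divided-power monomials $\bfe^{(B^+)}$ (resp.\ $\bff^{(B^-)}$). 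For the inductive step I would first dispose of the easy cases: since $\bfsi(A)=\bfsi(A^+)+\bfsi(A^-)$ with $\bfsi(A^+)=co(A^+)$ and $\bfsi(A^-)=ro(A^-)$, if $\la\not\geq\bfsi(A^+)$ then $\bfe^{(A^+)}\bfh_{\la,n}=0$ by (i), and symmetrically if $\la\not\geq\bfsi(A^-)$ then $\bfh_{\la,n}\bff^{(A^-)}=0$; in either case the left side of \eqref{**} is zero. There remains the ``tight'' case $\la\geq\bfsi(A^\pm)$ but $\la\not\geq\bfsi(A^+)+\bfsi(A^-)$, where I would slide $\bfh_{\la,n}$ out past $\bfe^{(A^+)}$ (and/or past $\bff^{(A^-)}$, choosing the more favourable side) and then apply relation (g)/\ref{commute formula} at the junction $\bfe_m^{(\cdot)}\bff_m^{(\cdot)}$ where $\bfe^{(A^+)}$ meets $\bff^{(A^-)}$. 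The terms in which an $\bfe_m$ gets ``contracted'' against an $\bff_m$ involve a matrix $C$ with $\deg(C)<\deg(A)$ and, after re-normalising them via (iii) into the standard shape $\bfe^{(B^+)}\bfh_{\mu,n}\bff^{(B^-)}$, are handled by the inductive hypothesis; the uncontracted contribution must then be shown, by tracking the weight shifts induced by $\bfe^{(A^+)},\bff^{(A^-)}$ and using $\la\not\geq\bfsi(A)$, to be either zero or again a sum of strictly lower-degree terms.

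For the spanning statement, by \ref{Monomial base for U(infty)}(1) the monomials $E^{(A^+)}K^{\bfj}F^{(A^-)}$ $(A\in\Xi^\pm(\iy),\ \bfj\in\mbzi)$ form a basis of $\bfU(\iy)$, so under the epimorphism $\bfU(\iy)\twoheadrightarrow\Sr$ the elements $\bfe^{(A^+)}\bfk^{\bfj}\bff^{(A^-)}$ (with $\bfk^{\bfj}=\prod_i\bfk_i^{j_i}$) span $\Sr$. Relation (b) of \ref{definition of S(infty,r)} gives $[\bfk_i;r+1]^!=0$, which is a polynomial in $\bfk_i$ with nonzero constant term, so each $\bfk_i$ is invertible in $\Sr$ with inverse a polynomial in $\bfk_i$; hence, for $n$ large enough that $A\in\Xi^\pm([-n,n-1])$ and $\bfj$ is supported in $[-n,n-1]$, the factor $\bfk^{\bfj}$ lies in $\mathbf B^0_n$, which by Corollary \ref{property2 of tilde(ttk)la}(1) has basis $\{\bfh_{\mu,n}\mid\mu\in\Lannr\}$. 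Thus $\bfe^{(A^+)}\bfk^{\bfj}\bff^{(A^-)}$ is a $\mbq(\up)$-combination of elements $\bfe^{(A^+)}\bfh_{\mu,n}\bff^{(A^-)}$, and these (over all $A\in\Xi^\pm([-n,n-1])$, $\mu\in\Lannr$, $n\geq 1$) span $\Sr$. Finally, each of them with $\mu\geq\bfsi(A)$ lies in $\sM_n'$ by definition, while each with $\mu\not\geq\bfsi(A)$ lies in $\spann(\sM_n')$ by \eqref{**} (applied with $(A,\la)$ there taken to be $(A,\mu)$) together with induction on $\deg(A)$; therefore $\Sr=\spann(\bigcup_n\sM_n')=\spann(\sM_\iy')$.

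The hard part will be the tight case of \eqref{**}: showing that, precisely when $\la$ meets each of $\bfsi(A^+)$ and $\bfsi(A^-)$ but not their sum, the uncontracted junction term is annihilated, so that only strictly lower-degree terms survive. This needs a careful bookkeeping of the weight shifts of $\bfe^{(A^+)}$ and $\bff^{(A^-)}$ and a good choice of which side to slide $\bfh_{\la,n}$ out on; it is the $\Sr$-analogue of \cite[5.6]{DP} (compare Corollary \ref{monomial base for Kr(infty)cc}), with the triangular relations \eqref{TRr-level}/\eqref{happytunes} available in $\bfVir$ replaced here by direct manipulation of the defining relations of $\Sr$.
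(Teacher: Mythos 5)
Your overall architecture matches the paper's: induction on $\deg(A)$, commuting $\bfh_{\la,n}$ past divided powers via \ref{property2 of tilde(ttk)la}(4), the commutator formula \ref{commute formula} at the $\bfe$--$\bff$ junction, re-normalisation of the lower terms into standard shape via \ref{Monomial base for U(infty)}, and then the spanning statement read off at the end (your variant of that last step, inverting $\bfk_i$ from relation (b) of \ref{definition of S(infty,r)} instead of using the binomial basis of ${\mathbf B}_n^0$, is fine). But the step you flag as ``the hard part'' is the entire content of the lemma, and your stated plan for it does not go through. If you slide $\bfh_{\la,n}$ all the way past $\bfe^{(A^+)}$ you obtain $\bfh_{\la',n}\bfe^{(A^+)}\bff^{(A^-)}$ with $\la'=\la-co(A^+)+ro(A^+)$, and after commuting, the uncontracted term is $\bfh_{\la',n}\bff^{(A^-)}\bfe^{(A^+)}$; from $\la_i<\s_i(A)$ you only get $\la'_i<\s_i(A^-)+ro(A^+)_i$, and since $ro(A^+)_i=\sum_{j>i}a_{ij}$ is in general positive this does not force $\la'\not\geq\bfsi(A^-)$, so the uncontracted term need not vanish. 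Sliding out on the other side meets the symmetric obstruction, so ``choosing the more favourable side'' does not resolve it.

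The missing idea is a factorisation through a submatrix. Take $i$ minimal with $\la_i<\s_i(A)$ and let $A_i$ be the submatrix of $A$ supported on rows and columns $\le i$; the orders $\leq_1,\leq_2$ in \eqref{prodorder} give $\bfe^{(A^+)}=\bfm_1\bfe^{(A_i^+)}$ and $\bff^{(A^-)}=\bff^{(A_i^-)}\bfm_1'$, and one slides $\bfh_{\la,n}$ only past the inner factor $\bfe^{(A_i^+)}$. Because $A_i$ has no entries in rows or columns $>i$, the resulting weight satisfies $\la^*_i=\la_i-\s_i(A^+)$ exactly (no contamination from $ro(A^+)_i$), whence $\la^*_i<\s_i(A^-)=\s_i(A_i^-)$ and the uncontracted term $\bfh_{\la^*,n}\bff^{(A_i^-)}$ vanishes by \ref{property2 of tilde(ttk)la}(4); minimality of $i$ is what makes the slide legitimate in the first place. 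Only the genuinely lower-degree terms produced by \ref{commute formula} survive, and the induction closes. Without this (or an equivalent) device your induction has no mechanism for killing the leading term, so as written the argument is incomplete.
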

\begin{proof} We first observe, by \ref{property2 of
tilde(ttk)la}(2)\&(3), that all $\bfk_i$ and $\big[ {\bfk_i;c
\atop t} \big]$ are $\sZ$-linear combinations of $\bfh_{\la,n}$
whenever $-n\leq i\leq n-1$. So \eqref{**} implies the assertion
that $\sM_\iy'$ spanns $\ti \bfV(\iy,r)$. We now prove \eqref{**} by applying induction
on $\deg(A)$; see \eqref{degA}.

  Suppose $\la\in\Lannr$ and
$\la_i<\s_i(A)$ where $i$ is minimal. Then $\deg(A)\ge1$.
 If $\deg(A)=1$, then $A^+=E_{i-1,i}$ and
$A^-=0$ or $A^+=0$ and $A^-=E_{i,i-1}$. By \ref{alp}, we have
${\bfe}^{(A^+)}\bfh_{\la,n}{\bff}^{(A^-)}=0$. Assume now
$\deg(A)\ge2$ and \eqref{**} is true for all $A'$ with
$\deg(A')<\deg(A)$.
 Let $A_i=(a_{k,l}^{(i)})$ be the submatrix of $A$
such that $a_{k,l}^{(i)}=a_{k,l}$ if $k,l\leq i$ and
$a_{k,l}^{(i)}=0$ otherwise.
 Since $A\in\Xi^\pm([-n,n-1])$, we can write ${\bfe}^{(A^+)}=\bfm_1{\bfe}^{(A_i^+)}$ and
${\bff}^{(A^-)}=\bff^{(A_i^-)}\bfm'_1$, where $\bfm_1$ is the
monomial of $\bfe_j^{(a)}$ ($-n\leq j<n-1$, $a\geq 0$) and
$\bfm_1'$ is the monomial of $\bff_j^{(a)}$ ($-n\leq j<n-1$,
$a\geq 0$).
 Then
\[{\bfe}^{(A^+)}\klan{\bff}^{(A^-)}=\bfm_1{\bfe}^{(A_i^+)}\klan\bff^{(A_i^-)}\bfm'_1.\]
Since $\la_j\geq \s_j(A_i^+)$ for all $j\leq i$, \ref{property2 of
tilde(ttk)la}(4) implies
\[{\bfe}^{(A^+)}\klan{\bff}^{(A^-)}=
\bfm_1\klanp{\bfe}^{(A_i^+)}\bff^{(A_i^-)}\bfm'_1,\] where
$\la^*\in\Lannr$ with
$\la^*_i=\la-co(A_i^+)-ro(A_i^+)=\la_i-\sum_{j<i}a_{ji}=\la_i-\s_i(A^+)\geq0$.
On the other hand, the commutator formula in \cite[4.1(a)]{Lu89}
(see the remarks right after \cite[2.3]{DP}) implies
\[{\bfe}^{(A_i^+)}\bff^{(A_i^-)}=\bff^{(A_i^-)}{\bfe}^{(A_i^+)}+f,\]
where $f$ is a $\sZ$-linear combination of monomials
$\bfm^\bfe_j\bfh_{j}\bfm^\bff_j$ with
$\deg(\bfm^\bfe_j\bfm^\bff_j)<\deg(A_i)$. Here, $\bfm^\bfe_j$
 is a product of some $\bfe_s^{(a)}$, $\bfm^\bff_j$  is a product of some $\bff_s^{(b)}$, and $\bfh_j$ is a product of
 some $\leb{\ti\bfk_t;c\atop m}\rib$
where\footnote{Our assumption on $A$ guarantees that $\bfe_{n-1}$
and $\bff_{n-1}$ do not appear in ${\bfe}^{(A^+)}$ and
${\bff}^{(A^-)}$. Thus, only the $\ti\bfk_t=\bfk_t\bfk_{t+1}^{-1}$
with $-n\leq t\leq n-2$ occur in the commutator formula; cf.
\ref{definition of S(infty,r)}(g).} $-n\leq s,t\leq n-2$ and
$a,b,c,m\in\mbz$.
 Thus, $\deg(\bfm_1\bfm^\bfe_j\bfm^\bff_j\bfm_1')<\deg(A)$.
Now $\la_i<\s_i(A)$ implies $\la_i^*<\s_i(A^-)=\s_i(A_i^-)$. Hence,
$\klanp\bff^{(A_i^-)}=0$
 by \ref{property2 of tilde(ttk)la}(4).
Since $\bfh_j\in\mathbf B_n^0$ is a $\sZ$-linear combination of
$\bfh_{\mu,n}$ by \ref{property2 of tilde(ttk)la}(1-3), it follows
from \ref{property2 of tilde(ttk)la}(4) that
${\bfe}^{(A^+)}\klan{\bff}^{(A^-)}$ is a $\sZ$-linear combination
of $\bfm_1\bfm_j^{\bfe}\bfh_{\mu,n}\bfm_j^{\bff}\bfm_1'$ with
$\deg(\bfm_1\bfm_j^{\bfe}\bfm_j^{\bff}\bfm_1')<\deg(A)$ and
$\mu\in\Lannr$. Now, since $\Sr$ is a homomorphic image of
$\mathbf U(\infty)$, \ref{Monomial base for U(infty)} implies that
each $\bfm_1\bfm^\bfe_j$ (resp., $\bfm^\bff_j\bfm'_1$) is a
$\sZ$-linear combination of $\bfe^{( B )}$, $B\in\Xi^+([-n,n-1])$
(resp., $\bff^{(B')}$, $B'\in\Xi^-([-n,n-1])$) with
$\deg(B)=\deg(\bfm_1\bfm^\bfe_j)$ (resp.,
$\deg(B')=\deg(\bfm^\bff_j\bfm'_1)$). Thus, each
$\bfm_1\bfm^\bfe_j\bfh_{\mu,n}\bfm^\bff_j\bfm'_1$
 is a $\sZ$-linear combination of $\bfe^{(A')^+}\kmun\bff^{(A')^-}$ with $A'\in\Xi^\pm([-n,n-1])$
 and $\deg(A')<\deg(A)$. Thus, our assertion follows from induction.
\end{proof}

Though each $\sM_n'$ is linearly independent as part of the
integral basis for a $q$-Schur algebra (cf. \eqref{equation1} and
\ref{Ajr basis} below), the set $\sM_\iy'$ is unfortunately not
linearly independent. In fact, we can see that span$(\sM_n')$ is a
subspace of span$(\sM_{n+1}')$, but $\sM_n'$ is not a subset of
$\sM_{n+1}'$. However, we shall use $\sM_\iy'$ to construct
several bases below.

Let $$\tiN_\infty:=\{{\bfe}^{(A^+)}\bfk^{\bfj}{\bff}^{(A^-)}\mid
A\in\Th^{\pm}(\iy),\bfj\in\mbni,\s(\bfj)+\s(A)\leq r\}\subseteq \Sr$$ and, for
$n\geq 1$, let
$$\tiN_n:=\{{\bfe}^{(A^+)}\bfk^{\bfj}{\bff}^{(A^-)}\mid
A\in\Thnnpm,\bfj\in\La_{n,r,A}'\},$$ where $\La_{n,r,A}'$ is
defined in \ref{handy}. Then $\tiN_n\han
 \tiN_{n+1}$ and $\tiN_\infty=\bin_{n\geq 1}\tiN_n$.
 Similarly, we can define $\sN_\infty$ and $\sN_n$ in the algebra $\bfVir$
 so that $\sN_\infty=\pi(\tiN_\infty)$.
 \begin{Coro}\label{lemma for presentation theorem} The set
 $\tiN_\iy$ spans $\Sr$.
\end{Coro}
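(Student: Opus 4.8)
The goal is to show that $\tiN_\iy$ spans $\Sr$. Since $\Sr$ is generated by the $\bfe_i$, $\bff_i$, $\bfk_i$ ($i\in\mbz$), it suffices to show that the $\sZ$-span (or rather $\mbq(\up)$-span) of $\tiN_\iy$ is closed under left multiplication by each of these generators; then it is a subalgebra containing all generators and hence all of $\Sr$. Equivalently, since $\tiN_\iy=\bin_{n\geq 1}\tiN_n$ and each generator $\bfe_i,\bff_i,\bfk_i$ lies in some $\mathbf B_n^{\pm}$ for $n$ large, it is enough to produce, for every element $\bfe^{(A^+)}\bfk^\bfj\bff^{(A^-)}\in\tiN_\iy$ and every generator $x$, an expression of $x\cdot\bfe^{(A^+)}\bfk^\bfj\bff^{(A^-)}$ as a $\mbq(\up)$-linear combination of elements of $\tiN_m$ for $m$ sufficiently large.

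First I would reduce to the spanning set $\sM_\iy'$ already in hand. By Corollary \ref{property2 of tilde(ttk)la}(1)(b) each $\bfk^\bfj$ (with $\s(\bfj)\le r$) lies in $\mathbf B_n^0$ and is therefore a $\sZ$-linear combination of the $\bfh_{\mu,n}$, $\mu\in\Lannr$, for $n$ large enough that $A\in\Thnnpm$; conversely each $\bfh_{\mu,n}$, being a product of $\leb{\bfk_i';0\atop\mu_i}\rib$, lies in the span of the $\bfk^\bfj$ by Corollary \ref{property2 of tilde(ttk)la}(1). Hence $\mathrm{span}(\tiN_n)=\mathrm{span}(\sM_n')$ for each $n$ (up to the harmless bookkeeping that $\tiN_n$ records $\bfj\in\La_{n,r,A}'$ while $\sM_n'$ records $\la\in\La_{n,r,A}$, and $|\La_{n,r,A}|=|\La_{n,r,A}'|$ by Lemma \ref{handy}), and therefore $\mathrm{span}(\tiN_\iy)=\mathrm{span}(\sM_\iy')$. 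By Lemma \ref{case1} the latter is all of $\ti\bfV(\iy,r)=\pi(\Sr)$; but we want the statement inside $\Sr$ itself, not merely its image, so I cannot simply invoke $\pi$. The right move is to run the argument of Lemma \ref{case1} verbatim but one level up in $\Sr$: the only inputs used there — the presentation of $\mathbf B_n^{\pm}$, the commuting relations \ref{property2 of tilde(ttk)la}(4) between $\bfh_{\la,n}$ and $\bfe^{(A^+)}$, $\bff^{(A^-)}$, the commutator formula \ref{commute formula}, and the straightening \ref{Monomial base for U(infty)} for monomials in the $\bfe$'s and $\bff$'s — are all statements about $\Sr$ (via the epimorphism $\bfU(\iy)\twoheadrightarrow\Sr$), so that argument already shows $\sM_\iy'$ spans $\Sr$, not just $\ti\bfV(\iy,r)$. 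With $\mathrm{span}(\tiN_\iy)=\mathrm{span}(\sM_\iy')$ this gives the claim.

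Concretely, the key steps in order are: (i) fix an arbitrary element of $\tiN_\iy$ and a generator $x\in\{\bfe_i,\bff_i,\bfk_i\}$; choose $n$ so large that this element lies in $\tiN_n$ and $x\in\mathbf B_n^0$ or $\mathbf B_n^{\pm}$; (ii) rewrite the $\bfk^\bfj$-part as a $\sZ$-combination of $\bfh_{\mu,n}$ using \ref{property2 of tilde(ttk)la}(1)(2), reducing to the form $\bfe^{(A^+)}\bfh_{\mu,n}\bff^{(A^-)}$ with $A\in\Xi^\pm([-n,n-1])$; (iii) enlarge $n$ by one if $x=\bfe_{n-1},\bff_{n-1}$ etc., so that after the multiplication all matrices still lie in $\Xi^\pm([-n,n-1])$; (iv) use the commutation relations \ref{property2 of tilde(ttk)la}(4) to move $x$ past $\bfh_{\mu,n}$, the Serre and commutator relations \ref{commute formula} to straighten $x\cdot\bfe^{(A^+)}$ (or $\bff^{(A^-)}\cdot x$), and \ref{Monomial base for U(infty)} applied to $\mathbf U([-n,n-1])\to\Sr$ to re-express the resulting $\bfe$- and $\bff$-monomials in the canonical $\bfe^{(B^+)}$, $\bff^{(B^-)}$ form; (v) finally invoke Lemma \ref{case1} to push any terms with $\mu\notin\La_{n,r,A}$ back into the span of $\sM_n'$, and translate back from $\bfh$'s to $\bfk$'s to land inside $\mathrm{span}(\tiN_m)$. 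I expect step (iv), the bookkeeping of degrees through the commutator formula, to be the main obstacle — but it is exactly the computation already carried out in the proof of Lemma \ref{case1}, so in practice the proof is the short remark: "$\mathrm{span}(\tiN_\iy)=\mathrm{span}(\sM_\iy')$ by \ref{property2 of tilde(ttk)la}(1), and $\sM_\iy'$ spans $\Sr$ by \ref{case1}."
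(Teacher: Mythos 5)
Your overall strategy coincides with the paper's: invoke Lemma \ref{case1} to reduce to a comparison of $\spann\tiN_\iy$ with $\spann\sM_\iy'$. But the comparison is exactly where the substance lies, and you dismiss it as ``harmless bookkeeping.'' The inclusion you actually need is $\sM_n'\subseteq\spann\tiN_n$, and your justification for it does not work: expanding $\bfh_{\mu,n}$ in the monomials $\bfk^\bfj$ via \ref{property2 of tilde(ttk)la}(1) only lands you in $\spann\{\bfe^{(A^+)}\bfk^{\bfj}\bff^{(A^-)}\mid \s(\bfj)\leq r\}$, whereas membership in $\tiN_n$ requires the stronger constraint $\s(\bfj)+\s(A)\leq r$, i.e.\ $\bfj\in\La_{n,r,A}'$. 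That constraint is not cosmetic --- it is what makes $\tiN_\iy$ small enough to be linearly independent later --- and you cite Lemma \ref{handy} only for the cardinality statement $|\La_{n,r,A}|=|\La_{n,r,A}'|$, never using its actual content, the nonvanishing of $\det\bigl(\up^{\mu\cdot\bfj}\bigr)_{\mu\in\La_{n,r,A},\,\bfj\in\La_{n,r,A}'}$.

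The paper's proof fills this gap by an induction on $\deg(A)$. For $\bfj\in\La_{n,r,A}'$ one writes
$\bfe^{(A^+)}\bfk^{\bfj}\bff^{(A^-)}=\sum_{\mu\in\La_{n,r,A}}\up^{\mu\cdot\bfj}\bfe^{(A^+)}\bfh_{\mu,n}\bff^{(A^-)}+\sum_{\nu\notin\La_{n,r,A}}\up^{\nu\cdot\bfj}\bfe^{(A^+)}\bfh_{\nu,n}\bff^{(A^-)}$;
the second sum is pushed into $\spann\tiN_n$ by \eqref{**} together with the inductive hypothesis (the terms produced there have strictly smaller $\deg$), and only then can one let $\bfj$ range over $\La_{n,r,A}'$ and invert the square system using the determinant of Lemma \ref{handy} to conclude that each $\bfe^{(A^+)}\bfh_{\mu,n}\bff^{(A^-)}$ with $\mu\in\La_{n,r,A}$ lies in $\spann\tiN_n$. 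Your step (v) gestures at the first of these two ingredients but never sets up the induction, and the inversion step is absent. (Your worry about running Lemma \ref{case1} ``one level up'' is unnecessary: that lemma is already a statement in $\Sr$, so its conclusion that $\sM_\iy'$ spans $\Sr$ can be used directly; the missing piece is solely the inclusion $\sM_n'\subseteq\spann\tiN_n$.)
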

\begin{proof}
By \ref{case1}, it is enough to prove that, for any $n\geq1$, the
set  $\sM_n'$ lies in the span of $\tiN_n$. In other words, we
need to prove that, for any $A\in\Xi^\pm([-n,n-1])$, and
$\la\in\Lannr$ with $\la\geq\bfsi(A)$, the element
${\bfe}^{(A^+)}\klan{\bff}^{(A^-)}\in\text{span}\,\tiN_n$. We
proceed by induction on $\deg(A)$. If
$\deg(A)=0$, then by \ref{property2 of tilde(ttk)la}(1) we have
${\bfe}^{(A^+)}\klan{\bff}^{(A^-)}=\klan\in\spann\tiN_n$ for
$\la\in\Lannr$. Assume now that $\deg(A)\geq 1$. By \ref{property2
of tilde(ttk)la}(1), we may write, for any $\bfj\in\mbnnn$ with
$j_n=0$,
\begin{equation}\label{equation1}
{\bfe}^{(A^+)}\bfk^{\bfj}{\bff}^{(A^-)}=
\sum_{\mu\in\La_{n,r,A}}\up^{\mu\cdot\bfj}{\bfe}^{(A^+)}\kmun{\bff}^{(A^-)}
+\sum_{\nu\not\in\La_{n,r,A}}\up^{\nu\cdot\bfj}{\bfe}^{(A^+)}\knun{\bff}^{(A^-)}.
\end{equation}
By \eqref{**} and induction, the second sum is in the span of
$\tiN_n$. If $\bfj$ runs over the set $\La_{n,r,A}'$, then the determinant
det$(\up^{\mu\cdot\bfj})_{\mu,\bfj}\neq0$ (see
\ref{handy}). Hence,
${\bfe}^{(A^+)}\kmun{\bff}^{(A^-)}\in\spann\tiN_n$ for all
$\mu\in\La_{n,r,A}$.
 \end{proof}

\begin{Thm}\label{presentation and monomial base for U(infty,r)}
The algebra epimorphism $\pi:\Sr\to\bfV(\iy,r)$ defined in
\eqref{pi} is an isomorphism. In particular, if  a coefficient
$f_{B,\mu}^{A,\la}$ given in \eqref{**} is nonzero, then $B\prec
A$.
\end{Thm}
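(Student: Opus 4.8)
The plan is to deduce injectivity of the epimorphism $\pi$ of \eqref{pi} from the fact, already established in Corollary \ref{lemma for presentation theorem}, that $\tiN_\iy$ spans $\Sr$: if $\pi$ carries this spanning family to a \emph{linearly independent} family in $\bfVir$, then $\tiN_\iy$ must be a basis of $\Sr$, the family $\sN_\iy=\pi(\tiN_\iy)$ a basis of $\bfVir$, and $\pi$ an isomorphism. Thus everything reduces to proving that the elements $N_{A,\bfj}:=\tte^{(A^+)}\ttk^{\bfj}\ttf^{(A^-)}$, with $A\in\Thipm$ and $\bfj\in\mbni$, $\s(\bfj)+\s(A)\le r$, are linearly independent in $\bfVir\subseteq\hbfKir$.

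The key step is a triangular expansion of $N_{A,\bfj}$ in the standard ``basis'' $\{[C]\mid C\in\Thir\}$ of $\hbfKir$. Using $\ttk^{\bfj}=\sum_{\la\in\Lair}\up^{\la\cdot\bfj}[\diag(\la)]$ (Lemma \ref{KKK}(2)), Corollary \ref{bfsi}, Proposition \ref{alp} and the relation \eqref{TRr-level}, one gets $N_{A,\bfj}=\sum_{\la\in\Lair}\up^{\la\cdot\bfj}\,\ttm^{(A,\la)}$, and the analysis behind Corollary \ref{monomial base for Kr(infty)cc} — together with the easily checked identity $\bfsi(A)=ro(A^-)+co(A^+)$ for $A\in\Thipm$ — shows that $\ttm^{(A,\la)}=[A+\diag(\la-\bfsi(A))]+(\text{a combination of }[C]\text{ with }C\prec A)$ when $\la\ge\bfsi(A)$, and is a combination of $[C]$ with $C\prec A$ otherwise. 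Since, for $i\ne j$, the quantity $\s_{i,j}$ depends only on off-diagonal entries, the matrix $A+\diag(\la-\bfsi(A))$ has off-diagonal part $A$, and $C\prec A$ forces the off-diagonal part of $C$ to be $\prec A$; in particular distinct $A\in\Thipm$ produce disjoint sets of ``leading'' basis elements. Hence
$$N_{A,\bfj}=\sum_{\la\in\Lair,\ \la\ge\bfsi(A)}\up^{\la\cdot\bfj}\,[A+\diag(\la-\bfsi(A))]\ +\ (\text{a combination of }[C],\ C\prec A).$$

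Now take a finite relation $\sum_{A,\bfj}c_{A,\bfj}N_{A,\bfj}=0$ and fix $n$ so large that every $A$ occurring lies in $\Xi^\pm([-n,n-1])$ and every $\bfj$ occurring is supported on $[-n,n-1]$. Choose a $\prec$-maximal $A_0$ among those $A$ with $c_{A_0,\bfj}\ne0$ for some $\bfj$. For $\la\in\La_{n,r,A_0}$ the matrix $A_0+\diag(\la-\bfsi(A_0))$ lies in $\Thir$, and by the expansion above its coefficient in the relation receives a contribution only from the $A_0$-terms (a term $N_{A,\bfj}$ with $A\ne A_0$ would need $A_0\prec A$, against maximality); this gives $\sum_{\bfj}c_{A_0,\bfj}\up^{\la\cdot\bfj}=0$ for every $\la\in\La_{n,r,A_0}$. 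As $\bfj$ runs over a subset of $\La'_{n,r,A_0}$ (extending by $0$) and $\la\cdot\bfj=\sum_{i=-n}^{n-1}\la_ij_i$, the non-vanishing of the Vandermonde-type determinant in Lemma \ref{handy} forces $c_{A_0,\bfj}=0$ for all $\bfj$, a contradiction. Hence $\sN_\iy$ is linearly independent and $\pi$ is an isomorphism. The main obstacle is exactly this expansion and the order bookkeeping around it: one must ensure the ``honest'' leading term $[A+\diag(\la-\bfsi(A))]$ of $N_{A,\bfj}$ is never cancelled by lower terms of other $N_{A',\bfj'}$, which rests on $\prec$ seeing only off-diagonal entries and on a matrix in $\Thipm$ being determined by its off-diagonal part, matched with the input of Lemma \ref{handy}.

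For the final assertion, transport \eqref{**} through $\pi$. We may assume (applying \eqref{**} recursively, with $\deg$ strictly decreasing, cf. Lemma \ref{case1}) that every term $\bfe^{(B^+)}\bfh_{\mu,n}\bff^{(B^-)}$ on the right lies in $\sM'_\iy$, i.e. $\mu\ge\bfsi(B)$. Since $\pi(\bfh_{\la,n})=\tth_{\la,n}=\sum_{\mu}[\diag(\mu)]$, the sum over $\mu\in\Lair$ agreeing with $\la$ on $[-n,n-1]$, the left-hand side becomes $\sum_{\mu}\ttm^{(A,\mu)}$; because $A\in\Xi^\pm([-n,n-1])$ and $\la\notin\La_{n,r,A}$, no such $\mu$ satisfies $\mu\ge\bfsi(A)$, so by the expansion of the second paragraph the left-hand side is a (possibly infinite) combination of $[C]$ with $C\prec A$. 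On the right-hand side each $\tte^{(B^+)}\tth_{\mu,n}\ttf^{(B^-)}=\sum_{\mu'}\ttm^{(B,\mu')}$ expands as (a nonzero combination of $[B+\diag(\cdots)]$) plus lower terms $\prec B$. Choosing a $\prec$-maximal $B_0$ among the $B$ with $f_{B,\mu}^{A,\la}\ne0$ and comparing, on the two sides, the coefficient of a suitable $[B_0+\diag(\nu)]$ — on the right it equals, up to a nonzero scalar, $f_{B_0,\mu}^{A,\la}$ for the unique $\mu$ (determined by its restriction to $[-n,n-1]$), no cancellation being possible by maximality of $B_0$; on the left it is nonzero only if $B_0+\diag(\nu)\prec A$ — forces $B_0\prec A$. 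Since every $B$ occurring is $\preccurlyeq$ such a maximal $B_0$, and $\preccurlyeq$ followed by $\prec$ yields $\prec$, every $B$ with $f_{B,\mu}^{A,\la}\ne0$ satisfies $B\prec A$.
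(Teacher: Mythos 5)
Your proposal is correct and follows essentially the same route as the paper: both reduce the theorem to the linear independence of $\sN_\infty=\pi(\tiN_\infty)$ and establish it by combining the triangular expansion coming from \eqref{TRr-level}/\ref{monomial base for Kr(infty)cc} with the nonvanishing determinant of Lemma \ref{handy}. The only cosmetic difference is that you unwind the triangularity all the way to the basis $\{[C]\}$ of $\h\bfK(\iy,r)$ and argue via a $\prec$-maximal $A_0$, whereas the paper organizes the same computation through the basis $\{\ttm^{(A)}\}$ of Theorem \ref{monomial base for Kr(infty)}.
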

\begin{proof}
Since $\Sr$ is spanned by $\tiN_\infty$ by \ref{lemma for
presentation theorem}, it is enough to prove that its image
$\pi(\tiN_\infty)=\sN_\infty$ is linearly independent. Fix $n\geq
1$. By \ref{KKK} and \eqref{monomial base for Kr(infty)cc}, for
$A\in\Thnnpm$ and $\bfj\in\mbnnno$, there exist
$f_{B,A}\in\mbq(\up)$ such that
\begin{equation*}
\begin{split}
{\tte}^{(A^+)}\ttk^{\bfj}{\ttf}^{(A^-)}&=\sum_{\la\in\La(\iy,r)
\atop \forall j,\,\la_j\geq\s_j(A)}
\up^{\la\cdot\bfj}{\tte}^{(A^+)}\ttk_{\la}{\ttf}^{(A^-)}+
\sum_{\substack{\mu\in\La(\iy,r) \\
\exists j,\,\mu_j<\s_j(A)}}\up^{\mu\cdot\bfj}
{\tte}^{(A^+)}\ttk_{\mu}{\ttf}^{(A^-)} \\
&=\sum_{\la\in\Lannr \atop \forall
i,\la_i\geq\s_i(A)}\up^{\la\cdot\bfj}{\tte}^{(A^+)}
\ttk_{\la}{\ttf}^{(A^-)}+ \sum_{\la\not\in\Lannr \atop\forall i,
\la_i\geq\s_i(A)}\up^{\la\cdot\bfj}{\tte}^{(A^+)}
\ttk_{\la}{\ttf}^{(A^-)} +\sum_{ B \in\Thir \atop  B \p A}f_{ B
,A}\ttm^{( B )}.
\end{split}
\end{equation*}
Hence, by \eqref{monomial base for Kr(infty)} and \ref{handy} the
set $\sN$ is linearly independent.  Since $\sN_\infty=\bin_{n\geq
1}\mc N_n$ and $\sN_n\han\sN_{n+1}$, it follows that the set
$\sN_\infty$ is linearly independent.

The last assertion follows from  \ref{tri}.
\end{proof}

With the above result  we will identify $\Sr$ with $\bfV(\iy,r)$.
In particular we will identify $\klan$ with $\tth_{\la,n}$,
$\bfe_i$ with $\tte_i$ etc. For $A\in\Th^\pm(\iy)$ and
$\bfj\in\mbni$, let
$$\ttn^{(A,\bfj)}:={\tte}^{(A^+)}\ttk^{\bfj}{\ttf}^{(A^-)}.$$

\begin{Coro}\label{Ajr basis} The set $\sN_\infty=\{\ttn^{(A,\bfj)}\mid A\in\Xi^\pm(\infty),\bfj\in\mbn^\infty,\sigma(\bfj)+\sigma(A)\le r\}$ forms a basis for $\bfV(\iy,r)$.
Moreover, if $\bfj\in\mbn^{[-n,n-1]}$ and
 $\bfj\not\in\La_{n,r,A}'$, then we have, for some
 $p_{B,\bfj'}\in\mbq(v)$,
$${\tte}^{(A^+)}\ttk^{\bfj}{\ttf}^{(A^-)}=\sum_{B\in\Thnnopm\atop B\pr
A, \bfj'\in\La_{n,r,A}'}p_{B,\bfj'}\ttn^{( B ,\bfj')}.$$
\end{Coro}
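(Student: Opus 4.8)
The plan is to establish the two assertions of Corollary~\ref{Ajr basis} by combining the spanning statement from Corollary~\ref{lemma for presentation theorem}, the linear independence proven inside Theorem~\ref{presentation and monomial base for U(infty,r)}, and the degree/order bookkeeping already set up for $\sM_\iy'$. First, for the basis claim: we have just identified $\Sr$ with $\bfV(\iy,r)$, so it suffices to show $\sN_\iy$ is a basis of $\Sr$. Corollary~\ref{lemma for presentation theorem} gives that $\tiN_\iy$ spans $\Sr$, and under the identification $\sN_\iy=\pi(\tiN_\iy)$; the linear independence of $\sN_\iy=\bin_{n\ge1}\mc N_n$ was obtained at the end of the proof of Theorem~\ref{presentation and monomial base for U(infty,r)}, using the chain $\mc N_n\han\mc N_{n+1}$ and the nonvanishing Vandermonde-type determinant from Lemma~\ref{handy}. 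So the first sentence of the Corollary is essentially a repackaging of what has already been proved, and I would phrase the proof of it as ``this is immediate from \ref{presentation and monomial base for U(infty,r)} and \ref{lemma for presentation theorem}.''

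The substantive part is the ``Moreover'' clause: rewriting ${\tte}^{(A^+)}\ttk^{\bfj}{\ttf}^{(A^-)}$ when $\bfj\in\mbn^{[-n,n-1]}$ but $\bfj\notin\La_{n,r,A}'$ (i.e.\ $\sigma(\bfj)+\sigma(A)>r$) in terms of the basis elements $\ttn^{(B,\bfj')}$ with $B\pr A$. The key step is to expand $\ttk^{\bfj}$ over the orthogonal idempotents: since $\ttk_i=\sum_{\mu\in\La(\iy,r)}\up^{\mu_i}[\diag(\mu)]$ (Lemma~\ref{KKK}), we get
\[
{\tte}^{(A^+)}\ttk^{\bfj}{\ttf}^{(A^-)}
=\sum_{\mu\in\La(\iy,r)}\up^{\mu\cdot\bfj}\,{\tte}^{(A^+)}\ttk_\mu{\ttf}^{(A^-)}
=\sum_{\mu\in\La(\iy,r),\,\mu\ge\bfsi(A)}\up^{\mu\cdot\bfj}\,\ttm^{(A,\mu)},
\]
using Proposition~\ref{alp} to kill the terms with $\mu\not\ge\bfsi(A)$. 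Now split the sum into $\mu\in\Lannr$ and $\mu\notin\Lannr$. For $\mu\in\La_{n,r,A}$ we have ${\tte}^{(A^+)}\ttk_\mu{\ttf}^{(A^-)}=\ttm^{(A+D)}$ for a diagonal $D$ (as in the proof of Theorem~\ref{presentation and monomial base for U(infty,r)}), and by \ref{monomial base for Kr(infty)} each such $\ttm^{(A+D)}$ already lies in $\spann\sN_n$ with leading term $[A+D]$, so after re-expanding $[A+D]$ back in the $\ttk^{\bfj'}$-basis via the inverse Vandermonde from \ref{handy} one lands on $\ttn^{(A,\bfj')}$ with $\bfj'\in\La_{n,r,A}'$. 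For $\mu\notin\Lannr$ (but $\mu\ge\bfsi(A)$), Corollary~\ref{monomial base for Kr(infty)cc} together with \eqref{561} expresses $\ttm^{(A,\mu)}$ as a $\mbq(\up)$-combination of $\ttm^{(B)}$ with $B\prec A$; each such $\ttm^{(B)}$, being a genuine monomial basis element of type $(B,\text{diagonal})$, is then itself converted into the $\ttn^{(B,\bfj')}$-form with $\bfj'\in\La_{n,r,B}'\subseteq\La_{n,r,A}'$ (enlarging the index set is harmless) by the same determinant argument. Throughout, the condition $\sigma(\bfj)+\sigma(A)>r$ is what guarantees no ``$\bfj$ itself'' term survives — the diagonal corrections forced by $\ttk_\mu=[\diag(\mu)]$ when $\sigma(\mu)=r$ push the relevant column sums past the range, so the identity matrix coefficient genuinely reduces to strictly lower $\pr$-terms.

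The main obstacle I anticipate is the bookkeeping that the $B$'s appearing really satisfy $B\pr A$ (not merely $\deg(B)<\deg(A)$, and not merely $B\p A$ — note the Corollary asserts $B\pr A$, allowing $B=A$). The terms coming from $\mu\in\La_{n,r,A}$ contribute $B=A$ itself (this is why $\pr$ rather than $\p$), while the terms from $\mu\notin\Lannr$ contribute $B\prec A$ via \eqref{561} and the last assertion of Theorem~\ref{presentation and monomial base for U(infty,r)}, which already records that the coefficients $f^{A,\la}_{B,\mu}$ in \eqref{**} are nonzero only when $B\prec A$. One must also check the support statement $B\in\Thnnopm$: this is exactly the content of \ref{case1}/\eqref{**}, since all matrices produced by the commutator reductions there have entries confined to $[-n,n-1]$ (the footnote in the proof of \ref{case1} about $\bfe_{n-1},\bff_{n-1}$ not appearing is precisely what keeps us inside $[-n,n-1]$). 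So the proof reduces to: expand by idempotents, apply \ref{alp}, split by $\Lannr$, invoke \ref{monomial base for Kr(infty)cc} and \ref{monomial base for Kr(infty)} on each piece, and invert the \ref{handy} determinant — citing the already-established order control for the $\pr$-claim.
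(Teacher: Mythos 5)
Your first assertion (that $\sN_\infty$ is a basis) is fine and matches the paper: spanning is \ref{lemma for presentation theorem} transported through $\pi$, and linear independence of $\sN_\infty=\bigcup_n\mc N_n$ was already established inside the proof of \ref{presentation and monomial base for U(infty,r)}. The problem is in your treatment of the ``Moreover'' clause, and it is a genuine gap rather than a stylistic difference. You expand $\ttk^{\bfj}$ over the \emph{fine} idempotents $[\diag(\mu)]$, $\mu\in\La(\iy,r)$ (an infinite sum), and then claim (a) that \ref{alp} kills the terms with $\mu\not\geq\bfsi(A)$, and (b) that the terms with $\mu\geq\bfsi(A)$ but $\mu\notin\Lannr$ reduce to $\ttm^{(B)}$ with $B\prec A$ via \ref{monomial base for Kr(infty)cc} and \eqref{561}. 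Claim (a) is inaccurate but repairable: \ref{alp} only annihilates ${\tte}^{(A^+)}\ttk_\mu$ when $\mu\not\geq\bfsi(A^+)$; the terms with $\mu\geq\bfsi(A^+)$ but $\mu\not\geq\bfsi(A)$ survive, and it is precisely these that \eqref{561} converts into lower terms. Claim (b) is the serious error: when $\mu\geq\bfsi(A)$ you are in the \emph{first} case of \ref{monomial base for Kr(infty)cc}, so $\ttm^{(A,\mu)}=\ttm^{(A+D)}$ with $D$ diagonal and the off-diagonal part still equal to $A$ — these are \emph{not} lower with respect to $\prec$, and for $\mu\notin\Lannr$ the diagonal $D$ sticks outside $[-n,n]$. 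You thus have infinitely many distinct $\Kir$-basis elements $\ttm^{(A+D_\mu)}$ which individually are \emph{not} expressible through $\ttn^{(B,\bfj')}$ with $\bfj'\in\mbn^{[-n,n-1]}$; they can only be absorbed into finitely many $\ttn^{(A,\bfj')}$ after regrouping the infinite sum according to the restriction of $\mu$ to $[-n,n-1]$. Your argument never performs this regrouping, so it does not close.

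The paper avoids the issue entirely by working from the start with the \emph{coarse} idempotents $\bfh_{\nu,n}$ indexed by the finite set $\Lannr$: equation \eqref{equation1} writes ${\tte}^{(A^+)}\ttk^{\bfj}{\ttf}^{(A^-)}$ as a \emph{finite} sum $\sum_{\mu\in\La_{n,r,A}}\up^{\mu\cdot\bfj}{\tte}^{(A^+)}\bfh_{\mu,n}{\ttf}^{(A^-)}+\sum_{\nu\notin\La_{n,r,A}}\up^{\nu\cdot\bfj}{\tte}^{(A^+)}\bfh_{\nu,n}{\ttf}^{(A^-)}$, where each $\bfh_{\nu,n}$ already packages the infinite family of $[\diag(\mu)]$ with fixed restriction to $[-n,n-1]$. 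The second sum is genuinely lower order by \eqref{**} (with $B\prec A$ by the last assertion of \ref{presentation and monomial base for U(infty,r)}), and the first sum is converted to the $\ttn^{(A,\bfj')}$, $\bfj'\in\La_{n,r,A}'$, by inverting the determinant of \ref{handy} — this is the argument of \cite[6.7]{DFW} that the paper cites. Your determinant-inversion and order-bookkeeping steps are the right ingredients, but they must be applied to the quantities ${\tte}^{(A^+)}\bfh_{\mu,n}{\ttf}^{(A^-)}$, not to the individual $\ttm^{(A,\mu)}$.
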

\begin{proof} Using \eqref{equation1} and \ref{case1}, the last assertion can be
proved as the proof of \cite[6.7]{DFW}.
\end{proof}

The basis $\sN_\infty$ is the counterpart of the basis
\cite[6.6(1)]{DFW} for $q$-Schur algebras. Since the formula
similar to \cite[5.5(c)]{BLM} holds in $\bfVir$, by \ref{Ajr
basis} we get the following result which is the counterpart of the
basis given in \cite[6.6(2)]{DFW}.

\begin{Coro}\label{A(j,r) base of U(infty,r)}
The set  $\mathcal B_\infty=\{A(\bfj,r)\mid
A\in\Th^\pm(\iy),\bfj\in\mbni,\s(\bfj)+\s(A)\leq r\}$  forms a
basis for $\bfV(\iy,r)$.
\end{Coro}

We end this section with another application of the spanning set
$\sM_\iy'$ by displaying an integral monomial basis for the
$\sZ$-form of $\bfV(\iy,r)$; cf. \cite[5.4]{DP}.

Let $V(\iy,r)=\bfxir(U(\iy))$. 
We further put $V^+(\iy,r)=\bfxir(U^+(\iy)),\quad
V^-(\iy,r)=\bfxir(U^-(\iy))\,\,\text{ and
 }\,\,V^0(\iy,r)=\bfxir(U^0(\iy)).$

\begin{Lem}
$(1)$ The set $\{\tte^{(A)}\mid  A\in\Thip,\s(A)\leq r\}\quad
(resp.,\{\ttf^{(A)}\mid  A\in\Thim),\s(A)\leq r\})$ forms a $\sZ$-basis
of $V^+(\iy,r)$ $($resp., $V^-(\iy,r)$.

$(2)$ The set $\sM_\iy^0:=\{\ttk_{\la}\mid\la\in\mbni,\s(\la)\leq r\}$ forms a $\sZ$-basis of $V^0(\iy,r)$.
\end{Lem}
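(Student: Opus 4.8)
The plan is to realize $V^+(\iy,r)$ and $V^-(\iy,r)$ as the $\sZ$-spans of explicit sub-collections of the basis $\sN_\infty$ of \ref{Ajr basis}, and to realize $V^0(\iy,r)$ as the union of the degree-zero Borel subalgebras ${\mathbf B}_n^0$ of the finite $q$-Schur algebras.

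\emph{Part (1).} I treat $V^+(\iy,r)$; the $V^-$ case is symmetric. By \ref{Monomial base for U(infty)} the set $\{E^{(A)}\mid A\in\Thip\}$ is a $\sZ$-basis of $U^+(\iy)$, so $\{\tte^{(A)}=\bfxir(E^{(A)})\mid A\in\Thip\}$ $\sZ$-spans $V^+(\iy,r)$. The first point is that $\tte^{(A)}=0$ whenever $\s(A)>r$: using $1=\sum_{\la\in\Lair}\ttk_\la$ and the formula ${\tte}^{(A^+)}\ttk_\la$ of \ref{alp}, one gets $\tte^{(A)}=\sum_{\la\in\Lair}\tte^{(A)}\ttk_\la$ with each summand vanishing unless $\la\ge\bfsi(A)$; since $\s(\bfsi(A))=\sum_i\s_i(A)=\s(A)>r$, no $\la\in\Lair$ dominates $\bfsi(A)$, so $\tte^{(A)}=0$. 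Hence $\{\tte^{(A)}\mid A\in\Thip,\ \s(A)\le r\}$ already $\sZ$-spans $V^+(\iy,r)$. For linear independence, note that for $A\in\Thip$ we have $A^+=A$, $A^-=0$, so $\tte^{(A)}=\tte^{(A^+)}\ttk^{\bfl}\ttf^{(A^-)}=\ttn^{(A,\bfl)}$; thus this collection is a subset of the $\mbq(\up)$-basis $\sN_\infty$ of $\bfV(\iy,r)$ (\ref{Ajr basis}) and is therefore $\sZ$-linearly independent. Spanning together with independence gives the claim. For $V^-(\iy,r)$ the same argument applies verbatim, now using $\ttf^{(A)}=\ttn^{(A,\bfl)}$ for $A\in\Thim$ and the second commuting formula of \ref{alp}.

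\emph{Part (2).} First, $\ttk_\la=\bfxir\bigl(\prod_{i}\leb{K_i;0\atop\la_i}\rib\bigr)$ lies in $V^0(\iy,r)$ for every $\la\in\mbni$, and $\ttk_\la=0$ once $\s(\la)>r$ by \ref{KKK}(2). The key step is the identification $V^0(\iy,r)=\bigcup_{n\ge1}{\mathbf B}_n^0$, where ${\mathbf B}_n^0$ is the subalgebra generated by $\ttk_i$, $-n\le i\le n-1$. Indeed $U^0(\iy)=\bigcup_n U^0([-n,n])$ and $\bfxir(K_i)=\ttk_i$, so ${\mathbf B}_n^0\subseteq\bfxir(U^0([-n,n]))\subseteq{\mathbf B}_{n+1}^0$ (note that, unlike in a finite $q$-Schur algebra, $\ttk_n$ is a genuinely new generator, since $\prod_{i=-n}^n\ttk_i\ne\up^r$ in $\bfV(\iy,r)$); passing to the union yields the identification. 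Inside ${\mathbf B}_n^0$, the relations of \ref{property2 of tilde(ttk)la} give $\leb{\ttk_i;0\atop\la_i}\rib\bfh_{\mu,n}=\leb{\mu_i\atop\la_i}\rib\bfh_{\mu,n}$ and $1=\sum_{\mu\in\Lannr}\bfh_{\mu,n}$, whence
\[
\ttk_\la=\sum_{\mu\in\Lannr}\Bigl(\prod_{i=-n}^{n-1}\leb{\mu_i\atop\la_i}\rib\Bigr)\bfh_{\mu,n}\qquad(\operatorname{supp}\la\subseteq[-n,n-1],\ \s(\la)\le r).
\]
Via the bijection $\la\mapsto(\la_{-n},\dots,\la_{n-1},\,r-\s(\la))\in\Lannr$, the matrix $\bigl(\prod_{i=-n}^{n-1}\leb{\mu_i\atop\la_i}\rib\bigr)$ is unitriangular for the componentwise order on the $[-n,n-1]$-coordinates, hence invertible over $\sZ$; since $\{\bfh_{\mu,n}\mid\mu\in\Lannr\}$ is a $\sZ$-basis of ${\mathbf B}_n^0$ by \ref{property2 of tilde(ttk)la}, so is $\{\ttk_\la\mid\operatorname{supp}\la\subseteq[-n,n-1],\ \s(\la)\le r\}$. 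These bases are nested (this time genuinely, as $\ttk_\la$ does not depend on $n$), and their union over $n$ is exactly $\sM_\iy^0$; hence $\sM_\iy^0$ is a $\sZ$-basis of $\bigcup_n{\mathbf B}_n^0=V^0(\iy,r)$.

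The local computations — the vanishing $\tte^{(A)}=0$ for $\s(A)>r$, the identity $\sum_i\s_i(A)=\s(A)$, the action of the binomials on the $\bfh_{\mu,n}$, and the unitriangularity of the binomial matrix — are all routine and parallel the finite case of \cite[5.4]{DP}. The step that needs genuine care is the direct-limit identification $V^0(\iy,r)=\bigcup_n{\mathbf B}_n^0$: one must check that the extra generators $\ttk_{\pm n}$ do not collapse the degree-zero part the way they do in a finite $q$-Schur algebra, so that no completion-theoretic pathology (infinite sums of the idempotents $[\diag(\mu)]$) is introduced and the reduction to the finite Borel structure is legitimate.
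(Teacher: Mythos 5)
Your proof is correct and follows essentially the same route as the paper's: the vanishing $\tte^{(A)}=0$ for $\s(A)>r$ via the idempotent decomposition and \ref{alp}, spanning from \ref{Monomial base for U(infty)}, and for part (2) exactly the unitriangular transition \eqref{prod} between $\sM_n^0$ and $\{\tth_{\la,n}\}$ followed by passage to the nested union. The only differences are cosmetic: for independence in (1) you cite the basis $\sN_\infty$ of \ref{Ajr basis} where the paper invokes the $\ttm^{(A)}$-basis of \ref{monomial base for Kr(infty)} (both are legitimate), and in (2) your ``$\bigcup_n{\mathbf B}_n^0$'' should be read as the union of the $\sZ$-spans of the $\tth_{\la,n}$ (which is what the paper works with), since ${\mathbf B}_n^0$ as defined in the text is a $\mbq(\up)$-algebra and \ref{property2 of tilde(ttk)la}(1) gives $\{\bfh_{\la,n}\}$ as a $\mbq(\up)$-basis, not a $\sZ$-basis, of it.
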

\begin{proof}
Let $A\in\Thip$. If $\s(A)=\sum_i\s_i(A)>r$, then $\tte^{(A)}=
\sum_{\la\in\La(\iy,r)} \tte^{(A)}\ttk_\la=0$ by \ref{KKK} and
\ref{alp}. Now (1) follows from \ref{Monomial base for U(infty)}
and \eqref{monomial base for Kr(infty)}. For $n\geq 1$ we let
$\sM_n^0:=\big\{\prod_{-n\leq i\leq n-1}\leb{\ttk_i;0\atop
\mu_i}\rib\ \big|\ \mu\in\La([-n,n],r)\big\}.$ We fix $n\geq 1$.
For $\la,\mu\in\La([-n,n],r)$ we write $\la\leq_n\mu$ if and only
if $\la_{i}\leq\mu_{i}$ for $-n\leq i\leq n-1$. If $\la_i<\mu_i$
for some $-n\leq i\leq n-1$ then we write $\la<_n\mu$. For
$\la\in\Lannr$ we have by \ref{property2 of tilde(ttk)la}(2)\&(3)
\begin{equation}\label{prod}
\prod_{-n\leq i\leq
n-1}\leb{\ttk_i;0\atop\la_i}\rib=\tth_{\la,n}+\sum_{\mu\in\Lannr,\la<_n\mu}\prod_{-n\leq
i\leq n-1}\leb{\mu_i\atop\la_i}\rib\tth_{\mu,n}. \end{equation}
Hence, the set $\sM_n^0$ is linearly independent and we have
$\spann_{\sZ}\sM_n^0=\spann_{\sZ}\{\tth_{\la,n}|\la\in\Lannr\}$.
Since $\sM_n^0\han\sM_{n+1}^0$ and $\sM_\iy^0=\bin_{n\geq
1}\sM_n^0$, the set $\sM_\iy^0$ is linearly independent. Since
$V^0(\iy,r)=\spann_{\sZ}\{\tth_{\la,n}|\la\in\Lannr, n\geq 1\}$,
we have $V^0(\iy,r)=\spann_{\sZ}\sM_\iy^0$, proving (2).
\end{proof}

The proof above shows that every $\tth_{\la,n}\in V(\infty,r)$.
Hence, the set $\sM_\infty'$ is a subset of the $\sZ$-algebra
$V(\infty,r)$. Thus, \ref{case1} implies immediately the
following.

\begin{Coro}\label{Zspan} The set $\sM_\infty'$ is a spanning set of the $\sZ$-algebra
$V(\infty,r)$.
\end{Coro}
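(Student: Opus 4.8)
The plan is to deduce the statement from the two facts recorded just before it: that $\sM_\infty'$ lies inside the $\sZ$-algebra $V(\infty,r)$, and that the reduction carried out in the proof of \ref{case1} already expresses a $\sZ$-spanning set of $V(\infty,r)$ in terms of $\sM_\infty'$ with coefficients in $\sZ$. For the first inclusion: every $\tth_{\mu,n}$ lies in $V(\infty,r)$ (as shown in the proof of the lemma preceding the statement), while $\tte^{(A^+)}\in V^+(\iy,r)$ and $\ttf^{(A^-)}\in V^-(\iy,r)$ are images under $\bfxir$ of elements of $U(\iy)$; hence each generator $\tte^{(A^+)}\tth_{\mu,n}\ttf^{(A^-)}$ of $\sM_\infty'$ lies in $V(\infty,r)$, so $\spann_\sZ\sM_\infty'\subseteq V(\infty,r)$. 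It remains to prove the reverse inclusion $V(\infty,r)\subseteq\spann_\sZ\sM_\infty'$.

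For that I would first invoke \ref{Monomial base for U(infty)}(2) with $\eta=\mbz$, together with \ref{map zr}(2): the $\sZ$-algebra $V(\infty,r)=\bfxir(U(\iy))$ is $\sZ$-spanned by the images $\tte^{(A^+)}\bigl(\prod_i\ttk_i^{\dt_i}\leb{\ttk_i;0\atop t_i}\rib\bigr)\ttf^{(A^-)}$ of the monomial $\sZ$-basis of $U(\iy)$, with $A\in\Th^\pm(\iy)$, $\bft\in\mbni$ and $\dt_i\in\{0,1\}$ of finite support. Fix such a monomial and choose $n$ large enough that $A\in\Thnnopm$ and that every index $i$ in the middle factor satisfies $-n\le i\le n-1$. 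By \ref{property2 of tilde(ttk)la}(2)--(3) — exactly the observation opening the proof of \ref{case1} — each $\ttk_i$ and each $\leb{\ttk_i;c\atop t}\rib$ with $-n\le i\le n-1$ is a $\sZ$-linear combination of the idempotents $\tth_{\mu,n}$ ($\mu\in\Lannr$); hence the monomial is a $\sZ$-linear combination of elements $\tte^{(A^+)}\tth_{\mu,n}\ttf^{(A^-)}$ with $A\in\Thnnopm$ and $\mu\in\Lannr$.

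Finally I would show each such $\tte^{(A^+)}\tth_{\mu,n}\ttf^{(A^-)}$ lies in $\spann_\sZ\sM_\infty'$ by induction on $\deg(A)$: if $\mu\geq\bfsi(A)$ (that is, $\mu\in\La_{n,r,A}$) the element is already a member of $\sM_n'\subseteq\sM_\infty'$; if $\mu\notin\La_{n,r,A}$, then equation \eqref{**} of \ref{case1} rewrites it as a $\sZ$-linear combination of elements $\bfe^{(B^+)}\bfh_{\mu',n}\bff^{(B^-)}$ with $B\in\Thnnopm$ and $\deg(B)<\deg(A)$, to which the inductive hypothesis applies (the base case $\deg(A)=0$ being trivial, since then $A$ is diagonal, $A^\pm=0$, $\bfsi(A)=0$, and the element is just $\tth_{\mu,n}\in\sM_n'$). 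Combining the two inclusions gives $V(\infty,r)=\spann_\sZ\sM_\infty'$. I do not anticipate a genuine obstacle here: the whole argument is essentially internal to the proof of \ref{case1}, and the only point needing care is that all coefficients produced along the way — the $f_{B,\mu}^{A,\la}$ of \eqref{**}, the binomials converting $\ttk_i$-expressions into $\tth_{\mu,n}$-combinations, and the structure constants of the monomial $\sZ$-basis of $U(\iy)$ — genuinely lie in $\sZ$ and not merely in $\mbq(\up)$, which they do by \ref{case1}, \ref{property2 of tilde(ttk)la}, and \ref{Monomial base for U(infty)}(2) respectively.
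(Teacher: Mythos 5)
Your proof is correct and follows essentially the same route as the paper: the inclusion $\sM_\infty'\subseteq V(\infty,r)$ via the integrality of the $\tth_{\la,n}$, and the reverse inclusion by pushing the monomial $\sZ$-basis of $U(\iy)$ through $\bfxir$, converting the $\ttk$-factors into $\sZ$-combinations of $\tth_{\mu,n}$'s, and reducing via \eqref{**}. The paper compresses all of this into the remark that \ref{case1} "implies immediately" the claim; your version merely makes explicit the induction on $\deg(A)$ and the $\sZ$-integrality of every coefficient involved, which is exactly the content being relied upon.
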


\begin{Prop}
The set
$$\sM_\iy:=\{\tte^{(A^+)}\ttk_{\la}\ttf^{(A^-)}\mid
A\in\Xi^\pm(\iy),\la\in\mbni,\la\geq\bfsi(A),\s(\la)\leq r\}$$
forms a $\sZ$-basis for $V(\iy,r)$.
\end{Prop}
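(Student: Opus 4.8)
The plan is to verify the two defining properties of a basis separately. That $\sM_\iy\subseteq V(\iy,r)$ is immediate, since $\tte^{(A^+)}\ttk_\la\ttf^{(A^-)}=\bfxir\big(E^{(A^+)}\prod_i\leb{K_i;0\atop\la_i}\rib F^{(A^-)}\big)$. For the spanning, I would invoke Corollary~\ref{Zspan}, by which $\sM_\iy'=\bigcup_{n\ge1}\sM_n'$ already spans the $\sZ$-algebra $V(\iy,r)$; so it suffices to rewrite each generator $\bfe^{(A^+)}\bfh_{\la,n}\bff^{(A^-)}$ of $\sM_n'$ (with $A\in\Xi^\pm([-n,n-1])$, $\la\in\Lannr$, $\la\ge\bfsi(A)$) as a $\sZ$-combination of members of $\sM_\iy$. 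After the identification $\Sr=\bfV(\iy,r)$ we have $\bfe^{(A^+)}\bfh_{\la,n}\bff^{(A^-)}=\tte^{(A^+)}\tth_{\la,n}\ttf^{(A^-)}$, and relation~\eqref{prod} expresses $\prod_{-n\le i\le n-1}\leb{\ttk_i;0\atop\la_i}\rib$ as $\tth_{\la,n}$ plus a $\sZ$-combination of the $\tth_{\mu,n}$ with $\mu>_n\la$. Since $\sum_{-n\le i\le n-1}\mu_i\le r$ for every $\mu\in\Lannr$, a descending induction on $\sum_{-n\le i\le n-1}\la_i$ inverts this to give $\tth_{\la,n}=\sum_{\mu\in\Lannr,\,\mu\ge_n\la}c_{\la,\mu}\,\ttk_{\mu'}$ with $c_{\la,\mu}\in\sZ$, where $\mu'\in\mbni$ denotes the extension by zeros of $(\mu_{-n},\dots,\mu_{n-1})$. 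Then $\s(\mu')\le r$, and since $\s_i(A)=0$ for $i\notin[-n,n-1]$ (as $A\in\Xi^\pm([-n,n-1])$) the chain $\mu\ge_n\la\ge\bfsi(A)$ forces $\mu'\ge\bfsi(A)$; hence each $\tte^{(A^+)}\ttk_{\mu'}\ttf^{(A^-)}$ lies in $\sM_\iy$, and therefore $\sM_\iy$ spans $V(\iy,r)$.

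For the linear independence I would expand the elements of $\sM_\iy$ against the basis $\{\ttm^{(B)}\mid B\in\Thir\}$ of $\Kir$ from Proposition~\ref{monomial base for Kr(infty)}. Fix $A\in\Xi^\pm(\iy)$ and $\la\in\mbni$ with $\la\ge\bfsi(A)$, $\s(\la)\le r$. By the computation inside the proof of Lemma~\ref{KKK}(2), $\ttk_\la=\sum_{\nu\in\Lair,\,\nu\ge\la}\big(\prod_i\leb{\nu_i\atop\la_i}\rib\big)[\diag(\nu)]$, and for each such $\nu$ the identity established in the proof of Theorem~\ref{presentation and monomial base for U(infty,r)} gives $\tte^{(A^+)}[\diag(\nu)]\ttf^{(A^-)}=\ttm^{(B_\nu)}$, where $B_\nu:=A+\diag(\nu-\bfsi(A))\in\Thir$, so that $B_\nu^\pm=A$ and $\bfsi(B_\nu)=\nu$; hence $\tte^{(A^+)}\ttk_\la\ttf^{(A^-)}=\sum_{\nu\ge\la}\big(\prod_i\leb{\nu_i\atop\la_i}\rib\big)\ttm^{(B_\nu)}$. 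Given a finite relation $\sum_{(A,\la)\in S}c_{A,\la}\tte^{(A^+)}\ttk_\la\ttf^{(A^-)}=0$, I would observe that distinct $A$'s produce $\ttm^{(B)}$'s with distinct off-diagonal parts $B^\pm$, so comparing $\ttm^{(B)}$-coefficients decouples the relation over $A$ and leaves, for each fixed $A$ and each $\nu\in\Lair$ with $\nu\ge\bfsi(A)$, the scalar identity $\sum_{\la:(A,\la)\in S,\,\la\le\nu}c_{A,\la}\prod_i\leb{\nu_i\atop\la_i}\rib=0$. Choosing $\la^0$ minimal in the coordinatewise order among $\{\la:(A,\la)\in S,\,c_{A,\la}\ne0\}$ and $\nu=\la^0+(r-\s(\la^0))\be_M$ with $M$ outside every support occurring in $S$, one checks that $\nu\ge\bfsi(A)$, $\prod_i\leb{\nu_i\atop\la^0_i}\rib=1$, and $\la\not\le\nu$ for every other $\la$ with $(A,\la)\in S$ and $c_{A,\la}\ne0$; the identity at this $\nu$ then reads $c_{A,\la^0}=0$, a contradiction. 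Thus $\sM_\iy$ is $\sZ$-linearly independent, and with the spanning statement it is a $\sZ$-basis of $V(\iy,r)$.

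The step requiring the most care is the passage to the $\ttm^{(B)}$-expansion: when $\s(\la)<r$ this sum is genuinely infinite, so one must argue in the completion $\bfV(\iy,r)\subseteq\hbfKir$ and justify that it converges there and that comparison of $\ttm^{(B)}$-coefficients is legitimate. This follows from the completion formalism of Lemma~\ref{completion} once one notes that each block $[\diag(\kappa)]\,\bfKir\,[\diag(\kappa')]$ is finite dimensional and that $\nu\mapsto ro(B_\nu)=ro(A)+\nu-\bfsi(A)$ is injective for fixed $A$, so that only finitely many terms of the expansion land in any one block; granting this, the rest is bookkeeping with $\bfsi$ together with the triangularity already recorded in~\eqref{prod} and Proposition~\ref{monomial base for Kr(infty)}.
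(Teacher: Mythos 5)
Your proof is correct, and while its spanning half coincides with the paper's argument, the linear-independence half takes a genuinely different route. For spanning, both you and the paper start from Corollary \ref{Zspan} and exploit the unitriangular relation \eqref{prod} between the families $\{\tth_{\la,n}\}$ and $\{\prod_{-n\le i\le n-1}\leb{\ttk_i;0\atop\la_i}\rib\}$: the paper records this as the identity \eqref{***} and concludes $\spann_\sZ\sM_n=\spann_\sZ\sM_n'$, whereas you invert \eqref{prod} by descending induction on $\sum_{-n\le i\le n-1}\la_i$ --- the same content in the opposite direction. For linear independence, the paper stays at finite level: each $\sM_n$ is independent because $\sM_n'$ is (as recorded after Lemma \ref{case1}, via the proof of Theorem \ref{presentation and monomial base for U(infty,r)}) and the transition \eqref{***} is unitriangular; then $\sM_n\han\sM_{n+1}$ and $\sM_\iy=\bigcup_{n\ge1}\sM_n$ finish the job. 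You instead expand each $\tte^{(A^+)}\ttk_\la\ttf^{(A^-)}$ directly in the basis $\{\ttm^{(B)}\}_{B\in\Thir}$ of Proposition \ref{monomial base for Kr(infty)}, using $\tte^{(A^+)}[\diag(\nu)]\ttf^{(A^-)}=\ttm^{(A+\diag(\nu-\bfsi(A)))}$, and then kill the coefficients one at a time by evaluating at $\nu=\la^0+(r-\s(\la^0))\be_M$ with $\la^0$ minimal and $M$ outside all supports; this evaluation trick is valid and rather clean. Its cost is exactly the point you flag: for $\s(\la)<r$ the $\ttm$-expansion is an infinite sum, so the comparison must take place in $\hbfKir$, and one needs that only finitely many terms land in each block $[\diag(\kappa)]\bfKir[\diag(\kappa')]$ (your injectivity of $\nu\mapsto ro(B_\nu)$ for fixed $A$, together with the finiteness of $S$) and that the $\ttm^{(B)}$ with fixed $ro(B)$ and $co(B)$ form a basis of that finite-rank block --- which holds because the lower terms in \eqref{happytunes} have the same row and column sums as $B$. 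The paper's route avoids infinite sums altogether, since at level $n$ every weight occurring has $\s(\la)=r$, so $\ttk_\la=[\diag(\la)]$ and each element of $\sM_n'$ is handled by finite bookkeeping; it also yields, as a by-product, the unitriangularity between $\sM_n'$ and $\sM_n$ that is reused in the corollary immediately following. Your route is more self-contained, in that it does not lean on the independence statements extracted from the proof of Theorem \ref{presentation and monomial base for U(infty,r)}, at the price of the extra care in the completion.
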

\begin{proof} For any $n\geq 1$, let
$$\sM_n:=\big\{\tte^{(A^+)}\prod_{-n\leq i\leq
n-1}\leb{\ttk_i;0\atop \la_i}\rib\ttf^{(A^-)}\big|
A\in\Xi^\pm([-n,n-1]),\la\in\La_{n,r,A} \big\}.$$ Then,
$\sM_n\han\sM_{n+1}$, $|\sM_n|=|\sM_n'|$ and $\sM_\iy=\bin_{n\geq
1}\sM_n$. Since, for $A\in\Xi^\pm([-n,n-1])$ and
$\la\in\La([-n,n],r)$ with $\la\geq\bfsi(A)$, \eqref{prod} implies
\begin{equation}\label{***}
\tte^{(A^+)}\prod_{-n\leq i\leq n-1}\leb{\ttk_i;0\atop
\la_i}\rib\ttf^{(A^-)}
=\tte^{(A^+)}\tth_{\la,n}\ttf^{(A^-)}+\sum_{\mu\in\Lannr\atop\la<_n\mu}\prod_{-n\leq
i\leq
n-1}\leb{\mu_i\atop\la_i}\rib\tte^{(A^+)}\tth_{\mu,n}\ttf^{(A^-)},
\end{equation}
and $\mu>_n\la$ implies $\mu\geq \bfsi(A)$ (as $\s_i(A)=0$ for
$i\not\in[-n,n-1]$), it follows that all
$\tte^{(A^+)}\tth_{\mu,n}\ttf^{(A^-)}\in\sM_n'$ and
$\spann_{\sZ}\sM_n=\spann_{\sZ}\sM_n'$. Consequently,
$\spann_{\sZ}\sM_\iy=\spann_{\sZ}\sM_\iy'=V(\iy,r)$ by
\ref{Zspan}. On the other hand, the linear independence of
$\sM_n'$ implies that $\sM_n$ is also linearly independent. Hence,
$\sM_\iy$ is linearly independent.
\end{proof}

\begin{Coro}  Let $A\in\Xi^\pm(\iy)$ and $\la\in\mbni$ with $\s(\la)\leq
r$. If $\la_i<\s_i(A)$ for some $i\in\mbz$ then
$$\eap\ttk_\la\faf=\sum_{B\pr
A\atop\mu\geq\bfsi(A),\sigma(\mu)\leq r}
g_{B,\mu}\tte^{(B^+)}\ttk_\mu\ttf^{(B^-)}\quad(g_{B,\mu}\in\sZ,
B\in\Xi^{\pm}(\iy),\mu\in\mbni)$$
\end{Coro}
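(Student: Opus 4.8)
The final statement is the corollary with parts (1) and (2) about the transition matrix between $\sM_n'$ and $\sM_n$ and the triangularity of $\eap\ttk_\la\faf$ when $\la_i < \s_i(A)$ for some $i$.

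The plan is to read both parts directly off the identity \eqref{***} established just above, together with the spanning/independence facts already in hand. For part (1): fix $n\geq 1$. Equation \eqref{***} expresses each element $\tte^{(A^+)}\prod_{-n\leq i\leq n-1}\leb{\ttk_i;0\atop\la_i}\rib\ttf^{(A^-)}$ of $\sM_n'$ as $\tte^{(A^+)}\tth_{\la,n}\ttf^{(A^-)}$ plus a $\sZ$-linear combination of the elements $\tte^{(A^+)}\tth_{\mu,n}\ttf^{(A^-)}$ with $\mu>_n\la$ and $\mu\geq\bfsi(A)$, all of which lie in $\sM_n$; the coefficient of $\tte^{(A^+)}\tth_{\la,n}\ttf^{(A^-)}$ is $1$. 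Thus, once we linearly order the index set of $\sM_n$ (pairs $(A,\la)$, with the $A$-component fixed and the $\la$-component ordered compatibly with $<_n$, e.g. by $\s(\la)$ and then lexicographically), the transition matrix from $\sM_n$ to $\sM_n'$ — equivalently from $\sM_n'$ to $\sM_n$, since both are bases of the same span and the matrix is unitriangular hence invertible over $\sZ$ — is upper triangular with $1$'s on the diagonal. The only mild care needed is to check that the index sets of $\sM_n$ and $\sM_n'$ coincide under $(A,\la)\mapsto(A,\la)$, which is immediate from their definitions (both run over $A\in\Xi^\pm([-n,n-1])$ and $\la\in\La_{n,r,A}$), so the two bases are indexed by the same set and the transition matrix is genuinely square and unitriangular.

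For part (2): let $A\in\Xi^\pm(\iy)$ and $\la\in\mbni$ with $\s(\la)\leq r$, and suppose $\la_i<\s_i(A)$ for some $i$. Choose $n$ large enough that $A\in\Xi^\pm([-n,n-1])$ and $\la\in\mbn^{[-n,n-1]}$. Then $\la\not\in\La_{n,r,A}$, so Lemma \ref{case1} (equation \eqref{**}) gives
$$\eap\ttk_\la\faf=\tte^{(A^+)}\bfh_{\la,n}\ttf^{(A^-)}=\sum_{B\in\Thnnopm,\ \mu\in\La_{n,r,A},\ \deg(B)<\deg(A)}f_{B,\mu}^{A,\la}\,\tte^{(B^+)}\bfh_{\mu,n}\tte^{(B^-)}$$
with $f_{B,\mu}^{A,\la}\in\sZ$ — here I have identified $\ttk_\la$ with $\tth_{\la,n}=\bfh_{\la,n}$ via the identifications set up after Theorem \ref{presentation and monomial base for U(infty,r)}, using that $\s(\la)\leq r$ forces $\la\in\La_{n,r}$ up to the convention on the $n$-th coordinate. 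By Theorem \ref{presentation and monomial base for U(infty,r)}, each nonzero $f_{B,\mu}^{A,\la}$ forces $B\prec A$; and since $\mu\in\La_{n,r,A}$ means $\mu\geq\bfsi(A)$ with $\s(\mu)\leq r$, each term is of the required form $g_{B,\mu}\tte^{(B^+)}\ttk_\mu\ttf^{(B^-)}$ with $g_{B,\mu}\in\sZ$, $B\in\Xi^\pm(\iy)$, $\mu\in\mbni$. To be careful one should rewrite $\bfh_{\mu,n}$ back in terms of $\ttk_\mu$: since $\mu\in\La_{n,r,A}\subseteq\La([-n,n],r)$, Proposition \ref{map vin} (or \ref{property2 of tilde(ttk)la}) identifies $\bfh_{\mu,n}$ with $\tth_{\mu,n}=\vi_n^{\geq0}(\bfk'_\mu)$, and this is exactly what $\ttk_\mu$ denotes for $\mu$ supported in $[-n,n-1]$ under our identifications, so no new coefficients are introduced.

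I do not anticipate a serious obstacle: both parts are essentially bookkeeping on top of \eqref{***} and Lemma \ref{case1}/Theorem \ref{presentation and monomial base for U(infty,r)}. The one point requiring a little attention is the compatibility of the various indexings and identifications ($\ttk_\la$ vs.\ $\tth_{\la,n}$ vs.\ $\bfh_{\la,n}$, and the role of the suppressed $n$-th coordinate), so that the element on the left of (2) really is one of the $\sM_n'$-type elements and the elements on the right really lie in $\sM_\iy$; once that is pinned down, the triangularity statements are immediate. For (1), I should also note explicitly that unitriangularity over $\sZ$ guarantees invertibility, so that "transition matrix between $\sM_n'$ and $\sM_n$" makes sense in either direction and the stated form holds.
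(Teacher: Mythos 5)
Part (1) of your argument is essentially the paper's: the unitriangular relation is exactly \eqref{***}, and your remark that a unitriangular integer matrix has a unitriangular inverse settles the direction. (You have the names $\sM_n$ and $\sM_n'$ swapped --- the element $\tte^{(A^+)}\prod_{-n\leq i\leq n-1}\leb{\ttk_i;0\atop\la_i}\rib\ttf^{(A^-)}$ belongs to $\sM_n$, and \eqref{***} expands it in the $\tth_{\mu,n}$-elements of $\sM_n'$ --- but since the statement is symmetric in the two bases this is harmless, and your check that the two index sets coincide is the right point to make.)

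Part (2) has a genuine gap: the opening equality $\eap\ttk_\la\faf=\tte^{(A^+)}\bfh_{\la,n}\ttf^{(A^-)}$ is false. Here $\ttk_\la=\prod_{i}\leb{\ttk_i;0\atop\la_i}\rib$ involves only the factors indexed by $-n\leq i\leq n-1$, whereas $\bfh_{\la,n}=\psi_n^{\geq0}(\bfk'_\la)$ carries the additional factor $\leb{\bfk'_n;0\atop\la_n}\rib$ with $\bfk'_n=\up^r\bfk_{-n}^{\prime-1}\cdots\bfk_{n-1}^{\prime-1}$; it is precisely this extra factor that cuts $\ttk_\la=\sum_{\mu\geq\la}\prod_i\leb{\mu_i\atop\la_i}\rib[\diag(\mu)]$ down to the idempotent $\tth_{\la,n}$, whose defining sum runs only over $\mu$ with $\mu_i=\la_i$ for $-n\leq i\leq n-1$. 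The correct relation is \eqref{prod}: $\ttk_\la=\tth_{\la,n}+\sum_{\la<_n\mu}\prod_i\leb{\mu_i\atop\la_i}\rib\tth_{\mu,n}$. Consequently you cannot feed $\eap\ttk_\la\faf$ directly into \eqref{**}; you must first expand via \eqref{prod}, then apply \eqref{**} (and the last assertion of \ref{presentation and monomial base for U(infty,r)}, which gives $B\prec A$) only to the summands $\eap\tth_{\mu,n}\faf$ with $\mu\not\in\La_{n,r,A}$, and apply part (1) to the summands with $\mu\in\La_{n,r,A}$ to convert them back to $\ttk_\mu$-form. Those latter summands are the source of terms with $B=A$, which is why the statement reads $B\pr A$ rather than $B\prec A$; your derivation as written would only ever produce strictly lower terms, which is not what the expansion \eqref{prod} actually delivers. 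This two-step route --- \eqref{prod} first, then \eqref{**} together with part (1) --- is exactly the paper's proof.
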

\begin{proof}
The assertion follows from \eqref{prod}, \eqref{**} and
\eqref{***}.
\end{proof}

\section{Infinite $q$-Schur algebras}

In this section, we will establish the isomorphism between
$\bfU(\infty,r)$ and $\bfVir$ and discuss the relationship between
$\bfU(\infty,r)$ and the infinite $q$-Schur algebra $\bS(\infty,
r)$. We shall mainly work over the field $\mathbb Q(\up)$ though
results like \ref{lemma2 for non-surjective map} and \ref{Cor1 for
non-surjective map}  below continue to hold over $\sZ$.

As in \S 2, let $\fS_r$ be the symmetric groups on $r$ letters.
For $\la\in\La(\iy,r)$, let $\fS_\la$ be the Young subgroup of
$\fS_r$, and let $\frak D_\la$ be the set of  distinguished right
$\frak S_\la$-coset representatives. Then, $\frak D_{\la\mu}=\frak
D_\la\cap\frak D_\mu^{-1}$ is the set of distinguished double
$(\frak S_\la,\fS_\mu)$-coset representatives.

Let $\bfH$ be the
associated Hecke algebras over $\mbq(\up)$ with basis
$\{T_w:=\up^{\ell(w)}\sT_w\}_{w\in\fS_r}$, and let
$$x_\la=\sum_{w\in\frak S_\la}T_w.$$
Recall from \S2 the $\bfH$-module $\bfOg_{\eta}^{\ot r}$. Clearly,
there is an $\bfH$-module embedding $i_n:\bfOg_{[-n,n]}^{\ot r}\to
\bfOg_{[-n-1,n+1]}^{\ot r}$, and $\{(\bfOg_{[-n,n]}^{\ot
r},i_n)\}_{n\geq1}$ forms a direct system.

\begin{Lem}\label{lemma2 for non-surjective map} The $\bfH$-module
$\bfOg_{\iy}^{\ot r}$ is the
 direct limit of $\bfH$-modules $\{(\bfOg_{[-n,n]}^{\ot
 r},i_n)\}_{n\geq1}$.
Thus, $\bfH$-module $\bfOg_\iy^{\ot r}$ is isomorphic to the
$\bfH$-module $\oplus_{\la\in\La(\iy,r)}x_{\la}\bfH$.
\end{Lem}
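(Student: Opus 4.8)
The plan is to establish the two assertions in turn, deducing the module decomposition from the direct-limit statement together with the classical tensor-space decomposition in the finite case. For the first assertion, the key observation is that $\bfOg_\iy$, being the free $\mbq(\up)$-space on $\{\og_i\}_{i\in\mbz}$, equals $\bigcup_{n\geq1}\bfOg_{[-n,n]}$; consequently every basis tensor $\og_{i_1}\ot\cdots\ot\og_{i_r}$ of $\bfOg_\iy^{\ot r}$ involves only finitely many indices and so lies in $\bfOg_{[-n,n]}^{\ot r}$ for $n\gg0$, whence $\bfOg_\iy^{\ot r}=\bigcup_{n\geq1}\bfOg_{[-n,n]}^{\ot r}$. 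Moreover each $i_n$ is $\bfH$-equivariant, since place permutations only permute tensor positions and never change which $\og_i$ occur, so the $\bfOg_{[-n,n]}^{\ot r}$ form a directed chain of $\bfH$-submodules of $\bfOg_\iy^{\ot r}$ whose union is everything. A directed union of submodules is the direct limit (one checks the universal property directly: any family of compatible $\bfH$-maps out of the $\bfOg_{[-n,n]}^{\ot r}$ glues to a well-defined $\bfH$-map out of the union), which gives $\bfOg_\iy^{\ot r}\cong\underset{\underset n\longrightarrow}\lim\,\bfOg_{[-n,n]}^{\ot r}$.

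For the second assertion I would invoke the standard $q$-Schur--Weyl fact that for finite $\eta$ there is an $\bfH$-module isomorphism $\bfOg_{[-n,n]}^{\ot r}\cong\bigoplus_{\la\in\La([-n,n],r)}x_\la\bfH$, realised through the decomposition $\bfOg_{[-n,n]}^{\ot r}=\bigoplus_{\la}\bigl(\bfOg_{[-n,n]}^{\ot r}\bigr)_\la$ into content (weight) spaces: each content space is an $\bfH$-submodule (place permutations preserve content), it is generated by the weakly increasing tensor $\og_{\bfi_\la}$ of content $\la$, and $\og_{\bfi_\la}\mapsto x_\la$ gives $\bigl(\bfOg_{[-n,n]}^{\ot r}\bigr)_\la\cong x_\la\bfH$. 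The point is that this description is manifestly compatible with the transition maps: $i_n$ carries the content-$\la$ space of $\bfOg_{[-n,n]}^{\ot r}$ into the content-$\la$ space of $\bfOg_{[-n-1,n+1]}^{\ot r}$ (with $\la$ extended by zeros), and it sends $\og_{\bfi_\la}$ to the weakly increasing generator for the padded composition, since neither the content of a basis tensor nor the sorting order on the indices changes when $[-n,n]$ is enlarged. Hence the chosen isomorphisms commute with the transition maps on both sides, the right-hand maps being the inclusions $\bigoplus_{\la\in\La([-n,n],r)}x_\la\bfH\hookrightarrow\bigoplus_{\mu\in\La([-n-1,n+1],r)}x_\mu\bfH$ induced by $\La([-n,n],r)\subseteq\La([-n-1,n+1],r)$. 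Passing to direct limits and using $\La(\iy,r)=\bigcup_{n\geq1}\La([-n,n],r)$ (every $\la$ with $\s(\la)=r$ has finite support), I obtain
\[
\bfOg_\iy^{\ot r}\;\cong\;\underset{\underset n\longrightarrow}\lim\,\bfOg_{[-n,n]}^{\ot r}\;\cong\;\underset{\underset n\longrightarrow}\lim\,\bigoplus_{\la\in\La([-n,n],r)}x_\la\bfH\;=\;\bigoplus_{\la\in\La(\iy,r)}x_\la\bfH.
\]

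I do not expect a genuine obstacle here: the only thing requiring attention is the bookkeeping in the second step, where one must choose the canonical (content-space) isomorphisms rather than arbitrary ones so that the direct limit of isomorphisms actually exists. Alternatively, the limit formalism can be sidestepped entirely: one decomposes $\bfOg_\iy^{\ot r}=\bigoplus_{\la\in\La(\iy,r)}\bigl(\bfOg_\iy^{\ot r}\bigr)_\la$ directly into content spaces --- the index set is $\La(\iy,r)$ since every basis tensor has content in $\La(\iy,r)$ --- and notes that each $\bigl(\bfOg_\iy^{\ot r}\bigr)_\la$ involves only the finitely many indices in $\operatorname{supp}(\la)$, so that the finite-$\eta$ statement applied with any segment $\eta\supseteq\operatorname{supp}(\la)$ identifies it with $x_\la\bfH$.
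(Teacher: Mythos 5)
Your proof is correct and takes essentially the same route as the paper's: the paper cites \cite[5.1]{DPW} for the finite-case isomorphisms $\al_n:\bfOg_{[-n,n]}^{\ot r}\to\oplus_{\la\in\Lannr}x_\la\bfH$ and the commutativity of the transition diagram, then passes to direct limits exactly as you do. The only difference is that you verify the compatibility of the $\al_n$ with the inclusions by hand via the content-space description (and offer a limit-free variant), where the paper simply invokes the reference.
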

\begin{proof}By \cite[5.1]{DPW}, we know there is a $\bfH$-module isomorphism
 $\al_n:\bfOg_{[-n,n]}^{\ot r}\ra\oplus_{\la\in\Lannr}x_\la\bfH$ and the
 following diagram are commutative.
$$\begin{CD}
\bfOg_{[-n,n]}^{\ot r} @>\al_n>>\bigoplus\limits_{\la\in\Lannr}x_{\la}\bfH\\
@V i_n VV  @VV i_{n}' V\\
\bfOg_{[-n-1,n+1]}^{\ot r} @>>\al_{n+1}>
\bigoplus\limits_{\la\in\La([-n-1,n+1],r)}x_{\la}\bfH,
\end{CD}$$
where $i_n$ and $i_n'$ are natural injections. Since
$\bfOg_{\iy}^{\ot r}$ is the direct limit of
$\{\bfOg_{[-n,n]}^{\ot r}\}_{n\geq1}$ and
$\oplus_{\la\in\La(\iy,r)}x_\la\bfH$ is the direct limit of
$\{\oplus_{\la\in\La([-n,n],r)}x_\la\bfH\}_{n\geq1}$, the result
follows.
\end{proof}
\begin{Prop}\label{Cor1 for non-surjective map}
We have the following algebra isomorphisms
\begin{equation*}
\begin{split}
\bfKir&\overset\theta\cong \bop_{\mu\in\Lair}\bigoplus_{\la\in\La(\infty,r)}\Hom_{\bfH}(x_{\mu}\bfH,x_{\la}\bfH)\\
\iSr&\overset{\tilde\theta}\cong\prod_{\mu\in\La(\iy,r)}\bigoplus_{\la\in\La(\iy,r)}\Hom_ {\bfH}(x_\mu\bfH,x_\la\bfH).\\
\end{split}
\end{equation*}
In particular, the algebra $\bfKir$ can be regarded as a
subalgebra (without the identity) of $\iSr$.
\end{Prop}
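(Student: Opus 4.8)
The plan is to identify both $\bfKir$ and $\iSr$ with ``matrix algebras'' whose entries are the $\bfH$-homomorphism spaces $\Hom_\bH(x_\mu\bfH,x_\la\bfH)$, the only difference being a direct sum for $\bfKir$ versus a direct product for $\iSr$; the asserted non-unital inclusion is then the evident $\bop\subseteq\prod$.

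First I would treat $\iSr=\End_\bH(\bfOg_\iy^{\ot r})$. By Lemma \ref{lemma2 for non-surjective map} there is an $\bfH$-module isomorphism $\bfOg_\iy^{\ot r}\cong\bop_{\la\in\Lair}x_\la\bfH$, so it remains to compute the endomorphism ring of this countable direct sum of cyclic modules. The universal property of coproducts gives $\Hom_\bH(\bop_{\mu}x_\mu\bfH,-)=\prod_{\mu}\Hom_\bH(x_\mu\bfH,-)$, and since each $x_\mu\bfH$ is cyclic, hence finitely generated, any homomorphism $x_\mu\bfH\to\bop_{\la}x_\la\bfH$ has image inside a finite subsum, so $\Hom_\bH(x_\mu\bfH,\bop_{\la}x_\la\bfH)=\bop_{\la}\Hom_\bH(x_\mu\bfH,x_\la\bfH)$. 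Combining these,
$$\iSr\;\cong\;\prod_{\mu\in\Lair}\bop_{\la\in\Lair}\Hom_\bH(x_\mu\bfH,x_\la\bfH),$$
and composition of endomorphisms becomes the usual matrix product $(\phi\psi)_{\la\mu}=\sum_{\nu}\phi_{\la\nu}\psi_{\nu\mu}$, which is a finite sum because for fixed $\mu$ only finitely many $\psi_{\nu\mu}$ are nonzero; in particular this multiplication on $\prod_{\mu}\bop_{\la}$ is well defined. This is $\tilde\theta$.

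Next I would treat $\bfKir=\underset{\underset n\longrightarrow}\lim\,\bfK([-n,n],r)$. For finite $\eta=[-n,n]$ one has $\bfK([-n,n],r)\cong\bS([-n,n],r)=\End_\bH(\bfOg_{[-n,n]}^{\ot r})$ (see, e.g., \cite{Du95}), and $\bfOg_{[-n,n]}^{\ot r}\cong\bop_{\la\in\Lannr}x_\la\bfH$ (the finite-level statement underlying Lemma \ref{lemma2 for non-surjective map}, cf.\ \cite[5.1]{DPW}); since this is a \emph{finite} direct sum, $\bfK([-n,n],r)\cong\bop_{\mu,\la\in\Lannr}\Hom_\bH(x_\mu\bfH,x_\la\bfH)$. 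I would then observe that inside $\bfOg_\iy^{\ot r}$ (and inside $\bfOg_{[-n-1,n+1]}^{\ot r}$) the span of those basis vectors $\og_{i_1}\cdots\og_{i_r}$ with some index $i_j\notin[-n,n]$ is an $\bfH$-submodule complementary to $\bfOg_{[-n,n]}^{\ot r}$, since a place permutation can never remove an ``outside'' index. Hence each $\bfK([-n,n],r)$ embeds into $\bfK([-n-1,n+1],r)$ and into $\iSr$ by ``extending an endomorphism by zero'', and one checks that the first of these embeddings is exactly the structural map $[A]\mapsto[A]$ defining the direct system. Passing to the limit over $n$, using $\Lair=\bigcup_{n}\Lannr$ and the fact that $\Hom_\bH(x_\mu\bfH,x_\la\bfH)$ does not depend on $n$, gives
$$\bfKir\;\cong\;\bop_{\mu\in\Lair}\bop_{\la\in\Lair}\Hom_\bH(x_\mu\bfH,x_\la\bfH),$$
which is $\theta$; the compatible ``extend by zero'' maps then identify $\bfKir$ with the non-unital subalgebra $\bop_{\mu}\bop_{\la}\Hom_\bH(x_\mu\bfH,x_\la\bfH)$ of $\prod_{\mu}\bop_{\la}\Hom_\bH(x_\mu\bfH,x_\la\bfH)=\iSr$, which fails to contain the identity of $\iSr$ since under $\tilde\theta$ the latter is $(\mathrm{id}_{x_\mu\bfH})_{\mu\in\Lair}$, with infinitely many nonzero diagonal components.

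The step I expect to be the main obstacle is the bookkeeping just described: checking that the transition maps $[A]\mapsto[A]$ of the system $\{\bfK([-n,n],r)\}$, and the embeddings $\bfK([-n,n],r)\hookrightarrow\iSr$, really coincide with the ``corner'' embeddings coming from the $\bfH$-module decompositions of tensor space. This forces one to unwind the explicit isomorphism $\bfK([-n,n],r)\cong\bS([-n,n],r)$ and to track how a basis element $[A]$ acts as an $\bfH$-module homomorphism; the remaining ingredients (the $\Hom/\prod/\bop$ identities and the finite-generation argument) are routine.
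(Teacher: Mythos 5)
Your proposal is correct and follows essentially the same route as the paper: the finite-case isomorphism $\bfK([-n,n],r)\cong\bigoplus_{\la,\mu\in\Lannr}\Hom_{\bfH}(x_\la\bfH,x_\mu\bfH)$ followed by a direct limit for $\bfKir$, and Lemma \ref{lemma2 for non-surjective map} together with the observation that each cyclic $x_\mu\bfH$ maps into a finite subsum to obtain $\iSr\cong\prod_\mu\bigoplus_\la\Hom_{\bfH}(x_\mu\bfH,x_\la\bfH)$. The extra bookkeeping you supply (compatibility of the transition maps $[A]\mapsto[A]$ with the tensor-space decompositions, and well-definedness of the matrix product) only fleshes out steps the paper leaves implicit.
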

\begin{proof} It is known (see, e.g., \cite{Du95}) that, for any $n
\geq0$, the $q$-Schur algebras $\nKrb$ is isomorphic to
$\bigoplus\limits_{\la,\mu\in\Lannr}
 \Hom_{\bfH}(x_{\la}\bfH,x_{\mu}\bfH)$.
Since $\bfKir$ is the direct limit of $\{\nKrb\}_{n\geq1}$, the
first assertion follows. By \ref{lemma2 for non-surjective map},
we have
$$\iSr\cong\prod_{\mu\in\La(\iy,r)}\Hom_{\bfH}(x_\mu\bfH,\bigoplus_{\la\in\La(\iy,r)}x_\la\bfH).$$
Now, one checks easily that, for each $\mu\in\La(\iy,r)$, there is
a natural isomorphism
$$\bigoplus_{\la\in\La(\iy,r)}\Hom_{\bfH}(x_\mu\bfH,x_\la\bfH)\cong\Hom_\bfH(x_\mu\bfH,\bigoplus_{\la\in\La(\iy,r)}x_\la\bfH),$$
which induces the required algebra isomorphism.
\end{proof}

\newcommand{\tla}{t^\lambda}

The isomorphism $\theta$ can be made explicit.  For
$\la\in\La(\iy,r)$, let $Y(\la)$ be the Young diagram of $\la$
which is a collection of boxes, arranged in left justified rows
with $\la_i$ boxes in row $i$ for all $i\in\mbz$, and let $\tla$
be the $\la$-tableau in which the numbers $1,2\cdots,r$ appear in
order from left to right down successive (non-empty) rows of
$Y(\la)$. Let $R^\la_i$ ($i\in\mbz$) be the set of entries in the
$i$-th row of $\tla$. Clearly, $R^\la_i\neq\emptyset$ if and only
if $\la_i>0$.
  Let
\begin{equation}{\frak D}(\infty,r)=\{(\la,d,\mu)\mid
\la,\mu\in\La(\iy,r),\ d\in\frak D_{\la\mu}\}.\end{equation}
 By \cite[(1.3.10)]{JK}, every element $(\la,d,\mu)\in{\frak
D}(\infty,r)$ defines a matrix $A=(a_{i,j})_{i,j\in\mbz}\in\Thir$
such that $a_{i,j}=|R^\la_i\cap dR^\mu_j|$. This defines a
bijective map
$$\jmath:{\frak D}(\infty,r)\lra\Xi(\infty,r).$$
Define, for any $\la,\mu\in\La(\infty,r)$ and $w\in\frak
D_{\la,\mu}$, a map
\begin{equation}\label{phi basis}\phi_{\la\mu}^w:\oplus_{\la\in\La(\infty,r)}x_{\la}\mc H\to
\oplus_{\la\in\La(\infty,r)}x_{\la}\mc H\end{equation} by setting
$\phi_{\la\mu}^w(x_\nu h)=\delta_{\mu,\nu}\sum_{x\in\fS_\la
w\fS_\mu}T_xh.$ Then, the set $\{\phi_{\la\mu}^w\mid (\la,d,\mu)\in
{\frak D}(\infty,r)\}$ forms a basis for
$\bop_{\mu\in\Lair}\bigoplus_{\la\in\La(\infty,r)}\Hom_{\bfH}(x_{\mu}\bfH,x_{\la}\bfH).$
 By \cite[1.4]{Du95}, the
isomorphism $\theta$ is induced by $\jmath$. In other words, if
$\jmath(\la,w,\mu)=A$, then $\theta(e_A)=\phi_{\la\mu}^w$. In
particular, by \ref{KKK}, we have
$\theta(\ttk_\la)=\phi_{\la\la}^1$.

We now identify $\iSr$ as
$\prod_{\mu\in\La(\iy,r)}\bigoplus_{\la\in\La(\iy,r)}\Hom_
{\bfH}(x_\mu\bfH,x_\la\bfH)$ under $\tilde\theta$. Thus, the
elements of $\iSr$ have the form
$\biggl(\sum_{\la\in\Lair}f_{\la,\mu}\biggr)_{\mu\in\Lair}$, where
$f_{\la,\mu}\in\Hom_{\bfH}(x_{\mu}\bfH,x_{\la}\bfH)$ for all
$\la,\mu\in\Lair$, which represents the map given by
 $$\biggl(\sum_{\la\in\Lair}f_{\la,\mu}\biggr)_{\mu\in\Lair}\bigl(\sum_{\nu\in\Lair}a_{\nu}\bigr)=\sum_{\la,\mu\in\Lair}f_{\la,\mu}(a_{\mu}),$$
where $\sum_\nu a_{\nu}\in \oplus_\la x_{\la}\bfH$. This
identification allows us to regard the infinite $q$-Schur algebra
$\iSr$ as the $\dagger$-completion of $\bfKir$ (see \ref{completion}).

\begin{Prop}\label{iSr as dagger} Let $\tibfKir$ be the completion algebra of $\bfKir$
constructed as in \ref{completion}. Then there is an algebra
isomorphism \begin{equation*}
\barbfzetar:\tibfKir\st{\thicksim}{\lra}\iSr,
\,\, \sum_{A\in\Thir}\bt_{A}[A]\lm \biggl(\sum_{co(A)=\mu\atop
A\in\Thir}\bt_A[A]\biggr)_{\mu\in\La(\iy,r)}.
\end{equation*}
\end{Prop}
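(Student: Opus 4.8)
The plan is to use the explicit description of both algebras in terms of the Hecke-module data that has just been set up, so that the map $\barbfzetar$ becomes, up to bookkeeping, the tautological identification. Recall from Lemma~\ref{completion} that $\tibfKir=\h\bfK^\dagger(\iy,r)$ consists of all formal sums $f=\sum_{A\in\Thir}\bt_A[A]$ such that for each fixed $\mu\in\La(\iy,r)$ the set $\{A\in\Thir\mid \bt_A\neq0,\ co(A)=\mu\}$ is finite (the $\AR$-condition), with multiplication induced from $\bfKir$. On the other hand, by \ref{Cor1 for non-surjective map} and the identification made just before the statement, an element of $\iSr$ is a family $\bigl(\sum_{\la\in\Lair}f_{\la,\mu}\bigr)_{\mu\in\Lair}$ with $f_{\la,\mu}\in\Hom_\bfH(x_\mu\bfH,x_\la\bfH)$, and by Proposition~\ref{Cor1 for non-surjective map} the basis element $e_A$ (equivalently $[A]$, which differs by a unit in $\sZ$) with $co(A)=\mu$ maps under $\theta$ to $\phi_{\la\mu}^w$ where $\jmath(\la,w,\mu)=A$; these $\phi_{\la\mu}^w$ form a $\mbq(\up)$-basis of $\bigoplus_{\la}\Hom_\bfH(x_\mu\bfH,x_\la\bfH)$.

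First I would check that the map $\barbfzetar$ is well-defined and bijective as a map of $\mbq(\up)$-vector spaces. Given $f=\sum_A\bt_A[A]\in\tibfKir$, group the terms according to $co(A)=\mu$; the $\AR$-finiteness condition says each group $\sum_{co(A)=\mu}\bt_A[A]$ is a \emph{finite} $\mbq(\up)$-linear combination of the $\phi_{\la\mu}^w$, hence lies in $\bigoplus_{\la}\Hom_\bfH(x_\mu\bfH,x_\la\bfH)$, and so the family $\bigl(\sum_{co(A)=\mu}\bt_A[A]\bigr)_\mu$ is a genuine element of the direct product $\prod_\mu\bigoplus_\la\Hom_\bfH(x_\mu\bfH,x_\la\bfH)=\iSr$. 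Conversely, because for each $\mu$ the component lies in the \emph{direct sum} over $\la$, only finitely many $\phi_{\la\mu}^w$ occur, i.e. only finitely many $A$ with $co(A)=\mu$ have nonzero coefficient; this is precisely the $\AR$-condition, so the inverse assignment lands back in $\tibfKir$. Linearity and mutual inverseness are immediate from the fact that $\{[A]\}$ and $\{\phi_{\la\mu}^w\}$ are bases matched bijectively by $\jmath$.

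Next I would verify that $\barbfzetar$ is an algebra homomorphism. The multiplication in $\tibfKir$ is the one from Lemma~\ref{completion}: $\bigl(\sum_A\bt_A[A]\bigr)\bigl(\sum_B\ga_B[B]\bigr)=\sum_{A,B}\bt_A\ga_B[A][B]$, where $[A][B]$ is computed in $\bfKir$ and vanishes unless $co(A)=ro(B)$; the well-definedness lemma guarantees the result is again in $\tibfKir$. Under $\theta$, $[A][B]$ corresponds to $\phi_{\la\mu}^w\circ\phi_{\mu'\nu}^{w'}$, which is zero unless $\mu=\mu'$ (i.e. $co(A)=ro(B)$ in matrix terms), and composition of the $\phi$'s is exactly the structure constant computation underlying the isomorphism $\nKrb\cong\bigoplus_{\la,\mu\in\Lannr}\Hom_\bfH(x_\la\bfH,x_\mu\bfH)$ used in \ref{Cor1 for non-surjective map}. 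So on the level of each finite $\nKrb$ (equivalently each finite truncation of the sums) $\barbfzetar$ restricts to the known algebra isomorphism; passing to the completion, the products match term by term in the same way composition in $\prod_\mu\bigoplus_\la\Hom_\bfH(x_\mu\bfH,x_\la\bfH)$ is computed "column by column". Finally, $\barbfzetar$ sends the identity $1=\sum_{\la\in\Lair}[\diag(\la)]$ of $\tibfKir$ to $(\phi_{\mu\mu}^1)_\mu$, which by the displayed formula for composition in $\iSr$ acts as the identity on $\oplus_\la x_\la\bfH$; hence $\barbfzetar$ is unital.

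The main obstacle I expect is not any single deep step but the careful matching of the two finiteness conditions: one must be sure that the $\AR$-condition defining $\tibfKir$ (finitely many nonzero $\bt_A$ per \emph{column} $co(A)=\mu$) corresponds precisely to landing in $\prod_\mu(\text{direct sum over }\la)$ rather than $\prod_\mu(\text{direct product over }\la)$, and that the product of two such completed sums genuinely converges in the column-finite sense — this is exactly what Lemma~\ref{completion} was proved for, applied to $\sA=\bfKir$ with idempotents $e_\mu=[\diag(\mu)]$, so the check reduces to quoting that lemma together with \ref{Cor1 for non-surjective map}. A minor technical point to handle along the way is the discrepancy between $e_A$ and $[A]=\up^{-d_A}e_A$: since this is only a rescaling by a unit of $\sZ\subseteq\mbq(\up)$ it does not affect any of the vector-space or algebra identifications, but it should be mentioned so that the explicit formula for $\barbfzetar$ in terms of the $[A]$-basis is literally correct.
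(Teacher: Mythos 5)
Your proposal is correct and takes essentially the same route as the paper, whose entire proof is the one-line assertion that $\barbfzetar$ is induced by the natural monomorphism $\bfKir\ra\iSr$ of \ref{Cor1 for non-surjective map}. You have simply made explicit the details the paper treats as clear, namely the matching of the column-finiteness ($\AR$) condition with membership in $\prod_{\mu}\bigoplus_{\la}\Hom_{\bfH}(x_\mu\bfH,x_\la\bfH)$ and the column-by-column compatibility of the two multiplications.
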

\begin{proof} Clearly, $\barbfzetar$ is induced by
the natural monomorphism $\barbfzetar:\bfKir\ra\iSr$ given in
\ref{Cor1 for non-surjective map}.
\end{proof}

 By \ref{completion}, there is another completion algebra $\hbfKir$
 which is a subalgebra of $\tibfKir$ and which contains
 $\iVr$ as a subalgebras. Thus, restriction gives an algebra monomorphism $\varsigma_r:\iVr\to\iSr$.

We are now ready to establish the isomorphism $\iUr\cong\iVr$.

There is a similar basis
 $\big\{\phi_{\la\mu}^d\mid \la,\mu\in\Lannr,\ d\in
 \frak D_{\la\mu}\big\}$ for $\bfS([-n,n],r)$ by
 \cite{DJ91}, and $[A]=\up^{-d_A}\phi_{\la\mu}^d$, where
 $A=\jmath(\la,d,\mu)$,  if we identify $\bfUnr$ with
 $\bfS([-n,n],r)$ (cf. \cite[A.1]{Du92}).

 \begin{Thm}\label{not onto} The following diagram is commutative
 $$\begin{CD}
\bfV(\infty) @>\xi_r>>\iVr\\
@V \sim VV  @VV \varsigma_r V\\
\iU @>>\zeta_r> \iSr
\end{CD}$$
 Hence, we have an isomorphism $\iUr\cong\iVr$.
 Moreover, the map $\bfzetar:\iU\ra\iSr$ is not surjective for any $r\geq 1$.
 \end{Thm}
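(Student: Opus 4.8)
The plan is to prove the three assertions in order: the commutativity of the square (the crux), then the isomorphism $\iUr\cong\iVr$ (which is formal), and finally the failure of surjectivity (which then drops out). For the square I would first reduce to generators: both composites $\zeta_r\circ(\text{the isomorphism }\bfV(\iy)\cong\iU)$ and $\varsigma_r\circ\xi_r$ — the latter being $\bfV(\iy)\xrightarrow{\xi_r}\hbfKir\hookrightarrow\tibfKir\xrightarrow{\ \barbfzetar\ }\iSr$ by the construction preceding the theorem and \ref{iSr as dagger} — are algebra homomorphisms $\bfV(\iy)\to\iSr$, and by \ref{map zr}(1) the algebra $\bfV(\iy)$ is generated by $E_{h,h+1}(\bfl)$, $E_{h+1,h}(\bfl)$ and $0(\pm\be_j)$ for $h,j\in\mbz$; so it suffices to match the two maps on these generators, using the identification $\iSr\cong\prod_{\mu\in\Lair}\bigoplus_{\la\in\Lair}\Hom_{\bfH}(x_\mu\bfH,x_\la\bfH)$ of \ref{Cor1 for non-surjective map}. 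For the toral generators this is immediate: under the identification $(\bfOg_\iy^{\ot r})_\mu\cong x_\mu\bfH$ of \ref{lemma2 for non-surjective map}, $K_j$ acts on the $\mu$-weight space as multiplication by $\up^{\mu_j}$, while $\xi_r(0(\be_j))=0(\be_j,r)=\sum_{\mu\in\Lair}\up^{\mu_j}[\diag(\mu)]$ and $\theta([\diag(\mu)])=\phi^1_{\mu\mu}=\mathrm{id}_{x_\mu\bfH}$ by \ref{KKK}.

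The substantial case is $E_h$ (and, symmetrically, $F_h$), and this is where I expect the main obstacle. The operator $\zeta_r(E_h)$ carries $(\bfOg_\iy^{\ot r})_\mu$ into $(\bfOg_\iy^{\ot r})_{\mu+\al_h}$ and annihilates all other weight spaces, so its image in $\prod_\mu\bigoplus_\la\Hom_{\bfH}(x_\mu\bfH,x_\la\bfH)$ has, for each $\mu$, a single component in $\Hom_{\bfH}(x_\mu\bfH,x_{\mu+\al_h}\bfH)$. I would fix $\mu$, choose $n$ with $\{h,h+1\}\cup\mathrm{supp}(\mu)\subseteq[-n,n]$, and observe that both $(\bfOg_\iy^{\ot r})_\mu$ and $(\bfOg_\iy^{\ot r})_{\mu+\al_h}$ then lie inside $\bfOg_{[-n,n]}^{\ot r}$, and that — because $\Dt^{(r-1)}(E_h)$ only turns entries $h+1$ into $h$ with the accompanying $\ti K_h$-factors acting by scalars depending only on the multiplicities of $h$ and $h+1$ — the restriction of $\zeta_r(E_h)$ to $(\bfOg_\iy^{\ot r})_\mu$ equals the action of $E_h\in\bfU([-n,n])$ on $\bfOg_{[-n,n]}^{\ot r}$, i.e.\ the $(\mu+\al_h,\mu)$-component of the image of $E_h$ under the natural map $\bfU([-n,n])\to\bfS([-n,n],r)=\End_{\bfH}(\bfOg_{[-n,n]}^{\ot r})$. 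By the classical finite-rank BLM identifications $\bfS([-n,n],r)=\bfK([-n,n],r)=\bfV([-n,n],r)$, under which $E_h\mapsto E_{h,h+1}(\bfl,r)_{[-n,n]}=\sum_{\nu\in\La([-n,n],r-1)}[E_{h,h+1}+\diag(\nu)]$, together with the compatibility of $\theta$ with the embeddings $\bfK([-n,n],r)\hookrightarrow\bfKir$ and $\bfK([-n,n],r)\hookrightarrow\bfS([-n,n],r)\hookrightarrow\iSr$ (see \ref{Cor1 for non-surjective map} and \cite{Du95,DJ91,JK}), that component is precisely the $\mu$-column of $\barbfzetar(\tte_h)$: in $\xi_r(E_{h,h+1}(\bfl))=E_{h,h+1}(\bfl,r)=\sum_{\sigma(\nu)=r-1}[E_{h,h+1}+\diag(\nu)]$ exactly one term $[A]$ satisfies $co(A)=\mu$, namely $A=E_{h,h+1}+\diag(\mu-\be_{h+1})$, and then $ro(A)=\mu+\al_h$. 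Letting $\mu$ vary yields $\zeta_r(E_h)=\barbfzetar(\tte_h)$, hence, with the $F_h$ case, commutativity of the square. The obstacle is exactly that $\iSr$ is a completion, not a direct limit, of the finite $q$-Schur algebras, so there is no limit to take; one must argue weight-component by weight-component, using that $\zeta_r$ genuinely restricts to a finite-rank Schur-algebra map on each component, and that $\theta$ is coherent along the chain $\{\bfK([-n,n],r)\}_{n\ge1}$ and with the classical inclusion $\bfS([-n,n],r)\hookrightarrow\iSr$.

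Granting the square, the rest is short. Since $\xi_r$ is surjective onto $\iVr$ by construction, the left vertical map is an isomorphism, and $\varsigma_r$ is injective by \ref{iSr as dagger}, commutativity gives $\iUr=\zeta_r(\iU)=\varsigma_r(\iVr)\cong\iVr$. For non-surjectivity, I would use that $\iVr\subseteq\hbfKir$ (the chain $\bfKir\subseteq\iVr\subseteq\hbfKir$ of \ref{map zr}(2) and \ref{monomial base for Kr(infty)}), so $\iUr=\varsigma_r(\iVr)\subseteq\barbfzetar(\hbfKir)$; as $\barbfzetar:\tibfKir\to\iSr$ is an isomorphism it then suffices to exhibit, for every $r\ge1$, an element of $\tibfKir\setminus\hbfKir$. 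Taking the idempotents of $\bfKir$ to be the $[\diag(\mu)]$ $(\mu\in\Lair)$, a formal combination $\sum_A\beta_A[A]$ lies in $\tibfKir$ precisely when each column $\{A\mid\beta_A\ne0,\ co(A)=\mu\}$ is finite, whereas $\hbfKir$ requires moreover that each row $\{A\mid\beta_A\ne0,\ ro(A)=\mu\}$ be finite. The element $\sum_{j\in\mbz}[rE_{1,j}]$ has columns indexed by the pairwise distinct vectors $co(rE_{1,j})=r\be_j$, hence all singletons, so it lies in $\tibfKir$; but $ro(rE_{1,j})=r\be_1$ for every $j$, so its row at $r\be_1$ is infinite and it is not in $\hbfKir$. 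Therefore $\iUr\subseteq\barbfzetar(\hbfKir)\subsetneq\iSr$, and $\bfzetar$ fails to be surjective for every $r\ge1$.
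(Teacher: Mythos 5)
Your proposal is correct and takes essentially the same route as the paper: the square is checked on the generators $E_h,F_h,K_j$ by reducing, for each weight vector (equivalently each weight component), to the finite-rank identification $\bfS([-n,n],r)=\bfV([-n,n],r)$ on a window $[-n,n]$ large enough to contain $h$ and the relevant support, and non-surjectivity is obtained by exhibiting an element of $\tibfKir\setminus\hbfKir$ with one fixed row sum and infinitely many column sums (your $\sum_j[rE_{1,j}]$ is a concrete instance of the paper's family $([A_\mu])_{\mu}$ with $ro(A_\mu)=\la$ fixed). The only cosmetic difference is that the paper argues vector-by-vector on $\bfOg_\iy^{\ot r}$ rather than component-by-component in $\prod_\mu\bigoplus_\la\Hom_\bfH(x_\mu\bfH,x_\la\bfH)$.
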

 \begin{proof}  Recall from \ref{map zr} that we may identify $\bfU(\infty)$ with $\bfV(\infty)$.
 Fix some $i\in\mbz$.
 For $\og\in\bfOg_\iy^{\ot r}$, we choose $n\geq 1$ such that $-n\leq  i<n$ and
 $\og\in\bfOg_{[-n,n]}^{\ot r}$. Then $E_i\cdot\og=E_{i,i+1}(\bfl,r)_{[-n,n]}\cdot\og$ by a result for $q$-Schur algebras.
 It is clear that we have  $E_{i,i+1}(\bfl,r)_{[-n,n]}\cdot\og=E_{i,i+1}(\bfl,r)_\iy\cdot\og$.
 Hence,  $E_i\cdot\og=E_{i,i+1}(\bfl,r)_\iy\cdot\og$ for any $\og\in\bfOg_\iy^{\ot r}$.
 Similarly, we have $F_i\cdot\og=E_{i+1,i}(\bfl,r)_\iy\cdot\og$ and $K_i\cdot\og=0(\be_i,r)_\iy\cdot\og$ for any $\og\in\bfOg_\iy^{\ot r}$.
 The first assertion follows.

To see the last statement, it suffices to prove that the injective
map ${\varsigma}_r:\widehat\bfK(\infty,r)\ra\iSr$ is not
surjective. Fix a $\la\in\La(\infty,r)$. For any
$\mu\in\La(\infty,r)$, construct a matrix $A_\mu\in\Xi(\infty,r)$
such that $ro(A_\mu)=\la$ and $co(A_\mu)=\mu$. Clearly, the
element $([A_\mu])_{\mu\in\La(\infty,r)}$ belongs to $\iSr$, but
not in $\widehat\bfK(\infty,r)$.
 \end{proof}

 \begin{Rem} Recall from \cite{Ji} that, in the finite case, the
 $(\bfU(n),\bH)$-bimodule structure on $\bfOg_n^{\ot r}$ induces
 two algebra epimorphisms
 $$\bfU(n)\twoheadrightarrow\End_\bfH(\bfOg_n^{\ot r}),\qquad
 \bfH\twoheadrightarrow\End_{\bfU(n)}(\bfOg_n^{\ot r})$$
 (see \cite{DPS3} for the roots-of-unity case). This is the so-called {\it Schur--Weyl duality}.
 The theorem above shows that this duality is no longer true in
 the infinite case. However, the second epimorphism continues to hold;
 see \ref{bimodule isomorphism} below.
 \end{Rem}

 With the above result, we will identify $\iUr$ with $\iVr$. Thus,
 the algebra homomorphism $\bfzetar:\iU\ra\iSr$ sends $E_i,K_i,F_i$
 to $\tte_i,\ttk_i,\ttf_i$, respectively. Now,
by applying $\bfzetar$ to the graded components in \eqref{alg grading}, 
\ref{alp} implies immediately the following.

 \begin{Coro}\label{lemma2 for the map dzr}
Let $\la\in\Lair$ and $\nu',\nu''\in\mbz\iPi$. If
$t'\in\iU_{\nu'}$ and $t''\in\iU_{\nu''}$, then
$$\zr(t')[\diag(\la)]=\begin{cases}[\diag(\la+\nu')]\zr(t') &
\mathrm{if\ } \la+\nu'\in\La(\iy,r);\\
0&\mathrm{otherwise},
\end{cases}$$ and
$$[\diag(\la)]\zr(t'')=\begin{cases}\zr(t'')[\diag(\la-\nu'')]& \mathrm{if\ }\la-\nu''\in\La(\iy,r);\\
0&\mathrm{otherwise}.\end{cases}$$
\end{Coro}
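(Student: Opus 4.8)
The plan is to deduce Corollary~\ref{lemma2 for the map dzr} from Proposition~\ref{alp} by transporting the commutation relations between $\tte^{(A^\pm)}$, $\ttf^{(A^\pm)}$ and $\ttk_\la$ across the identification $\iU\cong\bfVi$ and the homomorphism $\zr$. Recall from Theorem~\ref{not onto} that, under the identification $\iUr=\iVr$, the map $\zr$ sends $E_i,F_i,K_j$ to $\tte_i,\ttf_i,\ttk_j$; moreover $\zr$ is the composite of the quotient $\iU=\bfVi\twoheadrightarrow\bfVir$ (which is $\bfxir$ of Theorem~\ref{map zr}(2)) with the embedding $\varsigma_r:\bfVir\hookrightarrow\iSr$. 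In particular $\zr$ is an algebra homomorphism, and by Lemma~\ref{KKK}(2), for $\la\in\Lair$ the idempotent $[\diag(\la)]$ lies in $\bfVir$ and equals $\ttk_\la$; it is then legitimate to multiply $\zr(t')$ by $[\diag(\la)]$ inside $\iSr$ (or already inside $\bfVir$).

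First I would reduce to the case where $t'$ is one of the monomial basis elements. By Proposition~\ref{Monomial base for U(infty)}(1), $\iU$ has the $\mbq(\up)$-basis $\{\Eap K^\bfj\Faf\}$, and by the algebra grading \eqref{alg grading} the weight-$\nu'$ component $\iU_{\nu'}$ is spanned by those $\Eap K^\bfj\Faf$ with $\al(A^+)-\al(A^-)+0=\nu'$, i.e.\ with $ro(A^+)-co(A^+)-ro(A^-)+co(A^-)=\nu'$ (note $K^\bfj$ is homogeneous of weight $0$). So it suffices to prove the two displayed formulas for $t'=\Eap K^\bfj\Faf$ with $A\in\Thipm$ such that this weight equals $\nu'$; then the general case follows by $\mbq(\up)$-linearity, since the right-hand sides $[\diag(\la+\nu')]\zr(t')$ and $0$ depend linearly on $t'$ once $\nu'$ (hence $\la+\nu'$) is fixed.

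Next I would compute $\zr(t')[\diag(\la)]$ directly. Applying $\zr$ gives $\zr(t')=\tte^{(A^+)}\ttk^\bfj\ttf^{(A^-)}$, where $\ttk^\bfj=\prod_i\ttk_i^{j_i}$. Now push $[\diag(\la)]=\ttk_\la$ to the left. By Lemma~\ref{KKK}(3) the element $\ttk^\bfj$ acts as a scalar on $\ttk_\la$, namely $\ttk^\bfj\ttk_\la=\up^{\la\cdot\bfj}\ttk_\la$; so it is harmless and one is reduced to moving $\ttk_\la$ past $\ttf^{(A^-)}$ and then past $\tte^{(A^+)}$. By the second group of formulas in Proposition~\ref{alp}, $\ttf^{(A^-)}\ttk_\la=\ttk_{\la+co(A^-)-ro(A^-)}\ttf^{(A^-)}$ when $\la\ge\bfsi(A^-)$ and is $0$ otherwise; and by the first, $\tte^{(A^+)}\ttk_\mu=\ttk_{\mu-co(A^+)+ro(A^+)}\tte^{(A^+)}$ when $\mu\ge\bfsi(A^+)$, zero otherwise. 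Composing, and using that $(\la+co(A^-)-ro(A^-))-co(A^+)+ro(A^+)=\la+\nu'$ by the choice of $\nu'$, one gets $\zr(t')\ttk_\la=\ttk_{\la+\nu'}\,\tte^{(A^+)}\ttk^\bfj\ttf^{(A^-)}=[\diag(\la+\nu')]\zr(t')$ whenever all intermediate shifts are legal and land in $\Lair$, and $0$ otherwise; one checks that ``legal and in $\Lair$'' for both steps is equivalent to $\la+\nu'\in\Lair$ (since $\la\in\Lair$ already, the nonnegativity conditions $\la\ge\bfsi(A^-)$ and $\la+co(A^-)-ro(A^-)\ge\bfsi(A^+)$ together with $\s=r$ force, and are forced by, $\la+\nu'\in\Lair$). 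The second formula, with $t''\in\iU_{\nu''}$ and $[\diag(\la)]$ on the left, is entirely symmetric: use the ``In particular'' clause of Corollary~\ref{bfsi}, i.e.\ $[\diag(\la)]\tte^{(A^+)}$ and $\ttf^{(A^-)}[\diag(\la)]$, equivalently apply Proposition~\ref{alp} reading the equalities from right to left, to slide $[\diag(\la)]$ rightward past $\tte^{(A^+)}$ and $\ttf^{(A^-)}$, picking up the shift $-\nu''$.

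The only real obstacle is bookkeeping: matching the sign conventions between $\bfsi(A)$, $co(A^\pm)$, $ro(A^\pm)$ and the weight grading, and verifying that the ``otherwise'' cases of the two compositions coincide with the single condition $\la+\nu'\notin\Lair$ (resp.\ $\la-\nu''\notin\Lair$). This is routine given Proposition~\ref{alp}; there is no new idea needed, so I would present it as an immediate corollary, spelling out only the reduction to basis monomials and the two-step slide as above.
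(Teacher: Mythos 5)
Your route is the paper's: the printed proof is the one\nobreakdash-line remark that applying $\zr$ to the graded components of \eqref{alg grading} together with \ref{alp} gives the statement, and your reduction to PBW monomials followed by sliding $\ttk_\la$ across $\ttf^{(A^-)}$ and then $\tte^{(A^+)}$ is just that remark written out. However, the step you dismiss as routine bookkeeping contains a false sub-claim. For a fixed monomial $t'=\Eap K^\bfj\Faf$, the legality of the two intermediate slides is \emph{not} equivalent to $\la+\nu'\in\Lair$: take $t'=E_1F_1$ (so $A^+=E_{1,2}$, $A^-=E_{2,1}$, $\nu'=0$, hence $\la+\nu'=\la\in\Lair$ for every $\la$) and any $\la\in\Lair$ with $\la_1=0$; then already $\ttf_1\ttk_\la=0$, so the first slide fails even though $\la+\nu'\in\Lair$. (You would also find that composing the two shifts exactly as printed in \ref{alp} gives $\la+ro(A^+)-co(A^+)+co(A^-)-ro(A^-)$, which is not $\la+\nu'=\la+ro(A)-co(A)$ unless $ro(A^-)=co(A^-)$; comparing the general formula of \ref{alp} with its own special case $\ttf_i\ttk_\la=\ttk_{\la-\al_i}\ttf_i$ shows the signs there need care, so the arithmetic does not close as quoted.)

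The corollary nevertheless holds in these degenerate cases because \emph{both} sides vanish, and the clean way to get the whole statement --- which is presumably what the authors mean by ``applying $\bfzetar$ to the graded components'' --- is the weight argument: from $K^{\bfj'}t'=\up^{\nu'\cdot\bfj'}t'K^{\bfj'}$ and \ref{KKK}(3) one gets $\up^{\mu\cdot\bfj'}\ttk_\mu\zr(t')\ttk_\la=\up^{(\la+\nu')\cdot\bfj'}\ttk_\mu\zr(t')\ttk_\la$ for all $\bfj'\in\mbzi$, so $\ttk_\mu\zr(t')\ttk_\la=0$ unless $\mu=\la+\nu'$. Since $\sum_{\mu\in\Lair}\ttk_\mu$ is the identity of $\iVr$, this yields $\zr(t')\ttk_\la=\ttk_{\la+\nu'}\zr(t')\ttk_\la$, which is $0$ if $\la+\nu'\notin\Lair$ and equals $\ttk_{\la+\nu'}\zr(t')$ otherwise (the same computation shows $\ttk_{\la+\nu'}\zr(t')=\ttk_{\la+\nu'}\zr(t')\ttk_\la$); the second displayed formula is symmetric. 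This also removes any need to reduce to monomials. With this replacement your argument is complete; as written, the asserted equivalence of the vanishing conditions is a genuine, though easily repaired, gap.
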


\section{Modified quantum $\frak{gl}_{\iy}$ and related algebras}

In this section, we identify $\mathbf U(\infty)$ with $\mathbf V(\infty)$ and $\mathbf U(\infty,r)$ with $\bfVir$.
 Thus, $E_h=E_{h,h+1}(\mathbf0)$ and
$F_h=E_{h+1,h}(\mathbf0)$ for all $h\in\mbz$.

The interpretation of the infinite $q$-Schur algebra $\iSr$ as the
$\dagger$-completion $\h\bfK^\dagger(\iy,r)$ of $\bfKir$ suggests
us to introduce the $\dagger$-completion $\h\bfK^\dagger(\iy)$ of
the algebra $\bfKi$ which contains $\h\bfK(\iy)$, and hence $\iU$,
as a subalgebra. Thus, it is natural to expect that the map
$\bfzetar:\iU\ra\iSr$ given in \eqref{bfzeta1} may be extended to
an epimorphism from $\h\bfK^\dagger(\iy)$ to $\iSr$. In this
section, we will establish this epimorphism through an
investigation of the epimorphism $\dot\bfzetar:\bfKi\to\bfKir$
induced by $\bfzetar$ via the modified quantum group $\diU$ of
$\iU$ (see \cite[23.1]{Lu93}).

Recall from \ref{map zr} that $\iU$ has a basis
$\{A(\bfj)\}_{A,\bfj}$. Thus, for any $\la,\mu\in\mbzi$, there is
a linear map from $\iU$ to $\bfKi$ sending $u$ to
$[\diag(\la)]u[\diag(\mu)]$. Let  $${}_\la\mathbf
K_\mu=\sum_{\bfj\in\mbzi}(K^\bfj-
 \up^{\la\cdot\bfj})\iU+\sum_{\bfj\in\mbzi}\iU(K^\bfj-\up^{\mu\cdot\bfj})\,\,\text{ and }\,\,_\la\overline{\iU}_\mu:=\iU/{}_\la\mathbf
K_\mu.$$ Since $[\diag(\la)]{}_\la\mathbf K_\mu[\diag(\mu)]=0$,
this map induces a linear map $_\la\overline{\iU}_\mu\to\bfKi$
sending $\pi_{\la\mu}(u)$ to $[\diag(\la)]u[\diag(\mu)]$, where
$\pi_{\la\mu}:\iU\ra{_\la}\overline{\iU}_\mu$ is the  canonical
projection. Thus, we obtain a linear map
$$f:\diU:=\bop\limits_{\la,\mu\in\mbzi}{_\la}\overline{\iU}_\mu\lra\bfKi.$$
We will introduce a multiplication in $\diU$ and prove that $f$ is
an algebra isomorphism. We need some preparation.

 \begin{Lem}\label{lemma1 for isomorphism between dot U and K(infty)}
Let $A\in\Th^\pm(\iy)$ and $\la,\mu\in\mbzi$. If
$\la-\mu\not=ro(A)-co(A)$, then we have $\pi_{\la\mu}(A(\bfj))=0$
for any $\bfj\in\mbzi$.
\end{Lem}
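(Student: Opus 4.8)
The plan is to show directly that $A(\bfj)\in{}_\la\mathbf K_\mu$ whenever $\la-\mu\neq ro(A)-co(A)$, which is exactly the assertion $\pi_{\la\mu}(A(\bfj))=0$ (this will later tell us which basis elements $A(\bfj)$ survive in ${}_\la\overline{\iU}_\mu$). Throughout we work inside $\iU$, using the identification $\iU=\bfVi$ of Theorem~\ref{map zr}(1), under which $K^\bfi=0(\bfi)$ and $A(\bfj)\in\bfVi=\iU$. The one input I would isolate first is a commutation relation between $K^\bfi$ and $A(\bfj)$: by the multiplication formulas \eqref{the formula1 (1) in the completion algebra of K(infty)}, which hold in $\hbfKi$ and hence a fortiori among the elements $K^\bfi,A(\bfj)\in\bfVi=\iU$, we have for every $\bfi\in\mbzi$
$$K^{\bfi}A(\bfj)=\up^{\bfi\cdot ro(A)}\,A(\bfi+\bfj),\qquad A(\bfj)K^{\bfi}=\up^{\bfi\cdot co(A)}\,A(\bfj+\bfi),$$
upon identifying $\sum_{p,k}i_pa_{p,k}=\bfi\cdot ro(A)$ and $\sum_{p,k}i_pa_{k,p}=\bfi\cdot co(A)$; since $\bfi+\bfj=\bfj+\bfi$ this gives
$$K^{\bfi}A(\bfj)=\up^{\bfi\cdot(ro(A)-co(A))}\,A(\bfj)K^{\bfi}.$$
(One could instead phrase this by saying $A(\bfj)$ is homogeneous of weight $ro(A)-co(A)$ for the grading \eqref{alg grading}, but the explicit formula sidesteps having to verify homogeneity of the $A(\bfj)$-basis.)

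Next I would reduce this identity modulo ${}_\la\mathbf K_\mu$. Since $(K^{\bfi}-\up^{\la\cdot\bfi})A(\bfj)$ lies in the summand $(K^{\bfi}-\up^{\la\cdot\bfi})\iU$ of ${}_\la\mathbf K_\mu$, and $A(\bfj)(K^{\bfi}-\up^{\mu\cdot\bfi})$ lies in the summand $\iU(K^{\bfi}-\up^{\mu\cdot\bfi})$, we obtain $K^{\bfi}A(\bfj)\equiv\up^{\la\cdot\bfi}A(\bfj)$ and $A(\bfj)K^{\bfi}\equiv\up^{\mu\cdot\bfi}A(\bfj)$ modulo ${}_\la\mathbf K_\mu$. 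Substituting these into the displayed commutation identity yields
$$\bigl(\up^{\bfi\cdot\la}-\up^{\bfi\cdot(\mu+ro(A)-co(A))}\bigr)A(\bfj)\in{}_\la\mathbf K_\mu\qquad\text{for every }\bfi\in\mbzi .$$

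To finish, suppose $\la-\mu\neq ro(A)-co(A)$. Since $\la$ and $\mu+ro(A)-co(A)$ are elements of $\mbzi$ (finite support), they differ in some coordinate $k$; taking $\bfi=\be_k$ makes the scalar above equal to $\up^{\la_k}-\up^{(\mu+ro(A)-co(A))_k}$, a difference of two distinct (Laurent) powers of the indeterminate $\up$, hence a nonzero element of $\mbq(\up)$. Multiplying by its inverse gives $A(\bfj)\in{}_\la\mathbf K_\mu$, i.e.\ $\pi_{\la\mu}(A(\bfj))=0$. The whole argument is a short formal computation; the only place needing attention is this last nonvanishing, which is precisely where the hypothesis $\la-\mu\neq ro(A)-co(A)$ (together with the finiteness of the supports of $\la,\mu$ and $A$) is used, so I would regard it as the crux rather than a genuine obstacle.
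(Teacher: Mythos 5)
Your argument is correct and is essentially the paper's own proof: both rest on the multiplication formulas \eqref{the formula1 (1) in the completion algebra of K(infty)} giving $K^{\bfi}A(\bfj)=\up^{\bfi\cdot ro(A)}A(\bfi+\bfj)$ and $A(\bfj)K^{\bfi}=\up^{\bfi\cdot co(A)}A(\bfj+\bfi)$, and both conclude by exhibiting a $\bfj'$ (your $\bfi=\be_k$) for which the resulting scalar $\up^{\bfj'\cdot(\la-ro(A))}-\up^{\bfj'\cdot(\mu-co(A))}$ is nonzero, forcing $A(\bfj)\in{}_\la\mathbf K_\mu$. The paper merely packages the two congruences into a single identity rather than reducing modulo ${}_\la\mathbf K_\mu$ in two steps.
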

\begin{proof}Since
$\mu-co(A)\not=\la-ro(A)$, there exist $\bfj'\in\mbzi$ such that
$\bfj'\cdot(\mu-co(A))\not=\bfj'\cdot(\la-ro(A))$. Since
$\up^{-\bfj'\cdot
ro(A)}0(\bfj')A(\bfj)=A(\bfj+\bfj')=\up^{-\bfj'\cdot
co(A)}A(\bfj)0(\bfj')$
 we have
for any $\bfj\in\mbzi$
$$(\up^{\bfj'\cdot(\mu-co(A))}-\up^{\bfj'\cdot(\la-ro(A))})A(\bfj)=\up^{-\bfj'\cdot ro(A)}
(K^{\bfj'}-\up^{\la\cdot\bfj'})A(\bfj) -\up^{-\bfj'\cdot
co(A)}A(\bfj)(K^{\bfj'}-\up^{\mu\cdot\bfj'}).$$ The assertion
follows.
\end{proof}

 Recall the algebra grading of $\iU$
 given in \eqref{alg grading}.

\begin{Lem}\label{lemma4 for isomorphism between dot U and K(infty)}
For $\la,\mu\in\mbzi$ and $\nu\in\mbz\iPi$. If $\nu\not=\la-\mu$
then we have $\pi_{\la\mu}(\iU_\nu )=0$. Hence,
${_\la}\overline{\iU}_\mu=\pi_{\la\mu}(\iU_{\la-\mu})$.
\end{Lem}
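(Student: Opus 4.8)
The plan is to reduce everything to the basis decomposition $\iU = \bigoplus_{A,\bfj} \mbq(\up)\,A(\bfj)$ coming from Theorem \ref{map zr}, and to exploit the interplay between this basis and the algebra grading $\iU = \bigoplus_{\nu\in\mbz\Pi(\iy)}\iU_\nu$ of \eqref{alg grading}. The key observation is that each basis element $A(\bfj)$ is homogeneous with respect to this grading: since $E^{(A^+)}\in\iU_{\nu^+}$ and $F^{(A^-)}\in\iU_{-\nu^-}$ where $\nu^{\pm}$ records the positive/negative parts, and since $K^\bfj\in\iU_0$, one sees that $A(\bfj)$ — being (up to the completion bookkeeping) built from $E^{(A^+)}$, the diagonal part, and $F^{(A^-)}$ — lies in $\iU_{ro(A)-co(A)}$. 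Indeed $ro(A)-co(A) = ro(A^+) - co(A^+) - (ro(A^-)-co(A^-))$ is exactly the weight one computes from $A^\pm$, which matches the degree of the monomial $E^{(A^+)}F^{(A^-)}$. So the grading component $\iU_\nu$ is precisely the span of those $A(\bfj)$ with $ro(A)-co(A)=\nu$.

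Given that, here is the argument. Fix $\la,\mu\in\mbzi$ and $\nu\in\mbz\iPi$ with $\nu\neq\la-\mu$. Any element of $\iU_\nu$ is a linear combination of basis elements $A(\bfj)$ with $ro(A)-co(A)=\nu\neq\la-\mu$. By Lemma \ref{lemma1 for isomorphism between dot U and K(infty)}, each such $A(\bfj)$ satisfies $\pi_{\la\mu}(A(\bfj))=0$ (the hypothesis there is exactly $\la-\mu\neq ro(A)-co(A)$). Hence $\pi_{\la\mu}(\iU_\nu)=0$ by linearity. For the second assertion, write the identity decomposition $1 = \sum_{\nu\in\mbz\iPi}$ (informally — more precisely, apply $\pi_{\la\mu}$ to an arbitrary $u = \sum_\nu u_\nu$ with $u_\nu\in\iU_\nu$, a finite sum): then $\pi_{\la\mu}(u) = \sum_\nu \pi_{\la\mu}(u_\nu) = \pi_{\la\mu}(u_{\la-\mu})$, since all other summands vanish by what was just shown. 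Because $\pi_{\la\mu}$ is surjective onto ${_\la}\overline{\iU}_\mu$, we conclude ${_\la}\overline{\iU}_\mu = \pi_{\la\mu}(\iU_{\la-\mu})$.

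The only genuine point needing care — and the step I would expect to require the most attention — is justifying that $A(\bfj)\in\iU_{ro(A)-co(A)}$ cleanly, i.e. that the homogeneity claim survives passage through the completion $\hbfKi$ in which $A(\bfj)$ is defined. This should follow from Corollary \ref{bfsi} (which shows $E^{(A^+)}$ and $F^{(A^-)}$ shift diagonal idempotents by exactly $ro(A^{\pm})-co(A^{\pm})$) together with the definition of $A(\bfj)$ as a $\up$-weighted sum of $[A+\diag(\la)]$ over $\la$; concretely, using \ref{the formula1 in the completion algebra of K(infty)} one has $A(\bfj) = \up^{?}\,E^{(A^+)}\,0(\bfj'')\,F^{(A^-)}$ type expressions, or one argues directly that multiplying $A(\bfj)$ on the left and right by $0(\bfi)$ reproduces the commutation \eqref{the formula1 (1) in the completion algebra of K(infty)} with the scalar $\up^{\bfi\cdot ro(A)}$ (resp. $\up^{\bfi\cdot co(A)}$), which pins down the $\iU^0$-bi-weight and hence the grading degree. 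Alternatively, and perhaps most economically, one can bypass the completion entirely: the grading \eqref{alg grading} is intrinsic to $\iU$, and since $\iU$ is generated by the $E_i\in\iU_{\al_i}$, $F_i\in\iU_{-\al_i}$, $K^\bfj\in\iU_0$, and the basis $\{A(\bfj)\}$ expands these generators, a triangularity/induction argument (as in \ref{Monomial base for U(infty)}) shows directly that $A(\bfj)$ is homogeneous of the asserted degree. With that in hand the rest is immediate from Lemma \ref{lemma1 for isomorphism between dot U and K(infty)}.
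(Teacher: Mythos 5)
Your proof is correct, but it takes a different route from the paper's. The paper argues directly with an arbitrary homogeneous element $t\in\iU_\nu$: from the defining relations one has $K^{\bfj}t=\up^{\nu\cdot\bfj}tK^{\bfj}$, and since $(K^{\bfj}-\up^{\la\cdot\bfj})\iU$ and $\iU(K^{\bfj}-\up^{\mu\cdot\bfj})$ both lie in ${}_\la\mathbf K_\mu$, applying $\pi_{\la\mu}$ gives $\up^{\la\cdot\bfj}\pi_{\la\mu}(t)=\up^{(\mu+\nu)\cdot\bfj}\pi_{\la\mu}(t)$ for all $\bfj$; choosing $\bfj$ with $\nu\cdot\bfj\neq(\la-\mu)\cdot\bfj$ forces $\pi_{\la\mu}(t)=0$. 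This is a two-line computation needing neither the basis $\{A(\bfj)\}$ nor Lemma \ref{lemma1 for isomorphism between dot U and K(infty)}. You instead reduce to Lemma \ref{lemma1 for isomorphism between dot U and K(infty)} via the claim that $\iU_\nu$ is spanned by the $A(\bfj)$ with $ro(A)-co(A)=\nu$. That claim is true, and you correctly identify it as the step needing justification; the cleanest argument is the one you sketch via \eqref{the formula1 (1) in the completion algebra of K(infty)}: conjugation by $K^{\bfj'}=0(\bfj')$ multiplies $A(\bfj)$ by $\up^{\bfj'\cdot(ro(A)-co(A))}$ while it multiplies $\iU_\nu$ by $\up^{\bfj'\cdot\nu}$, and comparing the two direct sum decompositions pins each $A(\bfj)$ into the single component $\nu=ro(A)-co(A)$. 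The underlying mechanism (separating the characters $\bfj\mapsto\up^{\nu\cdot\bfj}$) is identical in both arguments; what your version buys is a unified picture in which Lemma \ref{lemma1 for isomorphism between dot U and K(infty)} and the present lemma are the same statement read through the basis, at the cost of an extra homogeneity lemma that the paper's shorter, basis-free computation avoids. Your deduction of ${}_\la\overline{\iU}_\mu=\pi_{\la\mu}(\iU_{\la-\mu})$ from the first assertion is the same as the paper's.
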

\begin{proof}
Let $t\in\iU_\nu $. Since $K^\bfj t=\up^{\nu\cdot\bfj}tK^\bfj$ for
$\bfj\in\mbzi$, we have
$\up^{\la\cdot\bfj}\pi_{\la,\mu}(t)=\pi_{\la\mu}(K^\bfj
t)=\pi_{\la\mu}
(\up^{\nu\cdot\bfj}tK^\bfj)=\up^{\mu\cdot\bfj+\nu\cdot\bfj}\pi_{\la\mu}(t)$
for all $\bfj\in\mbzi$. Since $\nu\not=\la-\mu$ there exist
$\bfj\in\mbzi$ such that $\nu\cdot\bfj\not=(\la-\mu)\cdot\bfj$.
Hence, $\pi_{\la\mu}(t)=0$.
\end{proof}

For any $\la',\mu',\la'',\mu''\in\mbzi$ with
$\la'-\mu',\la''-\mu''\in\mbz\iPi$ and any $t\in\iU_{\la'-\mu'}$,
$s\in\iU_{\la''-\mu''}$, define the product in $\diU$
$$\pi_{\la'\mu'}(t)\pi_{\la''\mu''}(s)=\begin{cases}\pi_{\la'\mu''}(ts),
& \text{if } \mu'=\la''\\
0& \text{otherwise}.
\end{cases}$$
Using \ref{lemma4 for isomorphism between dot U and K(infty)} one
can easily check the above product defines  an associative
$\mbq(\up)$-algebra structure on $\diU$.

\begin{Thm}\label{isomorphism between dot U and K(infty)} The
linear map $f:\diU\ra\bfKi$ sending $\pi_{\la\mu}(u)$ to
$[\diag(\la)]u[\diag(\mu)]$ for all $u\in\iU$ and
$\la,\mu\in\mbzi$, is an algebra isomorphism.
\end{Thm}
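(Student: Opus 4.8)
The plan is to show that the linear map $f:\diU\to\bfKi$ sending $\pi_{\la\mu}(u)$ to $[\diag(\la)]u[\diag(\mu)]$ is a bijective algebra homomorphism. First I would verify that $f$ is well-defined: since $[\diag(\la)]\,{}_\la\mathbf K_\mu\,[\diag(\mu)]=0$ in $\bfKi$ (which follows from \eqref{the formula1 (1) in the completion algebra of K(infty)}, namely $0(\bfj)A(\bfj')=\up^{\sum j_ia_{i,k}}A(\bfj+\bfj')$ and its right-hand analogue, together with \eqref{the product [D][A] and [A][D]}), the assignment descends through the quotient $\iU\to{}_\la\overline{\iU}_\mu$ and hence assembles to a well-defined linear map on the direct sum $\diU=\bigoplus_{\la,\mu}{}_\la\overline{\iU}_\mu$.

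Next I would check that $f$ is an algebra homomorphism. By \ref{lemma4 for isomorphism between dot U and K(infty)} we have ${}_\la\overline{\iU}_\mu=\pi_{\la\mu}(\iU_{\la-\mu})$, so it suffices to compare $f(\pi_{\la'\mu'}(t)\pi_{\la''\mu''}(s))$ with $f(\pi_{\la'\mu'}(t))f(\pi_{\la''\mu''}(s))$ for homogeneous $t\in\iU_{\la'-\mu'}$, $s\in\iU_{\la''-\mu''}$. On the $\bfKi$ side, the product $[\diag(\la')]t[\diag(\mu')]\cdot[\diag(\la'')]s[\diag(\mu'')]$ vanishes unless $\mu'=\la''$ by the idempotent orthogonality \eqref{the product [D][A] and [A][D]}, and when $\mu'=\la''$ it equals $[\diag(\la')]ts[\diag(\mu'')]$; this matches the definition of the product in $\diU$ exactly. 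Here one uses \ref{bfsi} (or \ref{the commute formula in the completion algebra of K(infty)}) to see that multiplying a basis element $A(\bfj)$ of $\iU$ on the left by $[\diag(\la)]$ picks out the correct weight component, so that the middle idempotent $[\diag(\mu')][\diag(\la'')]$ collapses to $[\diag(\mu')]\delta_{\mu',\la''}$ inside the product; the weight bookkeeping is exactly what \ref{lemma1 for isomorphism between dot U and K(infty)} and \ref{lemma4 for isomorphism between dot U and K(infty)} were set up to control.

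Finally I would prove bijectivity. For surjectivity, note that $\bfKi$ has $\mbq(\up)$-basis $\{[A]\mid A\in\tiThi\}$; writing $\la=ro(A)$ and $\mu=co(A)$, it is enough to produce $u\in\iU$ with $[\diag(\la)]u[\diag(\mu)]=[A]$. Taking $u=\ttm^{(A)}$-type monomials at the $\iU$-level — concretely, using the monomial basis $\{E^{(A^+)}[\diag(\bfsi(A))]F^{(A^-)}\}$ of $\sK(\iy)$ from \ref{monomial base for K(infty)} and the triangular relation \eqref{5.5c} $E^{(A^+)}F^{(A^-)}=A(\mathbf 0)+f$ with lower terms — one shows by induction on $\|A\|$ (as in the proof of \ref{monomial base for K(infty)}) that each $[A]$ lies in the image of $f$; here \ref{bfsi} gives $[\diag(ro(A))]E^{(A^+)}F^{(A^-)}[\diag(co(A))]=E^{(A^+)}[\diag(\bfsi(A))]F^{(A^-)}$. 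For injectivity, I would exhibit a basis of $\diU$ that $f$ carries to a linearly independent set: by \ref{lemma4 for isomorphism between dot U and K(infty)}, $\diU$ is spanned by the classes $\pi_{\la\mu}(E^{(A^+)}K^{\bfj}F^{(A^-)})$, and matching the weight constraint $\la-\mu=ro(A)-co(A)$ with the computation above shows these map to a basis of $\bfKi$; a dimension (graded-piece) count then forces $f$ to be an isomorphism. The main obstacle I anticipate is the bookkeeping in the surjectivity/injectivity step — making precise which monomials in $\iU$ hit a given $[A]$ and checking that the spanning set of $\diU$ produced from \ref{lemma4 for isomorphism between dot U and K(infty)} has no collapse beyond what the weight grading already records — but the triangular relation \eqref{5.5c} together with \ref{bfsi} and the basis in \ref{monomial base for K(infty)} should make this routine.
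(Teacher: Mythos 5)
Your overall strategy coincides with the paper's: check well-definedness and the homomorphism property via the weight decomposition of \ref{lemma4 for isomorphism between dot U and K(infty)} and the orthogonality of the idempotents $[\diag(\la)]$, then prove bijectivity by comparing a spanning set of $\diU$ with the basis $\{[A]\}_{A\in\tiThi}$ of $\bfKi$. The well-definedness and homomorphism parts are fine, and your surjectivity argument works, though it is more elaborate than necessary: since $A^{\pm}(\mathbf 0)$ is itself an element of $\iU=\bfV(\iy)$, one gets directly $[\diag(ro(A))]A^{\pm}(\mathbf 0)[\diag(co(A))]=[A]$ from \eqref{the definition of A(j) and A(j,r)} and \eqref{the product [D][A] and [A][D]}, so no induction on $\|A\|$ via \eqref{5.5c} and \ref{monomial base for K(infty)} is needed.

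The injectivity step, however, has a genuine gap as written. You propose to show that the spanning set $\{\pi_{\la\mu}(E^{(A^+)}K^{\bfj}F^{(A^-)})\}$ of $\diU$ is carried by $f$ to a basis of $\bfKi$ and to finish by a ``dimension (graded-piece) count''. This cannot work literally: for fixed $A\in\Thipm$ and fixed $(\la,\mu)$ with $\la-\mu=ro(A)-co(A)$, the images $f(\pi_{\la\mu}(A(\bfj)))=\up^{(\la-ro(A))\cdot\bfj}[A+\diag(\la-ro(A))]$ for the various $\bfj\in\mbzi$ are all proportional to one another, so the image of your spanning set is far from linearly independent; moreover the graded pieces involved are infinite dimensional, so no dimension count is available. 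What actually has to be proved is that this proportionality is the \emph{only} collapse, i.e.\ that any $u=\sum_{A,\bfj}\beta_{_{A,\bfj}}A(\bfj)$ with $[\diag(\la)]u[\diag(\mu)]=0$ has its $(\la,\mu)$-relevant part already in ${}_\la\mathbf K_\mu$. The paper does this explicitly: the vanishing forces $\sum_{\bfj}\up^{(\la-ro(A))\cdot\bfj}\beta_{_{A,\bfj}}=0$ for each $A$, and then \eqref{the formula1 (1) in the completion algebra of K(infty)} lets one rewrite $u_{\la\mu}=\sum_{A,\bfj}\up^{-\bfj\cdot ro(A)}\beta_{_{A,\bfj}}(K^{\bfj}-\up^{\la\cdot\bfj})A(\mathbf 0)\in{}_\la\mathbf K_\mu$. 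Your argument becomes correct if you first reduce the spanning set modulo the $K^{\bfj}$-relations defining ${}_\la\mathbf K_\mu$ --- that is, keep only the representatives $\pi_{\la\mu}(A(\mathbf 0))$ with $A\in\Thipm$ and $\la-\mu=ro(A)-co(A)$ --- and then observe that their images $[A+\diag(\la-ro(A))]$ are pairwise distinct basis elements of $\bfKi$; but that reduction is precisely the computation you have omitted.
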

\begin{proof} Since, for any $A\in\ti\Th^\pm(\iy)$, $\bfj\in\mbz^\iy$ and $\lambda,\mu\in\mbz^\infty$ with $\lambda-\mu=ro(A)-co(A)$, we have
\begin{equation}\label{andy}
[\diag(\la)]A({\bfj})[\diag(\mu)]=\up^{(\la-ro(A))\cdot\bfj}[A+\diag(\la-ro(A))]=\up^{(\mu-co(A))\cdot\bfj}[A+\diag(\mu-co(A))],
\end{equation}
it follows that $[A]=[\diag(ro(A))]A^\pm({\bf0})[\diag(co(A))]$
for all $A\in\ti\Xi(\iy)$, and so
$$\bfKi=\bop_{\la,\mu\in\mbzi}[\diag(\la)]\iU[\diag(\mu)].$$ So $f$ is surjective.
Suppose now $\sum_{\la,\mu}\pi_{\la\mu}(u)\in\ker(f)$, where
$u=\sum_{A\in\Th^\pm(\iy)\atop\bfj\in\mbzi}\beta_{_{A,\bfj}}A(\bfj)$
with $\beta_{_{A,\bfj}}\in\mbq(\up)$. Then,
$[\diag(\la)]u[\diag(\mu)]=0$ for all $\la,\mu$. Let
$$u_{\la\mu}=\sum_{A\in\Th^\pm(\iy),\bfj\in\mbzi\atop\la-\mu=ro(A)-co(A)}\beta_{_{A,\bfj}}A(\bfj).$$
 By \ref{lemma1 for isomorphism between
dot U and K(infty)} we have
$\pi_{\la\mu}(u)=\pi_{\la\mu}(u_{\la\mu})$. On the other hand,
\eqref{andy} implies
$$0=[\diag(\la)]u_{\la\mu}[\diag(\mu)]=\sum_{A\in\Th^\pm(\iy)\atop\la-\mu=ro(A)-co(A)}
\left(\sum_{\bfj\in\mbzi}\up^{(\la-ro(A))\cdot
\bfj}\beta_{_{A,\bfj}}\right)[A+\diag(\la)-ro(A)].$$ Hence,
$\sum_{\bfj\in\mbzi}\up^{(\la-ro(A))\cdot\bfj}\beta_{_{A,\bfj}}=0$
for any $A\in\Th^\pm(\iy)$ with $\la-\mu=ro(A)-co(A)$. Since
$A(\bfj)=\up^{-\bfj\cdot ro(A)}0(\bfj)A(\bfl)$ we have
\begin{equation*}\begin{split}u_{\la\mu}&=\sum_{A\in\Th^\pm(\iy),\bfj\in\mbzi\atop\la-\mu=ro(A)-co(A)}
\beta_{_{A,\bfj}}A(\bfj)-\sum_{A\in\Th^\pm(\iy),\bfj\in\mbzi\atop\la-\mu=ro(A)-co(A)}
\up^{(\la-ro(A))\cdot\bfj}\beta_{_{A,\bfj}}A(0)\\
&=\sum_{A\in\Th^\pm(\iy),\bfj\in\mbzi\atop\la-\mu=ro(A)-co(A)}(K^\bfj-\up^{\la\cdot\bfj})
\up^{-\bfj\cdot ro(A)}\beta_{_{A,\bfj}}A(0)\in{}_\la\mathbf
K_\mu.\end{split}\end{equation*} Hence,
$\pi_{\la\mu}(u)=\pi_{\la\mu}(u_{\la\mu})=0$ for all $\la,\mu$. So
$f$ is injective.

 We now prove $f$ is an algebra homomorphism. Let $u_1,u_2\in\iU$
 and $\la',\mu',\la'',\mu''\in\mbzi$. If $\mu'\not=\la''$ or
$\la'-\mu'\not\in\mbz\Pi(\iy)$ or $\la''-\mu''\not\in\mbz\Pi(\iy)$
 then by definition and \ref{lemma4 for isomorphism between dot U and K(infty)}
$$f(\pi_{\la'\mu'}(u_1)\pi_{\la''\mu''}(u_2))=0=f(\pi_{\la'\mu'}(u_1))f(\pi_{\la''\mu''}(u_2)).$$
It remains to prove the case when $\mu'=\la''$,
$\la'-\mu'\in\mbz\Pi(\iy)$ and $\la''-\mu''\in\mbz\Pi(\iy)$. By
\ref{lemma4 for isomorphism between dot U and K(infty)}, we may
assume $u_1\in\iU_{\la'-\mu'}$ and $u_2\in\iU_{\la''-\mu''}$.
Observe that, for $u=E^{(A^+)} K^\bfj F^{(A^-)}\in\iU_{\la-\mu}$,
where $\la,\mu\in\mbzi$ with $\la-\mu\in\mbz\Pi(\iy)$, since
$\la-\mu=\sum_{i\leq h<j}(a_{ij}-a_{ji})\al_h$, it follows from
\ref{the commute formula in the completion algebra of K(infty)}
that
$$[\diag(\la)]u[\diag(\mu)]=[\diag(\la)][\diag(\mu+\sum_{i\leq
k<j}(a_{ij}-a_{ji})\al_k)]u=[\diag(\la)]u.$$ Hence, by
\ref{Monomial base for U(infty)},
$[\diag(\la)]u'[\diag(\mu)]=[\diag(\la)]u'$ for any
$u'\in\iU_{\la-\mu}$. Thus, if $\mu'=\la''$, then
\begin{equation*}
\begin{split}
f(\pi_{\la',\mu'}(u_1)\pi_{\mu',\mu''}(u_2))&=f(\pi_{\la'\mu''}(u_1u_2))=[\diag(\la')]u_1u_2[\diag(\mu'')]\\
&=([\diag(\la')]u_1[\diag(\mu')])u_2[\diag(\mu'')]\\&=f(\pi_{\la',\mu'}(u_1))f(\pi_{\mu',\mu''}(u_2)).
\end{split}
\end{equation*}
as required.
\end{proof}
With the above result, we shall identify $\diU$ with $\bfKi$. In
particular, we shall identify $\pi_{\la\mu}(u)$ with
$[\diag(\la)]u[\diag(\mu)]$ for $u\in\iU$ and $\la,\mu\in\mbzi$.

Since both algebras $\bfKi=\diU$ and $\bfV(\iy)=\iU$ are
subalgebras of $\h\bfK(\iy)$, the definition of $\bfV(\iy)$
implies that the algebra $\bfKi$ is a $\iU$-bimodule with an
action induced by multiplication. We now have the following
interpretation of this bimodule structure.

\begin{Coro}\label{bimodule}  The algebra $\iU$ acts on $\diU$ by the following rules:
$$t'\pi_{\la\mu}(s)t''=\pi_{\la+\nu',\mu-\nu''}(t'st'')$$
for all $t'\in\iU(\nu')$, $t''\in\iU(\nu'')$ and $s\in\iU$, where
$\la,\mu\in\mbzi$ and $\nu',\nu''\in\mbz\iPi$.
\end{Coro}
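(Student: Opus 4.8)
The plan is to work entirely through the identification $\diU=\bfKi$ established in Theorem \ref{isomorphism between dot U and K(infty)}, under which $\pi_{\la\mu}(u)$ is identified with $[\diag(\la)]u[\diag(\mu)]$; recall also (Theorem \ref{map zr}(1)) that $\bfKi=\diU$ and $\bfVi=\iU$ are both subalgebras of the completion $\hbfKi$, and that the $\iU$-bimodule structure on $\diU$ is simply left and right multiplication inside $\hbfKi$. So the asserted identity is purely an equality of products in $\hbfKi$, and everything reduces to the single commutation rule
\begin{equation}
t'\,[\diag(\la)]=[\diag(\la+\nu')]\,t'\qquad\bigl(t'\in\iU(\nu'),\ \la\in\mbzi\bigr)\tag{$\ast$}
\end{equation}
in $\hbfKi$ — the $\infty$-level counterpart of Corollary \ref{lemma2 for the map dzr}.

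To prove $(\ast)$ I would first observe that each basis element $A(\bfj)$ of $\bfVi=\iU$ (with $A\in\Thipm$, $\bfj\in\mbzi$) is homogeneous of weight $ro(A)-co(A)$ for the grading \eqref{alg grading}: the first pair of formulas in \eqref{the formula1 (1) in the completion algebra of K(infty)} gives $0(\bfj'')A(\bfj)=\up^{(ro(A)-co(A))\cdot\bfj''}A(\bfj)0(\bfj'')$ for all $\bfj''\in\mbzi$, and since conjugation by $K^{\bfj''}$ acts on $\iU(\nu)$ by the scalar $\up^{\nu\cdot\bfj''}$ and these scalars separate the grading group, it follows that $A(\bfj)\in\iU(ro(A)-co(A))$. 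Hence $\iU(\nu')$ is spanned by those $A(\bfj)$ with $ro(A)-co(A)=\nu'$, and by linearity it suffices to check $(\ast)$ for $t'=A(\bfj)$ of this form. For such elements, \eqref{the definition of A(j) and A(j,r)} together with the orthogonality relations \eqref{the product [D][A] and [A][D]} give $A(\bfj)[\diag(\la)]=\up^{(\la-co(A))\cdot\bfj}[A+\diag(\la-co(A))]$ and $[\diag(\la+\nu')]A(\bfj)=\up^{(\la+\nu'-ro(A))\cdot\bfj}[A+\diag(\la+\nu'-ro(A))]$, and the substitution $\nu'=ro(A)-co(A)$ makes the two right-hand sides literally equal; this is exactly the computation already recorded as \eqref{andy}. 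Applying $(\ast)$ with $t'$ replaced by $t''\in\iU(\nu'')$ and $\la$ replaced by $\mu-\nu''$ also yields $[\diag(\mu)]\,t''=t''\,[\diag(\mu-\nu'')]$.

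Finally, for an arbitrary $s\in\iU$ I would compute in $\hbfKi$:
$$t'\,\pi_{\la\mu}(s)\,t''=t'[\diag(\la)]\,s\,[\diag(\mu)]t''=[\diag(\la+\nu')]\,t'\,s\,[\diag(\mu)]t''=[\diag(\la+\nu')]\,(t'st'')\,[\diag(\mu-\nu'')],$$
using $(\ast)$ at the second step and its variant at the third. Since $\iU=\bfVi$ is a subalgebra of $\hbfKi$, we have $t'st''\in\iU$, so the right-hand side equals $\pi_{\la+\nu',\mu-\nu''}(t'st'')$ under the identification, which is the claimed formula. The only step requiring genuine care is the middle one — verifying that the $A(\bfj)$ are homogeneous and span the graded pieces, so that $(\ast)$ can be reduced to basis elements — but this is short and uses only results already at hand; the remainder is bookkeeping with products in $\hbfKi$.
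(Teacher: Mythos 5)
Your proof is correct and follows essentially the same route as the paper's: both reduce the claim to the commutation rule $t\,[\diag(\la)]=[\diag(\la+\nu)]\,t$ for homogeneous $t\in\iU_\nu$ and then conclude by the same three-step product computation in $\hbfKi$. The only (immaterial) difference is that the paper deduces that rule from Lemma \ref{the commute formula in the completion algebra of K(infty)} applied to the generators together with the monomial basis of Proposition \ref{Monomial base for U(infty)}, whereas you verify it directly on the basis elements $A(\bfj)$ via \eqref{andy} after first checking that they are homogeneous of weight $ro(A)-co(A)$.
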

\begin{proof}
By \ref{the commute formula in the completion algebra of K(infty)}
and \ref{Monomial base for U(infty)} we have
$t[\diag(\la)]=[\diag(\la+\nu)]t$ and
$[\diag(\la)]t=t[\diag(\la-\nu)]$ for $t\in\iU_\nu $. Hence,
 we have $t'\pi_{\la\mu}(s)t''=t'[\diag(\la)]s[\diag(\mu)]t''
=[\diag(\la+\nu')]t'st''[\diag(\mu+\nu'')]=\pi_{\la+\nu',\mu-\nu''}(t'st'')$.
\end{proof}

 Similar to \cite[3.4]{Lu00}, we  define a
map $\dzr$ from $\diU$ to $\iUr$ as follows.
\begin{Thm}
The map $\dzr:\diU\ra\iUr$ defined by
$$\dzr(\pi_{\la\mu}(u))=\begin{cases}[\diag(\la)]\zr(u)[\diag(\mu)]&\mathrm{if\ }\la,\mu\in\La(\iy,r);\\
0& \mathrm{otherwise}
\end{cases}$$
for $u\in\iU$ and $\la,\mu\in\mbzi$  is an algebra homomorphism.
\end{Thm}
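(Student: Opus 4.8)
The plan is to verify, in this order, that $\dzr$ is a well-defined linear map, that its values lie in $\iUr$, and that it is multiplicative. For the first two, recall from \ref{KKK} and \ref{orthogonal idempotents} that for $\la\in\Lair$ the element $[\diag(\la)]$ equals the idempotent $\ttk_\la\in\iUr$ and satisfies $[\diag(\la)]\ttk_i=\up^{\la_i}[\diag(\la)]$, so that $[\diag(\la)]\zr(K^\bfj)=\up^{\la\cdot\bfj}[\diag(\la)]$, and symmetrically $\zr(K^\bfj)[\diag(\mu)]=\up^{\mu\cdot\bfj}[\diag(\mu)]$. Since $\zr$ is an algebra map, applying $[\diag(\la)](-)[\diag(\mu)]$ to a generator $(K^\bfj-\up^{\la\cdot\bfj})v$, resp.\ $v(K^\bfj-\up^{\mu\cdot\bfj})$, of ${}_\la\mathbf K_\mu$ yields $0$; hence $[\diag(\la)]\zr({}_\la\mathbf K_\mu)[\diag(\mu)]=0$ and $\dzr$ is well defined on the summand ${}_\la\overline{\iU}_\mu$ when $\la,\mu\in\Lair$ (on the other summands it is declared to be $0$, so there is nothing to check). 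Since $[\diag(\la)],[\diag(\mu)]\in\iUr$ and $\iUr=\Image(\zr)$ is closed under multiplication, the image of $\dzr$ lies in $\iUr$.

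For multiplicativity it suffices, by bilinearity, to test on $\pi_{\la'\mu'}(u_1)$ and $\pi_{\la''\mu''}(u_2)$; by \ref{lemma4 for isomorphism between dot U and K(infty)} we may further assume $u_1\in\iU_{\la'-\mu'}$ and $u_2\in\iU_{\la''-\mu''}$, since otherwise the relevant factor vanishes already in $\diU$ (and hence also after applying $\dzr$). If $\mu'\neq\la''$, the product in $\diU$ is $0$ by definition, while the product $\dzr(\pi_{\la'\mu'}(u_1))\dzr(\pi_{\la''\mu''}(u_2))$ contains the factor $[\diag(\mu')][\diag(\la'')]=0$ by orthogonality of the idempotents (\ref{orthogonal idempotents}); both sides are $0$. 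So assume $\mu'=\la''$, whence the $\diU$-product is $\pi_{\la'\mu''}(u_1u_2)$ with $u_1u_2\in\iU_{\la'-\mu''}$, and we must match $\dzr(\pi_{\la'\mu''}(u_1u_2))$ with $\dzr(\pi_{\la'\mu'}(u_1))\dzr(\pi_{\mu'\mu''}(u_2))$, carefully tracking membership in $\Lair$. If $\la'\notin\Lair$ or $\mu''\notin\Lair$, both sides are visibly $0$. If $\la',\mu',\mu''\in\Lair$, then \ref{lemma2 for the map dzr} applied to $u_1\in\iU_{\la'-\mu'}$ gives $\zr(u_1)[\diag(\mu')]=[\diag(\la')]\zr(u_1)$, so $[\diag(\la')]\zr(u_1)[\diag(\mu')]=[\diag(\la')]\zr(u_1)$ using $[\diag(\la')]^2=[\diag(\la')]$; hence the right side is $[\diag(\la')]\zr(u_1)\zr(u_2)[\diag(\mu'')]=[\diag(\la')]\zr(u_1u_2)[\diag(\mu'')]$, the left side.

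The remaining case, $\la',\mu''\in\Lair$ but $\mu'\notin\Lair$, is the one genuinely delicate point and where I expect the main obstacle: the right side is $0$ (both factors vanish since $\mu'\notin\Lair$), and a priori $\pi_{\la'\mu''}(u_1u_2)$ need not vanish in $\diU$, so the ``naive'' value $[\diag(\la')]\zr(u_1u_2)[\diag(\mu'')]$ must be shown to be forced to $0$. This is exactly what \ref{lemma2 for the map dzr} delivers: applied to $u_1\in\iU_{\la'-\mu'}$ it says $[\diag(\la')]\zr(u_1)$ equals $\zr(u_1)[\diag(\la'-(\la'-\mu'))]=\zr(u_1)[\diag(\mu')]$ precisely when $\mu'\in\Lair$, and is $0$ otherwise; so here $[\diag(\la')]\zr(u_1)=0$, whence $[\diag(\la')]\zr(u_1u_2)[\diag(\mu'')]=[\diag(\la')]\zr(u_1)\zr(u_2)[\diag(\mu'')]=0$, agreeing with the right side. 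Apart from this point, the whole argument is bookkeeping with the weight grading \eqref{alg grading} and the orthogonality of the $[\diag(\la)]$; this mirrors Lusztig's treatment of the modified quantum group cited before the statement.
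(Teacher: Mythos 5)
Your proposal is correct and follows essentially the same route as the paper: well-definedness via $[\diag(\la)](\ttk^{\bfj}-\up^{\la\cdot\bfj})=0$ (from \ref{orthogonal idempotents}), and multiplicativity via the commutation rule \ref{lemma2 for the map dzr} applied to the graded pieces $\iU_{\la'-\mu'}$, $\iU_{\la''-\mu''}$. The only difference is presentational: the paper compresses your explicit case analysis (in particular the "delicate" case $\mu'\notin\La(\iy,r)$ with $\la',\mu''\in\La(\iy,r)$) into a single chain of equalities by adopting the convention $[\diag(\la)]=0$ for $\la\notin\La(\iy,r)$, with \ref{lemma2 for the map dzr} silently handling exactly the vanishing you spell out.
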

\begin{proof}
We first prove that $\dzr$ is well-defined. Assume
$\pi_{\la\mu}(u)=0$ where $u\in\iU$ and $\la,\mu\in\La(\iy,r)$. By
the definition of $_\la\iU_\mu$ we can write
$$u=\sum_{\bfj\in\mbzi}(K^\bfj-\up^{\la\cdot\bfj})u_{\bfj}'+
\sum_{\bfj\in\mbzi}u_{\bfj}''(K^\bfj-\up^{\mu\cdot\bfj})
$$ where $u_{\bfj}',u_{\bfj}''\in\iU$. By \ref{KKK} we have
 $[\diag(\la)](\ttk^{\bfj}-\up^{\la\cdot\bfj})=0=(\ttk^{\bfj}-\up^{\mu\cdot\bfj})[\diag(\mu)]$
 for $\bfj\in\mbzi$. It follows that $[\diag(\la)]\zr(u)[\diag(\mu)]=0$.
 Hence, $\dzr$ is well defined.

 For convenience, we let $[\diag(\la)]=0\in\iUr$ for
$\la\not\in\La(\iy,r)$.
Let $t\in\iU(\la'-\mu')$ and $s\in\iU(\la''-\mu'')$ where $\la',\mu',\la'',\mu''\in\mbzi$ such that
$\la'-\mu',\la''-\mu''\in\mbz\iPi$. If $\mu'\not=\la''$ then by
the definition of $\dzr$ we have
$\dzr(\pi_{\la'\mu'}(t)\pi_{\la''\mu''}(s))=0=\dzr(\pi_{\la'\mu'}(t))\dzr(\pi_{\la''\mu''}(s))$.
Now we assume $\mu'=\la''$. Then by \ref{lemma2 for the map dzr}
we have
$\dzr(\pi_{\la'\mu'}(t))\dzr(\pi_{\mu'\mu''}(s))=[\diag(\la')]\zr(t)[\diag(\mu')]\zr(s)[\diag(\mu'')]
=[\diag(\la')]\zr(t)\zr(s)[\diag(\mu'')]=[\diag(\la')]\zr(ts)[\diag(\mu'')]=
\dzr(\pi_{\la'\mu''}(ts))
=\dzr(\pi_{\la'\mu'}(t)\pi_{\mu'\mu''}(s))$. The result follows.
\end{proof}

\begin{Prop}\label{lemma4 for the map dzr}
The map $\dzr$ satisfies the following property$:$
$$\dzr(u_1u_2u_3)=\zr(u_1)\dzr(u_2)\zr(u_3)$$
where $u_1,u_3\in\iU$ and $u_2\in\diU$.
\end{Prop}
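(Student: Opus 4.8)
The plan is to reduce the identity to basis elements and then verify it by a direct computation, using Corollary~\ref{bimodule} to evaluate the bimodule product $u_1u_2u_3$ and Corollary~\ref{lemma2 for the map dzr} to commute diagonal idempotents past images under $\zr$.

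Both sides are $\mbq(\up)$-linear in each of $u_1,u_2,u_3$. Since $\iU=\bop_{\nu\in\mbz\iPi}\iU_\nu$ by \eqref{alg grading} and $\diU=\bfKi=\bop_{\la,\mu\in\mbzi}\pi_{\la\mu}(\iU)$ under the identification of Theorem~\ref{isomorphism between dot U and K(infty)}, I may assume $u_1\in\iU_{\nu'}$, $u_3\in\iU_{\nu''}$ and $u_2=\pi_{\la\mu}(s)$, for some $\nu',\nu''\in\mbz\iPi$, $\la,\mu\in\mbzi$ and $s\in\iU$. By Lemma~\ref{lemma4 for isomorphism between dot U and K(infty)}, $\pi_{\la\mu}(s)=\pi_{\la\mu}(s_{\la-\mu})$, so either $\la-\mu\notin\mbz\iPi$ (then $u_2=0$ and both sides vanish) or we may further take $s\in\iU_{\la-\mu}$.

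Granting these reductions, Corollary~\ref{bimodule} gives $u_1u_2u_3=\pi_{\la+\nu',\,\mu-\nu''}(u_1su_3)$. Hence, by the definition of $\dzr$ and since $\zr$ is an algebra homomorphism, the left-hand side equals $[\diag(\la+\nu')]\zr(u_1)\zr(s)\zr(u_3)[\diag(\mu-\nu'')]$ if $\la+\nu',\mu-\nu''\in\Lair$ and $0$ otherwise, whereas the right-hand side $\zr(u_1)\dzr(\pi_{\la\mu}(s))\zr(u_3)$ equals $\zr(u_1)[\diag(\la)]\zr(s)[\diag(\mu)]\zr(u_3)$ if $\la,\mu\in\Lair$ and $0$ otherwise. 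A short case analysis with Corollary~\ref{lemma2 for the map dzr} then finishes the proof. (i) If $\la,\mu,\la+\nu',\mu-\nu''$ all lie in $\Lair$, then $\zr(u_1)[\diag(\la)]=[\diag(\la+\nu')]\zr(u_1)$ and $[\diag(\mu)]\zr(u_3)=\zr(u_3)[\diag(\mu-\nu'')]$, so the two sides agree. (ii) If $\la,\mu\in\Lair$ but $\la+\nu'\notin\Lair$ (or $\mu-\nu''\notin\Lair$), the right-hand side vanishes because $\zr(u_1)[\diag(\la)]=0$ (resp.\ $[\diag(\mu)]\zr(u_3)=0$), and the left-hand side vanishes by the definition of $\dzr$. (iii) If $\la\notin\Lair$ or $\mu\notin\Lair$, the right-hand side vanishes; and the left-hand side also vanishes — by the definition of $\dzr$ when $\la+\nu'\notin\Lair$, and, when $\la+\nu'\in\Lair$, because $[\diag(\la+\nu')]\zr(u_1)=0$ by the second formula of Corollary~\ref{lemma2 for the map dzr} (applied with $t''=u_1$ and index $\la+\nu'$, noting $\la=(\la+\nu')-\nu'\notin\Lair$); symmetrically, if $\mu\notin\Lair$ and $\mu-\nu''\in\Lair$, then $\zr(u_3)[\diag(\mu-\nu'')]=0$ by the first formula.

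The one genuinely delicate point is case~(iii): one must exclude a nonzero left-hand side when $\la+\nu'$ and $\mu-\nu''$ both lie in $\Lair$ even though $\la$ or $\mu$ does not — a configuration that really occurs, since $\la$ may have a negative coordinate while still $\sigma(\la)=r$. This vanishing is precisely what the ``$0$ otherwise'' alternatives in Corollary~\ref{lemma2 for the map dzr} supply; everything else is routine idempotent bookkeeping.
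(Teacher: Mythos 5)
Your proposal is correct and follows essentially the same route as the paper's own proof: reduce to homogeneous $u_1,u_3$ and to $u_2=\pi_{\la\mu}(s)$ with $s\in\iU_{\la-\mu}$, apply Corollary \ref{bimodule} to compute $u_1u_2u_3$, and then move the idempotents $[\diag(\la)]$, $[\diag(\mu)]$ past $\zr(u_1)$, $\zr(u_3)$ via Corollary \ref{lemma2 for the map dzr}. The only difference is presentational: the paper compresses your three-case analysis by adopting the convention $[\diag(\la)]=0$ for $\la\notin\La(\iy,r)$, whereas you spell out the degenerate cases explicitly.
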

\begin{proof}We assume $u'\in\iU(\nu')$, $u''\in\iU(\nu'')$ and $u\in\iU(\la-\mu)$
where $\nu',\nu'',\la-\mu\in\mbz\iPi$ and $\la,\mu\in\mbzi$. By
\ref{bimodule} we have
$$\dzr(u'\pi_{\la\mu}(u)u'')=\dzr(\pi_{\la+\nu',\mu-\nu''}(u'uu''))=\begin{cases}
\zr(u')\zr(u)\zr(u'')[\diag(\mu-\nu'')]& \text{if
}\mu-\nu''\in\La(\iy,r); \\
0& \text{otherwise}.\end{cases}$$ Hence, by \ref{lemma2 for the map
dzr} we have
$\zr(u')\dzr(\pi_{\la\mu}(u))\zr(u'')=\zr(u')\zr(u)[\diag(\mu)]\zr(u'')=\dzr(u'\pi_{\la\mu}(u)u'')$.
\end{proof}
\begin{Prop}\label{image of [A] under the map dzr}
Let $A\in\ti\Th(\iy)$. Then we have
$$\dzr([A])=\begin{cases}[A]& \mathrm{if\ }A\in\Thir;\\
0&  \mathrm{otherwise}.\end{cases}$$
In particular we have $\dzr(\bfK(\iy))=\dzr(\diU)=\bfKir$.
\end{Prop}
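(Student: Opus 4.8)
The plan is to reduce to the explicit description of basis elements coming from the identification $\diU\cong\bfKi$ of Theorem \ref{isomorphism between dot U and K(infty)}, namely the identity $[A]=[\diag(ro(A))]\,A^\pm(\mathbf0)\,[\diag(co(A))]$, valid for all $A\in\ti\Xi(\iy)$, which was established in that proof. Writing $A=A^\pm+A^0$ with $A^\pm\in\Thipm$ and $A^0=\diag(\bfd)$, $\bfd=(a_{ii})_{i\in\mbz}$, this says $[A]=\pi_{ro(A),co(A)}(A^\pm(\mathbf0))$ under the identification $\pi_{\la\mu}(u)=[\diag(\la)]u[\diag(\mu)]$. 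Hence, by the definition of $\dzr$, $\dzr([A])=[\diag(ro(A))]\,\zr(A^\pm(\mathbf0))\,[\diag(co(A))]$ whenever $ro(A),co(A)\in\Lair$, and $\dzr([A])=0$ otherwise; since $\sigma(ro(A))=\sigma(co(A))=\sigma(A)$, the condition $ro(A),co(A)\in\Lair$ is equivalent to $\sigma(A)=r$ together with $ro(A),co(A)\in\mbn^\iy$.

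Next I would evaluate the right-hand side. Under the identifications in Theorems \ref{map zr} and \ref{not onto}, $\zr$ restricts on $\bfVi$ to $\bfxir$, so $\zr(A^\pm(\mathbf0))=A^\pm(\mathbf0,r)=\sum_{\nu\in\mbn^\iy,\,\sigma(A^\pm)+\sigma(\nu)=r}[A^\pm+\diag(\nu)]$ by \eqref{the definition of A(j) and A(j,r)}. Multiplying on the left by $[\diag(ro(A))]$ and on the right by $[\diag(co(A))]$ and invoking the absorption rules \eqref{the product [D][A] and [A][D]}, the summand indexed by $\nu$ survives only when $ro(A^\pm)+\nu=ro(A)$ and $co(A^\pm)+\nu=co(A)$; since $ro(A)=ro(A^\pm)+\bfd$ and $co(A)=co(A^\pm)+\bfd$, both conditions amount to $\nu=\bfd$. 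Therefore $\dzr([A])=[A^\pm+\diag(\bfd)]=[A]$ if $\bfd\in\mbn^\iy$ and $\sigma(A^\pm)+\sigma(\bfd)=r$, and $\dzr([A])=0$ otherwise (no summand matches).

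It then remains to match up these conditions with membership in $\Thir$. If $A\in\Thir$, all entries of $A$ are nonnegative, so $\bfd\in\mbn^\iy$ and $ro(A),co(A)\in\mbn^\iy$, and $\sigma(A)=r$, so $\sigma(A^\pm)+\sigma(\bfd)=r$ and $\sigma(ro(A))=\sigma(co(A))=r$; all conditions hold and $\dzr([A])=[A]$. Conversely, if $A\notin\Thir$ I would split into the case $\sigma(A)\ne r$, where already $ro(A)\notin\Lair$ and $\dzr([A])=0$ by the first paragraph, and the case $\sigma(A)=r$ with some $a_{ii}<0$, where $\bfd\notin\mbn^\iy$ and $\dzr([A])=0$ by the second paragraph. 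This gives the displayed formula. For the final assertion, $\bfKi=\diU$ has $\{[A]\mid A\in\ti\Xi(\iy)\}$ as a $\mbq(\up)$-basis, and we have just shown its image under $\dzr$ is spanned by $\{[A]\mid A\in\Thir\}\cup\{0\}$, which is precisely $\bfKir$. The computation is pure bookkeeping; the only point requiring care is that $A$ ranges over $\ti\Xi(\iy)$, whose matrices may have negative diagonal entries, so the two ways of failing to lie in $\Thir$ ("$\sigma(A)\ne r$" and "negative diagonal") must be tracked separately, and one must remember that here $[A]$ is an element of $\bfKi$ identified with $\diU$, not of $\bfKir$.
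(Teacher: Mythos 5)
Your proposal is correct and follows essentially the same route as the paper's proof: write $[A]=[\diag(ro(A))]A^\pm(\mathbf 0)[\diag(co(A))]$, apply the definition of $\dzr$ to reduce to $[\diag(ro(A))]A^\pm(\mathbf 0,r)[\diag(co(A))]$, and evaluate using the idempotent absorption rules. The only difference is that you spell out the final case analysis (tracking negative diagonal entries versus $\sigma(A)\neq r$) which the paper compresses into a single displayed case distinction.
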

\begin{proof}Let $\la=ro(A)$ and $\mu=co(A)$. If either
$\la\not\in\La(\iy,r)$ or $\mu\not\in\La(\iy,r)$, then we have
$\dzr([A])=\dzr([\diag(\la)]A^\pm(\bfl)[\diag(\mu)])=0$. Now we
assume $\la,\mu\in\La(\iy,r)$. Then we have
\begin{equation*}\begin{split}\dzr([A])&=\dzr([\diag(\la)]A^\pm(\bfl)[\diag(\mu)])\\
&=[\diag(\la)]\zr(A^\pm(\bfl))[\diag(\mu)]\\
&=[\diag(\la)]A^\pm(\bfl,r)[\diag(\mu)]\\
&=\begin{cases}[A]& \mathrm{if\ }A\in\Thir;\\
0&  \mathrm{otherwise}.\end{cases}
\end{split}
\end{equation*}
The result follows.
\end{proof}
\begin{Rem}
The natural linear map from $\bfKir$ to $\bfK(\iy)$ by sending
$[A]$ to $[A]$ for $A\in\Thir$ is not an algebra homomorphism. For
example, in the algebra $\bfKir$ we have
$[E_{1,2}]\cdot[E_{2,1}]=[E_{1,1}]$.
However we have $[E_{1,2}]\cdot[E_{2,1}]=[E_{1,1}]+[A]$, where
$A=\left(\begin{smallmatrix}0&1
\\1&-1\end{smallmatrix}\right)$
in the algebra $\bfK(\iy)$.
\end{Rem}


By \ref{completion} we may construct the completion algebra $\iKt$ of $\bfKi$ such that $\iKh$ becomes an subalgebra of $\iKt$.

\begin{Thm}There is an  algebra epimorphism $\tzr$ from $\iKt$ to
$\iSr$ defined  by sending $\sum_{A\in\ti\Th(\iy)}\bt_{A}[A]$ to
$\sum_{A\in\Thir}\bt_{A}[A]$. Moreover we have
$\tzr(\hbfKi)=\hbfKir$ and $\tzr|_{\iU}=\zr$.
\end{Thm}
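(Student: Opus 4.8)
The plan is to realize $\tzr$ as the extension of the epimorphism $\dzr\colon\bfKi\to\bfKir$ of Proposition \ref{image of [A] under the map dzr} --- which sends $[A]$ to $[A]$ if $A\in\Thir$ and to $0$ otherwise --- to the $\dagger$-completions. First I would unwind the two sides: by Lemma \ref{completion}, $\iKt$ is the space of formal sums $f=\sum_{A\in\ti\Th(\iy)}\bt_A[A]$ that are column-finite, meaning $\{A\mid\bt_A\neq0,\ co(A)=\mu\}$ is finite for every $\mu\in\mbzi$, and by Proposition \ref{iSr as dagger} the infinite $q$-Schur algebra $\iSr$ is the $\dagger$-completion of $\bfKir$, i.e.\ the corresponding space of column-finite sums indexed by $\Thir$. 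Since $\Lair\subseteq\mbzi$ and $co(A)\in\Lair$ for every $A\in\Thir$, the map $\tzr$ carries $\iKt$ into $\iSr$, is $\mbq(\up)$-linear, and sends the identity $\sum_{\la\in\mbzi}[\diag(\la)]$ of $\iKt$ to the identity $\sum_{\la\in\Lair}[\diag(\la)]$ of $\iSr$; it is surjective because any element of $\iSr$, with coefficients extended by $0$ outside $\Thir$, already belongs to $\iKt$.

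The crux is multiplicativity, and the property I would isolate first is that the structure constants of $\bfKi$ preserve the ``level'': if $f_{A,B,C}(\up,1)\neq0$ then, using $[A]=[\diag(ro(A))][A]$ and $[B]=[B][\diag(co(B))]$ from \eqref{the product [D][A] and [A][D]}, only matrices $C$ with $ro(C)=ro(A)$ and $co(C)=co(B)$ occur, so that $\s(C)=\s(ro(C))=\s(A)$ and $\s(C)=\s(co(C))=\s(B)$; in particular $C\in\Thir\iff A\in\Thir\iff B\in\Thir$. Granting this, for $f=\sum_sf_s[A_s]$ and $g=\sum_tg_t[A_t]$ in $\iKt$ the coefficient of a fixed $[C]$ in $fg$ is the finite sum $\sum_{s,t}f_sg_tf_{A_s,A_t,C}(\up,1)$ (the finiteness is as in the proof of Lemma \ref{completion}); when $C\in\Thir$ only the pairs with $A_s,A_t\in\Thir$ survive, and --- as $\dzr$ is an algebra map, so $[A_s]\cdot[A_t]=\dzr([A_s][A_t])$ in $\bfKir$ whenever $A_s,A_t\in\Thir$ --- this is precisely the coefficient of $[C]$ in $\tzr(f)\,\tzr(g)$. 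Hence $\tzr(fg)=\tzr(f)\tzr(g)$, so $\tzr$ is an algebra epimorphism. I expect this to be the only genuine obstacle: one must check that the termwise product in the completion $\iKt$ is compatible with deleting every basis vector $[A]$ with $A\notin\Thir$, and the level-preservation above is exactly what makes that work.

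For the two remaining claims, I would note that $\hbfKi$ (resp.\ $\hbfKir$) sits in $\iKt$ (resp.\ $\iSr$) as the subspace of sums that are in addition row-finite, so the argument of the first paragraph, applied to rows as well as to columns, gives $\tzr(\hbfKi)=\hbfKir$. Finally, identifying $\iU$ with $\bfVi\subseteq\hbfKi$ as in Theorem \ref{map zr}, I would compute directly from \eqref{the definition of A(j) and A(j,r)} that applying $\tzr$ to $A(\bfj)=\sum_{\la\in\mbzi}\up^{\la\cdot\bfj}[A+\diag(\la)]$ (for $A\in\Thipm$, $\bfj\in\mbzi$) discards exactly the summands with $A+\diag(\la)\notin\Thir$ and leaves $\sum_{\la\in\mbni,\ \s(A)+\s(\la)=r}\up^{\la\cdot\bfj}[A+\diag(\la)]=A(\bfj,r)=\bfxir(A(\bfj))$; since the $A(\bfj)$ form a basis of $\bfVi$, this shows $\tzr|_{\bfVi}=\bfxir$, and the commuting square of Theorem \ref{not onto} (in which $\varsigma_r$ is the inclusion of $\iVr$ into $\iSr$) identifies $\bfxir$ with $\zr$ under $\iU\cong\bfVi$, so $\tzr|_{\iU}=\zr$.
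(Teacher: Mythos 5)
Your overall strategy is the same as the paper's: $\tzr$ is the termwise extension of the epimorphism $\dzr\colon\bfKi\to\bfKir$ of \ref{image of [A] under the map dzr} to the $\dagger$-completions, and your treatment of well-definedness, surjectivity, the claim about $\hbfKi$, and the restriction to $\iU$ all agree with what the paper does (it simply cites \ref{image of [A] under the map dzr}, \ref{Cor1 for non-surjective map} and \ref{map zr} for these points). The one step that fails as written is the ``level-preservation'' argument inside your proof of multiplicativity. From $ro(C)=ro(A)$ and $co(C)=co(B)$ you correctly deduce $\s(C)=\s(A)=\s(B)$, but the conclusion ``$C\in\Thir\iff A\in\Thir\iff B\in\Thir$'' does not follow: membership in $\Thir$ requires nonnegativity of all entries in addition to $\s=r$, and $\tiThi$ contains matrices with negative diagonal entries whose total sum is still $r$. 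In fact the Remark following \ref{image of [A] under the map dzr} is a counterexample to one direction of your claimed equivalence: in $\bfKi$ one has $[E_{1,2}]\cdot[E_{2,1}]=[E_{1,1}]+[A]$ with $A=\left(\begin{smallmatrix}0&1\\1&-1\end{smallmatrix}\right)$, where $E_{1,2},E_{2,1}\in\Xi(\iy,1)$ and $\s(A)=1$, yet $A\notin\Xi(\iy,1)$.

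Fortunately the implication you actually need --- that $f_{A,B,C}(\up,1)=0$ for every $C\in\Thir$ whenever $A\notin\Thir$ or $B\notin\Thir$ --- is true, and it follows from the tool you already invoke for the other half of the argument, namely that $\dzr$ is an algebra homomorphism: if, say, $A\notin\Thir$, then
$$0=\dzr([A])\,\dzr([B])=\dzr([A]\cdot[B])=\sum_{C\in\Thir}f_{A,B,C}(\up,1)[C],$$
so each such coefficient vanishes by the linear independence of the $[C]$. (Your $\s$-computation only eliminates the $A$ with $\s(A)\neq r$; the matrices with $\s(A)=r$ but a negative diagonal entry must be killed this way.) With that substitution, your comparison of the coefficient of a fixed $[C]$, $C\in\Thir$, in $\tzr(fg)$ and in $\tzr(f)\tzr(g)$ goes through, and the remainder of the proposal is correct and consistent with the paper's (much terser) proof.
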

\begin{proof}By \ref{image of [A] under the map dzr} there is an  algebra homomorphism $\tzr$ from $\iKt$ to
$\iSr$ by sending
$\sum_{A\in\ti\Th(\iy)}\bt_{A}[A]$ to
$\sum_{A\in\ti\Th(\iy)}\bt_{A}\dzr([A])=\sum_{A\in\Thir}\bt_{A}[A]$. It is clear we have $\tzr(\hbfKi)=\hbfKir$.
By \ref{Cor1 for non-surjective map} the map $\tzr$ is surjective.
By  \ref{map zr} and \ref{image of [A] under the map
dzr} we have $\zr(A(\bfj))=A(\bfj,r)=\tzr(A(\bfj))$ for
$A\in\Th^\pm(\iy)$ and $\bfj\in\mbzi$. Hence, we have
$\tzr|_{\iU}=\zr$.
\end{proof}
\begin{Rem}
By \cite[3.10]{BLM} there is an antiautomorphism $\tau_r$ on
$\bfKir$ defined by $\tau_r([A])=[A^T]$ for $A\in\Thir$. This
induces, by \ref{stabilization property}, an antiautomorphism
$\tau$ on $\bfKi$ defined by $\tau([A])=[A^T]$ for $A\in\tiThi$.
Hence, 
we have
$\tibfKil\cong(\tibfKi)^{\text{op}}$ and $\tibfKirl\cong(\tibfKir)^{\text{op}}$.
\end{Rem}

\section{Highest weight representations of $\iU$}

In the rest of the paper, we will discuss the representation
theory of $\iU$. There
are two types of representations for $\iU$. The first type
consists of highest weight representations, while the second type
consists of ``polynomial representations'' arising from
representations of infinite $q$-Schur algebras. In the quantum $\frak{gl}_n$ case, it is well-known
that polynomial representations more or less determine all highest weight representations.
However, we shall see in Theorem \ref{classfication for simple module in tilde Cr} that
polynomial representations and highest weight representations of $\iU$ are almost mutually exclusive.
In this section,
we first discuss the highest weight representations of $\iU$,
following the approach used in \cite{Lu93}.

 For a consecutive segment $\eta$ of $\mathbb Z$, let
\begin{equation}\label{wt poset}
\aligned X(\eta)&=\{\la=(\la_i)_{i\in\eta}\mid
\la_i\in\eta\},\quad\text{
and }\\
X^+(\eta)&=\{\la\in X(\eta)\mid \la_i\geq\la_{i+1},\,\,\forall
i\in\eta^\dashv\}\\
\endaligned
\end{equation}
 be the sets of weights and dominant weights, respectively.
Thus, both $\mbz^\iy$ and $\mbn^\iy$ are subsets of $X(\iy):=X(\mathbb Z)$. Recall
from \S2 that, for $i\in\mbz$, $\be_i=(\cdots,0,\underset
i1,0\cdots)\in\mbz^\iy$ and $\al_i=\be_i-\be_{i+1}\in\mbz^\iy$.
Let $\Pi(\iy)=\{\al_i\mid i\in\mbz\}$.
We introduce a
partial ordering $\leqwt$ on $X(\iy)$ by setting $\mu\leqwt\la$ if
$\la-\mu\in\mbz^+\Pi(\iy)$. For $\al=\sum_{i\in\mbz}k_i\al_i\in
\mbz\Pi(\iy)$ the number $\height(\al):=\sum_{i\in\mbz}k_i$ is
called the height of $\al$.

We first discuss the `standard' representation theory of $\iU$
which covers the category $\sC$ of weight modules, the
category\footnote{This category is denoted as $\sC'$ in
\cite{Lu93}.} $\intC$ of integrable weight modules, the category
$\sC^{hi}$, and the category $\sO$. All these categories are full
subcategories of the category $\iU$-$\bfMod$ of $\iU$-modules.

Let $M$ be a $\iU$-module. For $\la\in \iX$, let $M_\la=\{x\in
M\mid K_ix=\up^{\la_i}x\text{ for $i\in\mbz$}\}$. If
$M_\la\not=0$, then $\la$ is called a weight of $M$, $M_\la$ is
called a weight space and a nonzero vector in $M_\la$ is called a
weight vector of $M$. Let $\wt(M)=\{\la\in \iX\mid M_\la\not=0\}$
denote the set of weights of $M$. It is clear we have the
following lemma.
\begin{Lem}\label{weight}
Let $M$ be a $\iU$-module. Then $E_iM_\la\han M_{\la+\al_i}$ and
$F_iM_{\la}\han M_{\la-\al_i}$ for $i\in\mbz$.
\end{Lem}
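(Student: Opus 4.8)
The plan is to read off the result directly from the defining relations $(b)$ and $(c)$ of $\iU$ in Definition \ref{definition of U(infty)}, specialized to a weight vector. First I would fix $i\in\mbz$, a weight $\la\in\iX$, and a vector $x\in M_\la$, so that $K_jx=\up^{\la_j}x$ for all $j\in\mbz$ by definition of the weight space.

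Next I would compute, for each $j\in\mbz$, the action of $K_j$ on $E_ix$ using relation $(b)$: $K_j(E_ix)=\up^{\dt_{j,i}-\dt_{j,i+1}}E_j K_j x=\up^{\la_j+\dt_{j,i}-\dt_{j,i+1}}E_ix$. Since $\al_i=\be_i-\be_{i+1}$, the exponent $\la_j+\dt_{j,i}-\dt_{j,i+1}$ is precisely the $j$-th coordinate $(\la+\al_i)_j$. As this holds for every $j$, we conclude $E_ix\in M_{\la+\al_i}$, i.e. $E_iM_\la\han M_{\la+\al_i}$. The argument for $F_i$ is identical, using relation $(c)$: $K_j(F_ix)=\up^{\dt_{j,i+1}-\dt_{j,i}}F_iK_jx=\up^{\la_j-(\dt_{j,i}-\dt_{j,i+1})}F_ix=\up^{(\la-\al_i)_j}F_ix$, giving $F_iM_\la\han M_{\la-\al_i}$.

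There is no real obstacle here; the lemma is a formal consequence of the Hopf-algebra relations and is recorded only for later use in \S9. The only point requiring a word of care is bookkeeping the indices in the Kronecker deltas so that $\dt_{j,i}-\dt_{j,i+1}$ is correctly identified with the $j$-th entry of the simple root $\al_i$ as defined in \eqref{simple roots}; once that identification is made explicit the proof is immediate.
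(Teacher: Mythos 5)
Your proof is correct and is exactly the computation the paper has in mind (the paper gives no proof, dismissing the lemma with ``It is clear''): apply relations $(b)$ and $(c)$ of Definition \ref{definition of U(infty)} to a weight vector and match the exponent $\dt_{j,i}-\dt_{j,i+1}$ with the $j$-th entry of $\al_i=\be_i-\be_{i+1}$. The only blemish is a typo in your displayed computation, where $E_jK_jx$ should read $E_iK_jx$; the subsequent expressions and the conclusion are correct.
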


A $\iU$-module $M$ is called a {\it weight module}, if
$M=\op_{\la\in\iX}M_\la$. Let $\sC$ denote the full subcategory of
$\iU$-$\bfMod$ consisting of all weight modules. It is clear every
submodule of a weight module and every quotient of a weight module
are weight modules. Hence, $\mc C$ is an abelian category.

An object $M\in\mc C$ is said to be integrable if, for any
 $x\in M$ and any $i\in\mbz$,
there exist $n_0\geq 1$ such that $E_i^nx=F_i^nx=0$ for any $n\geq
n_0$. Let $\intC$ be the full subcategory of $\mc C$ whose objects
are integrable $\iU$-modules.

Let $\mc C^{hi}$ be the full subcategory of $\mc C$ whose objects
are the $\iU$-module $M$ with the following property: for any
$x\in M$ there exist $n_0\geq 1$ such that $u_\cdot x=0$ where $u$
is a monomial in the $E_i$'s and $\deg(u)\geq n_0$.

For $\la\in \iX$ set $(-\iy,\la]=\{\mu\in \iX\mid \mu\leqwt\la\}$.
The category $\mc O$ is defined as follows. Its objects are
$\iU$-modules $M$ which are weight modules with finite dimensional
weight spaces and such that there exists a finite number of
elements $\la^{(1)},\cdots,\la^{(s)}\in \iX$ such that
$\wt(M)\han\bin_{i=1}^s(-\iy,\la^{(i)}]$.

\begin{Prop}\label{subcategory}
The category $\mc O$ is a full subcategory of $\mc C^{hi}$.
\end{Prop}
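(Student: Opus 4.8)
The plan is to show that every object of $\mc O$ is an object of $\mc C^{hi}$; since both $\mc O$ and $\mc C^{hi}$ are full subcategories of $\iU$-$\bfMod$, nothing about morphisms needs to be checked once this object inclusion is established. So fix $M\in\mc O$, say with $\wt(M)\han\bin_{i=1}^s(-\iy,\la^{(i)}]$, and I must produce, for each $x\in M$, an integer $n_0\ge1$ such that $u\cdot x=0$ for every monomial $u$ in the $E_i$'s with $\deg(u)\ge n_0$.

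First I would reduce to the case of a single weight vector: as $M$ is a weight module, any $x$ is a finite sum $x=\sum_j x_j$ with $x_j\in M_{\mu_j}$, and taking the maximum of the integers attached to the $x_j$ settles the general case (the case $x=0$ being trivial). So suppose $0\neq x\in M_\mu$. Given a monomial $u=E_{i_1}\cdots E_{i_d}$ of degree $d$, Lemma \ref{weight} gives $u\cdot x\in M_{\mu+\nu}$ where $\nu=\al_{i_1}+\cdots+\al_{i_d}\in\mbz^+\iPi$ has $\height(\nu)=d$. The key step is the observation that if $u\cdot x\neq0$ then $\mu+\nu\in\wt(M)$, hence $\mu+\nu\leqwt\la^{(i)}$ for some $i$, i.e.\ $\la^{(i)}-\mu-\nu\in\mbz^+\iPi$; adding back $\nu$ shows $\la^{(i)}-\mu\in\mbz^+\iPi$, and since $\height$ is additive and non-negative on $\mbz^+\iPi$ (its well-definedness coming from the $\mbz$-linear independence of the $\al_i=\be_i-\be_{i+1}$ in $\mbz^\iy$) this forces $d=\height(\nu)\le\height(\la^{(i)}-\mu)$. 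Because $M_\mu\neq0$, the index set $I_\mu=\{i\mid \mu\leqwt\la^{(i)}\}$ is a nonempty subset of $\{1,\dots,s\}$, so I would simply take $n_0=1+\max_{i\in I_\mu}\height(\la^{(i)}-\mu)$; then every monomial $u$ in the $E_i$'s of degree $\ge n_0$ annihilates $x$, which is exactly the condition defining $\mc C^{hi}$.

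I do not anticipate a real obstacle: this is the $\iU$-version of a routine argument for quantum Kac--Moody algebras, and the only points needing a word of justification are that $\height$ is well defined and additive on the positive root monoid $\mbz^+\iPi$, together with the harmless reduction to weight vectors. In particular, the finite-dimensionality of the weight spaces of objects of $\mc O$ plays no role in this statement.
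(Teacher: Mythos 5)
Your argument is correct and is essentially the paper's own proof: both bound the degree of a nonvanishing monomial acting on a weight vector $x\in M_\mu$ by $\max\{\height(\la^{(i)}-\mu)\mid \mu\leqwt\la^{(i)}\}$, using that $E_{i_1}\cdots E_{i_k}x$ lies in a weight space whose weight must again lie under some $\la^{(i)}$. The only additions on your side (the reduction to weight vectors and the remark on additivity of $\height$) are harmless and left implicit in the paper.
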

\begin{proof}Let $M$ be a object of $\mc O$. Then there exist $\la^{(i)}(1\leq i\leq s)$ such that
$\wt(M)\han\bin_{i=1}^s(-\iy,\la^{(i)}]$. Let $x_\mu\not= 0\in
M_\mu$. Let $n_\mu=\max\{\height(\la^{(i)}-\mu)\mid
\la^{(i)}\geqwt\mu,1\leq i\leq s\}$. We claim that if $k\geq
n_\mu+1$, then for any $i_1,\cdots, i_k\in\mbz$ we have
$\mu+\al_{i_1}+\al_{i_2}+\cdots+\al_{i_k}\not\in \wt(M)$. Indeed,
if there exists $1\leq j\leq s$ such that
$\mu+\al_{i_1}+\al_{i_2}+\cdots+\al_{i_k}\leq\la^{(j)}$, then
$\height(\la^{(j)}-\mu)\geq k\geq n_\mu+1$, contrary to the
definition of $n_\mu$. Thus, for any $k\geq n_{\mu+1}$ and any
$i_1,\cdots,i_k\in\mbz$, we have $E_{i_1}\cdots E_{i_k}x_\mu\in
M_{\mu+\al_{i_1}+\cdots+\al_{i_k}}=0$. Hence, $M$ is a object of
$\mc C^{hi}$.
\end{proof}

 A $\iU$-module $M$ is called a {\it highest
weight module} with highest weight $\la\in \iX$ if there exists a
nonzero vector $x_0\in M_\la$ such that $E_ix_0=0$,
$K_ix_0=\up^{\la_i}x_0$ for all $i\in\mbz$ and $\iU x_0=M$.  The
vector $x_0$ is called a highest weight vector. By a standard
argument (see, e.g., \cite{Lu89}) we have the following lemma.
\begin{Lem}Let $M$ be a highest weight $\iU$-module with highest weight $\la$. Then
$M=\bop\limits_{\mu\leq\la}M_\mu$ and $\dim M_\la=1$. Moreover $M$
contains a unique  maximal submodule.
\end{Lem}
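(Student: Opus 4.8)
The statement to prove is the standard structure theorem for highest weight modules: if $M$ is a highest weight $\iU$-module with highest weight $\la$ and highest weight vector $x_0$, then $M=\bop_{\mu\leq\la}M_\mu$, $\dim M_\la=1$, and $M$ has a unique maximal submodule.

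\textbf{Proof proposal.} The plan is to follow the classical PBW-style argument, adapted to $\iU=\bfU_\up(\frak{gl}_\iy)$ using the monomial basis from Proposition \ref{Monomial base for U(infty)}. First I would establish the weight decomposition. Since $M=\iU x_0$, it suffices to see that $\iU x_0$ lies in $\bop_{\mu\leq\la}M_\mu$. Using the basis $\{\Eap K^\bfj\Faf\mid A\in\Thetapm,\bfj\in\mbzi\}$ of $\iU$, apply such a basis element to $x_0$. Since $E_ix_0=0$ for all $i$, any monomial $\Eap$ with $A^+\neq 0$ appearing to the rightmost position after rewriting will kill $x_0$; more precisely, push the $K^\bfj$ and $\Eap$ parts to act on $x_0$ directly: $K^\bfj x_0=\up^{\la\cdot\bfj}x_0$, and $\Eap x_0=0$ unless $A^+=0$. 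Hence $M$ is spanned by $\Faf x_0$ with $A\in\Thetam$ (together with the graded pieces of $\bfU^-(\iy)$). By Lemma \ref{weight}, each $F_i$ shifts weight down by $\al_i$, so $\Faf x_0\in M_{\la-\nu}$ for some $\nu\in\mbz^+\Pi(\iy)$, giving $M=\bop_{\mu\leqwt\la}M_\mu$. The weight space $M_\la$ is then spanned by the single vector $x_0$ (the only $\Faf x_0$ of weight $\la$ is the one with $A^-=0$), so $\dim M_\la=1$; here one must note linear independence is automatic since $M_\la$ is one-dimensional once we know $x_0\neq 0$ generates it.

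For the unique maximal submodule, I would argue as usual: let $N$ be the sum of all submodules $N'\subseteq M$ with $x_0\notin N'$, equivalently (using the weight decomposition) with $N'\cap M_\la=0$, since any proper submodule of a weight module is itself a weight module and cannot meet the one-dimensional $M_\la$ (if it did, it would contain $x_0$ and hence all of $\iU x_0=M$). Then $N$ is a submodule with $N\cap M_\la=0$ as well, because the weight-$\la$ component of any element of $N$ is a sum of weight-$\la$ components from finitely many such $N'$, each of which is zero. Thus $N\neq M$, so $N$ is the unique maximal submodule: any proper submodule omits $x_0$, hence is contained in $N$.

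\textbf{Main obstacle.} The only subtlety is the rewriting step: showing that an arbitrary element of $\iU x_0$ is a linear combination of vectors of the form $\Faf x_0$ lying in weight spaces $M_\mu$ with $\mu\leqwt\la$. This is exactly where the ordered monomial basis of Proposition \ref{Monomial base for U(infty)} is used—one writes $u=\sum c_{A,\bfj,B}\,\Eap[\text{(Bexpression)}]$ in the form $\bfU^-\cdot\bfU^0\cdot\bfU^+$ and observes that the $\bfU^+$-part annihilates $x_0$ while the $\bfU^0$-part acts by a scalar, leaving only $\bfU^-x_0\subseteq\bop_{\mu\leqwt\la}M_\mu$. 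Since this is the standard triangular decomposition argument (as in \cite{Lu89,Lu93}) and the excerpt explicitly says ``by a standard argument,'' I would keep this brief, citing Proposition \ref{Monomial base for U(infty)} and the commutation relations (b),(c),(e) of Definition \ref{definition of U(infty)} to justify the reordering. Everything else is a routine consequence of the weight grading.
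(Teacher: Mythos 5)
Your proposal is correct and is exactly the ``standard argument'' the paper itself invokes without proof (it only cites \cite{Lu89}): triangular decomposition gives $M=\bfU^-(\iy)x_0$, hence the weight decomposition with $\dim M_\la=1$, and the sum of all submodules avoiding $M_\la$ is the unique maximal submodule. The one point you rightly flag—rewriting into the order $\bfU^-\bfU^0\bfU^+$ rather than the order $E^{(A^+)}K^{\bfj}F^{(A^-)}$ of Proposition \ref{Monomial base for U(infty)}—is standard (e.g.\ via the antiautomorphism swapping $E_i$ and $F_i$), so there is no gap.
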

For $\la\in \iX$, let
$$M(\la)=\iU/(\sum_{i\in\mbz}\iU E_i+
\sum_{i\in\mbz}\iU(K_i-\up^{\la_i}))=\bfU^-(\iy)1_\la,$$ where
$1_\la$ is the image of 1 in $M(\la)$. The module $M(\la)$ is
called a {\it Verma module}. By the above lemma, we know that
$M(\la)$ has a unique irreducible quotient module $L(\la)$. It is
clear, for $\la\in \iX$, the modules $M(\la)$ and $L(\la)$ are all
in the category $\mc C^{hi}$ and the category $\mc O$.

The next result classifies the irreducible modules in the
categories $\mc C^{hi}$ and $\mc O$.
\begin{Thm}\label{classfication for simple module in C^hi}
$(1)$ The map $\la\ra L(\la)$ defines a bijection between $\iX$ and the
set of isomorphism classes of irreducible $\iU$-modules in the
category $\mc C^{hi}$.

$(2)$ The map $\la\ra L(\la)$ defines a bijection between
$\iX$ and the set of isomorphism classes of irreducible $\iU$-modules in the category $\mc O$.
\end{Thm}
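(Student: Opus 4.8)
The plan is to reduce both parts to the single assertion that every irreducible object of $\mc C^{hi}$ is a highest weight module, and then to deduce part (2) formally from part (1) via Proposition \ref{subcategory}.

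First I would record the easy direction: for each $\la\in\iX$ the module $L(\la)$ is irreducible and, as already noted, lies in $\mc O\subseteq\mc C^{hi}$, so $\la\mapsto L(\la)$ is a well-defined map into both sets of isomorphism classes. For injectivity I would invoke the Lemma preceding the definition of $M(\la)$: one has $L(\la)=\bigoplus_{\mu\leqwt\la}L(\la)_\mu$ with $\dim L(\la)_\la=1$, so $\la$ is the unique $\leqwt$-maximal weight of $L(\la)$ and is therefore recovered from the isomorphism class of $L(\la)$; hence $L(\la)\cong L(\mu)$ forces $\la=\mu$.

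The heart of the argument is surjectivity in (1). Given an irreducible $M$ in $\mc C^{hi}$, it is a nonzero weight module, so I would fix $0\neq x\in M_\mu$ for some $\mu\in\iX$. By the defining property of $\mc C^{hi}$ there is $n_0\geq 1$ with $u\cdot x=0$ for every monomial $u$ in the $E_i$ with $\deg(u)\geq n_0$; hence the set of degrees of monomials $u$ in the $E_i$ with $u\cdot x\neq 0$ is a nonempty, bounded set of integers. Choosing such a $u$ of maximal degree and setting $y=u\cdot x\neq 0$, I would check that $E_iy=(E_iu)\cdot x=0$ for all $i\in\mbz$ (since $E_iu$ has strictly larger degree than $u$, so it either exceeds that maximal degree or reaches $n_0$), that by Lemma \ref{weight} $y$ lies in some weight space $M_\la$, and that $K_iy=\up^{\la_i}y$ because $M$ is a weight module. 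Thus $M$ is a highest weight module with highest weight vector $y$ (using irreducibility of $M$ to get $\iU y=M$), hence an irreducible quotient of the Verma module $M(\la)$; since $M(\la)$ has $L(\la)$ as its unique irreducible quotient, $M\cong L(\la)$.

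For (2) I would argue that, $\mc O$ being a full subcategory of $\mc C^{hi}$, every irreducible object of $\mc O$ is irreducible in $\mc C^{hi}$ and hence isomorphic to some $L(\la)$, while conversely each $L(\la)$ already lies in $\mc O$; injectivity is inherited from (1). The only step I expect to require genuine care is the extraction of the highest weight vector $y$ in the surjectivity argument: this is exactly where the finiteness condition in the definition of $\mc C^{hi}$ is used, and one must make sure the ``maximal degree'' is well defined, which it is since all relevant degrees are bounded by $n_0$. Everything else is routine bookkeeping with weight spaces.
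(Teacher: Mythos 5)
Your proposal is correct and follows essentially the same route as the paper: extract a highest weight vector by acting on a weight vector with a monomial in the $E_i$'s of maximal degree giving a nonzero result (the finiteness coming from the defining condition of $\mc C^{hi}$), conclude $M\cong L(\la)$ as the unique irreducible quotient of $M(\la)$, and deduce (2) from Proposition \ref{subcategory}. Your explicit treatment of well-definedness and injectivity via the uniqueness of the maximal weight is a welcome addition the paper leaves implicit.
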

\begin{proof}Let $M$ be a irreducible $\iU$-module in the category
$\mc C^{hi}$. Choose a nonzero vector $x_\mu\in M_{\mu}$ for some
$\mu\in \wt(M)$. Let $$n_0=\max\{\deg(u)\mid \text{there exists
some monomial $u$ in the $E_i$'s such that  $u_\cdot
x_\mu\not=0$}\}.$$ Since $M\in\mc C^{hi}$, $n_0<\iy$. Let $u$ be
the monomial in the $E_i$'s  such that $\deg(u)\geq n_0$ and
$u_\cdot x_\mu\not=0$. Then $u_\cdot x_\mu$ is a highest weight
vector of $M$. Let $\la\in X(\iy)$ be the weight of $u_\cdot
x_\mu$. Then there exists a surjective homomorphism from $M(\la)$
to $M$. Since $L(\la)$ is a unique irreducible quotient module of
$M(\la)$ we have $M\cong L(\la)$, proving (1). The statement (2)
follows from (1) and \ref{subcategory}.
\end{proof}


\begin{Coro} An irreducible module $L(\la)$ in $\sC^{hi}$ is
integrable if and only if $\la\in X^+(\iy)$.
\end{Coro}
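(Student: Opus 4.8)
The plan is to reduce the statement to the classical highest-weight criterion for integrability, exploiting the chain $\bfU([-n,n])\subseteq\bfU(\iy)$ and the fact that integrability is tested one $\frak{sl}_2$-triple $(E_i,F_i,\ti K_i)$ at a time. First I would recall that a weight module $M$ is integrable precisely when every vector is killed by high enough powers of each $E_i$ and each $F_i$; for a highest weight module $L(\la)$ the $E_i$-condition is automatic since $\wt(L(\la))\subseteq(-\iy,\la]$ and $L(\la)\in\sC^{hi}$ by the remark following \ref{classfication for simple module in C^hi}, so the entire question is whether each $F_i$ acts locally nilpotently on $L(\la)$.

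\medskip

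\textbf{($\Leftarrow$)} Suppose $\la\in X^+(\iy)$. Fix $i\in\mbz$ and write $\mu_i=\la_i-\la_{i+1}=\langle\la,\al_i^\vee\rangle\geq0$. The standard $\frak{sl}_2$-argument applies: using the commutator formula \ref{commute formula} (Lusztig's $E_i^{(k)}F_i^{(l)}$ identity) one checks that $F_i^{(\mu_i+1)}1_\la$ is a highest weight vector inside $M(\la)$ of weight $\la-(\mu_i+1)\al_i$ which generates a proper submodule; hence it maps to $0$ in $L(\la)$, i.e.\ $F_i^{\mu_i+1}$ kills the image of $1_\la$. Since $L(\la)=\bfU^-(\iy)1_\la$ and the adjoint action of $E_j,F_j,K_j$ on powers of $F_i$ only involves finitely many of the $F_j$ (those with $|i-j|\le1$, via relations (d),(g) of \ref{definition of U(infty)}), local nilpotency of $F_i$ propagates from $1_\la$ to all of $L(\la)$ exactly as in the quantum $\frak{gl}_n$ case; $E_i$ is automatically locally nilpotent as noted. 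Thus $L(\la)$ is integrable.

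\medskip

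\textbf{($\Rightarrow$)} Conversely, suppose $L(\la)$ is integrable. Then in particular $F_i$ acts locally nilpotently on the highest weight vector $1_\la$; say $F_i^{m}1_\la=0$ with $m$ minimal, $m\ge1$. Applying $E_i^{(m-1)}$ and the commutator formula \ref{commute formula}, and using $E_i1_\la=0$, one extracts the constraint $\left[{\ti K_i;\,*\atop *}\right]1_\la\neq0$ forcing $\la_i-\la_{i+1}=m-1\ge0$. Running this for every $i\in\mbz$ gives $\la_i\ge\la_{i+1}$ for all $i$, i.e.\ $\la\in X^+(\iy)$.

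\medskip

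The main obstacle, and the only place where the infinite rank genuinely intervenes, is making rigorous the propagation of local nilpotency of $F_i$ from the cyclic generator $1_\la$ to an arbitrary element of $L(\la)=\bfU^-(\iy)1_\la$: a general element is $F^{(A^-)}1_\la$ for $A\in\Thim$ with finite support, so only finitely many $F_j$ enter, and the Serre/commutation relations (d),(e),(g) confine the interaction of $F_i$ with that product to the $F_j$ with $|i-j|\le1$; hence the classical finite-rank estimate (e.g.\ the argument in \cite{Lu93} or \cite{Lu89}) applies verbatim inside $\bfU([i-N,i+N])$ for $N$ large. Once this localization observation is in place, everything reduces to the rank-one and rank-two computations already recorded via \ref{commute formula}, and the proof is routine.
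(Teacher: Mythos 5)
Your argument is correct and follows essentially the same route as the paper: the forward direction is the observation that $F_i^{(\la_i-\la_{i+1}+1)}$ kills the highest weight vector and local nilpotency propagates (the paper phrases this as $L(\la)$ being a quotient of $M(\la)/I(\la)$ and cites \cite[3.5.3]{Lu93}), and the converse is the rank-one computation cited as \cite[(3.5.8)]{Lu93}. You have merely unpacked the two Lusztig citations into the explicit $\frak{sl}_2$ computations, and your localization remark about finite support is exactly why those finite-rank arguments apply verbatim.
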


\begin{proof} If
$\la\in\iXp$, then the module $L(\la)$ is a homomorphic image of
the module $M(\la)/I(\la)$, where $I(\la)=\sum_{i\in\mbz}\iUm
F_i^{(\la_i-\la_{i+1}+1)}1_\la$. Note that $I(\la)$ is a proper
submodule of $M(\la)$. By \cite[3.5.3]{Lu93}, $M(\la)/I(\la)$ is
integrable. Hence, $L(\la)$ is integrable. Conversely, let
$L(\la)$ be a irreducible module in the category $\mc C^{hi}$, and
assume that $L(\la)$ is integrable. Then, there exists $x_0\in
M_{\la}$, $x_0\not=0$ and $E_ix_0=0$ for all $i\in\mbz$. By
\cite[(3.5.8)]{Lu93}, $\la\in \iXp$.
\end{proof}

Thus, using \ref{subcategory}, we have the following
classification theorem.

\begin{Thm}\label{classfication for simple module in C' cap C^hi}
$(1)$ The map $\la\ra L(\la)$ defines a bijection between $\iXp$
and the set of isomorphism classes of irreducible $\iU$-modules in
the category $\intC\cap\mc C^{hi}$.

$(2)$ The map $\la\ra L(\la)$ defines a bijection between $\iXp$
and the set of isomorphism classes of irreducible $\iU$-modules in
the category $\intC\cap\mc O$.
\end{Thm}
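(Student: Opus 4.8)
The plan is to obtain both statements simply by intersecting the classification of Theorem~\ref{classfication for simple module in C^hi} with the integrability dichotomy established in the preceding Corollary, so that essentially no new argument is needed. First I would recall that, by Theorem~\ref{classfication for simple module in C^hi}(1), $\la\mapsto L(\la)$ is already a bijection from $\iX$ onto the isomorphism classes of irreducible objects of $\mc C^{hi}$, and that each of the categories $\mc C$, $\intC$, $\mc C^{hi}$, $\mc O$ is a full subcategory of $\iU$-$\bfMod$ closed under taking $\iU$-submodules (for $\mc C$ this was noted above; for $\intC$, $\mc C^{hi}$ and $\mc O$ it is immediate from the definitions). This closure makes ``irreducible in the subcategory'' coincide with ``irreducible as a $\iU$-module lying in the subcategory'', so that comparisons of irreducibility across these subcategories are harmless.

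For part (1), I would argue as follows. An irreducible object $M$ of $\intC\cap\mc C^{hi}$ is in particular irreducible in $\mc C^{hi}$, hence $M\cong L(\la)$ for a unique $\la\in\iX$; since $M$ is integrable, the preceding Corollary forces $\la\in\iXp$. Conversely, for $\la\in\iXp$ the module $L(\la)$ lies in $\mc C^{hi}$ (as noted just after the definition of the Verma modules) and is integrable by the preceding Corollary, hence lies in $\intC\cap\mc C^{hi}$; and distinct dominant weights give non-isomorphic $L(\la)$ because this already holds over all of $\iX$. This gives the bijection in (1). For part (2), I would invoke Proposition~\ref{subcategory} to get $\mc O\subseteq\mc C^{hi}$, hence $\intC\cap\mc O\subseteq\intC\cap\mc C^{hi}$; then an irreducible object of $\intC\cap\mc O$ is $L(\la)$ for a unique $\la\in\iXp$ by part (1), while conversely, for $\la\in\iXp$, one has $L(\la)\in\intC\cap\mc C^{hi}$ by part (1) and $L(\la)\in\mc O$ by the remark after the Verma modules, so $L(\la)\in\intC\cap\mc O$. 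Thus the map of part (1) restricts to the asserted bijection.

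I do not expect any genuine obstacle: the substantive work --- constructing the $L(\la)$, showing they exhaust the irreducibles of $\mc C^{hi}$ (Theorem~\ref{classfication for simple module in C^hi}), and proving that $L(\la)$ is integrable precisely when $\la\in\iXp$ (the preceding Corollary, via \cite[3.5.3]{Lu93} and \cite[(3.5.8)]{Lu93}) --- has already been carried out. The one point I would take care to spell out is the closure of all the categories under $\iU$-submodules, since it is exactly what licenses treating an object irreducible in $\intC\cap\mc O$ as irreducible in $\mc C^{hi}$; once that is in place the remaining argument is bookkeeping.
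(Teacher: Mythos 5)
Your proposal is correct and is essentially the paper's own argument: the paper derives this theorem in one line by combining Theorem~\ref{classfication for simple module in C^hi} with the preceding Corollary (integrability of $L(\la)$ iff $\la\in\iXp$) and Proposition~\ref{subcategory}. Your extra remark about the subcategories being closed under submodules is a reasonable piece of bookkeeping that the paper leaves implicit.
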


The integrable module $L(\la)$ has actually structure similar to
finite dimensional irreducible $\bfUn$-modules. Recall from
\cite[5.15]{Jan96} that, if $\mu\in\Xnnp$, then the irreducible
$\bfUn$-module $L(\mu)\cong\bfU([-n,n])/I_n(\mu)$ where
$$I_n(\mu)=\big(\sum_{-n\leq
i<n}(\bfU([-n,n])E_i+\bfU([-n,n])F_i^{\mu_i-\mu_{i+1}+1})+\sum_{-n\leq
j\leq n}\bfU([-n,n])(K_j-\up^{\mu_j})\big).$$

We want to prove that a similar isomorphism holds for $\iU$.

For any given $\la\in\iXp$, let $\ti {L(\la)}=M(\la)/I(\la)$, and
let $\lann=(\la_i)_{-n\leq i\leq n}\in\Xnnp$. Let
$\pi_{n,\la}:\bfUn\ra L(\lann)$ be the map sending $u$ to $u\bar
1_\la$, where $\bar 1_\la$ is the image of $1_\la$. Since
$I_n(\lann)\subset I_{n+1}(\la_{[-n-1,n+1]})$, there is a natural
$\bfUn$-module homomorphism $\iota_{n,\la}:L(\lann)\ra
L(\la_{[-n-1,n+1]})$ by sending $\pi_{n,\la}(u)$ to
$\pi_{n+1,\la}(u)$ for all $u\in\bfUn$, which is compatible with
the inclusion $\bfUn\hookrightarrow\bfU([-n-1,n+1])$. Thus, we
obtain a direct system $\{L(\lann)\}_{n\geq 1}$ whose direct limit
$\underset{\underset n\longrightarrow}\lim\,L(\lann)$ is naturally
a $\iU$-module . Further, since
$\iota_{n,\la}(\pi_{n,\la}(1))=\pi_{n+1,\la}(1)\not=0$, the map
$\iota_{n,\la}$ is injective. So we may identify $L(\lann)$ as a
$\bfUn$-submodule of $L(\la_{[-n-1,n+1]})$. Hence,
$\underset{\underset n\longrightarrow}\lim\,L(\lann)=\bin_{n\geq
1}L(\lann)$.

\begin{Thm}\label{L(la)}
We have, for $\la\in\iXp$,
$$L(\la)\cong\underset{\underset
n\longrightarrow}\lim\,L(\lann)\cong\ti{L(\la)}.$$
\end{Thm}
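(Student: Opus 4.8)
The plan is to prove the two isomorphisms $L(\la)\cong\ti{L(\la)}$ and $\ti{L(\la)}\cong\varinjlim L(\lann)$ separately, with the direct-limit identification playing the central role. First I would establish $\ti{L(\la)}\cong\varinjlim_n L(\lann)$. Write $\ti L=\ti{L(\la)}=M(\la)/I(\la)$, a highest weight $\iU$-module with highest weight vector $\bar 1_\la$. Since $\iU=\varinjlim\bfUn$ by \eqref{dirlimbU}, and $\ti L=\iU\bar 1_\la=\bigcup_{n\ge1}\bfUn\bar 1_\la$, it suffices to identify each $\bfUn$-submodule $\bfUn\bar 1_\la$ with $L(\lann)$ compatibly with the inclusions. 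The surjection $\bfUn\to\bfUn\bar 1_\la$, $u\mapsto u\bar1_\la$, kills $E_i\,(-n\le i<n)$, kills $K_j-\up^{\la_j}\,(-n\le j\le n)$ because $\bar 1_\la$ has weight $\la$, and kills $F_i^{\la_i-\la_{i+1}+1}\,(-n\le i<n)$ since $F_i^{(\la_i-\la_{i+1}+1)}1_\la\in I(\la)$ by the definition $I(\la)=\sum_i\iUm F_i^{(\la_i-\la_{i+1}+1)}1_\la$. Hence it factors through $L(\lann)=\bfUn/I_n(\lann)$. The nontrivial direction is injectivity of $L(\lann)\to\bfUn\bar1_\la$: here I would invoke that $\varinjlim L(\lann)$ is an integrable (indeed object of $\sC^{hi}$) highest weight $\iU$-module of highest weight $\la$ whose $\la$-weight space is one-dimensional, so by the universal property it is a quotient of $M(\la)$, and since the natural $\iU$-map $\ti L\to\varinjlim L(\lann)$ (sending $\bar1_\la\mapsto(\pi_{n,\la}(1))_n$) and the evident map the other way are mutually inverse on the generators, they are inverse isomorphisms. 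Concretely: the composite $\ti L\to\varinjlim L(\lann)\to\ti L$ fixes $\bar1_\la$ and is $\iU$-linear, hence is the identity; the other composite fixes each $\pi_{n,\la}(1)$ and is $\iU$-linear, hence is the identity.

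Next I would prove $L(\la)\cong\ti{L(\la)}$. Since $L(\la)$ is \emph{defined} as the unique irreducible quotient of the Verma module $M(\la)$, and $\ti{L(\la)}=M(\la)/I(\la)$ is a highest weight quotient of $M(\la)$, it is enough to show $\ti{L(\la)}$ is irreducible, equivalently that $I(\la)$ is already the unique maximal submodule of $M(\la)$. By the lemma preceding the definition of $M(\la)$, any proper submodule of $M(\la)$ lies in the unique maximal submodule, so it suffices to show $\ti L$ has no nonzero proper submodule. Using the first part, $\ti L\cong\varinjlim L(\lann)$ with $L(\lann)$ the \emph{irreducible} finite-dimensional $\bfUn$-module of highest weight $\lann$ (for $\la\in\iXp$, so $\lann\in\Xnnp$). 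A nonzero $\iU$-submodule $N$ of $\ti L$ is a weight module (objects of $\sC$ are closed under submodules), so $N$ contains some weight vector $v$; choosing $n$ large enough that $v\in L(\lann)$, irreducibility of $L(\lann)$ as a $\bfUn$-module gives $\bfUn v=L(\lann)$, hence $\bar1_\la\in N$; but $\bar1_\la$ generates $\ti L$ over $\iU$, so $N=\ti L$. Therefore $\ti L$ is irreducible, and being a highest weight quotient of $M(\la)$ with highest weight $\la$ it must coincide with $L(\la)$.

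The main obstacle I anticipate is the injectivity/irreducibility input, i.e. knowing that for $\la\in\iXp$ the $\bfUn$-module $M(\lann)/I_n(\lann)$ really is the irreducible $L(\lann)$ and that $\iota_{n,\la}$ is injective; both are handled by the cited classical facts for quantum $\frak{gl}_n$ (\cite[5.15]{Jan96} and the explicit presentation of $L(\mu)$ via $I_n(\mu)$ recalled just before the theorem), together with the observation, already made in the excerpt, that $\iota_{n,\la}(\pi_{n,\la}(1))=\pi_{n+1,\la}(1)\neq0$ forces $\iota_{n,\la}$ to be injective since $L(\lann)$ is irreducible. Everything else is formal: compatibility of the quotient maps with the embeddings $\bfUn\hookrightarrow\bfU([-n-1,n+1])$, exactness of direct limits, and the universal property of Verma modules. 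Thus the proof reduces to assembling these pieces and checking the two mutually inverse $\iU$-homomorphisms are well defined, which is routine given $I(\la)=\bigcup_n I_n(\lann)\cdot\iU$-span considerations made precise by the inclusion $I_n(\lann)\subset I_{n+1}(\la_{[-n-1,n+1]})$ noted in the text.
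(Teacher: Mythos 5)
Your proof is correct and follows essentially the same route as the paper's: both rest on the identification $\varinjlim L(\lann)=\bigcup_{n}L(\lann)$ and on the irreducibility of each $L(\lann)$ as a $\bfUn$-module to deduce irreducibility in the limit. The only difference is the order of the steps --- the paper first shows the direct limit is an irreducible highest weight module of weight $\la$ (hence $\cong L(\la)$) and then maps it onto $\ti{L(\la)}$, while you first exhibit mutually inverse $\iU$-maps between $\ti{L(\la)}$ and the limit and then prove irreducibility --- which is immaterial.
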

\begin{proof} Let $W$ be a $\iU$-submodule  of $\underset{\underset
n\longrightarrow}\lim\,L(\lann)$. Since $L(\lann)$ is irreducible $\bfUn$-module for $n\geq 1$ we have $W\cap L(\lann)=0$ or $W\cap L(\lann)=L(\lann)$. If there exist $n_0\geq 1$ such that $W\cap L(\lann)\not=0$ then $W\cap L(\lann)\not=0$ for $n\geq n_0$. Hence, $W\cap L(\lann)=L(\lann)$ for $n\geq n_0$. It follows that
$$W=\bin_{n\geq 1}(W\cap L(\lann))=\bin_{n\geq n_0}=(W\cap L(\lann))=\bin_{n\geq n_0}L(\lann)=\underset{\underset
n\longrightarrow}\lim\,L(\lann).$$ On the other hand, if   $W\cap
L(\lann)=0$ for $n\geq 1$ then $W=0$. Hence, $\underset{\underset
n\longrightarrow}\lim\,L(\lann)$ is an irreducible $\iU$-module.
Let $x_0:=\pi_{1,\la}(1)\in \underset{\underset
n\longrightarrow}\lim\,L(\lann)$. Then $x_0=\pi_{n,\la}(1)$ for
any $n\geq 1$. Hence, for $n\geq 1$ and $1\leq i< n$, $1\leq j\leq
n$ we have $E_ix_0=\pi_{n,\la}(E_i)=0$ and
$K_jx_0=\pi_{n,\la}(K_j)=\up^{\la_i}x_0$. Hence,
$\underset{\underset n\longrightarrow}\lim\,L(\lann)$ is a
irreducible highest weight module with highest weight $\la$. It
follows that $L(\la)\cong\underset{\underset
n\longrightarrow}\lim\,L(\lann)$. For any $n\geq 1$ there is a
natural $\bfUn$-module homomorphism $f_n$ from $L(\lann)$ to
$\ti{L(\la)}$ by sending $\pi_{n,\la}(u)$ to $\bar u$ for
$u\in\bfUn$. The maps $f_n$ ($n\geq 1$) induce a surjective
$\iU$-homomorphism $f$ from $\underset{\underset
n\longrightarrow}\lim\,L(\lann)$ to $\ti{L(\la)}$.  Since
$\underset{\underset n\longrightarrow}\lim\,L(\lann)$  is
irreducible $\iU$-module and $\Image(f)=\ti{L(\la)}\not=0$, $f$
has to be an isomorphism.
\end{proof}
\begin{Rem}
By \cite[4.4]{DP02} and \cite[\S6]{DD05} we can use the
PBW basis $\{A(\bfl)\mid A\in\Thim\}$  of $\iUm$ to define the canonical
basis $\{\frak c_{A}\mid A\in\Thim\}$ of $\iUm$. For $\la\in\iXp$, let $x_{\la}$
 be the highest weight vector of $L(\la)$. By \cite[8.10]{Lu90} and \ref{L(la)},
 one can easily show that the set $\{\frak c_Ax_\la\mid A\in\Thim\}\backslash\{0\}$
 forms a $\mbq(\up)$ basis of $L(\la)$.
\end{Rem}

Finally, it is interesting to point out that there are not
many {\it finite dimensional} weight $\bfU(\infty)$-modules.
For $n\geq 0$, let $J(n)$ be the two sided ideal of $\iU$
generated by $E_i$, $F_i$, $i\in (-\iy,-n)\cup[n,\iy)$.
\begin{Lem}\label{the two sided ideal J(infty)}
$(1)$ For $n\geq 0$, we have $J(n)=J(0)$.

$(2)$ Let $M$ be a finite dimensional $\iU$-module in the category $\mc
C$. Then $E_iM=F_iM=0$ for all $i\in\mbz$ and $\wt(M)\han \{\la\in
\iX\mid \la=k\bf 1\}$, where ${\bf 1}=(\cdots,1,1,\cdots,1,\cdots)\in
\iX$.
\end{Lem}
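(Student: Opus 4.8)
The plan is to prove the two parts separately, using the commutation relations in $\iU$ and the weight space decomposition of modules in $\mc C$.

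\textbf{Part (1).} First I would show $J(n)\subseteq J(0)$ is automatic (more generators), so it suffices to prove $E_i,F_i\in J(n)$ for every $i\in\mbz$ whenever $J(n)$ already contains $E_j,F_j$ for $j\notin[-n,n)$. I would argue by a ``sliding'' induction using the Serre-type relations. Concretely, suppose $i=n$ (the argument for $i=-n-1$ is symmetric). Since $|n-(n-1)|=1$, relation (f) in Definition~\ref{definition of U(infty)} gives $E_n^2E_{n-1}-(\up+\up^{-1})E_nE_{n-1}E_n+E_{n-1}E_n^2=0$; but this does not immediately produce $E_n$ from $E_{n-1}$. The cleaner route is relation (e): $E_{n}F_{n}-F_{n}E_{n}=\frac{\ti K_n-\ti K_n^{-1}}{\up-\up^{-1}}$, combined with the fact that $E_{n+1},F_{n+1}\in J(n)$ already. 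Actually the key observation is: once $E_{n+1}\in J(n)$ and $F_{n+1}\in J(n)$, the element $E_nF_{n+1}-F_{n+1}E_n$ lies in $J(n)$, but by (d) (since $|n-(n+1)|=1$ fails — they differ by $1$) this is not zero. I think the right tool is: $E_n = $ (up to units and $\ti K$'s) a commutator expression forced to lie in the ideal because $K_j-K_j'$ relations let us isolate it. I would set this up carefully: in $\iU$, $\ti K_n - \ti K_n^{-1} = (\up-\up^{-1})(E_nF_n-F_nE_n)\in J(n)$ once we know $E_n\in J(n)$ — circular. So instead I would use that $J(0)$ contains $E_0,F_0$ and hence $\ti K_0-\ti K_0^{-1}$, and then propagate via the embedding $\bfU([-n,n])\hookrightarrow\iU$ together with the known fact (from the $\frak{gl}_n$ theory, e.g.\ simplicity-type arguments) that the two-sided ideal of $\bfU([-n,n])$ generated by a single root vector $E_k$ together with $F_k$ is everything except $\bfU^0$-scalars — more precisely, that it contains all $E_j,F_j$. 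The honest statement I would prove: for $|i-j|=1$, the ideal generated by $\{E_i,F_i\}$ contains $E_j$ and $F_j$; this follows by a direct Serre-relation manipulation (apply (f) and multiply by suitable $F_i^{(2)}$, use (e) and Lemma~\ref{commute formula} to extract $E_j$). Iterating this gives $J(n)=J(n-1)=\cdots=J(0)$. The main obstacle is precisely this lemma that one root vector ideal-generates the adjacent ones — it is a finite-rank computation, so I would reduce to $\bfU([-1,1])$ or $\bfU([0,1])$ and cite/redo the standard $\frak{sl}_2$/$\frak{gl}_2$ argument.

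\textbf{Part (2).} Let $M$ be finite dimensional in $\mc C$, so $M=\bigoplus_{\la\in\wt(M)}M_\la$ with $\wt(M)$ finite. I would first show $E_iM=0$ for all $i$. Fix $i$; by Lemma~\ref{weight}, $E_i^kM_\la\subseteq M_{\la+k\al_i}$, and since $\wt(M)$ is finite, $E_i$ acts nilpotently on $M$. Similarly $F_i$ acts nilpotently. Now I would invoke Part (1): the two-sided ideal $J(0)$, equivalently $J(n)$ for all $n$, acts on $M$; but I want to show it acts as zero. Consider the action of the $\frak{sl}_2$-triple $(E_i,F_i,\ti K_i)$ on $M$. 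Standard quantum $\frak{sl}_2$ theory on a finite-dimensional weight module forces $E_i$ and $F_i$ to act as they do on a sum of finite-dimensional simples — in particular $M$ decomposes into weight strings. The finiteness of $\wt(M)$ as $i$ ranges over all of $\mbz$ is the crucial constraint: if $E_jM_\la\ne0$ for some $j,\la$, then $\la+\al_j\in\wt(M)$, and iterating along infinitely many directions $\al_j$ would eventually escape any finite set unless all these are zero. Precisely: suppose $E_jx\ne0$ for some weight vector $x\in M_\la$ and some $j$. Then for $|k-j|>1$, relation (d) and a counting argument... — better: I would show directly that $\wt(M)$ being finite and stable under neither adding nor subtracting any $\al_j$ (for the relevant generator to act nonzero) forces, via the $\frak{sl}_2$ representation theory at index $j$, that each weight $\la\in\wt(M)$ has $\lan\la,\al_j^\vee\ran = \la_j-\la_{j+1}=0$ for \emph{all} $j\in\mbz$ — otherwise a nontrivial $\frak{sl}_2$-string at some $j$ with nonzero highest weight would appear, and simultaneously at adjacent indices the Serre relations would force infinitely many distinct weights. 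Hence $\la_j=\la_{j+1}$ for all $j$, i.e.\ $\la=k\mathbf 1$ for some $k\in\mbz$ (the constant being the same across all coordinates by transitivity). This gives $\wt(M)\subseteq\{k\mathbf 1\mid k\in\mbz\}$. Finally, for such $\la=k\mathbf1$ we have $\la\pm\al_i = \la\mp(\be_{i+1}-\be_i)$ pattern is no longer constant, hence $\la\pm\al_i\notin\wt(M)$, so $E_iM_\la\subseteq M_{\la+\al_i}=0$ and $F_iM_\la\subseteq M_{\la-\al_i}=0$; summing over $\la\in\wt(M)$ gives $E_iM=F_iM=0$.

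The step I expect to be the genuine obstacle is making the ``infinitely many directions force escape from a finite set'' argument airtight in Part (2), since one must rule out cancellation — a priori $E_jM_\la$ could be nonzero while $M_{\la+\al_j}$ is small. The fix is to run the quantum $\frak{sl}_2$ argument index by index: restrict $M$ to the subalgebra generated by $E_j,F_j,K_j^{\pm1},K_{j+1}^{\pm1}$ (a copy of quantum $\frak{gl}_2$), where finite-dimensional weight modules are completely reducible with weights determined by $\la_j-\la_{j+1}$; nonzero action of $E_j$ forces some composition factor with $\la_j-\la_{j+1}>0$, and then the \emph{global} weight set must contain $\la, \la-\al_j, \dots$ and, by applying the same to index $j+1$ wherever $E_{j+1}$ or $F_{j+1}$ acts nontrivially, one builds an infinite antichain — contradiction with $|\wt(M)|<\infty$. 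I would write this as: if $\la_j>\la_{j+1}$ for some $j$, pick $\la$ with $\lan\la,\al_j^\vee\ran>0$ maximal; the $\frak{sl}_2$-string through $\la$ at index $j$ has length $\ge2$, giving two weights differing by $\al_j$, and the maximality plus Serre relations at $j\pm1$ force a third independent weight, and so on without bound. This contradiction yields $\la_j=\la_{j+1}$ for all $j$, completing the proof.
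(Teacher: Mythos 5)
Your Part (1) contains the correct mechanism but you talk yourself out of it. You write that deducing $\ti K_n-\ti K_n^{-1}\in J(n)$ from relation (e) is ``circular'' because it requires $E_n\in J(n)$; but $E_n$ and $F_n$ \emph{are} generators of $J(n)$, since $n\in[n,\iy)$. So relation (e) immediately gives $\ti K_n-\ti K_n^{-1}\in J(n)$, hence $K_n^2-K_{n+1}^2\in J(n)$, and then relation (b) --- $K_n^2E_{n-1}=\up^{-2}E_{n-1}K_n^2$ while $K_{n+1}^2E_{n-1}=E_{n-1}K_{n+1}^2$ --- lets you isolate $(\up^{-2}-1)E_{n-1}K_n^2$ as a difference of two elements of the two-sided ideal $J(n)$, whence $E_{n-1}\in J(n)$; the same works for $F_{n-1}$, $E_{-n}$, $F_{-n}$, and induction gives $J(n)=J(0)$. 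This is exactly the paper's proof. The replacement you propose instead --- ``apply (f) and multiply by suitable $F_i^{(2)}$'' --- cannot work as stated: every term of the Serre relation (f) already contains $E_i$, so the whole identity lies in the ideal generated by $E_i$ and is vacuous modulo it; no manipulation extracts $E_j$ from an identity all of whose terms are already in the ideal. The only usable input is (e) together with the $K$-commutation (b), which is precisely the step you discarded.

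For Part (2) you take a genuinely different route from the paper, and you also overlook the shortcut that makes the paper's argument short. The paper observes that for a fixed weight $\la$ the weights $\la+\al_j$ ($j\in\mbz$) are pairwise distinct, so since $\wt(M)$ is finite, $E_jM_\la=F_jM_\la=0$ for all but finitely many $j$; taking the maximum over the finitely many weights gives $E_jM=F_jM=0$ for $|j|\gg0$, Part (1) then kills all $E_i,F_i$, and relation (e) forces $\up^{2(\la_i-\la_{i+1})}$ to act as $1$, i.e.\ $\la=k\mathbf 1$. Your route --- restrict to the quantum $\frak{gl}_2$ at each index $j$, use complete reducibility and the string symmetry $\dim M_\mu=\dim M_{s_j\mu}$ to conclude that $\wt(M)$ is stable under the group of finitely supported permutations of $\mbz$, and then note that any non-constant $\la$ has infinite orbit --- can be made rigorous and is a legitimate alternative, but as written it is not: the phrase ``the maximality plus Serre relations at $j\pm1$ force a third independent weight, and so on without bound'' is not an argument, and the clean statement you actually need (Weyl-orbit invariance of $\wt(M)$ plus infinitude of the orbit of any non-constant sequence) is never formulated. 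Note also that your Part (2), as structured, never really uses Part (1), whereas the paper's proof of (2) relies on it essentially.
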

\begin{proof}Since $E_n,F_n\in J(n)$, by \ref{definition of
U(infty)}(e) we have $\ti K_n-\ti K_n^{-1}\in J(n)$. Hence,
$K_n^2-K_{n+1}^2\in J(n)$. By \ref{definition of U(infty)}(b) we
have $K_n^2E_{n-1}=\up^{-2}E_{n-1}K_n^2$ and
$K_{n+1}^2E_{n-1}=E_{n-1}K_{n+1}^2$. Hence,
$\up^{-2}E_{n-1}K_n^2-E_{n-1}K_{n+1}^2=(K_n^2-K_{n+1}^2)E_n\in
J(n)$. It follows that
$(\up^{-2}-1)E_{n-1}K_n^2=(\up^{-2}E_{n-1}K_n^2-E_{n-1}K_{n+1}^2)-E_{n-1}(K_n^2-K_{n+1}^2)\in
J(n)$. Hence, $E_{n-1}\in J(n)$. It is similar we can show
$F_{n-1},E_{-n},F_{-n}\in J(n)$. Hence, we have $J(n)=J(n-1)$ for
$n\geq 1$, and (1) follows.

Since $M$ is  finite dimensional weight module, we see that
$\wt(M)$ is a finite set. Assume $\wt(M)=\{\la^{(i)}\mid 1\leq
i\leq s\}$. By \ref{weight}  for $1\leq i\leq s$ there exist $n_i$
such that $E_jM_{\la^{(i)}}=F_jM_{\la^{(i)}}=0$ for
$j\not\in[-n_i,n_i)$. Let $n_0=\max\{n_i\mid 1\leq i\leq s\}$.
then $E_iM=F_iM=0$ for $i\not\in[-n_0,n_0)$. Hence, by (1) we have
$E_iM=F_iM=0$ for all $i\in\mbz$. Let $\la\in \wt(M)$ and $x_0$ be
a nonzero vector in $M_{\la}$. By \ref{definition of U(infty)}(e)
we have $\ti K_i^2x_0=\up^{2\la_i-2\la_{i+1}}x_0=0$. Hence,
$\la_i=\la_{i+1}$ for all $i\in\mbz$, proving (2).
\end{proof}
 Let $\sC^{fd}$ be the category of finite dimensional weight $\iU$-module.
Now by the above result we have the following classification of finite dimensional $\iU$-modules.
\begin{Thm} The modules $L(k{\bf 1})$ $(k\in\mbz)$ are
all non-isomorphic  finite dimensional irreducible $\iU$-module in
the category $\sC^{fd}$ and $\dim(L(k{\bf 1}))=1$. Moreover every
finite dimensional $\iU$-module in the category $\mc C$ is
complete reducible.
\end{Thm}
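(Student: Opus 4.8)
The plan is to establish the three assertions of the theorem in turn, building entirely on Lemma~\ref{the two sided ideal J(infty)}. First I would note that by Lemma~\ref{the two sided ideal J(infty)}(2), any finite dimensional weight module $M$ has $E_iM=F_iM=0$ for all $i\in\mbz$, so the $\iU$-action on $M$ factors through the quotient $\iU/J(0)$; since $\wt(M)\han\{k\mathbf 1\mid k\in\mbz\}$ and $K_i$ acts as the scalar $\up^k$ on the weight space $M_{k\mathbf 1}$, the module decomposes as $M=\bigoplus_{k\in\mbz}M_{k\mathbf 1}$ with each $M_{k\mathbf 1}$ a $\iU$-submodule on which every generator $E_i,F_i$ acts as $0$ and every $K_i$ acts as $\up^k$. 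Hence each $M_{k\mathbf 1}$ is a trivial-type module and is a direct sum of copies of the one-dimensional module $L(k\mathbf 1)$ (note $k\mathbf 1\in\iXp$, so $L(k\mathbf 1)$ is exactly this one-dimensional module, with $\dim L(k\mathbf 1)=1$ following from the highest-weight-module lemma). This simultaneously gives the classification of finite dimensional irreducibles in $\sC^{fd}$, the fact that distinct $k$ give non-isomorphic modules (different $K_i$-eigenvalues), and the complete reducibility statement for objects of $\sC^{fd}$.

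For the last sentence of the theorem --- complete reducibility of \emph{every} finite dimensional $\iU$-module in $\mc C$ --- the argument above already suffices once we observe that ``finite dimensional $\iU$-module in the category $\mc C$'' means precisely an object of $\sC^{fd}$, so there is nothing further to prove; but to be safe I would spell out that a submodule or quotient of such an $M$ is again a finite dimensional weight module, so the category $\sC^{fd}$ is abelian and the decomposition $M=\bigoplus_k M_{k\mathbf 1}$ into isotypic pieces, each a direct sum of the simple $L(k\mathbf 1)$, is the required semisimple decomposition.

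The one point requiring a little care --- and the step I expect to be the main obstacle, though it is routine --- is verifying that a finite dimensional weight module on which all $E_i$ and $F_i$ act as zero and on which $K_i$ acts by a fixed scalar $\up^k$ really is semisimple as a $\iU$-module: one must check that any subspace is automatically a submodule (immediate, since the generators act diagonally), so $M_{k\mathbf 1}$ as a $\iU$-module is just a vector space with the scalar action and thus trivially a direct sum of simple modules. I would also remark that one needs $k\mathbf 1$ to be dominant to invoke the earlier results identifying $L(k\mathbf 1)$; since $(k\mathbf 1)_i=(k\mathbf 1)_{i+1}$ for all $i$, indeed $k\mathbf 1\in X^+(\iy)$, and the Verma-type presentation shows $L(k\mathbf 1)$ is one-dimensional. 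Assembling these observations yields all three claims.
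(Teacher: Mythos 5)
Your proposal is correct and follows essentially the same route as the paper, which derives the theorem directly from Lemma \ref{the two sided ideal J(infty)}(2): the generators $E_i,F_i$ annihilate any finite dimensional weight module, the weights all lie in $\{k\mathbf 1\}$, and each eigenspace $M_{k\mathbf 1}$ is then a submodule on which every subspace is a submodule, giving the decomposition into one-dimensional simples $L(k\mathbf 1)$. Your identification of the one-dimensional module with $L(k\mathbf 1)$ via $\ti L(k\mathbf 1)=M(k\mathbf 1)/I(k\mathbf 1)$ being one-dimensional (rather than via the highest-weight lemma alone, which only controls the top weight space) is the right way to nail down $\dim L(k\mathbf 1)=1$.
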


\section{Representations of $\iSr$}

In this section, we investigate the weight modules for $\iSr$ and
classify the irreducible ones.

 Recall from \S2 and \S5 the basis
$\{T_w\}_{w\in\frak S_r}$ for the Hecke algebra $\bfH$ and the
basis $\{\phi_{\la\mu}^w\}$  for the algebra
$\boldsymbol\sK(\iy,r)$ given in \eqref{phi basis}. Recall also
the various notations for the idempotents
$$\ttk_\la=[\diag(\la)]=\phi_{\la\la}^1,\qquad\la\in\Lair.$$ Let
\begin{equation}\label{varpi}
\vp=(\vp_i)_{i\in\mbz}\in\La(\iy,r),\,\,\text{ where
}\,\,\vp_i=\begin{cases}1,&\text{ if }1\leq i\leq r;\\
0,&\text{ otherwise.}\\\end{cases}
\end{equation}

We first observe the following which is clear from the definition
of the $\dagger$-completion $\tibfKir$ and the identification
$\iSr=\tibfKir$ (see \ref{iSr as dagger}).

\begin{Lem}\label{lemma2 for representation of U(infty,r)}
$(1)$ Let $\la,\mu\in\La(\iy,r)$. Then $\iSr\ttk_\la=\iUr\ttk_\la=
\bfKir\ttk_\la$, and
$\ttk_\la\iSr\ttk_\mu=\Hom_{\bfH}(x_\mu\bfH,x_\la\bfH)$.

$(2)$  The Hecke algebra $\bfH$ is isomorphic to
$\ttk_\vp\iSr\ttk_\vp$ by sending $T_u$  to $\phi_{\vp,\vp}^u$
($u\in\frak S_r$).

$(3)$ Let $\eta$ be a consecutive segment of $\mbz$. Then,
whenever $|\eta|\geq r$, $\bfH$ is isomorphic to a (centralizer)
subalgebra $e\boldsymbol\sS(\eta,r)e$ of $\boldsymbol\sS(\eta,r)$
for some idempotent $e$.
\end{Lem}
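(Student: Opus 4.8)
The plan is to deduce all three parts directly from the completion-algebra picture of \S7. Under the identifications $\bfKir\cong\bigoplus_{\mu\in\Lair}\bigoplus_{\la\in\Lair}\Hom_\bfH(x_\mu\bfH,x_\la\bfH)$ of \ref{Cor1 for non-surjective map} and $\iSr=\tibfKir\cong\prod_{\mu\in\Lair}\bigoplus_{\la\in\Lair}\Hom_\bfH(x_\mu\bfH,x_\la\bfH)$ of \ref{iSr as dagger}, the idempotent $\ttk_\la=[\diag(\la)]$ is the identity of the $(\la,\la)$-block and corresponds to $\phi_{\la\la}^1$. The single mechanism needed is this: for $f$ in the $\dagger$-completion, the defining finiteness condition says the ``column'' of $f$ at any fixed value of $co$ is a finite sum, so right multiplication by the column projector $\ttk_\la$ carries $f$ into $\bfKir$.

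For (1), this observation gives $\iSr\ttk_\la\subseteq\bfKir\ttk_\la$, while the chain $\bfKir\han\bfV(\iy,r)=\iUr$ (by \ref{monomial base for Kr(infty)} and \ref{not onto}) together with $\iUr=\bfVir\han\hbfKir\han\iSr$ (by \ref{map zr}(2) and the construction preceding \ref{not onto}) gives the reverse inclusions $\bfKir\ttk_\la\subseteq\iUr\ttk_\la\subseteq\iSr\ttk_\la$, so the three coincide. For the second equality, $\ttk_\la\iSr\ttk_\mu$ is a single block; since a single block is already a finite sum, $\ttk_\la\iSr\ttk_\mu=\ttk_\la\bfKir\ttk_\mu$, which under the isomorphism $\theta$ of \ref{Cor1 for non-surjective map} is $\Hom_\bfH(x_\mu\bfH,x_\la\bfH)$, using $\theta(\ttk_\la)=\phi_{\la\la}^1$.

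For (2), I would apply (1) with $\la=\mu=\vp$, giving $\ttk_\vp\iSr\ttk_\vp\cong\End_\bfH(x_\vp\bfH)$. Since $\vp=(1,\dots,1,0,0,\dots)$ has all parts at most $1$, the Young subgroup $\fS_\vp$ is trivial, so $x_\vp=\sum_{w\in\fS_\vp}T_w=1$ and $x_\vp\bfH=\bfH$ as a right $\bfH$-module. The standard algebra isomorphism $\End_\bfH(\bfH_\bfH)\cong\bfH$, $\psi\mapsto\psi(1)$ (an algebra map because $\bfH$-endomorphisms of $\bfH$ are left multiplications), then identifies $\End_\bfH(x_\vp\bfH)$ with $\bfH$; unwinding the definition of the basis element $\phi_{\vp\vp}^u$, which acts on $x_\vp\bfH=\bfH$ by left multiplication by $T_u$ (as $\fS_\vp=1$), shows that this composite isomorphism is exactly $T_u\mapsto\phi_{\vp\vp}^u$, as asserted.

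For (3), I would write $\bfS(\eta,r)=\End_\bfH(\bfOg_\eta^{\ot r})$ and, sorting tensor-basis vectors into weakly increasing order, decompose $\bfOg_\eta^{\ot r}\cong\bigoplus_{\la\in\Laetar}x_\la\bfH$ as right $\bfH$-modules (for finite $\eta$ this is the classical Dipper--James statement; for infinite $\eta$ it follows by the same direct-limit argument as \ref{lemma2 for non-surjective map}). Since $|\eta|\ge r$, there is $\vp^\eta\in\Laetar$ with all parts in $\{0,1\}$, hence $\fS_{\vp^\eta}$ trivial and $x_{\vp^\eta}\bfH\cong\bfH$; taking $e\in\bfS(\eta,r)$ to be the $\bfH$-linear projection of $\bigoplus_\la x_\la\bfH$ onto that summand, one gets $e\bfS(\eta,r)e\cong\End_\bfH(x_{\vp^\eta}\bfH)\cong\bfH$ exactly as in (2) (and for $\eta=\mbz$ this recovers (2) with $e=\ttk_\vp$). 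The step that deserves the most care, the rest being formal manipulation with idempotents, is the general, possibly infinite-$\eta$ version of $\bfOg_\eta^{\ot r}\cong\bigoplus_\la x_\la\bfH$ and the verification that the one-summand projector $e$ genuinely lies in the (possibly non-unital, completion-type) algebra $\bfS(\eta,r)$; granting that, conjugation by $e$ collapses $\bfS(\eta,r)$ onto $\End_\bfH(\bfH)\cong\bfH$.
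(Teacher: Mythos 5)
Your proof is correct and follows exactly the route the paper intends: the paper offers no written proof, asserting the lemma is ``clear from the definition of the $\dagger$-completion $\tibfKir$ and the identification $\iSr=\tibfKir$'', and your argument is precisely the elaboration of that observation (right multiplication by the column projector $\ttk_\la$ lands in $\bfKir$ by the finiteness condition defining $\AR$, the chain $\bfKir\han\iUr\han\iSr$ closes the circle, and $\fS_\vp=\{1\}$ reduces (2) and (3) to $\End_\bfH(\bfH_\bfH)\cong\bfH$). No gaps.
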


An $\iSr$-module $M$ is called a {\it weight module} if
$M=\oplus_{\la\in\La(\infty,r)}\ttk_\la M$. Clearly, \ref{lemma2
for representation of U(infty,r)}(1) implies that $\bfKir$ is a
weight $\iSr$-module. Let $\CrS$ be the category of weight
$\iSr$-modules. Since all $\ttk_\la\in\bfKir$, we can define
similarly the categories $\CrK$ and $\CrU$.

\begin{Prop}\label{weight module for U(infty,r)}
{\rm(1)} Up to category isomorphism, the categories $\CrK$, $\CrU$
and $\CrS$ are all the same.

{\rm(2)} The category $\CrS$ is a full subcategory of $\sC$.
\end{Prop}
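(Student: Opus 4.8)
The plan is to realize $\CrK$, $\CrU$ and $\CrS$ as literally the same category by exhibiting mutually inverse functors, and then to identify $\CrS$ inside $\sC$ by restricting scalars along $\zr$. Throughout I would use the chain of subalgebras $\bfKir\subseteq\bfVir=\iUr\subseteq\hbfKir\subseteq\tibfKir\cong\iSr$ (from \ref{monomial base for Kr(infty)}, \ref{not onto} and \ref{iSr as dagger}), all of which share the orthogonal idempotents $\ttk_\la$ ($\la\in\Lair$) whose (infinite) sum is the common identity element of $\iUr$, $\hbfKir$ and $\iSr$, together with the key identity $\iSr\ttk_\la=\iUr\ttk_\la=\bfKir\ttk_\la$ from \ref{lemma2 for representation of U(infty,r)}(1).

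For (1), in one direction I would simply use restriction of scalars along the inclusions $\bfKir\hookrightarrow\iUr\hookrightarrow\iSr$; since these inclusions fix the $\ttk_\la$, the restriction of a weight module stays a weight module, giving functors $\CrS\to\CrU\to\CrK$. In the other direction, given a weight $\bfKir$-module $M=\oplus_{\la\in\Lair}\ttk_\la M$, I would define an $\iSr$-action (and, by the identical recipe, an $\iUr$-action) by
\[
s\cdot x:=\sum_{\la\in\Lair}(s\ttk_\la)\cdot(\ttk_\la x)\qquad(s\in\iSr,\ x\in M).
\]
This sum is finite because $\ttk_\la x=0$ for all but finitely many $\la$, and each term makes sense because $s\ttk_\la\in\iSr\ttk_\la=\bfKir\ttk_\la\subseteq\bfKir$ by \ref{lemma2 for representation of U(infty,r)}(1). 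Checking that this is a unital associative action is routine: unitality uses $\sum_\la\ttk_\la x=x$, and associativity uses that for each fixed $\la$ the element $t\ttk_\la\in\bfKir$ has nonzero $\ttk_\mu$-component for only finitely many $\mu$, so that $(st)\ttk_\la=s(t\ttk_\la)=\sum_\mu(s\ttk_\mu)(\ttk_\mu t\ttk_\la)$ is a finite sum of products taken already in $\bfKir$. Moreover a $\bfKir$-linear map between weight modules is automatically $\iSr$- and $\iUr$-linear by the same computation as in the proof of (2) below, and extension and restriction are visibly inverse to one another on objects and on morphisms; this gives $\CrK\cong\CrU\cong\CrS$.

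For (2), I would compose the isomorphism $\CrS\cong\CrU$ with restriction of scalars along the epimorphism $\zr\colon\iU\twoheadrightarrow\iUr$ (Theorem \ref{not onto}). Given a weight $\iSr$-module $M$, for $x\in\ttk_\la M$ one has $K_i x=\ttk_i x=\up^{\la_i}x$ by \ref{KKK}(3), so $\ttk_\la M\subseteq M_\la$; conversely, decomposing an arbitrary $x\in M_\la$ as $x=\sum_\mu\ttk_\mu x$ and applying the $\ttk_i$ forces $\ttk_\mu x=0$ unless $\mu=\la$, whence $M_\la=\ttk_\la M$ for $\la\in\Lair$ and $M_\la=0$ otherwise, so the underlying $\iU$-module is a weight module and lies in $\sC$. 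For fullness, let $f\colon M\to N$ be $\iU$-linear between weight $\iSr$-modules; since $\zr$ is surjective and $\ttk_\la=[\diag(\la)]\in\bfV(\iy,r)=\iUr$ by \ref{KKK}(2), the map $f$ commutes with every element of $\iUr$, in particular with each $\ttk_\la$ and with each $s\ttk_\la\in\bfKir\subseteq\iUr$, so that for $s\in\iSr$
\[
f(sx)=\sum_{\la}f\big((s\ttk_\la)(\ttk_\la x)\big)=\sum_{\la}(s\ttk_\la)f(\ttk_\la x)=s\,f(x),
\]
i.e.\ $f$ is $\iSr$-linear; faithfulness is clear. Hence $\CrS$ is (isomorphic to) a full subcategory of $\sC$. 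The only point that needs genuine care, here and in part (1), is keeping the various infinite sums $\sum_\la$ under control — and this is exactly what the weight-module hypothesis $M=\oplus_{\la}\ttk_\la M$ is for; I expect this bookkeeping, rather than any conceptual difficulty, to be the main obstacle.
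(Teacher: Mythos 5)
Your proposal is correct and follows essentially the same route as the paper: part (1) rests on the identity $\iSr\ttk_\la=\iUr\ttk_\la=\bfKir\ttk_\la$ of \ref{lemma2 for representation of U(infty,r)}(1) (you merely spell out the transfer of module structures via $s\cdot x=\sum_\la(s\ttk_\la)(\ttk_\la x)$, which the paper leaves as "follows easily"), and part (2) is the same identification $\ttk_\la M=M_\la$ using \ref{KKK}. The only cosmetic difference is that for $M_\la\subseteq\ttk_\la M$ you argue by decomposing $x=\sum_\mu\ttk_\mu x$ rather than computing $\ttk_\la x=\prod_i\left[{\ttk_i;0\atop\la_i}\right]x=x$ directly as the paper does; both are fine.
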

\begin{proof} The statement (1) follows easily from part (1) of the lemma
above. We now prove (2). Using the homomorphism
$\zeta_r:\iU\to\iSr$, every $\iSr$-module $M$ is a $\iU$-module.
It suffices to prove that $\ttk_\la M=M_\la$ for all
$\la\in\La(\iy,r).$
 If $x\in M_\la$, then
$$\ttk_\la x=\prod_{i\in\mbz}\left(\left[{\ttk_i;0\atop\la_i}\right]x\right)=\prod_{i\in\mbz}
\left[{\la_i \atop\la_i}\right]x=x.$$ Hence,
$M_\la\subseteq\ttk_\la M$. Conversely, the inclusion $\ttk_\la
M_\la\subseteq M_\la$ follows from \ref{KKK}(3).
\end{proof}


Let $\bfOgir:=\iSr\ttk_\vp$. If we identify $\bfH$ with the
subalgebra $\ttk_\vp\iSr\ttk_\vp$ of $\iSr$ via the isomorphism
given in \ref{lemma2 for representation of U(infty,r)}(2), then
$\bfOgir=\bop\limits_{\mu\in\La(\iy,r)}\phi_{\mu\vp}^1\bfH$.
Hence, $\bfOgir$ is an ($\iSr$,$\bfH$)-bimodule.

\begin{Prop}\label{bimodule isomorphism}
 The evaluation map
 $$\text{\rm Ev}: \bfOgir\lra
 \bfOg_{\iy}^{\ot r},\,\, f\mapsto f(T_1)$$ defines an ($\iSr$,$\bfH$)-bimodule
isomorphism. Moreover, we  have
$$\bfH\cong\End_{\iSr}(\bfOg_{\iy}^{\ot
r})\cong\End_{\iU}(\bfOg_{\iy}^{\ot r}).$$
\end{Prop}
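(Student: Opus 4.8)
The plan is to establish the bimodule isomorphism first and then deduce the two $\End$-algebra isomorphisms as formal consequences of the Schur--Weyl--type setup. First I would verify that $\mathrm{Ev}\colon\bfOgir\to\bfOg_\iy^{\ot r}$ is well-defined and $\bfH$-linear: an element of $\bfOgir=\iSr\ttk_\vp$ is a map $f$ on $\oplus_\la x_\la\bfH$ determined by $f(x_\vp)\in\oplus_\la x_\la\bfH\cong\bfOg_\iy^{\ot r}$ (using the $\bfH$-module isomorphism of \ref{lemma2 for non-surjective map}, or rather \ref{lemma2 for representation of U(infty,r)} together with the identification $x_\vp\bfH\cong\bfH$), so $\mathrm{Ev}(f):=f(T_1)$ makes sense once we pin down the identification $\ttk_\vp\iSr\ttk_\vp\cong\bfH$ from \ref{lemma2 for representation of U(infty,r)}(2). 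Right $\bfH$-linearity is immediate since $f$ is a left $\iSr$-map hence commutes with the right $\bfH$-action coming from $\ttk_\vp\iSr\ttk_\vp$; left $\iSr$-linearity is the identity $\mathrm{Ev}(sf)=s\cdot\mathrm{Ev}(f)$, which holds because $\bfOg_\iy^{\ot r}\cong\oplus_\mu\phi^1_{\mu\vp}\bfH$ as an $\iSr$-module via $\phi^1_{\mu\vp}h\mapsto$ (its value at $T_1$). The decomposition $\bfOgir=\oplus_{\mu\in\Lair}\phi^1_{\mu\vp}\bfH$ noted just before the proposition is exactly what makes $\mathrm{Ev}$ bijective: on each summand $\phi^1_{\mu\vp}\bfH$ it restricts to an isomorphism onto $\ttk_\mu\bfOg_\iy^{\ot r}\cong x_\mu\bfH$, and these assemble to the direct-sum decomposition $\bfOg_\iy^{\ot r}\cong\oplus_\mu x_\mu\bfH$ of \ref{lemma2 for non-surjective map}.

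Next I would prove $\bfH\cong\End_{\iSr}(\bfOg_\iy^{\ot r})$. Transporting through the bimodule isomorphism, this is the same as $\End_{\iSr}(\iSr\ttk_\vp)\cong\ttk_\vp\iSr\ttk_\vp$, which is the standard fact that for an idempotent $e$ in an algebra $S$ one has $\End_S(Se)\cong (eSe)^{\mathrm{op}}$ via $\varphi\mapsto\varphi(e)$ (or $\cong eSe$ after composing with the anti-automorphism $\tau_r$ of $\bfKir$ from the Remark after \ref{isomorphism between dot U and K(infty)}, which one can extend to $\iSr=\tibfKir$; alternatively one simply checks $T_u\mapsto\phi^u_{\vp\vp}$ directly gives an algebra isomorphism rather than an anti-isomorphism because of how the $\bfH$-structure on $\ttk_\vp\iSr\ttk_\vp$ was normalized in \ref{lemma2 for representation of U(infty,r)}(2)). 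A small point to handle: $\iSr$ has no identity, so I should phrase the idempotent argument using the complete family $\{\ttk_\la\}_{\la\in\Lair}$ and note $\ttk_\vp$ acts as identity on its own block; alternatively work inside $\ttk_\vp\iSr\ttk_\vp$, which is a genuine unital algebra with unit $\ttk_\vp$.

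Then $\End_{\iU}(\bfOg_\iy^{\ot r})\cong\End_{\iSr}(\bfOg_\iy^{\ot r})$ follows because the $\iU$-action on $\bfOg_\iy^{\ot r}$ factors through $\iUr=\zeta_r(\iU)$ (this is \ref{map zr} together with \ref{not onto}, identifying $\iUr$ with $\bfVir\subseteq\hbfKir$), and on a weight module any $\bfKir$-endomorphism restricts from $\iSr$; more precisely $\bfOg_\iy^{\ot r}$ is already a faithful-ish module whose $\iU$-submodules coincide with its $\iSr$-submodules. The cleanest formulation: since $\iUr\ttk_\la=\iSr\ttk_\la$ for all $\la$ by \ref{lemma2 for representation of U(infty,r)}(1), every $\iU$-submodule of a weight $\iSr$-module is an $\iSr$-submodule, hence $\End_{\iU}(\bfOg_\iy^{\ot r})=\End_{\iSr}(\bfOg_\iy^{\ot r})$. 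I expect the main obstacle to be bookkeeping around the absence of an identity in $\iSr$ and $\bfKir$ and the various identifications ($\iSr=\tibfKir$, $\bfH=\ttk_\vp\iSr\ttk_\vp$, $\bfOg_\iy^{\ot r}=\oplus_\mu x_\mu\bfH$), rather than any genuinely hard step; in particular one must be careful that $\mathrm{Ev}$ uses the \emph{same} identification of $\bfH$ with $\ttk_\vp\iSr\ttk_\vp$ on both the source and target so that the right $\bfH$-actions match, and that the isomorphism $\bfH\cong\End_{\iSr}(\bfOg_\iy^{\ot r})$ comes out covariant and not contravariant.
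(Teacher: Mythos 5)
Your proposal is correct and follows essentially the same route as the paper, whose entire proof is the observation that $\mathrm{Ev}$ carries the basis $\{\phi_{\mu\vp}^d\mid \mu\in\La(\iy,r),\,d\in\fD_\mu\}$ of $\bfOgir$ onto a basis of $\bfOg_\iy^{\ot r}$ (equivalent to your summand-by-summand identification $\phi^1_{\mu\vp}\bfH\cong x_\mu\bfH$), with the bimodule property and the two $\End$ isomorphisms left as "follows easily." Your extra care about the $(eSe)^{\mathrm{op}}$ twist and about deducing $\End_{\iU}=\End_{\iSr}$ from $\iUr\ttk_\la=\iSr\ttk_\la$ correctly supplies exactly the details the paper suppresses (and note $\iSr=\tibfKir$ is in fact unital, so the non-identity worry is moot).
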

\begin{proof}
Since the set $\{\phi_{\mu\vp}^d\mid \mu\in\La(\iy,r),d\in\frak
D_\mu\}$ forms a basis for $\bfOgir$, and the set
$\{\phi_{\mu\vp}^d(T_1)\mid \mu\in\La(\iy,r),d\in\frak D_\mu\}$
forms a basis for $\bfOg_{\iy}^{\ot r}$. So the assertion follows
easily.
\end{proof}

 For $\la\in\Lair$, let $\la^t=(\la_i^t)_{i\in\mbz}\in\Lair$, where $\la_i^t=\#\{j\in\mbz|\la_j\geq i\}$ for $i\geq 1$ and $\la_i^t=0$ for $i\leq 0$.
 Let $w_\la$ be the unique element in $\fD_{\la,\la^t}$ such that $w_{\la}^{-1}\fS
 w_\la\cap\fS_{\la^t}=\{1\}$, and let $z_\la=\phi_{\la\vp}^1
T_{w_\la}y_{\la^t}$, where
$y_{\la^t}=\sum_{w\in\fS_{\la^t}}(-\up^2)^{-l(w)}T_w$.

For any $\mu\in\Lannr$, let
\begin{equation}\label{SpechtWeyl}
S^\mu=z_\mu\bfH,\quad\text{ and }\quad \iWmu=\iSr z_\mu
\end{equation}
 be the {\it Specht module} of $\bfH$ and the
{\it Weyl module} of $\iSr$, respectively. Note that, if
 $\mu\in\Lannr$ and $n\geq r$, then $\vp\in\Lannr$. So
  $$\nWmu:=\bfS([-n,n],r)z_\mu$$
  is well-defined. This is a Weyl module of $\bfS([-n,n],r)$. Since $\bfS([-n,n],r)$ is
a finite dimensional semisimple algebra, by \cite[4.6]{DJ91},
$\nWmu$ is an irreducible $\bfS([-n,n],r)$-module. Note that
$z_\mu=\ttk_\mu z_\mu$. So \ref{lemma2 for representation of
U(infty,r)} implies $\iWmu=\iSr\ttk_\mu z_\mu=\bfKir\ttk_\mu
z_\mu=\bfKir z_\mu.$  Hence, for any $\mu\in\La(\iy,r)$, we obtain
\begin{equation}\label{bin}
\iWmu=\bin_{n\geq r}\nWmu\cong\underset{\underset
n\longrightarrow}\lim\,\nWmu.
\end{equation}

Let $\La^+(r)$ be the set of all partitions of $r$. We will regard
$\La^+(r)$ as the subset $$\{\la\in\La(\iy,r)\mid
\la_i\geq\la_{i+1} \forall i\geq 1, \la_i=0\,\forall i\leq0\}$$ of
$\La(\iy,r)$. Define a map from $\La(\iy,r)$ to $\La^+(r)$ by
sending $\la$ to $\la^+$ where $\la^+$ is the unique element in
$\La^+(r)$ obtained by reordering the parts of $\la$.


\begin{Prop}\label{infinite Weyl module}
$(1)$ For $\mu\in\La(\iy,r)$ we have
$$\iWmu\cong\Hom_{\bfH}(S^\mu,\bfOg_{\iy}^{\ot
r})\cong\bop_{\la\in\La(\iy,r)}\Hom(S^\mu,x_\la\bfH).$$

$(2)$ Let $\mu\in\La(\iy,r)$. Then the module $\iWmu$ is an
irreducible $\iSr$-module. Moreover,  we have $\iWmu\cong
W(\iy,\mu^+)$.
\end{Prop}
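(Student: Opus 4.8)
The plan is to prove Proposition \ref{infinite Weyl module} in two parts, mirroring the finite-dimensional theory and then passing to the limit via the chain relations already established.

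For part (1), the key identity is \ref{bimodule isomorphism}, which gives an $(\iSr,\bfH)$-bimodule isomorphism $\bfOgir=\iSr\ttk_\vp\cong\bfOg_\iy^{\ot r}$. Since $z_\mu=\ttk_\mu z_\mu$ and $z_\mu\in\ttk_\mu\iSr\ttk_\vp$ (note $z_\mu=\phi_{\mu\vp}^1 T_{w_\mu}y_{\mu^t}$ lies in $\ttk_\mu\iSr\ttk_\vp$), right multiplication by $z_\mu$ defines a map $\bfOgir\to\iWmu=\iSr z_\mu$, equivalently a $\bfH$-module map $S^\mu=z_\mu\bfH\to\bfOg_\iy^{\ot r}$ whose image under $\Hom_\bfH(-,\bfOg_\iy^{\ot r})$ recovers $\iSr z_\mu$. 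Concretely, I would argue that $\iSr z_\mu\cong\Hom_\bfH(z_\mu\bfH,\,\iSr\ttk_\vp)\cong\Hom_\bfH(S^\mu,\bfOg_\iy^{\ot r})$, the first isomorphism coming from the standard fact that for an idempotent $e$ and $z=ez'\,\ttk_\vp$ with $z\bfH$ a direct summand of $e\bfH$ (as $S^\mu$ is a direct summand of $\ttk_\mu\bfH\cong x_\mu\bfH$ up to the Hecke-module twist), the functor $M\mapsto Mz$ is naturally $\Hom_\bfH(z\bfH,M\ttk_\vp)$. Then the decomposition $\bfOg_\iy^{\ot r}\cong\oplus_{\la\in\Lair}x_\la\bfH$ from \ref{lemma2 for non-surjective map} yields $\iWmu\cong\oplus_\la\Hom(S^\mu,x_\la\bfH)$. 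The main thing to check carefully here is that $S^\mu$ really is a direct summand of $x_\mu\bfH$ — this is the semisimplicity of $\bfH$ over $\mbq(\up)$ together with the classical description of $z_\mu\bfH$ as a Specht module, which I may quote from \cite{DJ91}; once that is in hand the Hom-tensor manipulation is routine.

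For part (2), irreducibility and the isomorphism $\iWmu\cong W(\iy,\mu^+)$ both follow from \eqref{bin}. Fix $n\ge r$; then $\nWmu=\bfS([-n,n],r)z_\mu$ is irreducible over the finite-dimensional semisimple algebra $\bfS([-n,n],r)$ by \cite[4.6]{DJ91}. The direct system $\{\nWmu\}_{n\ge r}$ has injective transition maps (they are the restrictions of the embeddings $\bfKir=\varinjlim\nKrb$ applied to the fixed element $z_\mu$, using $\iWmu=\bfKir z_\mu$ from \ref{lemma2 for representation of U(infty,r)}), so $\iWmu=\bin_{n\ge r}\nWmu$. A submodule $W\subseteq\iWmu$ meets each $\nWmu$ in either $0$ or all of $\nWmu$, and since $W=\bin_n(W\cap\nWmu)$, either $W=0$ or $W=\iWmu$ once some $W\cap\nWmu\ne0$; this is exactly the argument already used in the proof of \ref{L(la)}, which I would reproduce. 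Finally, to identify $\iWmu$ with $W(\iy,\mu^+)$: over $\bfS([-n,n],r)$, the Weyl modules $\nWmu$ and $W([-n,n],\mu^+)$ are isomorphic because $z_\mu$ and $z_{\mu^+}$ generate isomorphic simple modules — indeed $\mu$ and $\mu^+$ are conjugate compositions, so there is a permutation matrix in $\bfS([-n,n],r)$ (a suitable $[A]$ with $A$ a permutation of the identity) conjugating $z_\mu$ to $z_{\mu^+}$ up to a unit, hence $\bfS([-n,n],r)z_\mu\cong\bfS([-n,n],r)z_{\mu^+}$. Taking the direct limit over $n\ge r$ and using \eqref{bin} for both $\mu$ and $\mu^+$ gives $\iWmu\cong W(\iy,\mu^+)$.

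The step I expect to be the main obstacle is the compatibility of the finite-level isomorphisms $\nWmu\cong W([-n,n],\mu^+)$ with the transition maps of the two direct systems — i.e. checking that the conjugating elements at level $n$ and level $n+1$ can be chosen coherently so that the diagram commutes. Since the permutation matrix realizing $\mu\mapsto\mu^+$ at level $n$ naturally extends by zeros to level $n+1$ (under the embedding $\tiThnn\subseteq\ti\Xi([-n-1,n+1])$), and the relevant $[A]$'s are group-like in the sense that they multiply on basis diagrams predictably via \ref{first formula}, I expect this to go through, but it requires being careful that "extension by zeros" of the relevant permutation matrix is again the correct permutation matrix at the larger level and that it fixes $z_\mu$ appropriately. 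Everything else reduces to bookkeeping with the already-established limit descriptions and the semisimplicity of the finite $q$-Schur algebras.
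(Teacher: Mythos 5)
Your argument is correct in outline and lands on the same skeleton as the paper's proof (the bimodule isomorphism $\iSr\ttk_\vp\cong\bfOg_{\iy}^{\ot r}$, the decomposition $\bfOg_{\iy}^{\ot r}\cong\oplus_\la x_\la\bfH$, and reduction to the finite-dimensional semisimple algebras $\bfS([-n,n],r)$), but it deviates in two places. For (1), you prove $\iSr z_\mu\cong\Hom_{\bfH}(S^\mu,\bfOg_{\iy}^{\ot r})$ directly at the infinite level, using that $S^\mu$ is a direct summand of $\bfH$ by semisimplicity so that evaluation at $z_\mu$ is an isomorphism onto $\iSr z_\mu$; the paper instead takes the direct limit of the finite-level isomorphisms $\Hom_{\bfH}(S^\mu,\bfOg_{[-n,n]}^{\ot r})\cong\nWmu$ of \cite[8.1, 8.7]{DJ91}. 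Your route is more self-contained; to make it airtight you should say explicitly that $\Hom_{\bfH}(S^\mu,-)$ commutes with the infinite direct sum because $S^\mu$ is cyclic, and that surjectivity of evaluation comes from extending any map $S^\mu\to M$ along the projection $\bfH\to S^\mu$. For the irreducibility in (2), your nested-union argument over the irreducible $\nWmu$ is exactly what the paper intends by ``follows from \eqref{bin}.''

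The one place where you are doing genuinely more work than necessary, and where your argument is still incomplete, is the identification $\iWmu\cong W(\iy,\mu^+)$. You propose to conjugate $z_\mu$ into $z_{\mu^+}$ by a generalized permutation element and then check coherence of these elements along the direct system, which you flag as the main obstacle. Two remarks. First, the element you need is not an extension-by-zeros of an identity-like matrix but a basis element $\phi_{\mu^+\mu}^d$ with $d^{-1}\fS_{\mu^+}d=\fS_\mu$, and verifying that $\phi_{\mu^+\mu}^d z_\mu$ is a unit multiple of $z_{\mu^+}$ requires a Hecke-algebra length computation ($T_dT_{w_\mu}$ versus $T_{w_{\mu^+}}$, using $\mu^t=(\mu^+)^t$); as written this step is asserted rather than proved, and it is precisely the content of \cite[3.9]{DJ91}. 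Second, the paper avoids the whole issue: by \cite[3.9]{DJ91} the finite-level Weyl modules for associated compositions are \emph{equal} as submodules of the tensor space, $\nWmu=W([-n,n],\mu^+)$ for all $n\geq\max\{r,m_0\}$, so the two direct systems in \eqref{bin} coincide term by term and there is no compatibility to check. If you either carry out the length computation or simply cite \cite[3.9]{DJ91} at the finite level, your proof closes up.
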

\begin{proof}
The bimodule isomorphism Ev given in  \ref{bimodule isomorphism}
induces $\iSr$-module isomorphism
$$\Hom_\bfH(S^\mu,\bfOg_{\iy}^{\ot r})\cong
\Hom_\bfH(S^\mu,\bfOgir)
 \cong\bop_{\la\in\La(\iy,r)}\Hom(S^\mu,\phi_{\la\vp}^1\bfH).$$
Now \ref{lemma2 for non-surjective map} implies
 $$\Hom_\bfH(S^\mu,\bfOg_{\iy}^{\ot r})\cong \underset{\underset
n\longrightarrow}\lim\,\Hom_\bfH(S^\mu,\bfOg_{[-n,n]}^{\ot
r}).
$$
 Choose $m_0$ such that
$\mu\in\La([-m_0,m_0],r)$ and let $r_0=\max\{r,m_0\}$. Then, for
any $n\geq r_0$, we have, by \cite[8.1,8.7]{DJ91}, isomorphisms
$\Hom_\bfH(S^\mu,\bfOg_{[-n,n]}^{\ot r})\cong \nWmu$ which are
compatible with the direct systems. Now, the first isomorphism in
(1) is obtained by taking direct limits.

The assertions in (2) follow easily from \eqref{bin} (cf.
\cite[3.9]{DJ91}).
\end{proof}

 For
$\la,\mu\in\La(\iy,r)$ we write $\mu\leq_\wt^+\la$ if
$\mu^+\leq_\wt\la^+$. Note that $\mu^+\leq_\wt\la^+$ is equivalent
to the dominance order $\mu^+\trianglelefteq\la^+$ on partitions.

We also recall the notation of Young tableaux. Suppose
$\la\in\La(\iy,r)$ and $\mu\in\La^+(r)$. A $\mu$-tableau of type
$\la$ is a $\mu$-tableau with (possibly) repeated entries, where
for each $i$, the number of entries $i$ is equal to $\la_i$. We
denote the set of $\mu$-tableaux of type $\la$ by $\mcT(\mu,\la)$.
For $T\in \mcT(\mu,\la)$, we say that $T$ is row-standard (resp.,
strictly rwo-standard) if the numbers are weakly increasing
(resp., increasing) along each row of $T$. The column-standard and
strictly column-standard tableaux can be defined similarly. A
tableau $T$ is semistandard if it is row-standard and strictly
column-standard. Let $\mcT_0(\mu,\la)$ denote the set of
semistandard $\mu$-tableaux of type $\la$.

\begin{Prop}\label{weight space for infinite Weyl module} Let $\la\in\La(\iy,r)$ and $\mu\in\La^+(r)$.
 We have $\iWmu_\la\cong\Hom_{\bfH}(S^\mu,x_\la\bfH)$. Hence,
$\dim\iWmu_\la=\#\mcT_0(\mu,\la)$ (\cite[8.7]{DJ91}). In
particular, $\dim\iWmu_\mu=1$, and $\iWmu_\la\not=0$ implies
$\la\leq_\wt^+\mu$.
\end{Prop}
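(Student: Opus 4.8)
Let $\la\in\La(\iy,r)$ and $\mu\in\La^+(r)$. We have $\iWmu_\la=\Hom_{\bfH}(S^\mu,x_\la\bfH)$. Hence, $\dim\iWmu_\la=\#\mcT_0(\mu,\la)$. In particular, $\dim\iWmu_\mu=1$, and $\iWmu_\la\not=0$ implies $\la\leq_\wt^+\mu$.

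The plan is to leverage the explicit identification of $\iWmu$ with $\bigoplus_{\nu\in\La(\iy,r)}\Hom_{\bfH}(S^\mu,x_\nu\bfH)$ furnished by Proposition~\ref{infinite Weyl module}(1), and then to match weight spaces directly under the identification $\iSr\cong\prod_{\nu}\bigoplus_{\la}\Hom_{\bfH}(x_\nu\bfH,x_\la\bfH)$ from Proposition~\ref{Cor1 for non-surjective map}. First I would recall that $\ttk_\la=\phi_{\la\la}^1$ corresponds to the identity endomorphism of the summand $x_\la\bfH$, and that by Proposition~\ref{weight module for U(infty,r)}(2) the $\la$-weight space of any weight $\iSr$-module $M$ is exactly $\ttk_\la M$. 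So under the isomorphism $\iWmu\cong\bigoplus_{\nu}\Hom_{\bfH}(S^\mu,x_\nu\bfH)$, the action of $\ttk_\la$ is projection onto the $\nu=\la$ summand; hence $\iWmu_\la=\ttk_\la\iWmu\cong\Hom_{\bfH}(S^\mu,x_\la\bfH)$. This is really a bookkeeping step: one must check that the isomorphism of Proposition~\ref{infinite Weyl module}(1), which comes from the evaluation map $\mathrm{Ev}$ of Proposition~\ref{bimodule isomorphism} and the direct-limit description $\bfOg_\iy^{\ot r}\cong\bigoplus_\la x_\la\bfH$, is equivariant for the left $\ttk_\la$-action, so that it carries $\ttk_\la\iWmu$ onto the $\la$-summand. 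This follows because $\ttk_\la$ acts on $\bfOg_\iy^{\ot r}$ as the projector onto $x_\la\bfH$ (equivalently, onto the span of basis vectors $\og_{i_1}\cdots\og_{i_r}$ of weight $\la$), which is immediate from the action formulas for $K_a$ on $\bfOg_\eta$ in \S2 together with $\ttk_\la=\prod_i\big[{\ttk_i;0\atop\la_i}\big]$ and Lemma~\ref{KKK}(3).

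Next, the dimension formula $\dim\iWmu_\la=\#\mcT_0(\mu,\la)$ follows at once from the cited result \cite[8.7]{DJ91}, which computes $\dim\Hom_{\bfH}(S^\mu,x_\la\bfH)$ as the number of semistandard $\mu$-tableaux of type $\la$; alternatively, one may take a direct limit of the finite-dimensional statement over $\nWmu$ using \eqref{bin}, noting the bases are compatible. For the special case $\la=\mu$: a semistandard $\mu$-tableau of type $\mu$ must be the unique one in which row $i$ consists entirely of entries $i$ (strict increase down columns forces the entries to be non-decreasing down a column and weak increase along rows plus the content constraint pins it down), so $\#\mcT_0(\mu,\mu)=1$ and $\dim\iWmu_\mu=1$. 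Finally, for the dominance statement: $\mcT_0(\mu,\la)\neq\emptyset$ forces $\la^+\trianglelefteq\mu$ by the standard combinatorial fact that the existence of a semistandard tableau of shape $\mu$ and content $\la$ implies $\la$ is dominated by $\mu$ (count, for each $k$, the entries $\le k$: they occupy the first $k$ rows of $\mu$, giving $\sum_{i\le k}\la_i^{\text{rearranged}}\le\sum_{i\le k}\mu_i$); since $\leq_\wt^+$ is defined via $\mu^+\leq_\wt\la^+\iff\mu^+\trianglelefteq\la^+$, this is precisely $\la\leq_\wt^+\mu$.

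The main obstacle, such as it is, is the first step: one must be careful that all the identifications ($\iSr\cong\tibfKir$, the basis $\{\phi_{\la\mu}^w\}$, the direct-limit description of $\bfOg_\iy^{\ot r}$, and the evaluation map) are mutually compatible so that the left $\iSr$-module structure — and in particular the idempotents $\ttk_\la$ — act the same way through each description; once that compatibility is in hand, the weight-space statement is a projection and everything else is the combinatorics of semistandard tableaux, which is entirely classical and imported from \cite{DJ91}. I would therefore spend most of the write-up on making the equivariance of $\mathrm{Ev}$ and of the limit isomorphism explicit, and merely cite the tableau counting.
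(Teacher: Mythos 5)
Your proposal is correct and follows essentially the same route as the paper: both arguments rest on the decomposition $\iWmu\cong\bigoplus_{\nu}\Hom_{\bfH}(S^\mu,x_{\nu}\bfH)$ from Proposition~\ref{infinite Weyl module}(1) and the observation that the diagonal idempotents single out the $\nu=\la$ summand — the paper verifies $\ttk_i f=\up^{\la_i}f$ for $f\in\Hom_{\bfH}(S^\mu,x_\la\bfH)$ directly, while you phrase the same computation as $\ttk_\la=\phi_{\la\la}^1$ acting as the projector onto that summand, which is equivalent by Lemma~\ref{KKK}(2)--(3). The remaining assertions are, in both treatments, imported from \cite[8.7]{DJ91} and standard semistandard-tableau combinatorics.
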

\begin{proof}Let $f\in\Hom_\bfH(S^\mu,x_\la\bfH)$. Assume $f(z_\mu)=x_\la
h_\la$ where $h_\la\in\bfH$. Then we have
\begin{equation*}
\begin{split}
(\ttk_if)(z_\la h)&=\ttk_if(z_\la h)=\ttk_i(x_\la h_\la h)=
\bigg(\sum_{\nu\in\La(\iy,r)}\up^{\nu_i}\phi_{\nu\nu}^1\bigg)(x_\la
h_\la
h)\\&=\sum_{\nu\in\La(\iy,r)}\up^{\nu_i}\phi_{\nu\nu}^1(x_\la
h_\la h)=\up^{\la_i}x_\la h_\la h=\up^{\la_i}f(z_\la h),
\end{split}
\end{equation*}
where $i\in\mbz$ and $h\in\bfH$. Hence, $\ttk_if=\up^{\la_i}f$ for
$i\in\mbz$. This means $f\in\iWmu_\la$. By \ref{infinite Weyl
module}(1), we obtain
 $(\iWmu)_\la=\Hom_{\bfH}(S^\mu,x_\la\bfH)$.
\end{proof}

Though the double centralizer property in the Schur-Weyl duality
is no longer true in the infinite case, the following
decomposition for the tensor space $\bfOg_{\iy}^{\ot r}$ continue
to hold.

\begin{Thm}\label{bimodule isomorphism for V(infty,r)}
We have the following ($\iSr$,$\bfH$)-bimodule isomorphism
 $$\bfOg_{\iy}^{\ot r}\cong\bop_{\mu\in\La^+(r)}\iWmu\ot S^\mu.$$
 Hence, as a (left) $\iSr$-module, $\bfOg_{\iy}^{\ot r}$ is
 completely reducible.
\end{Thm}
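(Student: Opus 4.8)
The plan is to prove the $(\iSr,\bfH)$-bimodule isomorphism $\bfOg_{\iy}^{\ot r}\cong\bop_{\mu\in\La^+(r)}\iWmu\ot S^\mu$ by passing to the direct limit of the analogous statement for the finite $q$-Schur algebras. First I would recall from \ref{lemma2 for non-surjective map} that, as a $\bfH$-module, $\bfOg_{\iy}^{\ot r}$ is the direct limit of the $\bfOg_{[-n,n]}^{\ot r}$ (equivalently $\oplus_{\la\in\La(\iy,r)}x_\la\bfH$), with the transition maps $i_n$ being the natural $\bfH$-module embeddings. Next, for each $n\geq r$ the algebra $\bfS([-n,n],r)$ is finite dimensional and semisimple (as $\mbq(\up)$ is a field of characteristic zero and $\up$ is generic), so by the classical Schur--Weyl duality for finite $q$-Schur algebras (see \cite{DJ91}) we have the $(\bfS([-n,n],r),\bfH)$-bimodule decomposition $\bfOg_{[-n,n]}^{\ot r}\cong\bop_{\mu\in\La^+(r),\,\ell(\mu)\leq 2n+1}\nWmu\ot S^\mu$, where the $\nWmu$ run over a complete set of irreducible $\bfS([-n,n],r)$-modules and $S^\mu$ over the irreducible $\bfH$-modules (noting that $\bfH$ is semisimple as well, so $S^\mu$ is genuinely irreducible). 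Since $r\leq n$, every partition $\mu$ of $r$ has at most $r\leq 2n+1$ parts, so in fact the sum already ranges over all of $\La^+(r)$ for every $n\geq r$.

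Then I would check compatibility with the direct systems. On the left, $\{\bfOg_{[-n,n]}^{\ot r}\}_{n\geq r}$ with the maps $i_n$; on the right, for each fixed $\mu\in\La^+(r)$ the Specht module $S^\mu$ is independent of $n$ (it is an $\bfH$-module built from $z_\mu$, which lives in $\bfH$ as soon as $n\geq r$ so that $\vp\in\Lannr$), while $\{\nWmu\}_{n\geq r}$ forms a direct system whose limit is $\iWmu$ by \eqref{bin}. One must verify that the isomorphisms $\bfOg_{[-n,n]}^{\ot r}\cong\bop_\mu\nWmu\ot S^\mu$ can be chosen to commute with the transition maps on both sides; this is where the care is needed. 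The natural way to do this is to realize both sides Hom-theoretically: by \ref{bimodule isomorphism} and \ref{infinite Weyl module}(1), $\iWmu\cong\Hom_\bfH(S^\mu,\bfOg_\iy^{\ot r})$, and the analogous finite statement $\nWmu\cong\Hom_\bfH(S^\mu,\bfOg_{[-n,n]}^{\ot r})$ holds by \cite[8.1,8.7]{DJ91}. The evaluation-type map $\bop_\mu \Hom_\bfH(S^\mu,\bfOg^{\ot r})\ot S^\mu\to\bfOg^{\ot r}$, $f\ot s\mapsto f(s)$, is visibly natural in the ambient tensor space, hence commutes with the $i_n$; at level $n$ it is an isomorphism by semisimplicity of $\bfH$ and $\bfS([-n,n],r)$. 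Taking the direct limit over $n\geq r$ (using that direct limits commute with the finite direct sum over $\mu\in\La^+(r)$ and with tensoring by the fixed module $S^\mu$) yields the desired $(\iSr,\bfH)$-bimodule isomorphism. Finally, since each $\iWmu$ is an irreducible $\iSr$-module by \ref{infinite Weyl module}(2) and the sum is a direct sum (of submodules, as the decomposition is canonical), $\bfOg_\iy^{\ot r}$ is completely reducible as a left $\iSr$-module.

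The main obstacle I anticipate is the compatibility of the two direct systems: one must be sure that the finite-level bimodule decompositions glue, i.e. that the embedding $\bfOg_{[-n,n]}^{\ot r}\hookrightarrow\bfOg_{[-n-1,n+1]}^{\ot r}$ carries the $\mu$-isotypic piece $\nWmu\ot S^\mu$ into $W([-n-1,n+1],\mu)\ot S^\mu$ and induces the direct-system map of \eqref{bin} on the Weyl-module factor. Using the Hom-functor description makes this essentially automatic, since $\Hom_\bfH(S^\mu,-)$ is functorial and the transition maps $i_n$ are $\bfH$-module maps; the only thing to confirm is that $i_n$ is injective with image a direct summand at each stage, which follows because $\bfH$ is semisimple, so that $\Hom_\bfH(S^\mu,i_n)$ is injective and $\lim_{\longrightarrow}\Hom_\bfH(S^\mu,\bfOg_{[-n,n]}^{\ot r})=\Hom_\bfH(S^\mu,\lim_{\longrightarrow}\bfOg_{[-n,n]}^{\ot r})=\iWmu$ by \ref{lemma2 for non-surjective map} together with \ref{infinite Weyl module}(1). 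A secondary point to be careful about is that the $\iSr$-action (not merely the $\bfKir$-action) on the limit is the correct one; this is handled by \ref{lemma2 for representation of U(infty,r)}(1), which guarantees $\iSr\ttk_\la=\bfKir\ttk_\la$, so that the $\iSr$-module structure on each $\iWmu=\bfKir z_\mu$ is the natural one and matches the limit construction.
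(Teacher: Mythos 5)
Your proposal is correct and follows essentially the same route as the paper: invoke the classical $(\bfS([-n,n],r),\bfH)$-bimodule decomposition of $\bfOg_{[-n,n]}^{\ot r}$ for $n\geq r$ and pass to the direct limit using \ref{lemma2 for non-surjective map} and \eqref{bin}. Your Hom-functor formulation of the compatibility of the two direct systems is a careful filling-in of a detail the paper leaves implicit, but it is not a different argument.
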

\begin{proof} It is well-known that, for all $n
\geq r$, there are ($\boldsymbol\sS([-n,n],r)$,$\bfH$)-bimodule
isomorphisms
$$\bfOg_{[-n,n]}^{\ot r}\cong\bop_{\mu\in\La^+(r)}W([-n,n],\mu)\ot S^\mu.$$
The require isomorphism is obtained by taking direct limits; cf.
\ref{lemma2 for non-surjective map} and \eqref{bin}.
\end{proof}

\begin{Coro}\label{lemma1 for classification of irreducible modle in Cr}
For each $\la\in\La(\iy,r)$, the $\iSr$-module $\iSr\ttk_\la$ is
completely reducible.
\end{Coro}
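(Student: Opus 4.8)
The plan is to unfold $\iSr\ttk_\la$ as a direct sum of the irreducible Weyl modules $W(\iy,\nu)$ $(\nu\in\La^+(r))$, using the ``matrix'' description of $\iSr$ from \ref{Cor1 for non-surjective map} together with \ref{lemma2 for representation of U(infty,r)} and \ref{infinite Weyl module}. Since $\iSr$ is not semisimple one cannot argue that a submodule of a semisimple module is semisimple; instead one observes that, once the problem is transported into $\bfKir$, the left module $\iSr\ttk_\la$ splits honestly into a \emph{finite} direct sum of $\Hom$-spaces, each of which is identified with an irreducible module.

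First I would use \ref{lemma2 for representation of U(infty,r)}(1) to replace $\iSr\ttk_\la$ by $\bfKir\ttk_\la$. In $\bfKir$, which has the basis $\{[A]\}_{A\in\Thir}$ with each $[A]$ carrying a well-defined row type $ro(A)$ and column type $co(A)$, one has $[A]\ttk_\la=[A]$ if $co(A)=\la$ and $0$ otherwise, so
\[
\iSr\ttk_\la=\bfKir\ttk_\la=\bigoplus_{\mu\in\La(\iy,r)}\ttk_\mu\,\bfKir\,\ttk_\la ,
\]
an honest direct sum over the left weight spaces (this is the point where passing to $\bfKir$ is essential, since in the completion $\iSr$ an element need not be a finite sum of its $\ttk_\mu$-components). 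By \ref{lemma2 for representation of U(infty,r)}(1) again, $\ttk_\mu\bfKir\ttk_\la=\ttk_\mu\iSr\ttk_\la\cong\Hom_\bfH(x_\la\bfH,x_\mu\bfH)$, an isomorphism of left $\iSr$-modules with $\iSr$ acting by post-composition via the identification $\iSr\cong\prod_\mu\bigoplus_\la\Hom_\bfH(x_\mu\bfH,x_\la\bfH)$.

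Next I would invoke the (split) semisimplicity of the Hecke algebra $\bfH$ over $\mbq(\up)$, whose simple modules are exactly the Specht modules $S^\nu$ $(\nu\in\La^+(r))$. As $x_\la\bfH$ is finite-dimensional this gives $x_\la\bfH\cong\bigoplus_{\nu\in\La^+(r)}(S^\nu)^{\oplus c_{\nu\la}}$ with $c_{\nu\la}=\#\mcT_0(\nu,\la)<\infty$ (\cite[8.7]{DJ91}), only finitely many nonzero. Since $\Hom_\bfH(-,-)$ is additive in the first variable and the sum is finite, feeding this into the previous display and applying \ref{infinite Weyl module}(1) in the form $W(\iy,\nu)\cong\bigoplus_{\mu\in\La(\iy,r)}\Hom_\bfH(S^\nu,x_\mu\bfH)$ yields
\[
\iSr\ttk_\la\;\cong\;\bigoplus_{\nu\in\La^+(r)}\Big(\bigoplus_{\mu\in\La(\iy,r)}\Hom_\bfH(S^\nu,x_\mu\bfH)\Big)^{\oplus c_{\nu\la}}\;\cong\;\bigoplus_{\nu\in\La^+(r)}W(\iy,\nu)^{\oplus c_{\nu\la}}
\]
as left $\iSr$-modules. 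By \ref{infinite Weyl module}(2) each $W(\iy,\nu)$ is an irreducible $\iSr$-module, so $\iSr\ttk_\la$ is a finite direct sum of irreducibles, hence completely reducible.

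The step I expect to demand the most care is the bookkeeping of the direct sums: one must split into weight spaces only after descending to $\bfKir$, and one must keep track that $x_\la\bfH$ is finite-dimensional so that $\Hom_\bfH$ commutes with the sum over $\nu$ and the multiplicities $c_{\nu\la}$ stay finite (an infinite product of the $W(\iy,\nu)$ would not obviously be semisimple). Everything else is formal once \ref{infinite Weyl module} is available. As an alternative route that bypasses \ref{Cor1 for non-surjective map} and \ref{lemma2 for representation of U(infty,r)}, one can realize $\ttk_\la$ as the $\bfH$-linear idempotent of $\iSr=\End_\bfH(\bfOg_\iy^{\ot r})$ projecting onto the finite-dimensional $\la$-weight space $N_\la$ of $\bfOg_\iy^{\ot r}$, note $\iSr\ttk_\la\cong\Hom_\bfH(N_\la,\bfOg_\iy^{\ot r})$ as left $\iSr$-modules (using that $N_\la$ is an $\bfH$-direct summand), decompose $N_\la$ into Specht modules, and apply \ref{bimodule isomorphism for V(infty,r)} together with \ref{infinite Weyl module} to reach the same conclusion.
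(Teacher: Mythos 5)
Your main argument is correct, but it takes a genuinely different route from the paper's. The paper's proof is a two-line reduction: right multiplication by $\phi_{\la\vp}^1$ gives an $\iSr$-module map $\iSr\ttk_\la\to\iSr\ttk_\vp\cong\bfOg_\iy^{\ot r}$, which is injective because $\phi_{\la\vp}^1:x_\vp\bfH\to x_\la\bfH$ is surjective; thus $\iSr\ttk_\la$ is a submodule of the tensor space, and complete reducibility follows at once from \ref{bimodule isomorphism for V(infty,r)}. You instead compute the decomposition directly: descend to $\bfKir\ttk_\la$ via \ref{lemma2 for representation of U(infty,r)}(1), identify it with $\bigoplus_\mu\Hom_\bfH(x_\la\bfH,x_\mu\bfH)$, split the finite-dimensional $\bfH$-module $x_\la\bfH$ into Specht modules, and reassemble via \ref{infinite Weyl module}. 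Both arguments are sound; yours has the advantage of producing the explicit multiplicities $\iSr\ttk_\la\cong\bigoplus_{\nu\in\La^+(r)}W(\iy,\nu)^{\oplus\#\mcT_0(\nu,\la)}$, while the paper's is shorter and leans entirely on the already-established decomposition of $\bfOg_\iy^{\ot r}$.

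One side remark of yours is wrong and worth correcting: you assert that ``since $\iSr$ is not semisimple one cannot argue that a submodule of a semisimple module is semisimple.'' In fact a submodule (or quotient) of a semisimple module is semisimple over \emph{any} ring with identity --- this is part of the standard characterization of semisimple modules and has nothing to do with semisimplicity of the ring --- and it is exactly the fact the paper's proof relies on after embedding $\iSr\ttk_\la$ into $\bfOg_\iy^{\ot r}$. So your detour was not forced, though it was a fruitful choice. Your closing ``alternative route'' via $\Hom_\bfH(N_\la,\bfOg_\iy^{\ot r})$ is essentially the paper's argument recast in Hom-language.
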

\begin{proof} Consider the $\iSr$-module homomorphism
$$f:\iSr\ttk_\la\lra\iSr\ttk_\vp\cong \bfOg_{\iy}^{\ot r},\,\,\,
u\ttk_\la\mapsto u\ttk_\la\phi_{\la\vp}^1\,\forall u\in\iSr.$$
Since $f$ is induced from the surjective map
$\phi_{\la\vp}^1:\bfH\to x_\la\bfH$, it follows that $f$ is
injective. Hence, $\iSr\ttk_\la$ is isomorphic to a submodule of
$\bfOg_{\iy}^{\ot r}$. Hence, the assertion follows from
\ref{bimodule isomorphism for V(infty,r)}.
\end{proof}
We are now ready to classify irreducible objects in the category
$\CrS$. We first observe the following.

\begin{Thm}\label{classfication for simple module in Cr}
$(1)$ The map $\mu\ra \iWmu$ defines a bijection between $\La^+(r)$ and
the set of isomorphism classes of irreducible $\iSr$-modules in
the category $\CrS$.

$(2)$ Any module in $\CrS$ is completely reducible.
\end{Thm}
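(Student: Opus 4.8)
The plan is to reduce everything to the finite case $\boldsymbol{\mathcal S}([-n,n],r)$, exploiting the fact that $\iSr$ is a direct limit / completion of the finite-dimensional semisimple $q$-Schur algebras $\boldsymbol{\mathcal S}([-n,n],r)$ for $n\geq r$, together with the Weyl module machinery set up in \eqref{SpechtWeyl}--\eqref{bin} and the classification of irreducible modules for $\boldsymbol{\mathcal S}([-n,n],r)$ from \cite{DJ91}. For part (1), I would first show that each $\iWmu$ ($\mu\in\La^+(r)$) is irreducible in $\CrS$; but this is already \ref{infinite Weyl module}(2). Next, I would show these are pairwise non-isomorphic: by \ref{weight space for infinite Weyl module}, the weight $\mu$ occurs in $W(\iy,\mu)$ with multiplicity one and any weight $\la$ of $W(\iy,\mu)$ satisfies $\la\leq^+_\wt\mu$; so $W(\iy,\mu)\cong W(\iy,\mu')$ forces $\mu\leq^+_\wt\mu'$ and $\mu'\leq^+_\wt\mu$, hence $\mu=\mu'$ (using that $\leq^+_\wt$ is the dominance order on partitions, which is antisymmetric). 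Finally, I would show every irreducible $M$ in $\CrS$ is one of the $W(\iy,\mu)$: pick any nonzero $x\in\ttk_\la M$ for some $\la\in\La(\iy,r)$, so $M=\iSr\ttk_\la M = \iSr\ttk_\la x$ is a quotient of $\iSr\ttk_\la$; by \ref{lemma1 for classification of irreducible modle in Cr} the module $\iSr\ttk_\la$ is completely reducible, hence so is its quotient, so the irreducible $M$ is a direct summand of $\iSr\ttk_\la$, and since $\iSr\ttk_\la$ embeds in $\bfOg_\iy^{\ot r}\cong\bigoplus_{\mu\in\La^+(r)}W(\iy,\mu)\ot S^\mu$ by \ref{bimodule isomorphism for V(infty,r)}, the module $M$ must be isomorphic to one of the $W(\iy,\mu)$.

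For part (2), the plan is as follows. Let $M\in\CrS$, so $M=\bigoplus_{\la\in\La(\iy,r)}\ttk_\la M$. Each $\ttk_\la M$ is a quotient of $\iSr\ttk_\la$ (via $u\ttk_\la\mapsto u x$ for $x$ running over a generating set of $\ttk_\la M$, more precisely $\iSr\ttk_\la\ot_{\ttk_\la\iSr\ttk_\la}\ttk_\la M\twoheadrightarrow \ttk_\la M$, or just note $\ttk_\la M$ is generated over $\ttk_\la\iSr\ttk_\la$ so $M=\iSr\cdot\ttk_\la M$ is a quotient of a direct sum of copies of $\iSr\ttk_\la$). By \ref{lemma1 for classification of irreducible modle in Cr}, $\iSr\ttk_\la$ is completely reducible; a direct sum of copies of a completely reducible module is completely reducible, and a quotient of a completely reducible module is completely reducible. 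Summing over $\la$, $M$ is a quotient of $\bigoplus_{\la}(\text{copies of }\iSr\ttk_\la)$, hence completely reducible. I should take a little care that $M$ is genuinely a \emph{quotient} (as an $\iSr$-module) of such a direct sum: since $M=\sum_{\la}\iSr\ttk_\la M$ and each $\ttk_\la M$ is spanned by some set of elements, the map $\bigoplus_{\la}\bigoplus_{\text{gens }x}\iSr\ttk_\la\to M$, $(u_{\la,x})\mapsto\sum u_{\la,x}x$, is a surjective $\iSr$-module homomorphism.

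The main obstacle is the bookkeeping around showing an arbitrary irreducible $M\in\CrS$ embeds in, rather than merely is a quotient of, something with a known decomposition — this is handled by combining complete reducibility of $\iSr\ttk_\la$ (\ref{lemma1 for classification of irreducible modle in Cr}) with the explicit embedding $\iSr\ttk_\la\hookrightarrow\bfOg_\iy^{\ot r}$ from the proof of that corollary. Once part (2) is in hand, part (1)'s surjectivity of $\mu\mapsto W(\iy,\mu)$ is immediate, since any irreducible in $\CrS$ is a summand of some $\iSr\ttk_\la\subseteq\bfOg_\iy^{\ot r}\cong\bigoplus_\mu W(\iy,\mu)\ot S^\mu$. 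One minor subtlety to flag: I must confirm that $\bfOg_\iy^{\ot r}$, and hence $\iSr\ttk_\la$, actually lies in $\CrS$ so that the category statement is internally consistent — but this follows from $\bfOg_\iy^{\ot r}=\bigoplus_\la\ttk_\la\bfOg_\iy^{\ot r}$, which is the content of $\bfOg_\iy^{\ot r}=\bigoplus_{\la}x_\la\bfH$ from \ref{lemma2 for non-surjective map} read through the weight decomposition.
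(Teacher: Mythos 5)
Your proposal is correct and follows essentially the same route as the paper: irreducibility and distinctness of the $\iWmu$ from \ref{infinite Weyl module} and \ref{weight space for infinite Weyl module}, realization of an arbitrary irreducible as a quotient (hence summand) of some $\iSr\ttk_\la$ via \ref{lemma1 for classification of irreducible modle in Cr}, and identification inside $\bfOg_\iy^{\ot r}$ via \ref{bimodule isomorphism for V(infty,r)}. The only cosmetic difference is in part (2), where the paper passes through $\iSr x=\bfKir x$ as a quotient of $\bfKir=\bigoplus_\la\iSr\ttk_\la$ rather than through your decomposition $M=\bigoplus_\la\ttk_\la M$; both reduce to the same two facts.
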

\begin{proof}By  \ref{infinite Weyl module} and \ref{weight space for infinite Weyl
module}, the modules $\iWmu$ ($\mu\in\La^+(r)$) are irreducible
objects in $\CrS$. Let $M$ be an irreducible $\iSr$-module in
$\CrS$. Let $\mu\in \wt(M)$ and $x_0$ is a nonzero vector in
$M_\mu$. Then we have a surjective homomorphism $f$  from $\iSr$
to $M$ by sending $u$ to $ux_0$ for $u\in\iSr$. By the proof of
\ref{weight module for U(infty,r)}(2), we have $x_0=\ttk_\mu x_0$,
and so $f(\iSr\ttk_\mu)=\iSr f(\ttk_\mu)=\iSr\ttk_\mu x_0=\iSr
x_0=M$. Hence, $M$ is a quotient module of $\iSr\ttk_\mu$. By
 \ref{lemma1 for
classification of irreducible modle in Cr},  $M$ is isomorphic to
an irreducible component of $\iSr\ttk_\mu$, and hence, to an
irreducible component of $\bfOg_{\iy}^{\ot r}$. Now, \ref{bimodule
isomorphism for V(infty,r)} implies that $M$ is isomorphic to
$\iWmu$ for some $\mu\in\La^+(r)$, proving (1).

Let $N$ be an arbitrary module in $\CrS$.  Let $0\neq x\in N$.
Since $N\in\CrS$, we have $\iSr x=\bfKir x$ is a quotient module
of $\bfKir$. But,  by \ref{lemma2 for representation of
U(infty,r)},
$$\bfKir=\bop_{\la\in\Lair}\bfKir\ttk_\la=\bop_{\la\in\Lair}\iSr\ttk_\la.$$
 Hence, \ref{lemma1 for
classification of irreducible modle in Cr} implies that the
$\iSr$-module $\bfKir$ is completely reducible, and so $\iSr x$ is
completely reducible. Consequently,  $N$ is completely reducible.
\end{proof}
\begin{Rem}
Following the construction in \cite{Du92}, we can use the PBW type
basis $$\{[A]=\up^{-d_A}\phi_{\la\mu}^d\mid
A=\jmath(\la,d,\mu),(\la,d,\mu)\in{\frak D}(\infty,r)\}$$ for
$\bfKir$ to define the canonical basis $\{\theta_{\la\nu}^d\mid
(\la,d,\mu)\in{\frak D}(\infty,r)\}$ of $\bfKir$. By
\cite[5.3]{Du1992}  one can easily show that, for each
$\mu\in\La^+(r)$, the set $\{\theta_{\la\nu}^dz_{\mu}\mid
(\la,d,\nu)\in{\frak D}(\infty,r)\}\backslash\{0\}$ forms a
$\mbq(\up)$-basis for $\iWmu$. This basis is called the {\it
canonical basis} of $\iWmu$.
\end{Rem}
\section{Polynomial representations of $\iU$}

We are now ready to classify all irreducible polynomials
representations of $\iU$. Let $\polC$ be the full subcategory of
$\sC$ consisting of weight $\bfU(\infty)$-modules $M$ such that
$\wt(M)\han\mbni$. We call the objects in $\polC$ {\it polynomial
representations}. Let $r$ be a positive integer and let $\Cr$ be
the category of $\bfU(\infty,r)$-modules which are weight
$\iU$-modules via the homomorphism $\zeta_r:\iU\to\iUr$.

\begin{Prop}\label{lemma2 for classification of irreducible weight module for U(infty,r)}
$(1)$ The category $\Cr$ is a full subcategory of $\polC\cap\intC$.

$(2)$ For each $r\in\mbn$, there is a surjective homomorphism from
$\bfU(\infty,r+1)$ to $\iUr$ by sending the generators
$\tte_i,\ttf_i,\ttk_i$ for $\bfU(\infty,r+1)$ to the generators
$\tte_i,\ttf_i,\ttk_i$ for $\bfU(\infty,r)$, respectively. Hence,
$\Cr$ is a full subcategory of $\Cro$.
\end{Prop}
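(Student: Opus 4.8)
The plan is to prove the two parts of Proposition~\ref{lemma2 for classification of irreducible weight module for U(infty,r)} separately, relying heavily on the presentation of $\bfVir\cong\iUr$ given in \ref{definition of S(infty,r)} and \ref{presentation and monomial base for U(infty,r)}.

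\textbf{Part (1).} First I would show $\Cr\subseteq\polC$. Let $M$ be a $\iUr$-module, regarded as a $\iU$-module through $\zeta_r$. Since $\sum_{\la\in\Lair}[\diag(\la)]$ is the identity of $\iUr$, we have $M=\bigoplus_{\la\in\Lair}\ttk_\la M$, and by the argument in the proof of \ref{weight module for U(infty,r)}(2) (using \ref{KKK}(3)) one gets $\ttk_\la M=M_\la$. Hence $M$ is a weight module and $\wt(M)\han\Lair\han\mbni$, so $M\in\polC$. Next, integrability: fix $x\in M$ and $i\in\mbz$; since $M$ is a weight module with $\wt(M)\han\La(\iy,r)$, we may write $x=\sum_{\la}x_\la$ with $x_\la\in M_\la$, a finite sum, and $E_i^nx_\la\in M_{\la+n\al_i}$. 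Because each coordinate of a weight in $\La(\iy,r)$ is bounded by $r$, for $n$ large $\la+n\al_i\notin\La(\iy,r)$, forcing $E_i^nx_\la=0$; similarly $F_i^nx_\la=0$. Taking $n_0$ to be the maximum over the finitely many $\la$ occurring gives the required vanishing, so $M\in\intC$. The fact that $\Cr$ is \emph{full} in $\polC\cap\intC$ is immediate: a $\iU$-module map between two $\iUr$-modules automatically respects the $\iUr$-action because $\iUr$ is generated by $\tte_i,\ttk_i,\ttf_i$, which are images of $E_i,K_i,F_i$ under $\zeta_r$ (see \ref{not onto}).

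\textbf{Part (2).} Here the heart is to construct the surjection $\bfU(\infty,r+1)\twoheadrightarrow\iUr$. Using the presentation in \ref{definition of S(infty,r)}, it suffices to check that the generators $\tte_i,\ttf_i,\ttk_i$ of $\iUr=\bfV(\iy,r)$ satisfy all the defining relations (a)--(g) of $\Sr|_{r+1}$. Relations (a), (c)--(g) are \emph{independent of $r$}, so they hold in $\bfV(\iy,r)$ verbatim. The only relation to worry about is (b): $\prod_{i\in\mbz}[\ttk_i;t_i]^!=0$ for all $\bft\in\mbni$ with $\s(\bft)=r+2$. But by \ref{KKK}(1), in $\bfV(\iy,r)$ we already have $\ttk(\bft)=0$ whenever $\s(\bft)>r$, in particular for $\s(\bft)=r+2$. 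Thus the universal property of the presentation yields an algebra homomorphism $\bfU(\infty,r+1)\to\bfV(\iy,r)$ sending generators to generators; it is surjective because those generators generate $\bfV(\iy,r)$ (Theorem~\ref{map zr}). Finally, composing $\zeta_{r+1}$ with this epimorphism shows every $\iUr$-module pulls back to a $\bfU(\infty,r+1)$-module, and on generators the pulled-back action agrees with the $\iU$-action through $\zeta_r$; hence $\Cr$ is a full subcategory of $\Cro$ (fullness again because the action is determined on generators).

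\textbf{Main obstacle.} The only genuinely delicate point is relation (b) in Part~(2): one must be sure that the "truncation" condition for level $r+1$ is \emph{weaker} than the one already known to hold at level $r$, i.e.\ that $\s(\bft)=r+2>r$, so that \ref{KKK}(1) applies directly. Once this is observed the rest is a routine invocation of the presentation \ref{definition of S(infty,r)} and the identification $\Sr\cong\bfV(\iy,r)$ from \ref{presentation and monomial base for U(infty,r)}; no new computation is needed. I would also take a moment to note explicitly that $r+1\geq 1$ so that $\bfU(\infty,r+1)$ and its presentation are defined, and that the homomorphism is level-lowering in the sense that it factors the map $\zeta_{r+1}$ appropriately.
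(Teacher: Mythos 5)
Your Part (2) is fine and is essentially the paper's argument in a slightly different packaging: the paper verifies $J_{r+1}\subseteq J_r$ by factoring $[K_{i_0};t_{i_0}]^!=[K_{i_0};t_{i_0}-1]^!(K_{i_0}-\up^{t_{i_0}-1})$, whereas you invoke \ref{KKK}(1) to see that relation (b) at level $r+1$ already holds in $\bfV(\iy,r)$; both are correct and the key observation ($\s(\bft)=r+2>r$) is the same.

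Part (1), however, contains a genuine gap. You deduce $M=\bigoplus_{\la\in\Lair}\ttk_\la M$ from the fact that $\sum_{\la\in\Lair}\ttk_\la$ is the identity of $\iUr$. That inference is only valid for a \emph{finite} family of orthogonal idempotents; here the sum is infinite and holds only in the completion $\hbfKir$, so the action of $1$ on a module element cannot be rewritten as the sum of the actions of the $\ttk_\la$. (The regular module $\bfV(\iy,r)$ itself illustrates the failure: $1$ is not a finite sum of elements of the $\ttk_\la\bfV(\iy,r)$, which is exactly why the paper must \emph{define} weight modules by the condition $M=\oplus_\la\ttk_\la M$ and why $\iSr\notin\sC$ in Remark 11.5.) Moreover the conclusion you draw, $\wt(M)\han\Lair$, is actually false: the trivial module $L(0)=W(\iy,\emptyset)$ lies in $\Cr$ and has weight $0\notin\Lair$ for $r\geq1$. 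What is true, and what the proposition asserts, is only $\wt(M)\han\mbni$, and proving it requires using the defining relations of $\iUr$ on a weight vector: for $x\neq0$ in $M_\la$ with $\la$ of infinite support, pick $r+1$ indices $i_1<\dots<i_{r+1}$ with $\la_{i_j}\neq0$ and apply $\prod_i[\ttk_i;t_i]^!=0$ ($t_{i_j}=1$, $\s(\bft)=r+1$) to get the contradiction $0\neq\prod_j(\up^{\la_{i_j}}-1)x=0$; then $[\ttk_i;r+1]^!x=0$ forces $0\leq\la_i\leq r$. This step cannot be bypassed. Your integrability argument is also affected, since it presupposes $\wt(M)\han\La(\iy,r)$, which you have not (and cannot) establish; the robust route is the algebra-level identity $\tte_i^{n}=\ttf_i^{n}=0$ for $n\geq r+1$, which follows from \ref{alp} and makes $M$ integrable regardless of its weights.
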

\begin{proof}Let $M$ be a $\iUr$-module in the category $\Cr$.
As a $\iU$-module, $E_i,K_i,F_i$ act on $M$ as the action of
$\tte_i,\ttk_i,\ttf_i$, respectively. But, by \ref{alp}, we have
$\tte_i^n=\ttf_i^n=0$ for all $n\geq r+1$. Hence, $M$ is an
integrable $\iU$-module.

We now prove $\wt(M)\han\mbzi$. Suppose this is not the case. Then
there exists $\la\in \wt(M)$ and $\la\not\in\mbzi$. Thus, $\la$
has an infinite support. Choose $r+1$ integers
$i_1<i_2<\cdots<i_{r+1}$  such that
$\la_{i_1}\not=0,\cdots,\la_{i_{r+1}}\not=0$ and
$\bft\in\La(\iy,r)$ such that $t_i=1$ for
$i\in\{i_1,\cdots,i_{r+1}\}$ and $t_i=0$ for all other $i$. Let
$u_\la$ be a nonzero vector in $M_\la$. Then
$$0\neq(\up^{\la_{i_1}}-1)\cdots
(\up^{\la_{i_{r+1}}}-1)u_\la=\prod_{i\in\mbz}[\up^{\la_i};t_i]^!u_\la=\prod_{i\in\mbz}[K_i;t_i]^!u_\la=\prod_{i\in\mbz}[\ttk_i;t_i]^!u_\la
\overset{\text{\ref{definition of S(infty,r)}}}=0u_\la=0,$$ a
contradiction.   Hence, $\wt(M)\han\mbzi$. On the other hand, by
\ref{definition of S(infty,r)} again, we have
$[\ttk_i;r+1]^!u_\la=(\up^{\la_i}-1)\cdots(\up^{\la_i}-\up^r)u_\la=0$
for $i\in\mbz$, forcing $0\leq\la_i\leq r$ for any $i\in\mbz$.
Hence, $\la\in\mbni$, proving (1).

Let $\Jr$ be the ideal of $\iU$ generated by
 $\prod_{i\in\mbz}[K_i;t_i]^!$, where $\bft=(t_i)_{i\in\mbz}\in\La(\iy,r+1)$.
By \ref{presentation and monomial base for U(infty,r)} we have
$\iUr$ is isomorphic to $\iU/\Jr$.  Let $\bft\in\La(\iy,r+2)$.
Choose $i_0\in\mbz$ such that $t_{i_0}>0$. Then
$\prod_{i\in\mbz}[K_i;t_i]^!=(K_{i_0}-\up^{t_{i_0}-1})(\prod_{i\not=i_0}[K_i;t_i]^!)[K_{i_0};t_{i_0}-1]^!\in
J_r$. Hence, $\Jrp\han\Jr$, and (2) follows.
\end{proof}

\begin{Lem}\label{lemma3 for classification of irreducible weight module for U(infty,r)}
Let $M$ be a irreducible $\iU$-module in the category $\polC$. Then $M\in\CrU$ for some $r\geq 0$.
\end{Lem}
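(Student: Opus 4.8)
The plan is to exploit the fact that an irreducible polynomial module has, by definition, all its weights in $\mbni$, and to show that these weights are automatically confined to a single ``level'' $r$. First I would pick a highest weight for $M$. Since $M$ is irreducible and lies in $\polC\subseteq\sC$, and since by \ref{lemma2 for classification of irreducible weight module for U(infty,r)} polynomial weights have nonnegative integer entries, the set $\wt(M)$ is bounded above in the ordering $\leqwt$ by any given weight: indeed if $\mu\in\wt(M)$ and $\mu+\al_{i_1}+\cdots+\al_{i_k}\in\mbni$ with all $\al_{i_j}$ simple roots, then summing coordinates gives $\s(\mu)=\s(\mu+\sum\al_{i_j})$, so the total $\s$ is constant along weights connected by raising operators; hence $M$ has the property defining $\sC^{hi}$ and, being irreducible, is a highest weight module $L(\la)$ for some $\la\in\mbni$ (using \ref{classfication for simple module in C^hi}(1), or just the standard argument in its proof).

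Next I would set $r=\s(\la)=\sum_{i\in\mbz}\la_i$, a finite nonnegative integer since $\la\in\mbni$. The claim is that $M=L(\la)$ descends to a module for $\iUr\cong\Sr$, i.e.\ that the ideal $\Jr$ of \ref{lemma2 for classification of irreducible weight module for U(infty,r)} (generated by the elements $\prod_{i\in\mbz}[K_i;t_i]^!$ with $\bft\in\La(\iy,r+1)$) annihilates $M$. Equivalently, using Definition \ref{definition of S(infty,r)}(b) under the presentation $\iUr\cong\Sr$, it suffices to show that every weight $\mu\in\wt(M)$ satisfies $\s(\mu)\le r$, for then $\prod_i[\up^{\mu_i};t_i]^!=0$ whenever $\sum t_i=r+1$ (some $t_i>\mu_i$), and this kills every weight vector, hence all of $M$. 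But $\s$ is constant on $\wt(M)$: the root lattice $\mbz\Pi(\iy)$ consists of height-zero elements (each $\al_i=\be_i-\be_{i+1}$ has coordinate sum $0$), so $\mu-\la\in\mbz\Pi(\iy)$ forces $\s(\mu)=\s(\la)=r$ for all $\mu\in\wt(M)$. In particular $\s(\mu)=r\le r$, so all of $\wt(M)$ sits at level $r$.

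With $\Jr M=0$ established, $M$ becomes a module over $\iU/\Jr\cong\iUr$ (the isomorphism is \ref{presentation and monomial base for U(infty,r)} together with the description of $\Jr$ in the proof of \ref{lemma2 for classification of irreducible weight module for U(infty,r)}(2)), and it is a weight module for $\iUr$ via $\zeta_r$ since it is already a weight $\iU$-module with weights in $\mbni$; thus $M$ is an object of $\CrU$, which by \ref{weight module for U(infty,r)}(1) is identified with $\Cr$. This gives $M\in\CrU$ as required.

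I expect the only real subtlety to be the first step — arguing cleanly that an irreducible object of $\polC$ is a highest weight module. The key point is that polynomial weights have bounded coordinate sum along any chain of raising operators (in fact constant, by the height-zero observation), so any weight vector can be raised only finitely often before being annihilated; choosing a maximal such chain produces a highest weight vector, and irreducibility then identifies $M$ with the corresponding $L(\la)$. Everything after that is a routine bookkeeping of the constancy of $\s$ on $\wt(M)$ and the definition of $\Jr$, so the level $r=\s(\la)$ is forced and the containment $M\in\CrU$ follows immediately.
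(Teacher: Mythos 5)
Your second and third paragraphs reproduce the paper's argument essentially verbatim: the simple roots $\al_i=\be_i-\be_{i+1}$ have coordinate sum zero, so $\s$ is constant on $\wt(M)$ (the irreducible weight module $M$ is generated by a single weight vector, so all its weights differ by elements of $\mbz\Pi(\iy)$); setting $r=\s(\la)$ for a weight $\la$ of $M$ gives $\wt(M)\han\La(\iy,r)$, and then every generator $\prod_{i}[K_i;t_i]^!$ of $\Jr$ (with $\s(\bft)=r+1$) vanishes on every weight space, so $\Jr M=0$, $M$ factors through $\iU/\Jr\cong\iUr$, and $M\in\CrU$.

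The first paragraph, however, contains a genuine error. An irreducible object of $\polC$ does \emph{not} in general lie in $\sC^{hi}$, and is not a highest weight module $L(\la)$. The constancy of $\s$ along chains of raising operators does not bound the length of such chains, because the index set is all of $\mbz$: the weights $\be_0,\be_{-1},\be_{-2},\dots$ all lie in $\mbni$ with $\s=1$ and satisfy $\be_0\leqwt\be_{-1}\leqwt\cdots$, and on the natural module $\bfOg_\iy=W(\iy,\mu)$ with $\mu_1=1$ the monomials $E_{-k}\cdots E_{-1}E_{0}$ act nonzero for every $k$, so no highest weight vector exists. Indeed, Theorem \ref{classfication for simple module in tilde Cr}(2) of the paper records that the only irreducible module in $\polC\cap\sC^{hi}$ is the trivial module $L(0)$; your first step would make every irreducible polynomial module a member of $\sC^{hi}$, contradicting this. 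Fortunately the step is not load-bearing: the putative highest weight $\la$ is used only to define $r=\s(\la)$, and since $\s$ is constant on $\wt(M)$ any weight of $M$ serves equally well. Replacing ``let $\la$ be the highest weight of $M$'' by ``let $\la$ be any weight of $M$ and note $M=\iU u_\la$ by irreducibility'' --- which is exactly what the paper does --- repairs the argument with no other change.
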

\begin{proof}Let $\la\in \wt(M)$ and $u_\la$ be a nonzero vector in
$M_\la$. Then $M=\iU u_\la$ since $M$ is irreducible. Hence, by
\ref{weight}, we know $\sigma(\mu)=\sigma(\la)$ for any $\mu\in
\wt(M)$. Let $r=\sigma(\la)$. Since $\wt(M)\han\mbni$ we have
$\wt(M)\han\La(\iy,r)$. Thus, for any $\bft\in\La(\iy,r+1)$, there
exist $i_0\in\mbz$ such that $t_{i_0}>\la_{i_0}\geq 0$. Hence,
$[\up^{\la_{i_0}};t_{i_0}]^!=0$, and
$\prod_{i\in\mbz}[\ttk_i;t_i]^!u_\la=\prod_{i\in\mbz}[\up^{\la_i};t_i]^!u_\la=0$.
This shows that $M$ is a $\iUr$-module (by \ref{presentation and
monomial base for U(infty,r)}). Since $M\in\mc C$, it follows that
$M\in\CrU$.
\end{proof}
\begin{Thm}\label{classfication for simple module in tilde Cr}
$(1)$ For $r\in\mbn$ the modules  $\iWmu$ $(\mu\in\La^+(r'),r'\leq r)$
are  all non-isomorphic  irreducible $\iUr$-modules in the
category $\Cr$.

$(2)$ The modules  $\iWmu$ $(\mu\in\La^+(r), r\in\mbn)$ are  all
non-isomorphic  irreducible $\iU$-modules in the category $\polC$.
Moreover, there is only one irreducible $\iU$-module, the trivial
module $L(0)$, in the category $\polC\cap\mc C^{hi}$.
\end{Thm}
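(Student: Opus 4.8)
The plan is to bootstrap everything from Theorem~\ref{classfication for simple module in Cr}, which classifies the irreducible objects of $\CrS$ and hence, via the category isomorphism of Proposition~\ref{weight module for U(infty,r)}(1), of $\CrU$; the remaining tools are Lemma~\ref{lemma3 for classification of irreducible weight module for U(infty,r)}, the categorical inclusions of Proposition~\ref{lemma2 for classification of irreducible weight module for U(infty,r)}, and the weight–space data of Proposition~\ref{weight space for infinite Weyl module}.

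For part~(1) I would start with an irreducible object $M$ of $\Cr$. By Proposition~\ref{lemma2 for classification of irreducible weight module for U(infty,r)}(1), $M$ lies in $\polC\cap\intC$, so $\wt(M)\han\mbni$; since $E_i,F_i$ shift weights by $\pm\al_i$ (Lemma~\ref{weight}) and $M$ is irreducible, $\sigma$ is constant on $\wt(M)$, with some value $r'$. The key point is that $r'\le r$: as $M$ is a $\iUr$-module, $\prod_{i\in\mbz}[\ttk_i;t_i]^!$ annihilates $M$ whenever $\sigma(\bft)=r+1$ by \ref{KKK}(1); if $r'\ge r+1$, then for $\la\in\wt(M)$ and $\bft\in\La(\iy,r+1)$ chosen with $0\le t_i\le\la_i$ for all $i$ (possible since $\la\in\mbni$ and $\sigma(\la)=r'\ge r+1$), the scalar $\prod_i[\up^{\la_i};t_i]^!$ is nonzero, contradicting that it kills a nonzero $\la$-weight vector. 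Thus $M$ is an irreducible weight $\bfU(\infty,r')$-module with $r'\le r$, so Theorem~\ref{classfication for simple module in Cr} gives $M\cong\iWmu$ for a unique $\mu\in\La^+(r')$. Conversely, iterating the epimorphism $\bfU(\infty,r+1)\twoheadrightarrow\iUr$ of Proposition~\ref{lemma2 for classification of irreducible weight module for U(infty,r)}(2) makes each $\iWmu$ with $\mu\in\La^+(r')$, $r'\le r$, an object of $\Cr$, still irreducible. These are pairwise non-isomorphic: an $\iU$-module isomorphism preserves weight spaces, so an isomorphism $\iWmu\cong W(\iy,\nu)$ forces $\sigma(\mu)=\sigma(\nu)$, and by Proposition~\ref{weight space for infinite Weyl module} the weight $\mu$ (resp.\ $\nu$) is the unique $\leq_\wt^+$-maximal element of $\wt(\iWmu)$ (resp.\ $\wt(W(\iy,\nu))$), whence $\mu=\nu$.

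For part~(2), let $M$ be an irreducible object of $\polC$. By Lemma~\ref{lemma3 for classification of irreducible weight module for U(infty,r)}, $M$ is a weight $\bfU(\infty,r)$-module -- i.e.\ an object of $\Cr$ -- for some $r$, so part~(1) gives $M\cong\iWmu$ with $\mu\in\La^+(r')$, $r'\le r$; conversely each $\iWmu$ ($\mu\in\La^+(r)$, $r\in\mbn$) is an irreducible object of $\CrU$, hence of $\polC$ (submodules in $\polC$ are again weight $\iUr$-submodules), and non-isomorphy is exactly as in part~(1). It remains to identify the irreducibles of $\polC\cap\mc C^{hi}$. First I would record that $\polC\han\intC$: for a weight vector $x\in M_\la$ with $\wt(M)\han\mbni$, the weights $\la\pm n\al_i$ leave $\mbni$ once $n$ is large, so $E_i^nx=F_i^nx=0$ for $n\gg0$. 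Now let $M$ be irreducible in $\polC\cap\mc C^{hi}$; since submodules stay in both categories, $M$ is already irreducible in $\mc C^{hi}$, so $M\cong L(\la)$ for a unique $\la\in X(\iy)$ by Theorem~\ref{classfication for simple module in C^hi}(1), while $M\in\polC$ forces $\la\in\wt(M)\han\mbni$. As $M$ is integrable, the Corollary following Theorem~\ref{classfication for simple module in C^hi} gives $\la\in X^+(\iy)$. But a dominant weight of finite support must vanish: if $\la_{i_0}>0$ then $\la_i\ge\la_{i_0}>0$ for every $i\le i_0$, contradicting finite support. Hence $\la=0$ and $M\cong L(0)$, the trivial module, which plainly lies in $\polC\cap\mc C^{hi}$.

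The step I expect to be the main obstacle is pinning down the inequality $r'\le r$ in part~(1) -- that is, verifying that an irreducible $\iUr$-module in $\Cr$ has weight sum at most $r$ -- which hinges on extracting relation~(b) of Definition~\ref{definition of S(infty,r)} (equivalently \ref{KKK}(1)) and combining it with the constraint $\wt(M)\han\mbni$ coming from Proposition~\ref{lemma2 for classification of irreducible weight module for U(infty,r)}(1). Everything else is a routine assembly of the quoted results, with the final assertion resting on the elementary remark that a finitely supported dominant weight is $0$.
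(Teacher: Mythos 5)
Your proposal is correct and follows essentially the same route as the paper: the inequality $r'\le r$ is obtained by the identical contradiction with the relation $\prod_{i}[K_i;t_i]^!=0$ for $\sigma(\bft)=r+1$ evaluated on a weight vector, the classification is then pulled back from Theorem~\ref{classfication for simple module in Cr} via Lemma~\ref{lemma3 for classification of irreducible weight module for U(infty,r)} and Proposition~\ref{lemma2 for classification of irreducible weight module for U(infty,r)}, and the final assertion rests on the same observation that an integrable irreducible in $\sC^{hi}$ has dominant highest weight, which combined with finite support forces $\la=0$. Your added details (the direct check that $\polC\han\intC$ and the explicit non-isomorphy argument via the $\leq_\wt^+$-maximal weight from Proposition~\ref{weight space for infinite Weyl module}) are harmless elaborations of steps the paper leaves implicit.
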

\begin{proof}By \ref{infinite Weyl
module}(2) and \ref{lemma2 for classification of irreducible
weight module for U(infty,r)}(2) we know $\iWmu$,
 where $\mu\in\La^+(r')$ with $r'\leq r$, are irreducible
$\iUr$-modules in $\Cr$. On the other hand, let $M$ be a
irreducible $\iUr$-module in the category $\Cr$. By \ref{lemma2
for classification of irreducible weight module for U(infty,r)}(1)
and \ref{lemma3 for classification of irreducible weight module
for U(infty,r)} we have $M\in\CrUp$ for some $r'\geq 0$. We claim
that $r'\leq r$. Indeed, suppose the contrary $r'>r$. Then, for
any $\la\in \wt(M)\han\La(\iy,r')$, there exists
$\mu\in\La(\iy,r+1)$ such that $\mu_i\leq\la_i$ for all
$i\in\mbz$. Thus, for any nonzero vector $x_0$ in $M_\la$,
$$\prod_{i\in\mbz}[K_i;\mu_i]^!x_0=\prod_{i\in\mbz}[\ttk_i;\mu_i]^!x_0=\prod_{i\in\mbz}[\up^{\la_i};\mu_i]^!x_0\not=0$$
for $i\in\mbz$. However, since $M$ is a $\iUr$-module, the
presentation of $\iUr$ implies
$\prod_{i\in\mbz}[K_i;\mu_i]^!x_0=0$, a contradiction. Hence,
$r'\leq r$. Now \ref{classfication for simple module in Cr}
implies that there exists $\mu\in\La^+(r')$ such that
$M\cong\iWmu$, proving (1).

The first assertion in (2) follows from \ref{classfication for
simple module in Cr} and \ref{lemma3 for classification of
irreducible weight module for U(infty,r)}.  By \ref{lemma2 for
classification of irreducible weight module for U(infty,r)},
$\iWmu$ is integrable. Since $\wt(\iWmu)\han\Lair$,  by the
classification of irreducible integral modules in $\sC^{hi}$ given
in \ref{classfication for simple module in C' cap C^hi}, we
conclude $\iWmu\not\in\sC^{hi}$ for any $r\geq 1$ and
$\mu\in\Lair$, proving the last assertion in (2).
\end{proof}
\begin{Rem}
(1) Classification of irreducible integrable modules over a
Kac-Moody algebra is an open problem (see \cite[Ex.
10.23]{Kac90}). However, irreducible integrable $\iUr$-modules can
be classified for all $r\in\mbn$. In fact, by the above result,
the modules  $\iWmu$ $(\mu\in\La^+(r'),r'\leq r)$ are  all
irreducible integrable $\iUr$-modules for any $r\in\mbn$.

It is clear that $\iSr\cong\prod_{\mu\in\Lair}\iSr\ttk_\mu$ as a
$\iSr$-module. Hence, $\iSr\not\in\sC$ as a $\iU$-module.  Since
$\iSr\ttk_\mu$ is a direct sum of finitely many irreducible
$\iSr$-modules in $\CrS$, $\iSr$ is the direct product of
irreducible $\iSr$-modules in $\CrS$.
\end{Rem}

(2) It is interesting to make a comparison between the
finite and infinite cases for quantum $\mathfrak{gl}_\eta$. In the
finite case, if we only consider finite dimensional
representations, then the categories $\sC^{hi}$ and $\intC$  are
the same as the category $\sC^{fd}$, and contain $\bS(n,r)\hmod$
as a full subcategory of polynomial representations of degree $r$
which more or less determine $\sC^{hi}$ and $\sC^{int}$. They are
all completely reducible and the irreducible modules in these
categories are all indexed by dominant weights. In the infinite
case, the situation is completely different. For example, the
categories $\sC^{hi}\cap\intC$, $\CrS$ and $\sC^{fd}$ become three
different categories. The complete reducibility continues to hold
in the last two categories, but seems not true in the first
category. The irreducible modules in $\sC^{hi}\cap\intC$ are also
indexed by dominant weights. Moreover, there are only a few
irreducible objects in $\sC^{fd}$, and there is only one
irreducible object in the category $\sC^{pol}\cap\sC^{hi}$.

\end{document}